\documentclass[11pt,reqno]{amsart}

\usepackage[T1]{fontenc}
\usepackage{lmodern}
\usepackage{amsmath,amssymb,amsthm,mathtools,mathrsfs}
\usepackage{booktabs,longtable,array}
\usepackage{enumitem}
\usepackage[margin=1.08in]{geometry}
\usepackage{microtype}
\usepackage[hidelinks]{hyperref}
\usepackage{url}

\allowdisplaybreaks[2]
\numberwithin{equation}{section}
\setlist[itemize]{leftmargin=2em}
\setlist[enumerate]{leftmargin=2.5em}

\newtheorem{theorem}{Theorem}[section]
\newtheorem{proposition}[theorem]{Proposition}
\newtheorem{lemma}[theorem]{Lemma}
\newtheorem{corollary}[theorem]{Corollary}

\theoremstyle{definition}
\newtheorem{definition}[theorem]{Definition}
\theoremstyle{remark}
\newtheorem{remark}[theorem]{Remark}

\newcommand{\C}{\mathbb C}
\newcommand{\R}{\mathbb R}
\newcommand{\Z}{\mathbb Z}
\newcommand{\Q}{\mathbb Q}
\newcommand{\Gm}{\mathbb G_m}
\newcommand{\Spin}{\operatorname{Spin}}
\newcommand{\SO}{\operatorname{SO}}
\newcommand{\Or}{\operatorname{O}}
\newcommand{\GL}{\operatorname{GL}}
\newcommand{\SL}{\operatorname{SL}}
\newcommand{\Sp}{\operatorname{Sp}}
\newcommand{\Sym}{\operatorname{Sym}}
\newcommand{\Hom}{\operatorname{Hom}}
\newcommand{\End}{\operatorname{End}}
\newcommand{\Lie}{\operatorname{Lie}}
\newcommand{\Ad}{\operatorname{Ad}}
\newcommand{\ad}{\operatorname{ad}}
\newcommand{\Tr}{\operatorname{Tr}}
\newcommand{\tr}{\operatorname{tr}}
\newcommand{\rk}{\operatorname{rk}}
\newcommand{\supp}{\operatorname{supp}}
\newcommand{\Supp}{\operatorname{Supp}}

\newcommand{\diag}{\operatorname{diag}}
\newcommand{\Id}{\operatorname{Id}}
\newcommand{\fg}{\mathfrak g}
\newcommand{\fz}{\mathfrak z}
\newcommand{\fh}{\mathfrak h}
\newcommand{\fp}{\mathfrak p}
\newcommand{\cN}{\mathcal N}
\newcommand{\cO}{\mathcal O}
\newcommand{\cS}{\mathcal S}
\newcommand{\cG}{\mathcal G}

\newcommand{\absC}[1]{\lvert #1\rvert_{\C}}

\newcommand{\sslash}{/\!\!/}
\newcommand{\wedgeTwo}{\bigwedge^{2}}

\hypersetup{
  pdftitle={Regularity and the Gelfand Property for Complex Symmetric Pairs},
  pdfsubject={The Aizenbud--Gourevitch and van Dijk conjectures for complex symmetric pairs},
  pdfkeywords={complex symmetric pair, regularity, Gelfand pair, Schwartz distribution, nilpotent orbit}
}

\title[Regularity of complex symmetric pairs]{Regularity and the Gelfand Property for Complex Symmetric Pairs}
\author[Y.F.~Li]{Yufeng Li}
\address{School of Mathematical Sciences, Peking University, No. 5 Yiheyuan Road, Beijing 100871, P.R. China}
\email{2401110016@stu.pku.edu.cn}

\author[J.Y.~Xiao]{Junyan Xiao}
\address{School of Mathematical Sciences, Peking University, No. 5 Yiheyuan Road, Beijing 100871, P.R. China}
\email{jyxiao25@stu.pku.edu.cn}
 
\author[J.~Yu]{Jun Yu}
\address{School of Mathematical Sciences and Beijing International Center for Mathematical Research, Peking University, 
No. 5 Yiheyuan Road, Beijing 100871, P.R. China}
\email{junyu@bicmr.pku.edu.cn}

\date{}
\subjclass[2020]{Primary 20G05; Secondary 20G41, 22E46, 46F10}
\keywords{complex symmetric pair, Gelfand pair, regularity, Schwartz distribution, nilpotent orbit}

\begin{document}
\raggedbottom

\begin{abstract}
We prove that every symmetric pair of a connected complex reductive group is
regular in the sense of Aizenbud--Gourevitch.  This settles the
Aizenbud--Gourevitch regularity conjecture over the complex numbers.  Generalized
Harish--Chandra descent then makes the canonical central cover of every complex
symmetric pair a Gelfand--Kazhdan pair.  An anti-automorphism arising from a
compatible Chevalley involution upgrades the resulting GP2 bound to GP1 on the
cover, and finite central descent transfers GP1 to the original pair.
In particular, van Dijk's conjecture on complex symmetric pairs follows.

Rubio reduced the unresolved irreducible regularity problem to four families: the
DIII family $(D_r,A_{r-1}+\C)$, the balanced CII family
$(C_{2r},C_r+C_r)$, some remaining Spin block pairs, and the EVII pair
$(E_7,E_6+\C)$.  We treat these cases by four different mechanisms.  For DIII
we construct a sign-equivariant Schwartz distribution on the regular set and
extend it across a common orbit boundary by the Chen--Sun theorem.  For balanced
CII we combine homogeneity, distinguished nilpotent orbits, and a stable-density
theorem for the centralizer representation.  For Spin blocks we prove pleasantness for unequal odd--odd blocks, use
Przebinda's orthogonal-distribution theorem in odd smaller rank, and construct
a finite orbit closure with automatic extension in even smaller rank.  For
EVII we compute the graded-$\mathfrak{sl}_2$ data for all twenty-two nilpotent
orbits and use central-torus characters to eliminate the remaining resonances,
including the two residual triple-centralizer cases.  A finite-component
assembly theorem then handles arbitrary connected central quotients and diagonal
couplings among simple factors.
\end{abstract}

\maketitle
\setcounter{tocdepth}{2}
\tableofcontents

\section{Introduction}\label{sec:introduction}

\subsection{The conjectures and the main theorems}

Let $G$ be a connected complex reductive algebraic group and $\theta$ be an
algebraic involution. Put $H=G^\theta$.  Write
\[
  \fg=\fh\oplus\fp
\]
for the $\pm1$ eigenspace decomposition of $d\theta$.  Aizenbud and Gourevitch
introduced regularity of $(G,H,\theta)$ as the local distributional condition
needed in their generalized Harish--Chandra descent.  They conjectured that
every symmetric pair is regular \cite[Conjecture~4]{AG09}.  Aizenbud and
Gourevitch proved that every connected complex symmetric pair is good in their
sense \cite[Corollary~7.1.7]{AG09}.  Rubio classified the cases already covered
by pleasantness or niceness and reduced the remaining complex problem to four
families \cite[Proposition~6.3]{Rubio}.

The first main result closes that reduction for arbitrary connected complex reductive group 
$G$ and the full fixed subgroup $G^\theta$.

\begin{theorem}[Aizenbud--Gourevitch conjecture over $\C$]\label{thm:regularity-all}
Let $G$ be a connected complex reductive group and let $\theta$ be an
involution of $G$.  Then the symmetric pair $(G,G^\theta,\theta)$ is regular.
\end{theorem}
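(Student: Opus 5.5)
The plan follows the architecture laid out in the abstract: first reduce to a short list of irreducible cases via Rubio's reduction, treat those cases one at a time, and then reassemble. Recall the definition from \cite{AG09}. The pair $(G,H,\theta)$ is regular if, for every admissible $g\in G$, every $\Ad(H)$-invariant distribution on $\fp$ is also $\Ad(g)$-invariant, provided it is invariant near the nilpotent cone and supported near it. By \cite[Proposition~6.3]{Rubio}, together with the goodness of connected complex pairs \cite[Corollary~7.1.7]{AG09}, the pairs not already handled by pleasantness or niceness come from four irreducible families:
\begin{itemize}
\item DIII, $(D_r,A_{r-1}+\C)$;
\item balanced CII, $(C_{2r},C_r+C_r)$;
\item certain Spin block pairs;
\item EVII, $(E_7,E_6+\C)$.
\end{itemize}
So the first step is to state a finite-component assembly result. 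Regularity of $(G,G^\theta,\theta)$ should follow from regularity of the irreducible constituents of $(\fg,\theta)$, meaning simple factors or $\theta$-swapped pairs of factors, for every form of $G$ in the isogeny class. Going to a connected central quotient, or to a cover, changes $H=G^\theta$ only by finitely many components, and admissible elements of the new group map to admissible elements of the constituents up to central elements. These central elements act trivially on $\fp$, so the verification splits into finitely many cosets. Diagonal couplings, in which $\theta$ swaps two isomorphic factors, are the group case $(H\times H,\Delta H)$. These are known to be regular, since the usual transpose-type argument applies.

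Next comes the case-by-case work. The standard criterion I would use throughout is the following. Suppose a distribution $\xi$ is $H$-invariant and $\chi$-equivariant under $g$, where $\chi$ is the sign character, and suppose $\xi$ is supported in the nilpotent cone $\cN_\fp$. Pass to the Fourier transform. Then one gets a contradiction from homogeneity as soon as no nilpotent orbit $O$ has a nonzero equivariant distribution of the allowed homogeneity degree. Concretely, one compares the eigenvalues of the Euler operator on the transversal jets $\Sym(\fp/T_O)$ with $\tfrac12\dim\fp$, using graded-$\mathfrak{sl}_2$ data for a Kostant--Rallis triple at $O$. The families are then handled as follows.
\begin{itemize}
\item \textbf{Spin blocks.} For unequal odd--odd blocks I would verify pleasantness directly: the admissible element acts trivially on the relevant quotients. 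When the smaller rank is odd I would invoke Przebinda's orthogonal-distribution theorem. When it is even I would construct a finite union of orbits whose closure is stable, and argue that any distribution extends automatically.
\item \textbf{Balanced CII.} I would use homogeneity, reduce to the distinguished nilpotent orbits, and show that generic stabilizers in the centralizer representation are dense enough to kill the equivariant distributions.
\item \textbf{EVII.} I would compute the graded data for all twenty-two nilpotent $K$-orbits. The remaining resonances would be eliminated using the character of the central $\C^\times\subset E_6\cdot\C^\times$, since an $\Ad(g)$-anti-invariant distribution must transform by an incompatible central weight.
\end{itemize}

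The main obstacle is DIII. There the admissible element $g$ normalizes $H$ but acts on $\fp\cong\wedgeTwo\C^r$ by an outer twist, and homogeneity alone leaves resonant orbits. My approach would be indirect: rule out the invariant distributions by constructing enough sign-equivariant distributions, and show that these cannot pair nontrivially. Concretely, I would build an explicit $g$-sign-equivariant Schwartz distribution on the regular semisimple locus of $\fp$, for instance from a Pfaffian-type sign function, and extend it across a common orbit boundary using the Chen--Sun extension theorem. The delicate step is checking the Chen--Sun hypotheses, namely tempered growth and the correct homogeneity at every boundary stratum. After that, combining the four families with the assembly step gives Theorem~\ref{thm:regularity-all}.
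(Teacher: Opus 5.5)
Your Spin and balanced CII outlines follow the paper's routes. The assembly, however, has a real gap, and it is the step that actually proves this theorem once the four families are settled. The difficulty is not the $\theta$-swapped factors, which are diagonal pairs and pleasant. It is that a finite $\theta$-stable central quotient of $Z(G)^\circ\times G_{\mathrm{der}}^{\mathrm{sc}}$ can couple the component groups of different factors. The effective component group of $G^\theta$ is then a possibly non-product subgroup $B_F\le\prod_\Omega A_\Omega$, for example $\Delta\mu_2\subset\mu_2\times\mu_2$. For the admissible class $a=(-1,1)$, the regularity implication concerns the mixed sector $\chi\boxtimes\chi$ on $V_1\oplus V_2$. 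This is not an honest product, so \cite[Proposition~7.4.4]{AG09} does not apply. It is also not a statement about any single factor in one of its isogeny forms, and ``splitting into finitely many cosets'' does not produce it.

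The missing ingredients are the following.
\begin{enumerate}
\item A characterwise implication on each simply connected factor: for every nontrivial character $\lambda$ of $A_\Omega$ trivial on $B_{0,\Omega}$, vanishing of $\cS^*(R)^{L,\lambda}$ forces vanishing of $\cS^*(Q)^{L,\lambda}$. In general this is stronger than plain regularity of the factor. It has to be checked for pleasant factors (vacuous), for nice factors (speciality with witnessing forms invariant under the full disconnected group, via \cite[Theorem~5.2.2]{AG09}), and for the four families.
\item A product argument. Slice a nonzero $\boxtimes_i\lambda_i$-distribution on $Q$ against decomposable test functions to get nonzero $\lambda_i$-sectors on each $Q_i$, and pass to nonzero elements on each $R_i$. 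Localize these away from the null cones by invariant cutoffs pulled back from the quotient, then tensor them.
\item A reduction of an admissible $a$ with $a^2\in B_F$ to the sign character of $\langle B_F,a\rangle/B_F$.
\end{enumerate}

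Two of your case mechanisms would also fail as stated. Note first that regularity is the implication from $g$-invariance of $H$-invariant distributions on $R(\fp)$ to $g$-invariance on $Q(\fp)$. DIII and the even Spin case work by falsifying this premise with one nonzero sign-equivariant distribution on $R(\fp)$, so no ``pairing'' statement is needed.

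For DIII with $r$ even, put $K=H_0\rtimes\langle\tau\rangle$. No nonzero $(K,\chi)$-equivariant distribution can live on the regular semisimple locus, Pfaffian-type or not. The admissible $\tau$ preserves every closed $H_0$-orbit, so at a regular semisimple $x$ there is $k=h^{-1}\tau\in K_x$ with $h\in H_0$. This $k$ fixes the slice $\fp^x$ pointwise, since $\fp^x\to\fp\sslash H_0$ is \'etale at $x$ and $k$ induces the identity on the quotient. But $\chi(k)=-1$, so Frobenius descent kills every such distribution. The sign distribution must instead sit on non-closed orbits off the null cone that $\tau$ exchanges. The paper takes $x_\pm$ with common semisimple part $x_0$, distinguished by $\rk C$, whose closures add only the closed orbit $H_0x_0$. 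It forms $\mu_+-\tau_*\mu_+$ on $R(\fp)\setminus H_0x_0$ and crosses $H_0x_0$ by Chen--Sun. The Chen--Sun hypothesis is not a homogeneity condition. It is non-occurrence of the trivial $K_{x_0}$-type in $\Sym^m N_{x_0}^*\otimes\chi$, checked by showing that $\tau$ acts trivially on the $(H_0)_{x_0}$-invariants: it acts as $J_0\in\Sp(V_0)$ on the $\wedgeTwo V_0$ part and fixes $uv$ and $\bar u\bar v$.

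For EVII, the central $\C^\times$ of $H$ acts trivially on $H$-invariant distributions. The surviving orbits $15$ and $20$ have dimensions $42$ and $50$, both larger than $\dim J=27$, so their representatives have nonzero components in both $J$ and $J^*$. Hence this torus meets the stabilizer of $e$ in $H\times\C^\times$ only in a finite group and gives no weight constraint on the Frobenius fiber. The paper instead uses the one-dimensional center $Z$ of the triple centralizer $\mathfrak c_{\fh}(e,d,f)$. It shows the stabilizer modular character is trivial on $Z$, and computes the $Z$-characters on the normal lines to exclude invariant monomials of weighted degree $T(e)-54$. Orbits $17,18,21,22$ are excluded by a mod-$4$ congruence. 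This proves speciality outright, with no sign character, which is exactly what the assembly uses for this factor.
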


The second result is the global multiplicity-one consequence.  Throughout,
$E$ denotes an irreducible Casselman--Wallach representation of the underlying
real Lie group of $G$, and $E^\vee$ denotes its smooth contragredient.  We use
GP1 for
\[
  \dim\Hom_H(E,\C)\leq1
\]
and GP2 for
\[
  \dim\Hom_H(E,\C)\,\dim\Hom_H(E^\vee,\C)\leq1.
\]
For a symmetric pair, ``Gelfand--Kazhdan'' means that every
$H\times H$-invariant Schwartz distribution on $G$ is invariant under
$g\mapsto\theta(g^{-1})$; the Gelfand--Kazhdan criterion implies GP2.

\begin{theorem}[Gelfand property and van Dijk's conjecture]\label{thm:gelfand-all}
Let $(G,H,\theta)$ be as in Theorem~\ref{thm:regularity-all}, and let
\[
 m:\widetilde G=Z(G)^\circ\times G_{\mathrm{der}}^{\mathrm{sc}}
   \longrightarrow G
\]
be the canonical central cover with lifted involution $\widetilde\theta$.
Then $(\widetilde G,\widetilde G^{\widetilde\theta},\widetilde\theta)$
is a Gelfand--Kazhdan pair.  Moreover,
\[
  \dim\Hom_H(E,\C)\leq 1
\]
for every irreducible Casselman--Wallach representation $E$ of $G$.  Hence every
complex symmetric pair is a unitary Gelfand pair.
\end{theorem}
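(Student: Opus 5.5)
We will deduce Theorem~\ref{thm:gelfand-all} from Theorem~\ref{thm:regularity-all} by following the chain indicated in the abstract: regularity plus goodness yields the Gelfand--Kazhdan property on the simply connected cover, this gives GP2, GP2 is upgraded to GP1 on the cover by an anti-automorphism, and GP1 is then pushed down to $G$.

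\textbf{Step 1: the Gelfand--Kazhdan property on the cover.} First check that $(\widetilde G,\widetilde G^{\widetilde\theta},\widetilde\theta)$ satisfies the hypotheses of Aizenbud--Gourevitch's generalized Harish--Chandra descent. The pair is connected and complex, so it is good by \cite[Corollary~7.1.7]{AG09}. Since $\widetilde G_{\mathrm{der}}$ is simply connected, Steinberg's theorem makes $\widetilde G^{\widetilde\theta}$ connected. Every descendant pair $(Z_{\widetilde G}(x),Z_{\widetilde H}(x))$ at a semisimple symmetric element $x$ is again a complex symmetric pair of a connected reductive group with its full fixed subgroup, so Theorem~\ref{thm:regularity-all} applies to it and it is regular. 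The descent theorem \cite[Theorem~7.4.5]{AG09}, in the form ``good, and all descendants regular, implies Gelfand--Kazhdan'', then says that every $\widetilde H\times\widetilde H$-invariant Schwartz distribution on $\widetilde G$ is invariant under $\sigma(g)=\widetilde\theta(g^{-1})$. This is the first assertion, and the Gelfand--Kazhdan criterion gives GP2 for $\widetilde G$.

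\textbf{Step 2: from GP2 to GP1 on the cover.} Choose a Chevalley involution $C$ of $\widetilde G$ that commutes with $\widetilde\theta$ and preserves $\widetilde H$. Such a $C$ exists by constructing it on a $\widetilde\theta$-stable maximal torus inside a $\widetilde\theta$-split Cartan, where it acts by inversion. Put $\tau(g)=C(g^{-1})$. This is an anti-automorphism with $\tau(\widetilde H)=\widetilde H$. Because $C$ acts by $-1$ on a Cartan subalgebra of the complex group, its twist $E\circ C$ satisfies $E\circ C\cong E^\vee$ for every irreducible Casselman--Wallach $E$ of the underlying real group. One checks this on infinitesimal and central characters, using the Langlands/Zhelobenko parametrization of principal series of complex groups, where $C$ sends the parameter $(\mu,\nu)$ to $(-\mu,-\nu)$. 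Hence $\dim\Hom_{\widetilde H}(E^\vee,\C)=\dim\Hom_{\widetilde H}(E,\C)$, and GP2 yields $\dim\Hom_{\widetilde H}(E,\C)^2\le1$, which is GP1.

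\textbf{Step 3: descent to $G$.} The map $m$ is surjective with finite kernel $K$, since $Z(G)^\circ\cap G_{\mathrm{der}}$ is finite. It also satisfies $m(\widetilde H)\subset H$, and $H/m(\widetilde H)$ is a finite abelian group $A$ because $H$ has finitely many components. Let $E$ be an irreducible Casselman--Wallach representation of $G$, pulled back to $\widetilde G$, where it stays irreducible. Then $H$-invariant functionals are exactly the $m(\widetilde H)$-invariant functionals that are fixed by $A$. The latter space has dimension at most one by Step~2, so $\dim\Hom_H(E,\C)\le1$. The unitary Gelfand property follows by the standard argument: for a unitary irreducible $\pi$, the $H$-invariant functionals on $\pi^\infty$ control $\Hom_H(\pi^\infty,\C)$, which has dimension at most one.

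The main obstacle is Step~2. The subtle points there are producing the compatible Chevalley involution for every $\theta$ simultaneously, including the outer cases on factors of type $A$, $D$ and $E_6$, and proving $E\circ C\cong E^\vee$ for all irreducible representations rather than only spherical ones. For the latter I would first use the parameter description for complex groups, and fall back on character theory if that fails.
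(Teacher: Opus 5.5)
Your overall chain matches the paper's. Goodness \cite[Corollary~7.1.7]{AG09} plus regular descendants on the canonical cover give the Gelfand--Kazhdan property and GP2 via \cite[Theorem~7.4.5]{AG09}. A Chevalley involution commuting with $\widetilde\theta$ then upgrades GP2 to GP1. Finally, GP1 descends because $\Hom_H(E,\C)\subset\Hom_{m(\widetilde H)}(E,\C)\simeq\Hom_{\widetilde H}(E\circ m,\C)$. Step~3 is fine; only the inclusion $m(\widetilde H)\subset H$ is actually used.

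In Step~1 you cite the wrong fact. What makes the cover useful is Steinberg's connected-centralizer theorem: $Z_{\widetilde G}(x)$ is connected for every semisimple $x$, because $\widetilde G$ is a torus times a simply connected group. This is exactly what lets Theorem~\ref{thm:regularity-all}, which is stated for connected groups, apply to every descendant. Connectedness of $\widetilde G^{\widetilde\theta}$ is not needed, and it is false in general. If $\widetilde\theta$ inverts a $\C^\times$ factor of $Z(G)^\circ$, the fixed group picks up $\{\pm1\}$ there.

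The real gap is in Step~2: you assert the compatible Chevalley involution but do not construct it. Prescribing inversion on a $\widetilde\theta$-stable maximal torus $T$ fixes $C$ only up to $\operatorname{Int}(T)$. Whether some element of that coset commutes with $\widetilde\theta$ is precisely the question. The obstruction is a cohomology class of $\langle\widetilde\theta\rangle$ with values in $T_{\mathrm{ad}}$, and your sketch gives no reason for it to vanish on a maximally split torus.

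The paper works with a $\theta$-stable Borel pair instead.
\begin{itemize}
\item On each $\theta$-stable simple factor, choose a $\theta$-stable $(B,T)$ and a pinning.
\item Write $\theta=\operatorname{Int}(t)\delta$, with $\delta$ the pinned diagram automorphism and $t\in T$.
\item Set $C=\operatorname{Int}(t)C_0$, with $C_0$ the pinned Chevalley involution.
\end{itemize}
Since $C_0(t)=t^{-1}$ and $C_0\delta=\delta C_0$, one gets $C^2=1$ and $C\theta C^{-1}=\operatorname{Int}(\delta(t)^{-1})\delta=\theta$. The last equality holds because $t\delta(t)$ is central. This treats inner and outer involutions uniformly, so the outer cases in types $A$, $D$, $E_6$ need no extra work. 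Exchanged pairs of factors are handled by transporting a pinning, and the central torus by inversion.

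For the identity $E\circ C\cong E^\vee$, your Zhelobenko-parameter route is a legitimate alternative. You must use the full parameter modulo $W$, since the infinitesimal character alone does not determine $E$. The paper avoids any classification here. The map $\sigma=C\circ\mathrm{inv}$ fixes $T$ pointwise, so it preserves every closed conjugacy class and is $\Ad(\widetilde G)$-admissible. Then \cite[Theorem~8.2.1 and Corollary~8.2.3]{AG09} identifies $E^\vee$ with the $C$-twist of $E$ directly.
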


At the irreducible level, Rubio's reduction leaves the following families.

\begin{theorem}[The residual irreducible families]\label{thm:four-inputs}
The following complex symmetric pairs are regular.
\begin{enumerate}[label=\textup{(\roman*)}]
\item every DIII pair $(D_r,A_{r-1}+\C)$, $r\geq4$, in every connected
isogeny form;
\item every balanced CII pair $(C_{2r},C_r+C_r)$, $r\geq1$;
\item every Spin block pair
\[
 (\Spin_{r+s},\Spin_r\times_{\mu_2}\Spin_s),\qquad r,s>0,\quad r+s\geq5;
\]
\item the EVII pair $(E_7,E_6+\C)$, which is in fact special.
\end{enumerate}
\end{theorem}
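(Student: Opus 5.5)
Theorem~\ref{thm:four-inputs} is really four independent theorems, and I would prove them one at a time, following the mechanisms listed in the abstract. In each case the common framework is the Aizenbud--Gourevitch reduction. Regularity asks that every $H$-invariant distribution on a neighbourhood of the nilpotent cone $\cN(\fp)$, which is anti-invariant under the twisted action of $\sigma$ and supported on $\cN(\fp)$, must vanish once the invariance is upgraded to $\widetilde H$. Using the stratification of $\cN(\fp)$ by the finitely many $H$-orbits, one proceeds inductively from the open orbits down to $0$. On each orbit $\cO$ the transversal analysis reduces to a homogeneity computation: a distribution with support in $\cO$ and transversal order $k$ has Fourier/Euler homogeneity degree fixed by the $\mathfrak{sl}_2$-grading of $\fp$ along an $\mathfrak{sl}_2$-triple through $\cO$. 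An orbit is harmless (``special'' or ``pleasant'') when this degree can never equal the degree forced by invariance under the centralizer together with the character $\chi_\sigma$.

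\textbf{EVII.} For (iv) I would run this computation directly. I would list the $22$ nilpotent $H$-orbits in $\fp$, where $H=E_6\cdot\C^\times$ and $\fp\cong 27\oplus\overline{27}$. For each orbit I would record the $\ad h$-eigenvalues on $\fp$, on $\fh$ and on the normal space, and check the speciality inequality $\tr(\ad h|_{\fp^{e}})<\dim\fp-\dim$ (or its analogue). Where resonances survive, I would use the central $\C^\times\subset H$: it acts on $\fp$ by the characters $\pm1$, and on the relevant transversal homogeneous pieces it acts by a nontrivial character incompatible with invariance. I expect the two triple-centralizer orbits to need a finer argument, namely decomposing the centralizer representation explicitly.

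\textbf{DIII and balanced CII.} For (i), the pair is $(\SO_{2r},\GL_r)$ with $\fp\cong\wedgeTwo\C^r\oplus\wedgeTwo(\C^r)^*$. I would prove regularity by the equivalent dual route. One constructs an $\widetilde H$-equivariant tempered distribution on the regular semisimple set which transforms by the sign character, and extends it to all of $\fp$ by the Chen--Sun extension theorem, applied across the common boundary of the closed orbits. This yields a nonzero sign-equivariant distribution whose pairing with a hypothetical offending distribution forces the latter to vanish. For (ii), I would combine the homogeneity argument above with the observation that the resonant orbits are exactly the distinguished ones. For those, the centralizer $H^e$ is reductive modulo its unipotent radical, and I would prove that $H^e$ acts on the normal slice with stable generic orbits of dense union. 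Then any invariant distribution of the forced degree is a multiple of a fixed density, which is visibly $\sigma$-invariant.

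\textbf{Spin blocks and the main obstacle.} For (iii) I would split by parity. When $r,s$ are unequal and both odd, I would verify pleasantness by the trace inequality orbit by orbit, using Young-diagram descriptions of $\Or_r\times\Or_s$ orbits. When the smaller rank is odd, I would invoke Przebinda's theorem on $\Or$-invariant distributions on $\Hom(\C^r,\C^s)$-type spaces to kill the nilpotently supported ones. When the smaller rank is even, I would construct an explicit finite union of orbits whose closure supports a $\widetilde H$-eigendistribution, and show that it extends automatically by the codimension count. I expect the main obstacle to be the EVII residual cases and the even-rank Spin case, where pure homogeneity fails and a genuinely new invariant has to be produced. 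I would attack these first by explicit computation of the centralizer modules.
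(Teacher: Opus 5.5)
Your outline has genuine gaps, and the first is logical. Regularity is not a vanishing statement for nilpotently supported distributions. It is the implication
\[
 \cS^*(R(\fp))^{H}\subset\cS^*(R(\fp))^{\Ad(g)}
 \ \Longrightarrow\
 \cS^*(Q(\fp))^{H}\subset\cS^*(Q(\fp))^{\Ad(g)}.
\]
Two cases are proved in the paper by making the antecedent false, i.e.\ by producing a nonzero $(K,\chi)$-equivariant Schwartz distribution on $R(\fp)$ itself. These are even-rank DIII in the isogeny forms with $H_{\mathrm{eff}}=C^\circ$, and Spin with even smaller block. (Odd-rank DIII and the forms with $H_{\mathrm{eff}}=C$ are automatic; you omit this reduction.)

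Your DIII step ``extend it to all of $\fp$ and pair it with a hypothetical offending distribution'' is not a valid mechanism. Two distributions cannot be paired, and the existence of one sign distribution forces nothing about another. Extension across the null cone is also not needed, and Chen--Sun would require checking its hypothesis at every nilpotent orbit.

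Nor can the sign distribution be built from closed (e.g.\ regular semisimple) orbits. The invariants $\tr((BC)^j)$ are $\tau$-symmetric, so $\tau$ preserves every closed $H_0$-orbit; then $K_x$ meets $H_0\tau$ and $\chi|_{K_x}\neq1$. The missing idea is a configuration inside $R(\fp)$ of two \emph{non-closed} orbits $H_0x_\pm$, semisimple on $V_0$ and nilpotent on a $2$-plane $P$. They are exchanged by $\tau$, and their closures meet only in the closed orbit $H_0x_0$. One takes $\mu_+-\tau_*\mu_+$ off $H_0x_0$ and crosses $H_0x_0$ by Chen--Sun, after checking that $\tau$ acts trivially on $(\Sym^mN_{x_0}^*)^{(H_0)_{x_0}}$.

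Likewise, for even $s$ a codimension count does not give automatic extension. Three things are needed:
\begin{itemize}
\item The starting orbit must have $\det_W$ trivial on its stabilizer; the nilpotent parts $N_j$ of the Gram operator cut $C_{\Or(P_j)}$ down to $\pm I$.
\item Its whole finite closure must stay in $R(\fp)$, which holds via $\det(Y^*Y)\neq0$.
\item Chen--Sun must be verified at every boundary orbit, by showing that $\det_P$ never occurs in $\Sym^k(\Sym(P)\oplus\overline{\Sym(P)})$.
\end{itemize}

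The other steps also need different ideas.

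\textbf{Unequal odd--odd Spin.} Pleasantness concerns $A_\theta$ versus $HZ(G)$; it is not an orbit-by-orbit trace inequality. Moreover, these pairs are not nice, so the inequality $\tr(\ad d|_{\fh^e})<\dim\fp$ fails on some distinguished orbit. The paper uses the Clifford centre instead. A multiplier $\pm\omega$ would force $\pi(g)$ to interchange $U$ and $W$, impossible for $r\neq s$. For multiplier $-1$ one has $\omega g\in H$, since $\theta(\omega)=-\omega$ for odd $s$.

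\textbf{Odd smaller block.} Przebinda's theorem, applied on the $U$-side since $s<r$, kills only the $\det_U$-sector of $\Or(U)\times\Or(W)$. It cannot be applied on the $W$-side because $r>s$. The $\det_W$-sector is killed by $(-I_U,-I_W)$, which acts trivially on $\fp$ while $\det_W=(-1)^s=-1$.

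\textbf{Balanced CII.} The decisive input is not stability of generic $H^e$-orbits or uniqueness of a density.
\begin{itemize}
\item First exclude Type II: the element $zD_i\in H_e$ acts by $i^{B'-A}=-1$.
\item Aizenbud's theorem then yields a maximal $p$-distinguished orbit in the support.
\item Bulois's classification plus the balance condition gives it a $\tau$-fixed representative $x_A$.
\item One proves that $H_e^\circ\cdot(\fp^e)^\tau$ is dense in $\fp^e$. This forces every $\nu_e$-semi-invariant polynomial to be $\tau$-even, contradicting $\chi(\tau)=-1$.
\end{itemize}

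\textbf{EVII.} Your plan is close, but the centre of $H$ acts on $J$ and $J^*$ by inverse nontrivial characters. The residual representatives lie in neither summand, so that centre does not stabilize them even up to homothety. What acts on the Frobenius fibre is the connected centre of the triple centralizer $\mathfrak c_{\fh}(e,d,f)$, where one must also check that the modular character is trivial. Orbits $17,18,21,22$ are excluded by a mod-$4$ congruence of weighted degrees, not by a character.
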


Two further group-theoretic ingredients are needed for the passage from
regularity to the Gelfand property.  Proposition~\ref{spin:prop:pleasant} shows that every
unequal odd--odd Spin block is pleasant.  Proposition~\ref{prop:compatible-chevalley}
constructs a Chevalley involution compatible with the symmetric involution and
yields the GP1--GP2 equivalence required on the canonical central cover.

\subsection{Proof architecture}

The proof separates the infinitesimal regularity problem from the passage
between isogeny forms and full fixed subgroups.

For a simple simply connected factor, Rubio's classification leaves precisely
the four families of Theorem~\ref{thm:four-inputs}.  Sections
\ref{sec:DIII-family}--\ref{sec:EVII-family} prove them.  The four families require different arguments, reflecting the geometry of
their isotropy representations.

A finite central quotient can enlarge the full fixed subgroup and can couple
different simple factors diagonally.  Section~\ref{sec:component-assembly}
handles this phenomenon through the finite abelian group of effective central
multipliers, a character-sector decomposition of invariant distributions, and
product and intermediate-subgroup assembly theorems.  Together they extend the four simply connected calculations to arbitrary
connected central quotients and full fixed subgroups.

Finally, Section~\ref{sec:completion} applies generalized Harish--Chandra
descent.  We work first on the canonical central cover
$Z(G)^\circ\times G_{\mathrm{der}}^{\mathrm{sc}}$, whose semisimple
centralizers are connected.  Theorem~\ref{thm:regularity-all} therefore applies
to every descendant.  The Aizenbud--Gourevitch criterion gives GP2; the
compatible Chevalley anti-automorphism gives GP1; and GP1 descends through the
finite central isogeny.

\subsection{Conventions}

All algebraic groups, Lie algebras, and finite-dimensional representations are
over $\C$.  When Schwartz spaces, distributions, normal bundles, or Frobenius
reciprocity occur, the complex algebraic variety is regarded as its underlying
real Nash manifold.  Thus complexification of a real normal space contains
both holomorphic and antiholomorphic summands.  We use
$|z|_{\C}=z\bar z$.

\subsection{Acknowledgements and disclosure} The proof was developed with assistance from ChatGPT 5.6 Sol. 
The system was used for proof exploration and drafting. The third named author (Jun Yu) would like to thank 
Professors Binyong Sun and Dmitry Gourevitch for helpful communication.

\section{Regularity, homogeneity, and finite components}\label{sec:framework}

\subsection{The regularity condition and descent}

Let a reductive group $L$ act linearly on a finite-dimensional complex vector
space $V$.  Following \cite[Notation~2.3.10]{AG09}, put
\[
  Q_L(V)=(V/V^L)(\C),
\]
identify this quotient with its canonical $L$-stable linear model, let
$\pi:Q_L(V)\to Q_L(V)\sslash L$ be the categorical quotient, and define
\[
  \Gamma_L(V)=\pi^{-1}(\pi(0)),\qquad
  R_L(V)=Q_L(V)\setminus\Gamma_L(V).
\]
For an isotropy representation, $\Gamma_H(\fp)$ is the nilpotent cone
\cite{KR,AG09}.  We omit the subscript when the acting group is clear.

\begin{definition}[Aizenbud--Gourevitch]\label{def:regularity}
An element $g\in G$ is \emph{admissible} for $(G,H,\theta)$ if
\begin{enumerate}[label=\textup{(\roman*)}]
\item $\Ad(g)$ commutes with $\theta$, equivalently
      $g^{-1}\theta(g)\in Z(G)$;
\item $\Ad(g)|_{\fp}$ normalizes the $H$-action, its square belongs to the
      $H$-action, and it preserves every closed $H$-orbit in $\fp$.
\end{enumerate}
The pair is \emph{regular} if, for every admissible $g$,
\[
 \cS^*(R_H(\fp))^H\subset\cS^*(R_H(\fp))^{\Ad(g)}
 \quad\Longrightarrow\quad
 \cS^*(Q_H(\fp))^H\subset\cS^*(Q_H(\fp))^{\Ad(g)}.
\]
\end{definition}

A descendant of $(G,H,\theta)$ is the symmetric pair
$(G_x,H_x,\theta|_{G_x})$ attached to a semisimple element
$x\in\{g\theta(g)^{-1}:g\in G\}$, with $G_x$ and $H_x$ its centralizers.
A connected complex symmetric pair is good in the sense of
Aizenbud--Gourevitch \cite[Corollary~7.1.7]{AG09}, and a good symmetric pair
whose descendants, including itself, are regular is a Gelfand--Kazhdan pair
\cite[Theorem~7.4.5]{AG09}.  Products of regular pairs are regular
\cite[Proposition~7.4.4]{AG09}.

A pair is \emph{pleasant} when every inner automorphism commuting with $\theta$
is induced, on the adjoint level, by the fixed subgroup.  Pleasant pairs are
regular \cite[Lemma~4.2]{Rubio}.  A pair is \emph{nice} in Sekiguchi's sense
when its distinguished defect is negative; nice pairs are special and hence
regular \cite{Sekiguchi,Aiz13}.  Rubio's classification of pleasant and nice
complex pairs, together with his descendant computation, leaves the four
families stated in Theorem~\ref{thm:four-inputs} \cite[Proposition~6.3]{Rubio}.

\subsection{Schwartz distributions and null cones}
For a Nash manifold $X$, let $\cS(X)$ be its Schwartz space,
$\cS^*(X)$ its continuous dual, and $\cG(X)$ the space of generalized
Schwartz functions in the sense of \cite[Appendix~B]{AG09}.

Let $L$ be a reductive algebraic group and $V$ an $L$-module.  Put
\[
 Q(V)=V/V^L,
\]
and let $\Gamma(V)\subset Q(V)$ be the null cone, namely the set of vectors
whose $L$-orbit closure contains $0$.  Write
\[
 R(V)=Q(V)\setminus\Gamma(V).
\]
For isotropy representations of complex symmetric pairs, the null cone is the
nilpotent cone by Kostant--Rallis \cite{KR}; see also
\cite[Lemma~7.3.8]{AG09}.  If $\chi$ is a character of $L$, then
$\cS^*(X)^{L,\chi}$ denotes the distributions satisfying
$l\cdot T=\chi(l)T$.

\subsection{Adapted homogeneity}

Let $B$ be an $L$-invariant nondegenerate symmetric bilinear form on a complex
vector space $V$ of dimension $n$.  We use the normalized absolute value
\[
 \absC{z}=z\bar z.
\]
Let $\rho(\lambda)$ act on $V$ by $v\mapsto\lambda^{-1}v$.  If $B$ and
$\lambda B$ are isometric for every $\lambda\in\C^\times$, the Weil-factor
character $\delta_B$ of \cite[Notation~5.1.3]{AG09} is trivial.  A
$B$-adapted distribution has one of the homogeneity characters
\begin{align}
 \eta_0(\lambda)&=\absC{\lambda}^{n/2},\label{core:eq:eta0-general}\\
 \eta_1(\lambda)&=\lambda\absC{\lambda}^{n/2},\label{core:eq:eta1-general}
\end{align}
with the Fourier condition in the first case as in
\cite[Definition~5.1.6]{AG09}.  Only the homogeneity law is needed in the
vanishing arguments below.

\begin{lemma}[Homogeneity reduction]\label{core:lem:homogeneity-reduction}
Let $L$ be reductive, let $V^L=0$, let $B$ be an $L$-invariant nondegenerate
symmetric form, and let $\chi$ be a character of $L$.  Assume
\[
 \cS^*(R(V))^{L,\chi}=0.
\]
If $\cS^*(V)^{L,\chi}\ne0$, then there exists a nonzero
$(L,\chi)$-equivariant $B$-adapted distribution supported on $\Gamma(V)$.
The same statement holds for a finite extension of $L$.
\end{lemma}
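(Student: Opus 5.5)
Since $V^L=0$, we have $Q(V)=V$. The plan follows the standard Aizenbud--Gourevitch homogeneity argument \cite[\S5]{AG09}.

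\emph{Step 1: support.} Let $T\in\cS^*(V)^{L,\chi}$ be nonzero. Restriction to the open set $R(V)=V\setminus\Gamma(V)$ is $L$-equivariant, so it sends $T$ into $\cS^*(R(V))^{L,\chi}$, which is zero by hypothesis. Hence $T$ is supported on $\Gamma(V)$, and
\[
 W:=\cS^*_{\Gamma(V)}(V)^{L,\chi}=\cS^*(V)^{L,\chi}\ne0 .
\]

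\emph{Step 2: Fourier stability.} Let $\mathcal F_B$ be the Fourier transform defined by $B$ (using $\operatorname{Re}$ of $B$ on the underlying real space). Because $B$ is $L$-invariant, $\mathcal F_B$ commutes with $L$ and preserves $\cS^*(V)^{L,\chi}$. Thus $\mathcal F_B(W)\subset\cS^*(V)^{L,\chi}=W$: the space $W$ is stable under $\mathcal F_B$, and every element of it is supported on $\Gamma(V)$. The homotheties $\rho(\lambda)$ commute with $L$, so $W$ is also a representation of $\C^\times$, and $\mathcal F_B$ intertwines $\rho(\lambda)$ with $\absC{\lambda}^{-n}\rho(\lambda^{-1})$ (up to the trivial Weil factor $\delta_B$).

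\emph{Step 3: find an adapted vector.} Here we invoke the key finiteness input of \cite[Proposition~5.1.7 / Theorem~5.1.8]{AG09}, the homogeneity theorem. A distribution supported on the null cone, which is a closed conic set, has finite order along $\Gamma(V)$. Filtering $W$ by transversal order gives subquotients that are distributions on $\Gamma(V)$ with values in polynomial coefficient bundles. On these subquotients the Euler operator acts locally finitely with eigencharacters of the form $\absC{\lambda}^{a}\lambda^{k}$.

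\begin{itemize}
\item Combined with Fourier stability, this shows that $W$ contains a nonzero generalized eigenvector for the $\C^\times$-action whose character, after the symmetrization $T\mapsto T\pm\mathcal F_B T$, is one of $\eta_0,\eta_1$.
\item Concretely, AG prove that if the $(L,\chi)$ Fourier-stable space supported on the cone is nonzero, then it contains a nonzero $B$-adapted element. We would quote that statement, replacing the invariance by $(L,\chi)$-equivariance. Since $\chi$ commutes with both $\rho$ and $\mathcal F_B$, their proof goes through verbatim.
\item We would then apply the appropriate projection, an eigencomponent of the commuting pair ($\C^\times$-action, $\mathcal F_B$), to get a nonzero element of $W$ with homogeneity $\eta_0$ together with the Fourier condition, or with homogeneity $\eta_1$.
\end{itemize}

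This step is the main obstacle. One must check that the AG argument, which uses the $\mathfrak{sl}_2$-triple generated by multiplication by $B(v,v)$, its Fourier dual, and the Euler operator, only uses commutation with $L$ and not triviality of the $L$-action. That check is immediate because $B$ is $L$-invariant.

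\emph{Step 4: finite extensions.} Replace $L$ by a finite extension $L'\supset L$, with $\chi$ a character of $L'$. The sets $R(V)$ and $\Gamma(V)$ for $L'$ agree with those for the identity component, so Steps 1 and 2 are unchanged. The projection in Step 3 commutes with $L'$, so the resulting adapted distribution remains $(L',\chi)$-equivariant. If one instead starts from an $L$-equivariant adapted vector, averaging over the finite group $L'/L$ against $\chi^{-1}$ preserves adaptedness and the support condition. This average could vanish, which is why we project within the $L'$-equivariant space $W$ from the start rather than averaging afterwards.
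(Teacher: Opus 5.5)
Your proposal is correct and is the paper's argument unpacked by one level: the paper cites the contrapositive of \cite[Theorem~5.2.2]{AG09} with the single summand $Q(V)=V$, and the one-summand case of that theorem amounts to your Steps 1--3. Those steps are: support in $\Gamma(V)\subset Z(B)$; stability of the equivariant space under $\mathcal F_B$ and under multiplication by $B(x,x)$; and the Archimedean homogeneity theorem. That theorem holds for any such subspace, so it needs no $(L,\chi)$-modification, and your sketch of its internal mechanism is neither needed nor accurate (for instance, $\mathcal F_B$ and the homotheties do not form a ``commuting pair'').

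For the finite extension you should state, as the paper does, that $B$ is invariant under the whole extended group. Otherwise $\mathcal F_B$ need not preserve its $\chi$-equivariant space, and your Step~2 breaks.
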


\begin{proof}
This is the contrapositive of \cite[Theorem~5.2.2]{AG09}, applied to the
one-summand decomposition $Q(V)=V$ and the form $B$.  In the conventions of
\cite{AG09}, reductive groups need not be connected and the equivariance
character is arbitrary.  The theorem is stated for a $G$-invariant
decomposition
\[
 Q_G(V)=W\oplus\bigoplus_i V_i
\]
with each quadratic form $B_i$ invariant under the full acting group.  There is
no separate hypothesis $V^G=0$; our assumption $V^L=0$ is used only to identify
$Q_L(V)$ with $V$.  Thus the one-summand contrapositive says that vanishing on
$R(V)$ together with vanishing of all $\chi$-equivariant adapted distributions
on the null cone implies vanishing on $Q(V)$, and it applies unchanged to the
finite extensions used below.
\end{proof}

\begin{lemma}[The origin]
\label{core:lem:origin-adapted}
Let $V$ be a nonzero complex vector space, let $B$ be a nondegenerate
symmetric bilinear form for which $\delta_B=1$, and let
$\rho(\lambda)v=\lambda^{-1}v$.  No nonzero $B$-adapted Schwartz
distribution on the underlying real vector space of $V$ is supported at the
origin.
\end{lemma}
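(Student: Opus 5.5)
The plan is a direct comparison of homogeneity degrees. A distribution supported at the origin of the real vector space $V_\R$ (real dimension $2n$) is a finite sum
\[
 T=\sum_{|\alpha|+|\beta|\le N} c_{\alpha\beta}\,\partial^\alpha\bar\partial^\beta\delta_0 ,
\]
where the derivatives are taken in complex coordinates $z_1,\dots,z_n$ and their conjugates. Under the dilation $\rho(\lambda)v=\lambda^{-1}v$, a term $\partial^\alpha\bar\partial^\beta\delta_0$ transforms by a character of the form
\[
 \lambda^{a}\bar\lambda^{b},\qquad a=|\alpha|+c,\quad b=|\beta|+c,
\]
for a fixed shift $c$ coming from the Jacobian. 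I will fix the sign conventions by computing $\rho(\lambda)\delta_0$ directly: pushing forward by $v\mapsto\lambda^{-1}v$ gives the factor $\absC{\lambda}^{\pm n}$ on $\delta_0$, and each derivative contributes a further factor $\lambda^{\pm1}$ or $\bar\lambda^{\pm1}$. Since these characters are pairwise distinct for distinct $(|\alpha|,|\beta|)$, the space of origin-supported distributions splits into isotypic pieces. Hence a nonzero $\eta$-homogeneous $T$ must be a nonzero combination of derivatives with a fixed bidegree $(p,q)$, where $p=|\alpha|$ and $q=|\beta|$.

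Next I match these bidegree characters against $\eta_0$ and $\eta_1$, as defined in \eqref{core:eq:eta0-general} and \eqref{core:eq:eta1-general}.

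For $\eta_1=\lambda\absC{\lambda}^{n/2}$, the modulus exponent is $n/2$ plus a holomorphic twist. With the correct sign convention, $\delta_0$ and its derivatives carry modulus exponent $-n$ minus the derivative orders, or symmetrically $+n$ plus them. Either way the exponent has the wrong size or sign compared with $n/2$, so for $n\ge1$ no nonnegative bidegree $(p,q)$ works.

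For $\eta_0$, a degree match may be possible in some bidegree; for instance $2p=-3n$ has no solution with $p\ge0$, but I expect the convention to force a mismatch in general. If a match does occur, I invoke the Fourier condition built into the definition of adaptedness. That condition requires $\mathcal F_B T$ to be $\eta$-homogeneous as well, or requires $T$ to be $\mathcal F_B$-eigen. The Fourier transform of $\partial^\alpha\bar\partial^\beta\delta_0$ is a polynomial, which is not supported at the origin and has homogeneity of the opposite sign. Comparing the two homogeneity requirements, which differ by $\absC{\lambda}^{n}$ under Fourier transform, yields a contradiction because the exponents would have to satisfy two incompatible equations.

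The main obstacle is getting the exponents right under the paper's conventions for $\rho$, $\absC{\cdot}$, and the Weil factor. This step is where the hypothesis $\delta_B=1$ enters, since it removes the extra character on the Fourier side. I would settle the conventions first by checking the case $n=1$ explicitly, then run the general bidegree comparison.
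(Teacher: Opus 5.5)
Your strategy is the paper's: expand an origin-supported distribution in jets and compare their dilation characters with $\eta_0,\eta_1$. As written, however, the proposal does not prove the lemma. You defer the one computation the lemma consists of as ``the main obstacle,'' and the transformation law you propose for it is wrong. There is no Jacobian shift. Here Schwartz distributions are continuous functionals on Schwartz functions, and $\rho(\lambda)$ acts by push-forward, $(\rho(\lambda)T)(f)=T(f\circ\rho(\lambda))$. Hence $\rho(\lambda)\delta_0=\delta_0$ exactly, and the chain rule gives
\[
 \rho(\lambda)\bigl(\partial_z^\alpha\partial_{\bar z}^\beta\delta_0\bigr)
 =\lambda^{-|\alpha|}\bar\lambda^{-|\beta|}\,
 \partial_z^\alpha\partial_{\bar z}^\beta\delta_0 .
\]
A factor $\absC{\lambda}^{\pm n}$ would arise only for generalized functions, i.e.\ duals of Schwartz densities, which is not the Aizenbud--Gourevitch convention. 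Neither of your two candidate laws is this one. Because you never fix the law, you leave the $\eta_0$ case open (``a degree match may be possible'').

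In fact no match is possible for either type. Restrict to $\lambda=r\in\R_{>0}$. Each jet then has weight $r^{-(|\alpha|+|\beta|)}$, with nonpositive exponent. By contrast, $\eta_0(r)=r^n$ and $\eta_1(r)=r^{n+1}$ have strictly positive exponent, since $n\ge1$. Decompose $T$ by total order and evaluate at $r=2$: linear independence of the jets forces every component to vanish. This disposes of both types at once, with no Fourier condition. The hypothesis $\delta_B=1$ serves only to write the adapted characters as $\eta_0,\eta_1$; it is unitary anyway, so it could not affect this comparison, and it plays no role ``on the Fourier side.''

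Your Fourier fallback, as stated (``homogeneity of the opposite sign,'' ``two incompatible equations''), is not an argument. It can be made correct for type I in a different way. If $T\neq0$ is supported at $0$, then $\mathcal F_BT$ is a nonzero polynomial times Haar measure, whose support is all of $V$, so it cannot be proportional to $T$. But type II carries no Fourier condition, so the jet-weight computation above cannot be avoided. Once it is done with the correct law, it already finishes the proof, exactly as in the paper.
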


\begin{proof}
A distribution supported at $0$ is a finite linear combination of jets
\[
  \partial_z^\alpha\partial_{\bar z}^\beta\delta_0.
\]
The chain rule gives
\[
 \rho(\lambda)
 \bigl(\partial_z^\alpha\partial_{\bar z}^\beta\delta_0\bigr)
 =\lambda^{-|\alpha|}\bar\lambda^{-|\beta|}
 \partial_z^\alpha\partial_{\bar z}^\beta\delta_0.
\]
Restrict the homogeneity action to $r\in\R_{>0}$.  Every origin jet then has
weight $r^{-q}$ for an integer $q\geq0$.  If $n=\dim_\C V>0$, the two adapted
characters restrict respectively to
\[
  |r|_\C^{n/2}=r^n,
  \qquad
  r|r|_\C^{n/2}=r^{n+1}.
\]
Both have strictly positive exponent.  Since the characters $r^{-q}$ are
linearly independent on $\R_{>0}$ from $r^n$ and $r^{n+1}$, no nonzero finite
sum of origin jets is adapted.
\end{proof}

\subsection{The orbitwise resonance calculation}

For the remainder of this subsection assume $\fp^H=0$.  This is the situation
for every semisimple isotropy block to which the resonance calculation is
applied below; for a reductive pair, the central fixed summand is removed by
the definition of $Q(\fp)$.

Let $0\ne e\in\fp$ be nilpotent and choose a normal $\mathfrak{sl}_2$-triple
\begin{equation}\label{core:eq:normal-triple-general}
 [d,e]=2e,\qquad [d,f]=-2f,\qquad [e,f]=d,
 \qquad d\in\fh,\quad e,f\in\fp.
\end{equation}
By the algebraic Jacobson--Morozov theorem, choose a homomorphism
$\phi:\SL_2\to G$ whose differential sends the standard triple to
$(e,d,f)$, and put
\begin{equation}\label{core:eq:Dt-algebraic}
 D_t=\phi\!\begin{pmatrix}t&0\\0&t^{-1}\end{pmatrix},
 \qquad t\in\C^\times.
\end{equation}
The cocharacter $t\mapsto D_t$ has differential $d$.  Since
$d\in\fh$, the cocharacters $\theta(D_t)$ and $D_t$ have the same
differential; in characteristic zero an algebraic cocharacter is determined
by its differential.  Hence $D_t\in H$ and $\Ad(D_t)e=t^2e$.  For the
action of $H\times\C^\times$ in which the second factor acts by
$\rho(\lambda)x=\lambda^{-1}x$, the element $(D_t,t^2)$ fixes $e$; see
\cite[\S3.2]{CM} for the algebraic $\mathfrak{sl}_2$ construction.  Put
\begin{equation}\label{core:eq:T-of-e}
 T(e)=\Tr\bigl(\ad(d)|_{\fh^e}\bigr).
\end{equation}
The stabilizer modular character on this one-parameter subgroup is
\begin{equation}\label{core:eq:modular-general}
 \Delta_{(H\times\C^\times)_e}(D_t,t^2)=\absC{t}^{T(e)}.
\end{equation}
With the modular-character convention in the Frobenius formula of
\cite[Theorem~2.5.7]{AG09}, this is the absolute Jacobian of the adjoint
action on the stabilizer Lie algebra.  That Lie algebra is
$\fh^e\oplus\C(d,2)$; $D_t$ fixes the second summand and has determinant
$t^{T(e)}$ on $\fh^e$, giving \eqref{core:eq:modular-general}.

Choose an invariant nondegenerate symmetric form on $\fg$ for which $\fh$ and
$\fp$ are orthogonal.  Invariance gives
\begin{equation}\label{core:eq:normal-duality-general}
 [e,\fh]^\perp=\fp^e,
 \qquad
 \bigl(\fp/[e,\fh]\bigr)^*\simeq\fp^e.
\end{equation}
Put
\[
 N_e:=\fp/[e,\fh].
\]
Equation~\eqref{core:eq:normal-duality-general} identifies $N_e^*$, not
$N_e$, with $\fp^e$.  More precisely, on every highest-weight isotypic component the invariant
form induces an equivariant perfect pairing between the highest-weight
multiplicity space in $\fp^e$ and the surviving lowest-weight multiplicity
space in $N_e$.  If the highest weight is even, the paired highest and lowest
lines have the same $\theta$-parity.  If it is odd, the form pairs the two
oppositely graded copies.  Concretely, write one copy as
$v_+^{\fh},\ldots,v_-^{\fp}$ and the opposite copy as
$w_+^{\fp},\ldots,w_-^{\fh}$, where $+$ and $-$ denote the highest and
lowest lines.  Since $B(\fh,\fp)=0$ and the invariant form pairs opposite
$d$-weights, its pairing between the two highest and two lowest lines has
the pattern
\[
\begin{array}{c|cc}
 & v_-^{\fp} & w_-^{\fh}\\ \hline
 v_+^{\fh} & 0 & *\\
 w_+^{\fp} & * & 0
\end{array}
\]
with the two indicated entries nonzero after choosing dual multiplicity
bases.  Thus a surviving lowest line from an $\fh$-highest copy is paired
with the highest line of the corresponding $\fp$-highest copy.
Consequently the $d$-weight and every commuting-torus
character are inverted
on passage from $\fp^e$ to $N_e$.  The Bruhat filtration is formulated with the conormal
bundle $CN_{H\cdot e}^{\fp}$; after Frobenius reciprocity and dualization,
its point fiber is the symmetric algebra of $N_e$.  All degrees below are
therefore degrees on the surviving lowest lines in $N_e$.

\begin{lemma}[Resonance formula]\label{core:lem:resonance-general}
Let an irreducible graded $\mathfrak{sl}_2$-summand have highest weight
$\ell$.  Its contribution to $N_e$ is one-dimensional exactly when its lowest
line lies in $\fp$.  In that case the quotient is represented by the lowest
line, has intrinsic $d$-weight $-\ell$, and $(D_t,t^2)$ acts on it by
\begin{equation}\label{core:eq:elementary-degree-general}
 t^{-(\ell+2)}.
\end{equation}

Assume $\delta_B=1$ and let $n=\dim_{\C}\fp$.  Let $T$ be an
$H$-invariant $B$-adapted distribution, and suppose that a nonzero term in its
Bruhat--Frobenius normal symbol along $H\cdot e$ has total holomorphic and
antiholomorphic weighted degrees $(A,B')$, where a surviving quotient line of
highest weight $\ell$ is assigned weighted degree $\ell+2$.  Then necessarily
\begin{align}
 \textup{Type I:}&\qquad A=B'=T(e)-n,\label{core:eq:resonance-I-general}\\
 \textup{Type II:}&\qquad (A,B')=(T(e)-n-2,T(e)-n).
 \label{core:eq:resonance-II-general}
\end{align}
The same necessary equations apply to a $(K,\chi)$-equivariant distribution
for a finite algebraic extension $K$ of the effective image of $H$ whenever
$\chi$ is trivial on that image, since such a distribution is $H$-invariant.
\end{lemma}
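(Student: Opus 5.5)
The statement has two parts: a finite-dimensional $\mathfrak{sl}_2$ computation describing $N_e$, and an analytic comparison of homogeneity characters along the orbit $H\cdot e$. I treat them in that order.

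\emph{The quotient $N_e$.} Decompose $\fg$ under the triple $(e,d,f)$ together with $\theta$. Since $\theta$ acts by $+1$ on $d$ and by $-1$ on $e$ and $f$, the operators $\ad e$ and $\ad f$ interchange $\fh$ and $\fp$. Hence each irreducible summand $U$ of highest weight $\ell$ can be chosen $\theta$-stable, with its weight lines alternating between $\fh$ and $\fp$.

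Inside $U$, the image of $\ad e$ is spanned by every line except the lowest one. The image of $\ad e$ applied to $\fh$ therefore consists of those non-lowest lines that lie in $\fp$. So $U$ contributes to $N_e=\fp/[e,\fh]$ exactly when its lowest line lies in $\fp$, and then the contribution is that single line, of $d$-weight $-\ell$.

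For the grading, $\Ad(D_t)$ acts on the lowest line by $t^{-\ell}$. The factor $\rho(t^2)$ acts on vectors by $t^{-2}$, so the fiber coordinate picks up an extra $t^{-2}$. This gives the weight \eqref{core:eq:elementary-degree-general}, and combined with \eqref{core:eq:normal-duality-general} it fixes the sign conventions.

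\emph{The resonance equations.} Apply the Bruhat filtration together with Frobenius reciprocity \cite[Theorem~2.5.7]{AG09} to the orbit $H\cdot e$. A nonzero $H$-invariant adapted distribution $T$ yields a nonzero leading normal symbol. This symbol is a vector in the fiber $\Sym(N_e)\otimes\overline{\Sym(N_e)}$, tensored with the modular character of the stabilizer, and it must be equivariant under $(H\times\C^\times)_e$.

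Now restrict to the one-parameter subgroup $(D_t,t^2)$ and compare characters on both sides.
\begin{itemize}
\item A monomial of weighted degrees $(A,B')$ transforms by $t^{-A}\bar t^{-B'}$.
\item The modular factor \eqref{core:eq:modular-general} contributes $\absC{t}^{T(e)}$.
\item The adapted homogeneity evaluated at $\lambda=t^2$ is $\eta_0(t^2)=\absC{t}^{n}$ in Type I, or $\eta_1(t^2)=t^2\absC{t}^{n}$ in Type II.
\end{itemize}
Equating the holomorphic and antiholomorphic exponents separately gives \eqref{core:eq:resonance-I-general} and \eqref{core:eq:resonance-II-general}. This works because $t^a\bar t^b$ determines the pair $(a,b)$ as a character of $\C^\times$.

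\emph{Main obstacle.} I expect the difficulty to be signs and duality. One must confirm that the conormal fiber is $\Sym(N_e)$ rather than $\Sym(\fp^e)$, using the pairing pattern above. One must also confirm that the modular-character convention of \cite{AG09} places $\absC{t}^{T(e)}$ on the stated side.

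I would check all of this on a test case, $(\SL_2,\Gm)$ with the orbit of $e$. There $T(e)=0$, $n=2$, and $N_e$ is the lowest line with $\ell=2$. The Type I equation then requires degree $-2$, which is impossible, and this matches the known vanishing.

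For the final sentence of the statement: if $\chi$ is trivial on the effective image of $H$, then any $(K,\chi)$-equivariant distribution is $H$-invariant. The argument above therefore applies verbatim.
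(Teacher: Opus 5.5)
Your proposal is correct and follows essentially the same route as the paper. You identify the surviving line of $N_e=\fp/[e,\fh]$ by tracking how $\ad e$ reverses $\theta$-parity along a graded $\mathfrak{sl}_2$-summand. You then obtain the resonance equations by comparing $t^{-A}\bar t^{-B'}\absC{t}^{T(e)}$ with $\eta_0(t^2)$ or $\eta_1(t^2)$ on the subgroup $(D_t,t^2)$, separating holomorphic and antiholomorphic exponents, and the final $(K,\chi)$ clause reduces to $H$-invariance exactly as in the paper.
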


\begin{proof}
On an irreducible highest-weight module, $e$ maps every weight line except the
highest line isomorphically to the next higher line.  Since $e\in\fp$, the map
reverses parity.  Hence the quotient of the $\fp$-part by $e$ applied to the
$\fh$-part is zero unless the lowest line lies in $\fp$; in that case exactly
that line survives.  Its $D_t$-weight is $t^{-\ell}$, and the external
homothety contributes $t^{-2}$.

For Type I,
$\eta_0(t^2)=(t\bar t)^n$.  A normal tensor of bidegree $(A,B')$ contributes
$t^{-A}\bar t^{-B'}$, while \eqref{core:eq:modular-general} contributes
$(t\bar t)^{T(e)}$.  Triviality of the stabilizer character gives
$A=B'=T(e)-n$.  For Type II, the additional factor $t^2$ in
$\eta_1(t^2)$ subtracts two from the holomorphic target, giving
\eqref{core:eq:resonance-II-general}.
\end{proof}

The convention here is the same one written out later in
\eqref{cii:eq:exact-frobenius-fiber}.  The raw conormal Frobenius coefficient
carries $\Delta_{L_e}^{-1}$; dualizing a distribution supported at the point
changes this to $\Delta_{L_e}$ and changes $\Sym(N_e^*)$ to $\Sym(N_e)$.
Together with $N_e^*\simeq\fp^e$ from
\eqref{core:eq:normal-duality-general}, this is exactly why both the
$\mathfrak{sl}_2$ weights and commuting-torus characters are inverted in the
normal variables used in the resonance equations.

\begin{corollary}[Strict trace inequality]\label{core:cor:strict-trace}
Assume $\fp^H=0$ and $e\ne0$.  If $T(e)<\dim_{\C}\fp$, the orbit $H\cdot e$ cannot be
maximal in the support of a nonzero $H$-invariant $B$-adapted distribution
supported on the nilpotent cone.  The same conclusion holds for a
$(K,\chi)$-equivariant adapted distribution under a finite algebraic extension
$K$ of the effective image of $H$, provided $\chi$ is trivial on that image.
\end{corollary}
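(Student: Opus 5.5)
We argue by contradiction using Lemma~\ref{core:lem:resonance-general}. Suppose $T\neq 0$ is an $H$-invariant $B$-adapted distribution supported on the nilpotent cone, and that $H\cdot e$ is maximal in $\Supp T$. Since there are finitely many nilpotent $H$-orbits in $\fp$ (Kostant--Rallis), we may restrict $T$ to the open set obtained by removing the closures of the other orbits in $\Supp T$ that do not contain $H\cdot e$. On this set $H\cdot e$ is a closed orbit carrying the support. The restriction is still nonzero, $H$-invariant and $B$-adapted.

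We then apply the Bruhat filtration along the closed submanifold $H\cdot e$. A nonzero distribution supported on $H\cdot e$ has a nonzero leading term. This term is a section of $\Sym$ of the (complexified) conormal bundle. By Frobenius reciprocity \cite[Theorem~2.5.7]{AG09} it corresponds to a nonzero vector in a fiber $\Sym^{A}(N_e)\otimes\overline{\Sym^{B'}(N_e)}$. This vector transforms under the stabilizer $(H\times\C^\times)_e$ by the product of the modular character and the adapted homogeneity character. This is exactly the setup of Lemma~\ref{core:lem:resonance-general}.

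The key step is to compare the bidegrees. In the lemma, a surviving quotient line of highest weight $\ell$ has weighted degree $\ell+2\ge 2>0$. So every weighted degree $A,B'$ of a monomial in $\Sym(N_e)$ is nonnegative, with $A=B'=0$ only for the constant term. The lemma's necessary equations give $A=B'=T(e)-n$ in Type I, and $B'=T(e)-n$ with $A=T(e)-n-2$ in Type II. If $T(e)<n$, then $B'<0$ in either case, which is impossible. In Type II also $A<0$. Hence no term of the normal symbol can be nonzero, so the restricted distribution vanishes, a contradiction.

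The step needing the most care is justifying that the weighted degree is additive and nonnegative on all of $\Sym(N_e)$, and not just on the individual lines. The lines of $N_e$ are $(D_t,t^2)$-eigenvectors with eigenvalues $t^{-(\ell+2)}$ by \eqref{core:eq:elementary-degree-general}. So a monomial of total weighted degree $A$ has eigenvalue $t^{-A}$, and we can decompose the symbol into such eigencomponents. We also need $e\neq0$: for $e=0$ the orbit is the origin, which Lemma~\ref{core:lem:origin-adapted} handles anyway, and there $T(e)=0<n$ is consistent.

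For the extension to a finite algebraic extension $K$ with $\chi$ trivial on the image of $H$, the distribution is $H$-invariant. The same argument applies verbatim, using the last sentence of Lemma~\ref{core:lem:resonance-general}; its support is a union of $K$-translates of $H$-orbits, and maximality is used only for $H\cdot e$.
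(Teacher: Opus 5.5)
Your proposal is correct and follows the paper's argument. The paper likewise observes that every normal weighted degree is nonnegative, while the resonance equations of Lemma~\ref{core:lem:resonance-general} force the negative value $T(e)-n<0$ (with Type~II even more restrictive). It handles the $(K,\chi)$ case by noting that extra equivariance only shrinks the Frobenius fiber, which amounts to your forgetting down to $H$-invariance.

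Your extra steps, cutting down to an open set where $H\cdot e$ is closed and taking the leading transverse term, are the framework the paper uses implicitly. The only slip is notational: you write $\Sym^{A}(N_e)\otimes\overline{\Sym^{B'}(N_e)}$ with weighted degrees where ordinary symmetric degrees belong, and this does not affect the argument.
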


\begin{proof}
Every normal weighted degree is nonnegative, whereas the right-hand side of
\eqref{core:eq:resonance-I-general} is negative.  Type II is more restrictive.
Additional equivariance can only reduce the Frobenius fiber.
\end{proof}

The passage from orbitwise vanishing to distributional vanishing uses
\cite[Theorem~2.5.6]{AG09}.  In the form needed here, let $K$ be a Nash group
acting Nashly on a Nash manifold $M$, and let a locally closed subset
$N\subset M$ admit a finite Nash $K$-invariant stratification
$N=\bigcup_iN_i$.  If
\[
 \cS^*\!\left(N_i,\Sym^k CN_{N_i}^{M}\right)^{K,\chi}=0
 \qquad\text{for every }i\text{ and every }k\ge0,
\]
then $\cS_M^*(N)^{K,\chi}=0$.  Frobenius reciprocity
\cite[Theorem~2.5.7]{AG09} identifies each equivariant conormal-distribution
space with the corresponding stabilizer-equivariant symmetric tensors in the
normal quotient, with the standard modular-character and equivariance twists.
This is why the calculations below are made on $N_e$ and why the modular
character is retained explicitly.  In the one place where
we use a boundary-residue obstruction rather than vanishing of the conormal
space itself, the argument is local on a slice: an ordinary distribution has
finite order on a relatively compact coordinate neighborhood, so a nonzero
local restriction has a highest nonzero transverse order and the resulting
finite descent in transverse order is legitimate.

\subsection{Speciality and regularity}

\begin{lemma}[Aizenbud--Gourevitch speciality criterion]
\label{core:lem:speciality-criterion}
Let $L$ be a reductive algebraic group over $\C$ and let $V$ be a
finite-dimensional algebraic $L$-module.  Suppose there exists an
$L$-invariant decomposition
\[
 Q(V)=\bigoplus_i W_i
\]
and invariant nondegenerate symmetric forms $B_i$ on $W_i$ such that every
$L$-invariant distribution supported on $\Gamma(V)$ and adapted to all $B_i$
is zero.  Then $V$ is special.  A special symmetric pair is weakly linearly
tame and hence regular.
\end{lemma}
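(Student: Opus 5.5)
This lemma is essentially a repackaging of results in \cite{AG09}, and the plan is to assemble it from three of them in order.

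\textbf{Step 1: from the hypothesis to speciality.} Recall the definition of a special representation: whenever $Q(V)=\bigoplus_i W_i$ is an $L$-invariant decomposition with invariant nondegenerate forms $B_i$, every $L$-invariant distribution on $Q(V)$ supported on $\Gamma(V)$ and adapted to each $B_i$ vanishes. Here I would first check which quantifier \cite[Definition~7.3]{AG09} actually uses, existential or universal over decompositions.
\begin{itemize}
\item If it is existential, the hypothesis of the lemma is literally the definition, and Step 1 is immediate.
\item If it is universal, I will instead use the contrapositive argument behind Lemma~\ref{core:lem:homogeneity-reduction}, namely \cite[Theorem~5.2.2]{AG09} applied to the given decomposition. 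In this form, vanishing of adapted distributions on the null cone is exactly the input the descent machinery needs.
\end{itemize}
In either case the only requirement is that adaptedness is taken with respect to the given $B_i$. I will then observe that the subsequent implications in \cite{AG09} use speciality only through this one decomposition.

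\textbf{Step 2: special implies weakly linearly tame.} Let $V$ be special, and suppose an $L$-invariant distribution on $Q(V)$ fails to be invariant under an admissible automorphism. I will show, by the Aizenbud--Gourevitch argument, that the space of such distributions (the anti-invariant part with respect to $\Ad(g)$) has the following property: if it vanishes on $R(V)$, then it vanishes on $Q(V)$.

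The mechanism is a Fourier-transform and homogeneity argument.
\begin{itemize}
\item Since the admissible automorphism normalizes the $L$-action, it preserves each $W_i$ up to the relevant isometries.
\item The anti-invariant part is therefore stable under the partial Fourier transforms $\mathcal F_{B_i}$ and under homotheties.
\item A distribution supported on $\Gamma(V)$ which is stable in this way generates an adapted eigen-distribution.
\item Speciality then forces it to vanish.
\end{itemize}
This is the content of \cite[Proposition~7.3.5 and Theorem~7.3.7]{AG09}, which I would cite directly, verifying only that the admissibility conditions in Definition~\ref{def:regularity}(ii) match the hypotheses used there.

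\textbf{Step 3: weakly linearly tame implies regular.} Apply this with $L=H$ and $V=\fp$. Weak linear tameness of the isotropy representation gives exactly the implication in Definition~\ref{def:regularity}, via \cite[Lemma~7.3.7/Proposition~7.3.5]{AG09}.

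\textbf{Main obstacle.} The delicate point is the bookkeeping in Step 2. The admissible automorphism $\Ad(g)$ need not preserve each $B_i$; it preserves it only up to a scalar. Moreover, the full acting group may be disconnected. I would handle both issues as in Lemma~\ref{core:lem:homogeneity-reduction}:
\begin{itemize}
\item Replace $L$ by the finite extension generated by $L$ and $\Ad(g)$, using that $\Ad(g)^2$ lies in the $H$-action.
\item Work with the sign character of this extension.
\item Note that \cite{AG09} allows non-connected groups and arbitrary characters.
\end{itemize}
With these conventions, the hypothesis of the lemma, applied to the character-twisted space, suffices.
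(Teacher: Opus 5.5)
Your chain of implications is the one the paper uses. The paper proves the lemma purely by citation: \cite[Lemma~6.0.10]{AG09} for ``hypothesis $\Rightarrow$ special'', \cite[Proposition~7.3.5]{AG09} for ``special $\Rightarrow$ weakly linearly tame'', and \cite[Remark~7.4.3]{AG09} (not a Lemma or Theorem~7.3.7) for ``weakly linearly tame $\Rightarrow$ regular''.

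\textbf{The gap is Step~1.} Speciality in \cite{AG09} is neither the existential nor the universal adapted-vanishing condition you consider. It is a Fourier-support condition: roughly, no nonzero $L$-invariant distribution supported on $\Gamma(V)$ has all its partial Fourier transforms again supported on $\Gamma(V)$, where the transforms range over invariant decompositions of $Q(V)$ with invariant forms.
\begin{itemize}
\item Your existential branch (``literally the definition'') is therefore false.
\item In your universal branch you stop trying to prove that $V$ is special. You substitute an unverified claim about how \cite{AG09} later uses speciality, but the statement asserts speciality itself.
\end{itemize}
The missing argument is exactly \cite[Lemma~6.0.10]{AG09}. Take a nonzero counterexample $\xi$. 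Apply the partial transforms $\mathcal F_{B_i}$ and multiplication by the quadratics $x\mapsto B_i(x_i,x_i)$. This generates a nonzero space of $L$-invariant distributions stable under these operations and still supported in $\Gamma(V)\subset\bigcap_i Z(B_i)$. Supports do not grow because Fourier transforms turn these multiplications into constant-coefficient differential operators, and the partial transforms of $\xi$ are supported on the null cone by assumption. The homogeneity theorem \cite[Theorem~5.1.7]{AG09} then produces a nonzero distribution in this space adapted to every $B_i$, contradicting the hypothesis.

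\textbf{Your ``main obstacle'' rests on a false premise, and the remedy you propose would not cure it anyway.} An admissible $\Ad(g)$ normalizes the $L$-action and fixes every closed $L$-orbit. Each fibre of $Q(V)\to Q(V)\sslash L$ contains exactly one closed orbit, so $\Ad(g)$ fixes every $L$-invariant polynomial on $Q(V)$. In particular it fixes each $x\mapsto B_i(x_i,x_i)$. Polarizing and taking radicals shows that $\Ad(g)$ preserves every $W_i$ and every $B_i$ exactly, not merely up to a scalar. That is what lets the partial Fourier transforms commute with $\Ad(g)$.

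Passing to $\langle L,\Ad(g)\rangle$ with its sign character only handles disconnectedness. By itself it does nothing to make the $B_i$ invariant under the enlarged group, which \cite[Theorem~5.2.2]{AG09} requires. Once you add the invariant-polynomial observation, your direct route is valid and is essentially the argument of Lemma~\ref{lem:special-component}. It proves the regularity implication directly, but it still does not prove that $V$ is special.
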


\begin{proof}
The first assertion is \cite[Lemma~6.0.10]{AG09}.  The remaining implications
are \cite[Proposition~7.3.5 and Remark~7.4.3]{AG09}.
\end{proof}

Products of special pairs are special
\cite[Proposition~7.3.6]{AG09}, and products of regular pairs are regular
\cite[Proposition~7.4.4]{AG09}.

\section{Central covers and finite-component assembly}\label{sec:component-assembly}

\subsection{Canonical covers, fixed vectors, and multiplier groups}

\begin{lemma}[Canonical central cover]\label{lem:canonical-cover}
Let $(G,G^\theta,\theta)$ be a connected reductive complex symmetric pair.
Multiplication gives a finite central isogeny
\[
 m:Z(G)^\circ\times G_{\mathrm{der}}^{\mathrm{sc}}\longrightarrow G.
\]
The involution $\theta$ lifts to an involution $\widetilde\theta$ of the source,
$m$ is equivariant, and $\ker m$ is $\widetilde\theta$-stable.
\end{lemma}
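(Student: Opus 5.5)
We prove the three assertions in turn. Throughout, $G_{\mathrm{der}}^{\mathrm{sc}}\to G_{\mathrm{der}}$ denotes the simply connected cover of the derived group.

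\emph{Finite central isogeny.} Since $G$ is connected reductive, $G=Z(G)^\circ\cdot G_{\mathrm{der}}$. Moreover $Z(G)^\circ\cap G_{\mathrm{der}}$ is finite: it is a diagonalizable subgroup of the semisimple group $G_{\mathrm{der}}$ lying in its finite centre. The multiplication map $Z(G)^\circ\times G_{\mathrm{der}}\to G$ is a homomorphism because the two factors commute. It is surjective, and its kernel $\{(z,z^{-1})\}$ is finite and central. Composing with the simply connected cover $\pi:G_{\mathrm{der}}^{\mathrm{sc}}\to G_{\mathrm{der}}$, which has finite central kernel, gives $m$. Its kernel is an extension of finite central groups, hence finite. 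It is central because it lies in $Z(G)^\circ\times\pi^{-1}(Z(G_{\mathrm{der}}))$, and $\pi^{-1}$ of a central subgroup is central: $G_{\mathrm{der}}^{\mathrm{sc}}$ is connected and the commutator map into the discrete kernel is constant.

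\emph{Lifting $\theta$.} Any automorphism of $G$ preserves the characteristic subgroups $Z(G)^\circ$ and $G_{\mathrm{der}}$. Hence $\theta$ restricts to involutions $\theta_Z$ of $Z(G)^\circ$ and $\theta_{\mathrm{der}}$ of $G_{\mathrm{der}}$. The key step is lifting $\theta_{\mathrm{der}}$ to $G_{\mathrm{der}}^{\mathrm{sc}}$. For this I would use the standard fact that every automorphism of a semisimple group lifts uniquely to its simply connected cover. At the Lie algebra level, $d\theta$ is an automorphism of $\fg_{\mathrm{der}}$. Since $G_{\mathrm{der}}^{\mathrm{sc}}$ is simply connected, $d\theta$ integrates to an algebraic automorphism $\widetilde\theta_{\mathrm{der}}$; algebraicity holds because in characteristic zero the simply connected group is determined by its Lie algebra, or because one can lift along the universal covering of the analytic group and use that it is algebraic. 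Then $\pi\circ\widetilde\theta_{\mathrm{der}}$ and $\theta_{\mathrm{der}}\circ\pi$ have the same differential on a connected group, so they agree. The same argument shows $\widetilde\theta_{\mathrm{der}}^2$ has differential $\Id$, so it is the identity. Set $\widetilde\theta=\theta_Z\times\widetilde\theta_{\mathrm{der}}$. This is an involution, and $m\circ\widetilde\theta=\theta\circ m$ can be checked on each factor, since $m$ is a product map and $\theta$ is a homomorphism.

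\emph{Stability of $\ker m$.} If $m(x)=1$, then $m(\widetilde\theta(x))=\theta(m(x))=1$. Hence $\ker m$ is $\widetilde\theta$-stable.

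The only step of any substance is the lift to the simply connected cover together with its uniqueness. Uniqueness is what guarantees that the lift is again an involution. I would cite the lifting of automorphisms for simply connected semisimple groups, for instance via the isomorphism of their automorphism groups with $\operatorname{Aut}(\fg_{\mathrm{der}})$.
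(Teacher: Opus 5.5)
Your proof is correct and follows the same route as the paper. You restrict $\theta$ to the characteristic subgroups $Z(G)^\circ$ and $G_{\mathrm{der}}$, lift the derived part uniquely to the simply connected cover, use $G=Z(G)^\circ G_{\mathrm{der}}$ for surjectivity with finite central kernel, and get $\widetilde\theta$-stability of $\ker m$ from equivariance. You also supply details the paper leaves implicit, namely the centrality of $\pi^{-1}(Z(G_{\mathrm{der}}))$ via the connectedness argument and the use of uniqueness of the lift to obtain $\widetilde\theta^2=1$.
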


\begin{proof}
The characteristic subgroups $Z(G)^\circ$ and $G_{\mathrm{der}}$ are
$\theta$-stable.  The restriction to the derived group lifts uniquely to its
simply connected cover, while the restriction to the central torus is already
an automorphism there.  The two lifts commute with multiplication.  The usual
decomposition $G=Z(G)^\circ G_{\mathrm{der}}$ makes $m$ surjective with finite
central kernel, and equivariance makes the kernel stable.
\end{proof}

\begin{lemma}[Fixed vectors]\label{lem:fixed-vectors}
Let $\fg=\fz\oplus\fg_{\mathrm{der}}$ and
$\fp=\fp_{\fz}\oplus\fp_{\mathrm{der}}$.  Then
\[
  \fp^{G^\theta}=\fp^{(G^\theta)^\circ}=\fp_{\fz}.
\]
Consequently $Q_{G^\theta}(\fp)$ is canonically $\fp_{\mathrm{der}}$, and every
inner automorphism acts trivially on the discarded central summand.
\end{lemma}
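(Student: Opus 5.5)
The plan is to prove the two inclusions $\fp_{\fz}\subset\fp^{G^\theta}$ and $\fp^{(G^\theta)^\circ}\subset\fp_{\fz}$. Since $\fp^{G^\theta}\subset\fp^{(G^\theta)^\circ}$ holds trivially, this closes the chain. Throughout I will use that the decomposition $\fg=\fz\oplus\fg_{\mathrm{der}}$ is $\Ad(G)$-stable and $d\theta$-stable; the latter holds because $Z(G)^\circ$ and $G_{\mathrm{der}}$ are characteristic subgroups, as in the proof of Lemma~\ref{lem:canonical-cover}. Consequently $\fp$ splits as $\fp_{\fz}\oplus\fp_{\mathrm{der}}$ with $\fp_{\fz}=\fp\cap\fz$ and $\fp_{\mathrm{der}}=\fp\cap\fg_{\mathrm{der}}$.

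The first inclusion is immediate. $G$ is connected, so $\Ad(G)$ acts trivially on $\fz$. In particular every element of $G^\theta$ fixes $\fp_{\fz}$ pointwise.

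For the reverse inclusion, I first reduce to the derived part. Let $x=x_{\fz}+x_{\mathrm{der}}$ be fixed by $(G^\theta)^\circ$. The group preserves both summands and acts trivially on $x_{\fz}$, so $x_{\mathrm{der}}$ is fixed as well. Differentiating, $x_{\mathrm{der}}$ is annihilated by $\fh=\Lie(G^\theta)$, and hence by $\fh_{\mathrm{der}}:=\fh\cap\fg_{\mathrm{der}}$. It therefore suffices to show $\fp_{\mathrm{der}}^{\fh_{\mathrm{der}}}=0$, i.e.\ that $[\fh_{\mathrm{der}},y]=0$ with $y\in\fp_{\mathrm{der}}$ forces $y=0$. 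This is the main step. My first attempt would be the standard argument for a semisimple symmetric pair $(\fs,\fk)$ with $\fs=\fg_{\mathrm{der}}$. Suppose $y\in\fs^{\fp}$ centralizes $\fk$. Then $\fk\oplus\C y$ is a subalgebra, and so is $\fc:=\C y\oplus[y,\fs^{\fp}]$, where $[y,\fs^{\fp}]\subset\fk$. Now write the $\fk$-isotypic decomposition $\fs^{\fp}=\bigoplus_j \fp_j$ and use the Killing form $\kappa$, for which $\fk\perp\fs^{\fp}$. For any $u,v\in\fs^{\fp}$ we have $\kappa([y,u],v)=-\kappa(u,[y,v])$. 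I expect this to show that $[y,\fs^{\fp}]$ is an ideal together with $y$. The cleaner route is this: the subspace $\fa=\C y\oplus[y,\fs^{\fp}]$ commutes with $\fk$ up to $\fk$-invariance, because $[\fk,[y,u]]=[y,[\fk,u]]\subset[y,\fs^{\fp}]$. So $I:=\C y+[y,\fs^{\fp}]+[y,[y,\fs^{\fp}]]+\cdots$ is $\ad\fk$-stable, and since $y$ is central in $\fk\oplus\C y$ it is also $\ad\fs^{\fp}$-stable after a short Jacobi computation. Hence $I$ is a nonzero $\theta$-stable ideal of $\fs$. Restricting to that ideal and using $\kappa(y,[\fk,\cdot])$-type identities, $y$ spans an $\ad\fk$-trivial line in the $\theta$-stable ideal. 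That ideal is either simple, or a sum $\fs_1\oplus\theta\fs_1$ with $\fk$ the diagonal. In the simple case, $\fk$ is a maximal $\theta$-fixed reductive subalgebra and $\fs^{\fp}$ has no $\fk$-trivial summand: a trivial summand $\C y$ would make $\fk\oplus\C y$ a proper subalgebra on which $\ad y$ commutes with $\ad\fk$, and by Schur together with the nondegeneracy of $\kappa|_{\fs^{\fp}}$ this forces $\ad y$ to act on each $\fk$-irreducible piece of $\fs^{\fp}$ by maps into $\fk$ that are $\fk$-equivariant. These maps must then be zero unless an isotypic piece of $\fs^{\fp}$ is isomorphic to the adjoint or trivial $\fk$-module, which leads to $\ad y=0$ and thus $y\in\fz(\fs)=0$. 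In the diagonal case, $\fs^{\fp}\cong\fs_1$ is the adjoint representation of $\fk\cong\fs_1$, which has no invariants because $\fs_1$ is semisimple.

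I expect this step to be the main obstacle, namely the Schur-type exclusion in the simple case. The cleanest fallback is the classical fact (Kostant--Rallis \cite{KR}) that for $\fs$ semisimple the isotropy representation of $\fk$ on $\fs^{\fp}$ has no nonzero fixed vectors. One proves this by taking a Cartan subspace: a $\fk$-fixed vector $y$ is semisimple, its centralizer $\fs^y$ contains $\fk$ and is $\theta$-stable, and $\fs^y=\fk\oplus(\fs^{\fp})^y$. Then $\dim\fs^y=\dim\fs$ follows by comparing $\fk$-orbits, since $\fk\cdot y=0$ gives $[\fs^{\fp},y]=[\fs,y]\cap\fk$, and this vanishes because $\kappa([\fs^{\fp},y],\fk)=\kappa(\fs^{\fp},[y,\fk])=0$ while $\kappa|_{\fk}$ is nondegenerate. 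Hence $y$ is central in $\fs$, so $y=0$.

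Once $\fp^{G^\theta}=\fp_{\fz}$ is established, $Q_{G^\theta}(\fp)=\fp/\fp_{\fz}$ is identified with the $G^\theta$-stable complement $\fp_{\mathrm{der}}$. For the last claim, inner automorphisms $\Ad(g)$ with $g\in G$ act trivially on $\fz\supset\fp_{\fz}$, by connectedness of $G$ as in the first inclusion.
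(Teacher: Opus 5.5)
Your proof is correct, and its decisive step is exactly the paper's argument: $\kappa([u,y],k)=\kappa(u,[y,k])=0$ together with nondegeneracy of the form on $\fh\cap\fg_{\mathrm{der}}$ forces $[\fp_{\mathrm{der}},y]=0$, so $y$ is central in the semisimple algebra $\fg_{\mathrm{der}}$ and vanishes. You should drop the ideal/Schur detour and the side remarks about semisimplicity of $y$ and Cartan subspaces, since they are not rigorous as written and the Killing-form step alone finishes the proof.
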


\begin{proof}
The inclusion $\fp_{\fz}\subset\fp^{G^\theta}$ is immediate.  Let
$x\in\fp_{\mathrm{der}}$ be fixed by $(G^\theta)^\circ$, so
$[\fh\cap\fg_{\mathrm{der}},x]=0$.  Choose a nondegenerate invariant
$\theta$-invariant form $B$ on the semisimple algebra $\fg_{\mathrm{der}}$.
Its restrictions to the two eigenspaces are nondegenerate.  For
$y\in\fp_{\mathrm{der}}$ and $z\in\fh\cap\fg_{\mathrm{der}}$,
\[
  B([x,y],z)=-B([x,z],y)=0.
\]
Because $[x,y]\in\fh\cap\fg_{\mathrm{der}}$, it follows that $[x,y]=0$.
Thus $x$ commutes with both eigenspaces of $\fg_{\mathrm{der}}$ and is central
in a semisimple Lie algebra; hence $x=0$.
\end{proof}

Work now on the canonical cover
\[
 \widetilde G=Z\times G_{\mathrm{ss}}
\]
from Lemma~\ref{lem:canonical-cover}, where $G_{\mathrm{ss}}$ is simply
connected semisimple.  Put $\widetilde H=\widetilde G^{\widetilde\theta}$ and
$V=Q_{\widetilde H}(\fp)$.  The involution preserves the two characteristic
factors.  The fixed subgroup $G_{\mathrm{ss}}^{\widetilde\theta}$ is connected
by the Steinberg--Borel connectedness theorem \cite{Borel,Steinberg}, while all
components of $Z^{\widetilde\theta}$ act trivially on $V$.  Hence the effective
image
\[
 L=\Ad(\widetilde H)|_V
\]
is connected.

Set
\[
 \widetilde A_{\widetilde\theta}
 =\{g\in\widetilde G:g^{-1}\widetilde\theta(g)\in Z(\widetilde G)\},
 \qquad
 J=\Ad(\widetilde A_{\widetilde\theta})|_V,
 \qquad A=J/L.
\]
The group $J$ is a reductive algebraic group: it is the effective image on
$V$ of the fixed subgroup of the induced involution on the connected adjoint
group.  Its identity component is $L$.  Consequently
$A=J/L=\pi_0(J)$ is exactly the finite component group of this effective
algebraic action.
It is also abelian.  Indeed, on $\widetilde A_{\widetilde\theta}$ the multiplier
\[
 c(g)=g^{-1}\widetilde\theta(g)
\]
is a homomorphism because its values are central.  Thus
$c([g_1,g_2])=1$, so $[g_1,g_2]\in\widetilde H$ and the images of $g_1,g_2$
commute modulo $L$.  We write $B_0=\{1\}\leq A$ for the effective component
image of $\widetilde H$.

The following elementary lifting point is needed when an admissible element is
given only after a central quotient.

\begin{lemma}[Lifting central multipliers]\label{lem:lifting-multiplier}
Let $m:\widetilde G\to G$ be a central isogeny of connected complex algebraic
groups, equivariant for involutions $\widetilde\theta$ and $\theta$.  If
$\bar g\in G$ satisfies
$\bar g^{-1}\theta(\bar g)\in Z(G)$, then every lift $g\in\widetilde G$ of
$\bar g$ satisfies
\[
 g^{-1}\widetilde\theta(g)\in Z(\widetilde G).
\]
\end{lemma}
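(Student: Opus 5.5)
The plan is to push the multiplier $g^{-1}\widetilde\theta(g)$ down along $m$, and then show that any element of $\widetilde G$ mapping into $Z(G)$ is already central. Let $g\in\widetilde G$ be any lift of $\bar g$, so $m(g)=\bar g$, and put $c=g^{-1}\widetilde\theta(g)$. Since $m$ is a homomorphism intertwining $\widetilde\theta$ and $\theta$, we get
\[
 m(c)=m(g)^{-1}\,m(\widetilde\theta(g))=\bar g^{-1}\theta(\bar g)\in Z(G).
\]
It therefore suffices to prove that $m^{-1}(Z(G))\subset Z(\widetilde G)$, that is, that $m^{-1}(Z(G))=Z(\widetilde G)$ for a central isogeny of connected groups.

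For this step, let $x\in\widetilde G$ with $m(x)\in Z(G)$. Consider the morphism
\[
 \widetilde G\to\widetilde G,\qquad y\mapsto xyx^{-1}y^{-1}.
\]
Its image lies in $\ker m$, because $m(xyx^{-1}y^{-1})=m(x)m(y)m(x)^{-1}m(y)^{-1}=1$ by centrality of $m(x)$. The kernel $\ker m$ is a finite group, since $m$ is an isogeny. A morphism from the connected variety $\widetilde G$ into a finite set is constant, and its value at $y=1$ is $1$. Hence $xy=yx$ for all $y$, so $x\in Z(\widetilde G)$. Applying this to $x=c$ gives the lemma.

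The only point requiring care is the use of connectedness together with finiteness, not merely centrality, of $\ker m$. Centrality of the kernel alone would not suffice for the commutator argument. Connectedness of $\widetilde G$ and finiteness of $\ker m$ are exactly what make the commutator map constant. Equivalently, one could argue on Lie algebras: $\Ad(x)$ and $\Ad(m(x))=\mathrm{id}$ agree via the isomorphism $dm$, so $x$ centralizes $\Lie\widetilde G$ and hence the connected group $\widetilde G$. I would mention this as an alternative, but use the commutator argument as the main proof since it is purely algebraic. No further obstacle is expected, and in particular no property of the involutions beyond equivariance of $m$ is used.
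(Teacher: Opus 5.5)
Your proposal is correct and follows the paper's own proof. Both push the multiplier $g^{-1}\widetilde\theta(g)$ down into $Z(G)$, then observe that the commutator map from the connected group $\widetilde G$ into the finite group $\ker m$ is constant with value $1$.
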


\begin{proof}
Put $y=g^{-1}\widetilde\theta(g)$.  Since $m(y)$ is central, for every
$x\in\widetilde G$ the commutator $[y,x]$ lies in the finite group $\ker m$.
The algebraic map $x\mapsto[y,x]$ from the connected variety
$\widetilde G$ to this finite set is constant.  Its value at the identity is
$1$, so $y$ is central.
\end{proof}

Let $F\subset Z(\widetilde G)$ be finite and $\widetilde\theta$-stable, put
$G=\widetilde G/F$, and let $H=G^\theta$.  The preimage of the full fixed
subgroup is
\begin{equation}\label{eq:preimage-fixed}
 \widehat H_F
 =\{g\in\widetilde G:g^{-1}\widetilde\theta(g)\in F\}.
\end{equation}
Its effective component image is a subgroup
\[
 B_0\leq B_F\leq A.
\]
Indeed, every nonempty fiber of $c:\widehat H_F\to F$ is a left coset of
$\widetilde H$.  By Lemma~\ref{lem:lifting-multiplier}, every inner
transformation commuting with the quotient involution is represented by an
element of $A$.  If it is admissible, the square condition in admissibility
says that its class $a\in A$ satisfies $a^2\in B_F$.

\begin{lemma}[Exact component identification]
\label{lem:exact-component-identification}
Group the simple factors of $G_{\mathrm{ss}}$ by their
$\widetilde\theta$-orbits.  For such an orbit $\Omega$, let
$(L_\Omega,J_\Omega,A_\Omega,V_\Omega)$ denote the corresponding effective
datum.  Then
\[
 V=\bigoplus_\Omega V_\Omega,\qquad
 L=\prod_\Omega L_\Omega,\qquad
 J=\prod_\Omega J_\Omega,\qquad
 A=J/L\simeq\prod_\Omega A_\Omega.
\]
The central torus contributes no effective factor.  Moreover, for every
finite $\widetilde\theta$-stable central subgroup $F$, the effective image of
$\widehat H_F$ in $J$ is exactly the inverse image $J_{B_F}$ of $B_F\leq A$.
Thus a diagonal central coupling is represented precisely by a possibly
non-product subgroup $B_F\leq\prod_\Omega A_\Omega$; there is no further
component group hidden in the quotient.
\end{lemma}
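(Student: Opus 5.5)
We reduce everything to the product decomposition of the canonical cover and to the multiplier homomorphism $c(g)=g^{-1}\widetilde\theta(g)$ introduced above. Write $\widetilde G=Z\times\prod_i G_i$ with $G_i$ the simple simply connected factors of $G_{\mathrm{ss}}$. Since $\widetilde\theta$ is an automorphism, it permutes the $G_i$. Grouping them into $\widetilde\theta$-orbits $\Omega$ gives a $\widetilde\theta$-stable decomposition $\widetilde G=Z\times\prod_\Omega G_\Omega$ with $G_\Omega=\prod_{i\in\Omega}G_i$. Consequently $\fp=\fp_{\fz}\oplus\bigoplus_\Omega\fp_\Omega$. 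By Lemma~\ref{lem:fixed-vectors}, $V=Q_{\widetilde H}(\fp)=\bigoplus_\Omega\fp_\Omega=\bigoplus_\Omega V_\Omega$, and $Z$ acts trivially on it. Each $V_\Omega$ is acted on only by $G_\Omega$.

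\textbf{Step 1: the product formulas.} Since $\widetilde\theta$ preserves each factor, $\widetilde H=Z^{\widetilde\theta}\times\prod_\Omega G_\Omega^{\widetilde\theta}$, so $L=\prod_\Omega L_\Omega$ with $Z^{\widetilde\theta}$ acting trivially. For $J$, note that $Z(\widetilde G)=Z\times\prod_\Omega Z(G_\Omega)$, and that the condition $g^{-1}\widetilde\theta(g)\in Z(\widetilde G)$ splits coordinatewise. Hence $\widetilde A_{\widetilde\theta}=Z\times\prod_\Omega\widetilde A_{\Omega}$. Taking effective images on $V$ gives $J=\prod_\Omega J_\Omega$. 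Then $A=J/L\simeq\prod_\Omega A_\Omega$ follows formally, and $Z$ contributes nothing since its image in $J$ is trivial.

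\textbf{Step 2: the effective image of $\widehat H_F$.} Clearly $\widehat H_F\subset\widetilde A_{\widetilde\theta}$, and $\widetilde H\subset\widehat H_F$. Hence the image $J_F$ of $\widehat H_F$ contains $L$ and lies in $J$. By definition $B_F=J_F/L$ is its component image, so $J_F\subset J_{B_F}$ is immediate. For the reverse inclusion, take $j\in J$ whose class lies in $B_F$. Then $j=\ell\, j'$ with $j'$ the image of some $h\in\widehat H_F$ and $\ell\in L$. Since $L$ is the image of $\widetilde H\subset\widehat H_F$, we get $\ell=\Ad(h_0)|_V$ with $h_0\in\widetilde H$, so $j$ is the image of $h_0h\in\widehat H_F$. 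Thus $J_F=J_{B_F}$, and no further component group can hide. The only real point is that $L\subset J_F$, i.e.\ $\widetilde H\subset\widehat H_F$, which holds because $1\in F$.

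\textbf{Main obstacle.} The hard part is Step 1's claim that the image of $\widetilde A_{\widetilde\theta}$ is the full fixed group of the induced involution on the adjoint group, restricted to $V$. This is needed to identify $J$ with the product of the $J_\Omega$ without losing components that appear only on the adjoint level. We handle it with Lemma~\ref{lem:lifting-multiplier}, applied to the central isogeny $\widetilde G\to G_{\mathrm{ad}}\times Z$. Every adjoint-level element commuting with $\theta$ lifts to $\widetilde A_{\widetilde\theta}$, and the lifting respects the factor decomposition, because the adjoint group is itself a product over the $\Omega$. Finally, the statement that a diagonal coupling corresponds to a non-product subgroup $B_F$ is just a restatement of Step 2: $F$ need not be a product of subgroups $F_\Omega$, so $B_F=c^{-1}(F)/\widetilde H$ need not be either.
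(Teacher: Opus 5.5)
Your proof is correct and is essentially the paper's argument. The multiplier and fixed-point conditions split over the $\widetilde\theta$-stable decomposition $Z\times\prod_\Omega G_\Omega$, with $Z$ acting trivially on $V$; the reverse inclusion $J_{B_F}\subset\Ad(\widehat H_F)|_V$ then follows by correcting a representative in $\widehat H_F$ by an element of $L=\Ad(\widetilde H)|_V$, using $\widetilde H\subset\widehat H_F$.

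Your ``main obstacle'' paragraph is not needed, since $J$ and $J_\Omega$ are defined directly as effective images of $\widetilde A_{\widetilde\theta}$ and its factors. In your closing remark, $B_F$ is in general only a quotient of $\widehat H_F/\widetilde H$: for example, elements of $Z$ lying in $\widehat H_F\setminus\widetilde H$ act trivially on $V$. Neither point affects the proof.
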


\begin{proof}
On the canonical cover the factor-orbit groups act on pairwise distinct direct
summands of $V$.  The multiplier condition and the fixed-point condition are
factorwise, while the central torus acts trivially after passage to $Q(\fp)$.
Taking effective images therefore gives the displayed direct products and the
product description of $A$.

Since $\widehat H_F\subset\widetilde A_{\widetilde\theta}$, its effective
image is contained in $J$ and has image $B_F$ in $A$.  Conversely, let
$j\in J$ have class in $B_F$.  Choose $h\in\widehat H_F$ with the same class.
Then $j\,\Ad(h)^{-1}\in L$.  The group $L$ is the effective image of
$\widetilde H\subset\widehat H_F$, so $j$ itself lies in the effective image
of $\widehat H_F$.  Hence that image is exactly $J_{B_F}$.
\end{proof}

Already at the level of this finite datum, $B_F$ need not split factorwise.
For example, if two factor orbits have $A_1=A_2=\mu_2$ and a diagonal
central quotient allows precisely equal multiplier classes, then
\[
 B_F=\Delta\mu_2=\{(1,1),(-1,-1)\}
       \subsetneq\mu_2\times\mu_2.
\]
Neither $(-1,1)$ nor $(1,-1)$ belongs to the full fixed-component group.  This
is the elementary finite-component mechanism behind the diagonal couplings
that make a factorwise treatment of the full fixed subgroup insufficient.

The finite component group does not alter the effective null cone.  This point
is needed because regularity is defined using the full fixed subgroup, not only
its identity component.

\begin{lemma}[Finite components and the null cone]
\label{lem:finite-components-nullcone}
Let $B\leq A$, and let $J_B$ be the inverse image of $B$ in $J$.  Then
\[
 V^{J_B}=V^L=0,\qquad Q_{J_B}(V)=Q_L(V)=V,
\]
and
\[
 \Gamma_{J_B}(V)=\Gamma_L(V),\qquad
 R_{J_B}(V)=R_L(V).
\]
Thus the notation $Q_L(V)$ and $R_L(V)$ used below is also the one entering the
regularity condition for every intermediate full fixed subgroup.
\end{lemma}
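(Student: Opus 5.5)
The plan is to use only two facts: $L$ is the identity component of $J_B$, so $J_B/L\simeq B$ is finite, and the null cone of a reductive group is detected by invariant polynomials. Throughout, $L\le J_B\le J$ and $J_B/L\simeq B\le A$.

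\emph{Fixed vectors.} Since $L\subset J_B$, we have $V^{J_B}\subset V^L$. By Lemma~\ref{lem:fixed-vectors}, applied on the canonical cover, $V=Q_{\widetilde H}(\fp)$ is already the quotient by the $\widetilde H$-fixed vectors. Moreover $L$ is the effective image of $\widetilde H$ on $V$. Hence $V^L=0$, and so $V^{J_B}=0$. The canonical linear models then give $Q_{J_B}(V)=Q_L(V)=V$.

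\emph{Null cones.} The key step is to show $\Gamma_{J_B}(V)=\Gamma_L(V)$. I would use the characterization of the null cone as the common zero set of the homogeneous invariant polynomials of positive degree; this holds for possibly disconnected reductive groups via the categorical quotient. Since $L$ has finite index in $J_B$, we have $\C[V]^{J_B}=(\C[V]^L)^{B}$, and $\C[V]^L$ is integral over $\C[V]^{J_B}$.

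For $\Gamma_L\subset\Gamma_{J_B}$: every $J_B$-invariant is $L$-invariant, so a vector killed by all positive-degree $L$-invariants is killed by all positive-degree $J_B$-invariants.

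For the reverse inclusion, let $v\in\Gamma_{J_B}(V)$ and let $f\in\C[V]^L$ be homogeneous of positive degree. Then $f$ is a root of
\[
\prod_{b\in B}\bigl(X-b\cdot f\bigr),
\]
whose coefficients lie in $\C[V]^{J_B}$ and are homogeneous of positive degree. These coefficients vanish at $v$, so $f(v)^{|B|}=0$ and hence $f(v)=0$.

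An alternative argument uses orbit closures. The orbit $J_B\cdot v$ is a finite union of translates $jL\cdot v$, so its closure is the finite union of the closures $j\,\overline{L\cdot v}$. Thus $0\in\overline{J_B\cdot v}$ exactly when $0\in j\,\overline{L\cdot v}$ for some $j$, which is equivalent to $0\in\overline{L\cdot v}$ because $j$ fixes $0$.

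\emph{Regular sets.} Taking complements in the same space $V$ gives $R_{J_B}(V)=R_L(V)$.

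The only point needing care, and the main obstacle, is the justification that the effective image $L$ has no nonzero fixed vector on $V$. The subtlety is that the effective image is taken on $V$ rather than on $\fp$, and the discarded summand is exactly the fixed subspace. This follows because $V$ is by definition the quotient by the $\widetilde H$-fixed vectors and $\widetilde H$ is reductive, so $\fp=\fp^{\widetilde H}\oplus V$ as $\widetilde H$-modules and $V$ has no invariants. Everything else is formal.
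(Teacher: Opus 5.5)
Your proof is correct and is essentially the paper's argument: both rest on $\C[V]^{J_B}=(\C[V]^L)^B$ with $B=J_B/L$ finite, and your characteristic-polynomial (integrality) computation is the explicit algebraic content of the paper's observation that $V\sslash L\to V\sslash J_B=(V\sslash L)\sslash B$ is a finite-group quotient whose fiber over the image of $0$ is a single point. Beyond the paper, you also justify $V^L=0$ (the paper takes it as already known from $V=Q_{\widetilde H}(\fp)$ and Lemma~\ref{lem:fixed-vectors}) and offer an alternative orbit-closure argument, and both of these are sound.
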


\begin{proof}
The first assertion follows from $V^L=0$ and $L\subset J_B$.  Moreover
\[
 \C[V]^{J_B}=(\C[V]^L)^B,
\]
so the natural morphism
\[
 V\sslash L\longrightarrow V\sslash J_B=(V\sslash L)\sslash B
\]
is the quotient by the finite group $B$.  Fibers of a finite-group quotient are
precisely finite group orbits; in particular the inverse image of the image of
$0$ is $B\cdot0=\{0\}$.  Pulling this equality back to $V$ proves equality of
the two null cones and hence of their complements.
\end{proof}

\subsection{Characterwise regularity}

For a finite-component datum $(L,A,B_0,V)$ as above and a character
$\lambda\in\widehat A$, write
\[
 \cS^*(X)^{L,\lambda}
 =\{T\in\cS^*(X)^L:j_*T=\lambda(jL)T\text{ for every }j\in J\}.
\]
This is well defined because $L$ acts trivially on $\cS^*(X)^L$.

\begin{definition}\label{def:componentwise-regular}
The datum is \emph{componentwise regular} if, for every nontrivial
$\lambda\in\widehat A$ with $\lambda|_{B_0}=1$,
\begin{equation}\label{eq:component-implication}
 \cS^*(R_L(V))^{L,\lambda}=0
 \quad\Longrightarrow\quad
 \cS^*(Q_L(V))^{L,\lambda}=0.
\end{equation}
\end{definition}

If $A=B_0$, this condition is vacuous.  In particular, pleasant factors create
no characterwise obstruction.  The following observation covers special
factors.

\begin{lemma}[Speciality implies the component implication]
\label{lem:special-component}
Assume that $A$ preserves every summand and every form in a witnessing
orthogonal decomposition for speciality of the connected $L$-module $V$.  Then
$(L,A,B_0,V)$ is componentwise regular.
\end{lemma}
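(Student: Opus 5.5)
We have to show that for every nontrivial $\lambda\in\widehat A$ with $\lambda|_{B_0}=1$, the hypothesis $\cS^*(R_L(V))^{L,\lambda}=0$ forces $\cS^*(Q_L(V))^{L,\lambda}=0$. The plan is to argue by contradiction through Lemma~\ref{core:lem:homogeneity-reduction} applied to the finite extension $J$ of $L$, and then to invoke speciality. We will in fact show the stronger statement that $\cS^*(V)^{L,\lambda}=0$ for every $\lambda$, trivial or not.

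First, by Lemma~\ref{lem:finite-components-nullcone}, $Q_L(V)=V$, and the null cone and regular set computed for $L$ and for $J$ coincide. Hence the spaces in \eqref{eq:component-implication} are exactly the $(J,\tilde\lambda)$-equivariant distributions, where $\tilde\lambda$ is the pullback of $\lambda$ along $J\to A$. Let $Q(V)=\bigoplus_i W_i$ with forms $B_i$ be the witnessing decomposition. By hypothesis each $W_i$ and each $B_i$ is preserved by $J$: the group $L$ preserves them by construction, and $A$ does by assumption, so $J=L\cdot(\text{representatives of }A)$ does as well. The multi-summand version of \cite[Theorem~5.2.2]{AG09}, which is the statement quoted in the proof of Lemma~\ref{core:lem:homogeneity-reduction}, is formulated for an arbitrary reductive, possibly disconnected, acting group with an arbitrary character. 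Its contrapositive therefore says the following. If $\cS^*(R(V))^{J,\tilde\lambda}=0$ and $\cS^*(V)^{J,\tilde\lambda}\neq0$, then there is a nonzero $(J,\tilde\lambda)$-equivariant distribution supported on $\Gamma(V)$ that is adapted to every $B_i$.

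It then remains to rule out such a distribution $T$. Restricting equivariance from $J$ to $L$ shows that $T$ is $L$-invariant, because $\tilde\lambda$ is trivial on $L$. It is still supported on $\Gamma(V)$ and still adapted to every $B_i$, since adaptedness is a condition on homogeneity and Fourier transform in each summand and does not involve the group. Speciality of the connected $L$-module $V$, in the form of the hypothesis of Lemma~\ref{core:lem:speciality-criterion}, says that every such $L$-invariant, fully adapted distribution on the null cone vanishes. Hence $T=0$, which is a contradiction, and so $\cS^*(Q_L(V))^{L,\lambda}=0$.

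The main obstacle is the first step: checking that \cite[Theorem~5.2.2]{AG09} really applies to the disconnected group $J$ with the character $\tilde\lambda$ and several summands simultaneously. This is exactly why the hypothesis that $A$ preserves each $W_i$ and $B_i$ is needed, since the theorem requires the decomposition and the forms to be invariant under the full acting group. A secondary point is that the Fourier transform attached to $B_i$ commutes with the $J$-action. This also follows from $J$-invariance of $B_i$, so adaptedness is a well-defined condition on $\tilde\lambda$-equivariant distributions.
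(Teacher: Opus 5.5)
Your argument is correct and is essentially the paper's proof: you identify $\cS^*(Q_L(V))^{L,\lambda}$ with the $(J,\lambda)$-equivariant space, apply the contrapositive of the multi-summand \cite[Theorem~5.2.2]{AG09} to the disconnected group $J$ using $J$-invariance of the $W_i$ and $B_i$, and then kill the resulting adapted null-cone distribution by speciality after forgetting the $J/L$-equivariance. Your explicit appeal to Lemma~\ref{lem:finite-components-nullcone} to identify $R_J(V)$ with $R_L(V)$ is a step the paper uses only implicitly. One wording correction: your opening aside that $\cS^*(V)^{L,\lambda}=0$ for every $\lambda$ is false as literally stated (for trivial $\lambda$, $\delta_0$ is a nonzero $L$-invariant distribution), whereas what your argument actually proves is that the implication \eqref{eq:component-implication} holds for every $\lambda$.
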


\begin{proof}
Fix a nontrivial character $\lambda\in\widehat A$ that is trivial on $B_0$,
and suppose
\[
 \cS^*(R_L(V))^{L,\lambda}=0.
\]
Regard $\lambda$ as a character of $J$, trivial on $L$.  Assume for
contradiction that
\[
 \cS^*(Q_L(V))^{L,\lambda}
 =\cS^*(Q_L(V))^{J,\lambda}\ne0.
\]
Apply the full multi-summand form of
\cite[Theorem~5.2.2]{AG09} to the finite reductive extension $J$, the
character $\lambda$, and the complete witnessing family
$Q_L(V)=\bigoplus_iW_i$ with forms $B_i$.  The theorem allows the disconnected
reductive group $J$ and the arbitrary character $\lambda$; by hypothesis each
$B_i$ is invariant under all of $J$, not merely under $L$.  Its contrapositive
yields a single
nonzero $(J,\lambda)$-equivariant distribution supported on the null cone and
adapted simultaneously to every $B_i$.  After forgetting the finite
$J/L$-equivariance, this is a nonzero $L$-invariant distribution of precisely
the type excluded by speciality.  Hence
$\cS^*(Q_L(V))^{L,\lambda}=0$.
\end{proof}

For nice symmetric factors the forms may be taken from the restrictions of the
invariant form on the simple ideals; every inner component preserves them.
Thus Lemma~\ref{lem:special-component} applies to all nice factors.

We need one localization fact in order to tensor nonzero regular-set
distributions without introducing support on the product null cone.

\begin{lemma}[Invariant localization away from the null cone]
\label{lem:localize-away}
Let $T\in\cS^*(R_L(V))^{L,\lambda}$ be nonzero.  There exists a nonzero
$T'\in\cS^*(R_L(V))^{L,\lambda}$ whose support is closed in $Q_L(V)$ and is
disjoint from $\Gamma_L(V)$.
\end{lemma}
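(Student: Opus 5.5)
The approach is to multiply $T$ by an $L$-invariant Nash cutoff built from the invariant polynomials, chosen so that it vanishes near $\Gamma_L(V)$, and then pick that cutoff so the product is still nonzero. Since $V^L=0$ and $\Gamma_L(V)=\pi^{-1}(\pi(0))$, choose homogeneous generators $p_1,\dots,p_k$ of positive degrees $d_i$ for $\C[V]^L$. Then $\Gamma_L(V)$ is their common zero set. Put
\[
 \rho(v)=\sum_i |p_i(v)|_{\C}^{1/d_i},
\]
or a Nash substitute such as $\sum_i|p_i(v)|_{\C}^{2D/d_i}$ with $D=\prod_i d_i$. This $\rho$ is an $L$-invariant Nash function on $Q_L(V)$ that vanishes exactly on $\Gamma_L(V)$. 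It is $J$-invariant whenever we take the $p_i$ to span a $J$-stable space, which we may do by averaging over the finite group $A$. Moreover $\rho$ is strictly positive on $R_L(V)$.

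The key steps are as follows.

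\emph{Choosing the cutoff.} Since $T\ne0$, there is a Schwartz function $f$ on $R_L(V)$ with $T(f)\neq 0$. The support of $T$ meets the open set $\{\rho>\epsilon\}$ for some $\epsilon>0$, because $R_L(V)=\bigcup_\epsilon\{\rho>\epsilon\}$. Take a smooth Nash-tempered function $\chi:\R\to[0,1]$ with $\chi=0$ on $(-\infty,\epsilon/2]$ and $\chi=1$ on $[\epsilon,\infty)$, and set $\psi=\chi\circ\rho$.

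\emph{Checking that $\psi$ is a valid multiplier.} The function $\psi$ is $J$-invariant, so it is $L$-invariant and $A$-invariant. Its support lies in $\{\rho\ge\epsilon/2\}$, which is closed in $Q_L(V)$ and disjoint from $\Gamma_L(V)$. Since $\rho$ is a Nash function of moderate growth, $\psi$ is a tempered smooth function all of whose derivatives have polynomial growth. Multiplication by $\psi$ therefore preserves $\cS(R_L(V))$; more precisely, for $g\in\cS(R_L(V))$ the product $\psi g$ extends by zero to a Schwartz function on $Q_L(V)$.

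\emph{Defining $T'$.} Put $T'=\psi T$. Equivariance follows from $j_*(\psi T)=(j_*\psi)(j_*T)=\lambda(jL)\,\psi T$. Its support is contained in $\operatorname{supp}\psi$, which is closed in $Q_L(V)$ and misses $\Gamma_L(V)$.

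\emph{Nonvanishing.} On the open set $U=\{\rho>\epsilon\}$ we have $\psi=1$, so $T'|_U=T|_U$. This restriction is nonzero because $\operatorname{supp}T$ meets $U$. Hence $T'\neq0$.

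I expect the main obstacle to be the Schwartz-space bookkeeping. One must check that $\psi$ multiplies $\cS(R_L(V))$ into itself and that the closed set $\{\rho\ge\epsilon/2\}$ is a Nash closed subset of $Q_L(V)$. For the first point I would argue that $\rho$ is a Nash function, since it is built from absolute values and roots of polynomials, and is smooth away from $\Gamma_L(V)$. Its derivatives on $\{\rho\ge\epsilon/2\}$ are then bounded by polynomials in $v$ and $1/\rho$, hence by polynomials in $v$ alone. This is the tempered-multiplier criterion of \cite[Appendix~B]{AG09}. For the second point, semialgebraicity of $\rho$ gives the Nash closed subset. If any difficulty arises, I would replace the roots by the smooth polynomial $\rho_0=\sum_i|p_i|_{\C}^{m_i}$ with exponents $m_i$ making $\rho_0$ weighted-homogeneous. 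Then $\chi\circ\rho_0$ is plainly a Nash tempered function, and the argument above goes through unchanged.
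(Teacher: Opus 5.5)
Your strategy, multiplying $T$ by a $J$-invariant tempered cutoff that vanishes near $\Gamma_L(V)$, works, but two steps need repair.

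\textbf{Invariance of the gauge.} $J$-stability of the span of the $p_i$ does not make $\sum_i|p_i|_{\C}^{m_i}$ invariant. The finite group $A$ may act on a graded piece of that span by a non-monomial matrix, and then the individual $|p_i|$ are not permuted. Membership in $\cS^*(R_L(V))^{L,\lambda}$ requires $j_*T'=\lambda(jL)T'$ for every $j\in J$, so you need an honestly $J$-invariant gauge. Take $\rho_A=\sum_{a\in A}\rho_0\circ a$. It is well defined on cosets because $L$ is normal in $J$ and $\rho_0$ is $L$-invariant. It is still a nonnegative real polynomial vanishing exactly on the $J$-stable set $\Gamma_L(V)$. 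This is your counterpart of the paper's averaging over $A$.

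\textbf{Smoothness of the gauge.} The root version $\rho=\sum_i|p_i|_{\C}^{1/d_i}$ need not be smooth on $R_L(V)$. Near a point of $R_L(V)$ where a generator with $d_i\ge2$ has a zero with $dp_i\ne0$ while another generator is nonzero, $|p_i|_{\C}^{1/d_i}$ is not $C^\infty$. Its higher derivatives are governed by negative powers of $|p_i|$, not of $\rho$, so your bound on $\{\rho\ge\epsilon/2\}$ fails and $\chi\circ\rho$ can be nonsmooth in the transition region. The polynomial gauge is therefore the required choice, not a fallback. With it, $\psi=\chi\circ\rho_A$ (for any smooth $\chi$ with bounded derivatives) has polynomially bounded derivatives and multiplies $\cS(Q_L(V))$ continuously. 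It also vanishes on a neighborhood of $\Gamma_L(V)$. Hence it acts on $\cS(R_L(V))$, viewed as the Schwartz functions on $Q_L(V)$ flat along $\Gamma_L(V)$.

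\textbf{Comparison with the paper.} With these repairs your proof is a valid alternative to the paper's. The paper picks $f\in C_c^\infty(R_L(V))$ with $T(f)\ne0$, using density. It embeds $Q_L(V)\sslash L$ $A$-equivariantly into an affine $A$-module with $\pi(0)$ at the origin. It then pulls back an $A$-averaged compactly supported bump that equals $1$ on the $A$-orbit of $\pi(\supp f)$ and vanishes near the origin. Nonvanishing is then immediate from $\varphi f=f$, and the paper gets the stronger localization $\supp T'\subset\pi^{-1}(K)$ with $K$ compact.

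Your cutoff equals $1$ on the whole exterior region $\{\rho_A\ge\epsilon\}$. You therefore need neither the embedding of the quotient nor compactly supported test functions. Nonvanishing follows from $\supp T\cap\{\rho_A>\epsilon\}\ne\varnothing$, choosing $\epsilon<\rho_A(v)$ for some $v\in\supp T$. In exchange you get only support closed in $Q_L(V)$ and disjoint from $\Gamma_L(V)$, which is exactly what the lemma asserts. Also, since $\psi g\in\cS(R_L(V))$ for every $g\in\cS(Q_L(V))$, the extension of $T'$ to $Q_L(V)$ used in Proposition~\ref{prop:component-product} comes for free.
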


\begin{proof}
Compactly supported smooth functions are dense in the Schwartz space of a Nash
manifold \cite{AGSchwartz}; hence choose $f\in C_c^\infty(R_L(V))$ with
$T(f)\ne0$.  Let $\pi:Q_L(V)\to Q_L(V)\sslash L$ be the categorical quotient.
Choose an $A$-stable finite-dimensional generating subspace of
$\C[Q_L(V)]^L$; after subtracting the values at $0$, it gives an
$A$-equivariant closed embedding of $Q_L(V)\sslash L$ into a
finite-dimensional $A$-module with $\pi(0)$ represented by the origin.  The compact set $\pi(\supp f)$ and its finite $A$-orbit are
disjoint from the origin.  Choose a compactly supported smooth function on the
ambient affine space which equals one on that orbit and whose support misses
the origin, and average it over $A$.  Its pullback $\varphi$ is a
$J$-invariant Schwartz multiplier \cite{AGSchwartz}.  Then
$T'=\varphi T$ is nonzero, has the same equivariance, and is supported in
$\pi^{-1}(K)$ for a compact set $K$ not containing the origin.  The multiplier
vanishes on the inverse image of a neighborhood of the origin, so the support
of $T'$ is closed in $Q_L(V)$ and is separated from the null cone by an open
neighborhood.  Extension by zero therefore regards $T'$ as a Schwartz
distribution on $Q_L(V)$ with the same equivariance.
\end{proof}

\begin{proposition}[Products]\label{prop:component-product}
Let $(L_i,A_i,B_i,V_i)$, $1\leq i\leq m$, be componentwise regular.  Put
\[
 L=\prod_iL_i,\qquad A=\prod_iA_i,\qquad B_0=\prod_iB_i,
 \qquad V=\bigoplus_iV_i.
\]
Then $(L,A,B_0,V)$ is componentwise regular.
\end{proposition}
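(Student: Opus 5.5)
Fix a nontrivial character $\lambda=\prod_i\lambda_i\in\widehat A=\prod_i\widehat A_i$ with $\lambda|_{B_0}=1$. Since $B_0=\prod_iB_i$, this means $\lambda_i|_{B_i}=1$ for every $i$. Assume $\cS^*(R_L(V))^{L,\lambda}=0$. The goal is $\cS^*(Q_L(V))^{L,\lambda}=0$, which we prove by contradiction. The key observation concerns the null cone of the product. Invariants of $\prod_iL_i$ on $\bigoplus_iV_i$ are generated by the factor invariants, so
\[
 \Gamma_L(V)=\prod_i\Gamma_{L_i}(V_i),\qquad
 R_L(V)=\bigcup_i\Bigl(V_1\times\cdots\times R_{L_i}(V_i)\times\cdots\times V_m\Bigr).
\]
We first use the hypothesis to find an index $i_0$ for which
$\cS^*(R_{L_{i_0}}(V_{i_0}))^{L_{i_0},\lambda_{i_0}}=0$ and $\lambda_{i_0}$ is nontrivial. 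Consider the factors with $\lambda_i\neq1$; at least one exists. Suppose, for contradiction, that for every $i$ one has $\cS^*(R_{L_i}(V_i))^{L_i,\lambda_i}\neq0$. For trivial $\lambda_i$ this holds automatically: take a nonzero invariant function, e.g.\ a Gaussian-type invariant multiplier cut off away from $\Gamma_{L_i}$. Lemma~\ref{lem:localize-away} then gives nonzero $T_i'$ whose supports are closed in $V_i$ and disjoint from $\Gamma_{L_i}(V_i)$. The tensor product $\bigotimes_iT_i'$ is nonzero and $(L,\lambda)$-equivariant. Its support $\prod_i\supp T_i'$ lies in $\prod_iR_{L_i}(V_i)\subset R_L(V)$ and is closed in $V$, so it defines a nonzero element of $\cS^*(R_L(V))^{L,\lambda}$. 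This contradicts the assumption. Hence some index $i_0$ satisfies $\cS^*(R_{L_{i_0}}(V_{i_0}))^{L_{i_0},\lambda_{i_0}}=0$. For trivial $\lambda_{i_0}$ this vanishing is impossible when $V_{i_0}\neq0$, so $\lambda_{i_0}$ is nontrivial. (If $V_{i_0}=0$, then $L_{i_0}$ and $J_{i_0}$ act trivially, $A_{i_0}$ is trivial, and so $\lambda_{i_0}=1$; such factors may be discarded at the outset.)

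Componentwise regularity of the factor $i_0$ now gives $\cS^*(V_{i_0})^{L_{i_0},\lambda_{i_0}}=0$. This vanishing must be promoted to $V$. Consider any $T\in\cS^*(V)^{L,\lambda}$ and any Schwartz function $g$ on $W=\bigoplus_{i\neq i_0}V_i$. The partial pairing $T_g=T(\,\cdot\otimes g)$ lies in $\cS^*(V_{i_0})$, and it is $L_{i_0}$-invariant with $J_{i_0}$ acting through $\lambda_{i_0}$, because $J_{i_0}$ acts only on the first variable. Therefore $T_g=0$ for all $g$. Since the span of the products $f\otimes g$ is dense in $\cS(V)$, this forces $T=0$.

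The main obstacle lies in the second step. The delicate points are that the tensor product of the localized distributions extends by zero to $V$ while keeping the correct equivariance, and that $R_L(V)$ is exactly the set described in the display above. The null-cone description rests on the multigraded structure of $\C[V]^L=\bigotimes_i\C[V_i]^{L_i}$. The extension by zero is exactly why Lemma~\ref{lem:localize-away} was stated with support closed in $Q_L(V)$.
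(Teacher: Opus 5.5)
Your route differs from the paper's mainly in order. The paper argues contrapositively: it evaluates a nonzero $T\in\cS^*(Q_L(V))^{L,\lambda}$ on a decomposable test function where $T$ is nonzero, obtains nonzero $Q$-level slices in every factor, applies componentwise regularity to each factor with $\lambda_i\neq1$, and then tensors. You tensor first to locate a factor with vanishing regular-set sector, and slice afterwards. Your closing slicing step is correct.

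The gap is in the tensoring step. There you demand a nonzero regular-set distribution in \emph{every} factor, including those with $\lambda_i=1$. You assert that $\cS^*(R_{L_i}(V_i))^{L_i}\neq0$ holds automatically, i.e.\ that the vanishing ``is impossible when $V_{i_0}\neq0$''. This requires $R_{L_i}(V_i)\neq\emptyset$, which does not follow from $V_i\neq0$ and $V_i^{L_i}=0$. For example, if $\C^\times$ acts on $\C$ with weight one, there are no nonconstant invariants, so $\Gamma=V$ and $R=\emptyset$. In such a situation your dichotomy may return an index $i_0$ with $\lambda_{i_0}=1$, and nothing follows from it: componentwise regularity is silent there, and $\delta_0$ is a nonzero invariant distribution on $V_{i_0}$, so your slicing step cannot work. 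For the isotropy data actually arising in the paper the claim is true, since the restriction of the invariant form to $\fp$ is a nonconstant invariant polynomial vanishing at $0$, so $\Gamma\neq\fp$. But you supply no such argument.

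The repair is the device the paper uses: on the factors with $\lambda_i=1$, take $\delta_0$ instead of a regular-set distribution. Since $\lambda\neq1$, at least one factor still carries a localized distribution from Lemma~\ref{lem:localize-away}. Hence the tensor product has closed support disjoint from $\prod_i\Gamma_{L_i}(V_i)$ and restricts to a nonzero element of $\cS^*(R_L(V))^{L,\lambda}$. Your dichotomy then becomes one of two cases:
\begin{enumerate}
\item every $i$ with $\lambda_i\neq1$ has $\cS^*(R_{L_i}(V_i))^{L_i,\lambda_i}\neq0$, which contradicts the premise;
\item some $i_0$ with $\lambda_{i_0}\neq1$ has this space zero, and componentwise regularity of that factor together with your slicing argument gives $T=0$.
\end{enumerate}
With this change the proof is complete, and no nonemptiness of regular sets is needed.
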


\begin{proof}
One has
\[
 Q_L(V)=\bigoplus_iQ_{L_i}(V_i),
 \qquad
 \Gamma_L(V)=\prod_i\Gamma_{L_i}(V_i).
\]
Let $\lambda=\boxtimes_i\lambda_i$ be nontrivial and trivial on $B_0$, and
suppose $\cS^*(Q_L(V))^{L,\lambda}\ne0$.  The Schwartz kernel theorem and the
density of algebraic tensor products in the completed Schwartz tensor product
\cite{AGSchwartz} allow one to choose a decomposable test function
$f_1\otimes\cdots\otimes f_m$ on which a nonzero distribution $T$ does not
vanish.  For fixed $i$, define the slice
\[
 T_i(\varphi)=T(f_1\otimes\cdots\otimes f_{i-1}\otimes\varphi
                  \otimes f_{i+1}\otimes\cdots\otimes f_m).
\]
It is nonzero because $T_i(f_i)=T(f_1\otimes\cdots\otimes f_m)$.  With the push-forward convention
$(j_*T)(f)=T(f\circ j)$, acting in the $i$th factor gives
$(j_i)_*T_i=\lambda_i(j_iL_i)T_i$; hence $T_i$ is $L_i$-invariant and has
character $\lambda_i$.  Hence
\[
 \cS^*(Q_{L_i}(V_i))^{L_i,\lambda_i}\ne0
\]
for every $i$.  If $\lambda_i$ is nontrivial, componentwise regularity gives a
nonzero distribution on $R_{L_i}(V_i)$; replace it using
Lemma~\ref{lem:localize-away} so that its support is closed in
$Q_{L_i}(V_i)$.  If $\lambda_i$ is trivial, use $\delta_0$ on
$Q_{L_i}(V_i)$.

The external tensor product of these distributions is nonzero, has character
$\lambda$, and has closed support in $Q_L(V)$.  At least one factor lies in a
regular set, so its support is contained in
\[
 R_L(V)=Q_L(V)\setminus\prod_i\Gamma_{L_i}(V_i).
\]
It therefore restricts to a nonzero element of
$\cS^*(R_L(V))^{L,\lambda}$.  This proves the contrapositive of
\eqref{eq:component-implication}.
\end{proof}

\subsection{Intermediate fixed subgroups}

For a subgroup $B\leq A$, the notation $\cS^*(X)^{L,B}$ means invariance under
$L$ and under the inverse image of $B$ in $J$.

\begin{proposition}[Finite-component assembly]\label{prop:finite-assembly}
Assume $(L,A,B_0,V)$ is componentwise regular and let
$B_0\leq B\leq A$.  Let $a\in A$ satisfy $a^2\in B$.  If
\[
 \cS^*(R_L(V))^{L,B}\subset\cS^*(R_L(V))^a,
\]
then
\[
 \cS^*(Q_L(V))^{L,B}\subset\cS^*(Q_L(V))^a.
\]
Consequently every symmetric central quotient represented by the intermediate
fixed-component group $B$ is regular.
\end{proposition}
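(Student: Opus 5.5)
Since $A$ is finite abelian, I will decompose the invariant distributions by characters. Let $J_B$ be the inverse image of $B$ in $J$ and $J_{\langle B,a\rangle}$ the inverse image of $\langle B,a\rangle$. The space $\cS^*(X)^{L,B}$ (for $X=R_L(V)$ or $Q_L(V)$) is stable under $J_{\langle B,a\rangle}$, because $A$ is abelian so $a$ normalizes $J_B$. On it the finite abelian group $C=\langle B,a\rangle/B$ acts, and $C$ is cyclic of order at most $2$ since $a^2\in B$. So $\cS^*(X)^{L,B}$ splits into the $a$-invariant part and the $a$-anti-invariant part, i.e.\ the $(L,\lambda)$-isotypic pieces for characters $\lambda\in\widehat A$ trivial on $B$ with $\lambda(a)=\pm1$. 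To be precise, I will use the averaging projectors $\frac12(1\pm a_*)$ modulo $B$. These are well defined on $\cS^*(X)^{L,B}$ because $a_*^2$ acts trivially there. They commute with restriction from $Q_L(V)$ to the open set $R_L(V)$.

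The key step is this. Let $T\in\cS^*(Q_L(V))^{L,B}$, and let $T^-=\frac12(T-a_*T)$ be its anti-invariant part. Then $T^-$ is $(L,\lambda)$-equivariant for the character $\lambda$ of $\langle B,a\rangle/B$ with $\lambda(a)=-1$. Extend $\lambda$ to a character of $A$; this is possible because $A$ is a finite abelian group, so characters extend from subgroups. Then $T^-$ is equivariant under $J_{\langle B,a\rangle}$ with this character, though not yet under all of $J$. To reach a full $(L,\lambda')$-equivariant distribution for $\lambda'\in\widehat A$, I will further decompose $T^-$ under the finite group $A/\langle B,a\rangle$, which acts on the $\lambda$-isotypic subspace of $\cS^*(Q_L(V))^{L,\langle B,a\rangle}$. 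Each resulting component is $(L,\lambda')$-equivariant for some $\lambda'\in\widehat A$ with $\lambda'|_B=1$ and $\lambda'(a)=-1$. In particular $\lambda'$ is nontrivial and trivial on $B_0\le B$. The hypothesis says every element of $\cS^*(R_L(V))^{L,B}$ is $a$-invariant, so the restriction of each component to $R_L(V)$ vanishes. Thus each component lies in a space whose regular-set counterpart $\cS^*(R_L(V))^{L,\lambda'}$ is zero, since any such distribution is $B$-invariant and $a$-anti-invariant, hence zero. Componentwise regularity, \eqref{eq:component-implication}, then gives $\cS^*(Q_L(V))^{L,\lambda'}=0$. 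So every component of $T^-$ vanishes, hence $T^-=0$ and $T=a_*T$.

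For the consequence, I will combine the earlier lemmas. By Lemma~\ref{lem:canonical-cover} and Lemma~\ref{lem:lifting-multiplier}, an admissible element of a central quotient $G=\widetilde G/F$ lifts to $\widetilde A_{\widetilde\theta}$, so its action on $V$ is a class $a\in A$. The square condition of Definition~\ref{def:regularity} gives $a^2\in B_F$. By Lemma~\ref{lem:exact-component-identification}, the full fixed subgroup acts through exactly $J_{B_F}$. By Lemma~\ref{lem:fixed-vectors} and Lemma~\ref{lem:finite-components-nullcone}, the spaces $Q$ and $R$ in the regularity condition for $G^\theta$ coincide with $Q_L(V)$ and $R_L(V)$, and the discarded central summand is acted on trivially. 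Regularity of the quotient pair is therefore exactly the displayed implication with $B=B_F$.

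I expect the main obstacle to be the bookkeeping. One must check that the isotypic decomposition under the finite group really lands in $\cS^*(\,\cdot\,)^{L,\lambda'}$ as defined with respect to all of $J$, rather than only $J_{\langle B,a\rangle}$. This is handled by the further averaging over the full finite abelian group $A$. One must also check that these projections are continuous and commute with restriction to the open $J$-stable set $R_L(V)$, which holds because they are finite averages of pushforwards by algebraic automorphisms preserving $\Gamma_L(V)$.
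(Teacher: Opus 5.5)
Your proposal is correct and follows essentially the same route as the paper: you isolate the $a$-anti-invariant part of a $B$-invariant distribution as the $(C,\chi)$-isotypic piece for $C=\langle B,a\rangle$. You then split it into $(L,\lambda)$-sectors over the extensions $\lambda\in\widehat A$ of $\chi$, all of which are nontrivial and trivial on $B_0$. The premise kills each regular-set sector, componentwise regularity kills the corresponding $Q$-sector, and the quotient consequence follows from the lifted admissible class $a$ with $a^2\in B_F$. The only cosmetic point is that the case $a\in B$, where no such $\chi$ exists, should be dispatched separately; there $T^-=0$ trivially.
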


\begin{proof}
If $a\in B$ there is nothing to prove.  Otherwise put $C=\langle B,a\rangle$.
Since $A$ is abelian and $a^2\in B$, the quotient $C/B$ has order two.  Let
$\chi:C\to\{\pm1\}$ be the character trivial on $B$ and satisfying
$\chi(a)=-1$.  The premise says precisely that
\[
 \cS^*(R_L(V))^{L,C,\chi}=0.
\]
Every character of the subgroup $C$ extends to the finite abelian group $A$.
The $\chi$-isotypic space for $C$ is the direct sum of the $A$-character spaces
for the extensions $\lambda$ of $\chi$.  Each such $\lambda$ is nontrivial and
is trivial on $B_0$.  Componentwise regularity therefore makes all corresponding
spaces on $Q_L(V)$ vanish.  Hence
$\cS^*(Q_L(V))^{L,C,\chi}=0$, which is exactly the asserted $a$-invariance.
For a quotient pair, the square condition $a^2\in B$ is part of admissibility,
and \eqref{eq:preimage-fixed} identifies its full fixed-component group with
such an intermediate $B$.
\end{proof}

\begin{corollary}[Central products]\label{cor:central-products}
Suppose the canonical-cover factors of a connected complex symmetric pair are
pleasant, special, or one of the four factors proved componentwise regular in
Sections~\ref{sec:DIII-family}--\ref{sec:EVII-family}.  Then every finite
$\theta$-stable central quotient, including every diagonal coupling of the
factors and the central torus, is regular.
\end{corollary}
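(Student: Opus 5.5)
We first move to the canonical cover and then assemble the finite component data. Let $(G,\theta)$ be a connected complex symmetric pair and $F\subset Z(\widetilde G)$ the finite $\widetilde\theta$-stable kernel, so that $G=\widetilde G/F$ by Lemma~\ref{lem:canonical-cover}. By Lemma~\ref{lem:fixed-vectors}, $Q_H(\fp)$ is canonically $V=\fp_{\mathrm{der}}$, and the central torus contributes nothing effective. An admissible element $\bar g\in G$ lifts, by Lemma~\ref{lem:lifting-multiplier}, to some $g\in\widetilde A_{\widetilde\theta}$. Its action on $V$ is therefore a class $a\in A$. The square condition in admissibility gives $a^2\in B_F$, where $B_F$ is the effective component image of $\widehat H_F$. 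By Lemma~\ref{lem:finite-components-nullcone}, the sets $Q$, $\Gamma$ and $R$ computed for the full fixed subgroup $H$ coincide with those for $L$. Moreover, Lemma~\ref{lem:exact-component-identification} identifies the action of $H$ on $V$ with that of $J_{B_F}$. The regularity implication for $(G,H,\theta)$ and $\bar g$ is then literally the hypothesis-to-conclusion statement of Proposition~\ref{prop:finite-assembly} with $B=B_F$. It therefore suffices to show that the full datum $(L,A,B_0,V)$ is componentwise regular.

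Lemma~\ref{lem:exact-component-identification} decomposes this datum as a product over $\widetilde\theta$-orbits $\Omega$ of simple factors, with $B_0$ trivial in each factor. By Proposition~\ref{prop:component-product}, it is enough to verify componentwise regularity of each factor datum $(L_\Omega,A_\Omega,1,V_\Omega)$. We treat the three kinds of factor in turn.

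\textbf{Pleasant factors.} Every inner automorphism commuting with $\theta$ is induced by the fixed subgroup on the adjoint level. Hence $A_\Omega$ is trivial and the condition is vacuous.

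\textbf{Special factors.} For nice (hence special) factors, we take the witnessing decomposition from the invariant form on the simple ideals. Every inner component preserves this form, so Lemma~\ref{lem:special-component} applies. For special factors in general, we likewise need a witnessing decomposition preserved by $A_\Omega$. We would check that speciality is always witnessed by the restriction of the Killing form, and that elements of $\widetilde A_{\widetilde\theta}$ act on $\fp$ by isometries of that form. If needed, we would pass to the $A_\Omega$-averaged decomposition into isotypic blocks.

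\textbf{The four residual families.} By hypothesis these are proved componentwise regular in Sections~\ref{sec:DIII-family}--\ref{sec:EVII-family}. They are used here as a black box.

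A diagonal orbit $\Omega$, where $\theta$ swaps two isomorphic simple factors $\fg_1\oplus\fg_1$, gives the group case. That case is covered by the lists above, as one of the pleasant, special or residual cases of Rubio's classification.

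The main obstacle is making sure the hypotheses really apply in the form used. First, the identification of the regularity condition of Definition~\ref{def:regularity} for $\bar g$ with the $a$-invariance statement requires that $\Ad(\bar g)$ acts on $Q_H(\fp)$ through its class in $A$, independently of the chosen lift. This holds because two lifts differ by $\ker m\subset Z(\widetilde G)$, which acts trivially by $\Ad$. Second, the $H$-invariance of distributions on $Q_H(\fp)$ must match $J_{B_F}$-invariance, which is exactly the content of Lemma~\ref{lem:exact-component-identification}. The one genuinely nontrivial check is the $A_\Omega$-stability of the speciality witnesses. I would first try to reduce it to invariance of the Killing form under all automorphisms.
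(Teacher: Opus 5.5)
Your proposal follows essentially the same route as the paper: the central summand is removed by Lemma~\ref{lem:fixed-vectors}, and admissible elements are lifted to classes $a\in A$ with $a^2\in B_F$. The full fixed subgroup is identified with $J_{B_F}$ by Lemma~\ref{lem:exact-component-identification}, componentwise regularity of the cover datum is assembled factor by factor via Proposition~\ref{prop:component-product}, and Proposition~\ref{prop:finite-assembly} finishes.

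The one check you flag for special factors is not treated more generally in the paper than you treat it. The paper verifies it only for nice factors (remark after Lemma~\ref{lem:special-component}) and for EVII (Corollary~\ref{cor:EVII-component}), and these are the only special factors actually used. For a two-element $\theta$-orbit you can simply note, as the paper does, that the diagonal pair is pleasant, since an element with central multiplier is a fixed element times a central element; there is no need to appeal to Rubio's lists.
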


\begin{proof}
Group the simple factors by the orbits of $\theta$.  A two-element orbit gives
a diagonal pair and is pleasant.  For the remaining factors, pleasantness makes
the character condition vacuous, special factors satisfy it by
Lemma~\ref{lem:special-component}, and the four residual factors satisfy it by
the results cited in the statement.  Proposition~\ref{prop:component-product}
handles the direct product on the canonical cover.  The central anti-invariant
summand disappears by Lemma~\ref{lem:fixed-vectors}; a finite central quotient
only replaces $B_0$ by the exact intermediate subgroup $B_F$ described in
Lemma~\ref{lem:exact-component-identification}.  Apply
Proposition~\ref{prop:finite-assembly}.
\end{proof}

\section{The DIII family}\label{sec:DIII-family}

\begin{theorem}\label{thm:DIII-main}
Let $r\ge4$.  Let $G$ be a connected complex semisimple group with Lie algebra
$\mathfrak{so}_{2r}(\C)$, let $\theta$ be an involution of DIII type, and put
$H=G^\theta$.  Then $(G,H,\theta)$ is regular.
\end{theorem}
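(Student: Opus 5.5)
The plan is to reduce Theorem~\ref{thm:DIII-main} to the componentwise regularity of the simply connected datum, and then to show that for DIII the premise of the component implication never holds.

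\textbf{Reduction to the simply connected datum.} First pass to the canonical cover $\widetilde G=\Spin_{2r}$ with lifted involution $\widetilde\theta$ of DIII type. Here $\widetilde H=\widetilde G^{\widetilde\theta}$ is connected with Lie algebra $\mathfrak{gl}_r$, and $\fp\simeq\wedgeTwo\C^r\oplus\wedgeTwo(\C^r)^*$. Every connected isogeny form of $\mathfrak{so}_{2r}$ is a quotient $\widetilde G/F$ by a finite $\widetilde\theta$-stable central subgroup $F$. Lemma~\ref{lem:exact-component-identification} identifies the full fixed subgroup of such a quotient with an intermediate group $B_0\leq B_F\leq A$. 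By Lemma~\ref{lem:lifting-multiplier}, every admissible element is represented by a class $a\in A$ with $a^2\in B_F$. Proposition~\ref{prop:finite-assembly} then reduces the theorem to one statement: the datum $(L,A,B_0,V)$ with $V=\fp$ is componentwise regular in the sense of Definition~\ref{def:componentwise-regular}.

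\textbf{Computing the multiplier group $A$.} Next I will compute $A$ explicitly. The central torus of $L\simeq\GL_r$ acts on $\wedgeTwo\C^r$ and on its dual by the characters $\det^{\pm}$-weights $\pm2$. An element with nontrivial central multiplier acts on $V$ by an automorphism outside $L$. I expect $A$ to be $\mu_2$ or $\mu_4$, depending on the parity of $r$. One representative is the element $\diag(\sqrt{-1}\,I_r,-\sqrt{-1}\,I_r)$ lifted to the spin group. For odd $r$ there is in addition a representative inducing the swap-type outer action. I will list the nontrivial characters $\lambda$ of $A$ with $\lambda|_{B_0}=1$ and fix one of them.

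\textbf{Producing a nonzero distribution on the regular set.} For each such $\lambda$ I plan to show $\cS^*(R_L(V))^{L,\lambda}\neq0$ directly. This makes the implication \eqref{eq:component-implication} vacuously true.
\begin{enumerate}
\item Write down an $L$-semi-invariant regular function on the regular semisimple locus. The candidate is a signed product of Pfaffian-type invariants of $x\bar x$ for $x=(X,Y)$, with $X\in\wedgeTwo\C^r$ and $Y\in\wedgeTwo(\C^r)^*$. Components outside $L$ multiply it by $\lambda$: the outer class exchanges or negates the eigenvalue-pair coordinates, so a suitable antisymmetric combination has character $\lambda$.
\item Multiply this function by a $J$-invariant cutoff pulled back from $V\sslash L$. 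The cutoff is chosen as in Lemma~\ref{lem:localize-away}: its support is compact in the quotient and misses $\pi(0)$.
\item The result is a locally integrable, $(L,\lambda)$-equivariant Schwartz distribution on $R_L(V)$. It is nonzero because its density is nonzero on an open set.
\end{enumerate}
This is where the Chen--Sun extension theorem can enter. If instead I want a distribution on all of $Q_L(V)$ whose restriction to $R_L(V)$ is nonzero, I will extend the function across the common boundary orbit of the regular strata.

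\textbf{Expected obstacle and fallback.} The hard step is to check that a character $\lambda$ nontrivial on $A$ can be realized on the regular set at all. This may fail if $A$ acts trivially on $V\sslash L$ and on every fibre component, for instance in the even-$r$ form where the extra class is central in $\Or_{2r}$.

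If the construction fails for some $\lambda$, I will prove the implication directly instead. Suppose $\cS^*(R_L(V))^{L,\lambda}=0$ but $\cS^*(V)^{L,\lambda}\neq0$. Lemma~\ref{core:lem:homogeneity-reduction} then gives a nonzero adapted $(J,\lambda)$-distribution supported on the nilpotent cone, and the task is to exclude it.
\begin{enumerate}
\item Lemma~\ref{core:lem:origin-adapted} removes the zero orbit.
\item Corollary~\ref{core:cor:strict-trace} removes every orbit with $T(e)<\dim\fp$. This requires computing $T(e)$ from the $\mathfrak{sl}_2$ grading of $\wedgeTwo$ for each signed Young diagram.
\item For each remaining resonant orbit, the resonance equations \eqref{core:eq:resonance-I-general} and \eqref{core:eq:resonance-II-general} should force the normal symbol to be invariant under the representative of $a$. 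That invariance contradicts $\lambda(a)=-1$.
\end{enumerate}
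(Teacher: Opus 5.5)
There is a genuine gap, in the step that carries the whole argument.

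\textbf{A smaller error first.} The element $\diag(\sqrt{-1}I_r,-\sqrt{-1}I_r)$ is the element $s$ that defines $\theta$. It corresponds to $a=\sqrt{-1}\,I_r\in\GL_r$, so it lies in the fixed group and acts on $\fp$ by $-1\in L$. The swap $\tau\colon(B,C)\mapsto(C,B)$ has determinant $(-1)^r$, so it is inner for \emph{even} $r$, not odd $r$. Hence $A$ is trivial for odd $r$, where the theorem is automatic. For even $r$, $A\simeq\Z/2\Z$ and the only character to handle is the sign $\chi$ of $\tau$.

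\textbf{The main gap.} Your construction of a nonzero element of $\cS^*(R_L(V))^{L,\chi}$ from a semi-invariant function cannot succeed. Every $L$-invariant polynomial on $\fp$ is $\tau$-symmetric: this covers the coefficients of the characteristic polynomial of $BC$, and $\operatorname{Pf}(B)\operatorname{Pf}(C)$ for even $r$, because $CB=(BC)^t$. So $\tau$ acts trivially on $V\sslash L$ and does not exchange or negate eigenvalue pairs. Moreover $\tau$ preserves the generic orbits. For $B=\bigoplus_jJ_2$ and $C=\bigoplus_jc_jJ_2$ with all $c_j\neq0$, the block element $a=\bigoplus_ja_j$ with $\det a_j=c_j$ sends $(B,C)$ to $(C,B)$. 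Therefore every $L$-invariant function on the open dense generic locus is $\tau$-invariant, its $\chi$-isotypic part is zero, and no cutoff changes this. A $\chi$-equivariant distribution on $R(\fp)$ must be singular.

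\textbf{The missing idea.} You need non-closed orbits inside $R(\fp)$ that $\tau$ interchanges. The paper takes $x_+=X(B_+,C_+)$ with $\rk B_+=r$, $\rk C_+=r-2$ and $\tr(B_+C_+)=-(r-2)\neq0$, and sets $x_-=\tau x_+$. The orbits $H_0x_\pm$ are distinct, since the ranks of $B$ and $C$ are swapped. Both lie in one fibre of $(B,C)\mapsto BC$, and both degenerate only to the closed semisimple orbit $H_0x_0$. The difference of the two orbital measures is a $\chi$-equivariant Schwartz distribution on $R(\fp)\setminus H_0x_0$, by compactness modulo $H_0$. It is then extended across $H_0x_0$ by Chen--Sun. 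This extension happens inside $R(\fp)$, not across the null cone, so your remark about extending to $Q_L(V)$ is beside the point. The Chen--Sun hypothesis holds because $\tau$ acts trivially on $(\Sym^mN_{x_0}^*)^{(H_0)_{x_0}}$: on the $\wedgeTwo V_0$ part it coincides with $J_0\in\Sp(V_0)$, and on $\wedgeTwo P\oplus\wedgeTwo P^\vee$ the invariants are generated by $uv$ and $\bar u\bar v$.

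\textbf{The fallback.} It does not repair this. It aims at the stronger statement that no nonzero $\chi$-equivariant adapted distribution lives on the nilpotent cone. Step~3 is only a hope that resonance forces $\tau$-invariance, with no mechanism given. DIII is residual in Rubio's reduction precisely because it is not nice, so some distinguished orbit survives Corollary~\ref{core:cor:strict-trace}. You would need the full nilpotent classification plus an actual argument on those orbits.
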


The proof is carried out in the standard orthogonal model and then read on the
effective adjoint action.  In even rank the only nontrivial component interchanges
$\bigwedge^2V$ and $\bigwedge^2V^\vee$.  We construct two exchanged orbits whose
closures meet in one closed boundary orbit, take the difference of their orbital
measures, and cross the boundary by the automatic-extension theorem of
Chen--Sun.

\subsection{Effective model and component reduction}
\subsubsection{The orthogonal model}

By DIII type we mean that the symmetric Lie algebra pair is isomorphic to
$(\mathfrak{so}_{2r},\mathfrak{gl}_r)$.  Since any automorphism of a complex semisimple Lie algebra 
integrate uniquely to an automorphism of its connected adjoint group, the induced adjoint symmetric 
pair is, up to isomorphism, the standard DIII pair below.  We therefore fix this model for the 
effective-action calculations.

Put $V=\C^r$ and $E=V\oplus V^\vee$, equipped with the hyperbolic symmetric form
\[
  q(v+\lambda,w+\mu)=\lambda(w)+\mu(v).
\]
Relative to dual bases its matrix is
\[
  S=\begin{pmatrix}0&I_r\\ I_r&0\end{pmatrix}.
\]
Set
\[
  G_0=\SO(E,q),\qquad
  s=\begin{pmatrix}iI_r&0\\0&-iI_r\end{pmatrix},\qquad
  \theta_0=\Ad(s).
\]
Since $s^2=-I_E$ is central, $\theta_0$ is an involution.  Its fixed subgroup is
\[
 H_0=G_0^{\theta_0}
 =\left\{\begin{pmatrix}a&0\\0&a^{-t}\end{pmatrix}:a\in\GL(V)\right\}
 \simeq\GL(V).
\]
The $(-1)$-eigenspace in $\mathfrak{so}(E,q)$ is
\begin{equation}\label{diii:eq:pmodel}
 \fp=
 \left\{
   X(B,C)=\begin{pmatrix}0&B\\ C&0\end{pmatrix}:
   B^t=-B,\ C^t=-C
 \right\}
 \simeq\wedgeTwo V\oplus\wedgeTwo V^\vee,
\end{equation}
and the action of $H_0$ is
\begin{equation}\label{diii:eq:Haction}
  a\cdot(B,C)=\bigl(aBa^t,a^{-t}Ca^{-1}\bigr).
\end{equation}
The one-parameter subgroup $tI_V\subset H_0$ acts on the two summands by
$t^2$ and $t^{-2}$, respectively.  Hence
\begin{equation}\label{diii:eq:no-fixed}
  \fp^{H_0}=0,
  \qquad Q_{H_0}(\fp)=\fp.
\end{equation}

\subsubsection{The effective component group}

\begin{lemma}\label{diii:lem:components-SO}
Let $a\in G_0$ and suppose that $\Ad(a)$ commutes with $\theta_0$.  Then precisely one of the following holds:
\begin{enumerate}[label=\textup{(\roman*)}]
  \item $a\in H_0$;
  \item $r$ is even and
  \[
    a\in H_0\tau,
    \qquad
    \tau=\begin{pmatrix}0&I_r\\ I_r&0\end{pmatrix}\in G_0.
  \]
\end{enumerate}
For even $r$, the action of $\tau$ on $\fp$ is
\begin{equation}\label{diii:eq:tauaction}
  \tau(B,C)=(C,B).
\end{equation}
\end{lemma}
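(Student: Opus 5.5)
The plan is to turn the commutation condition into a matrix equation. $\Ad(a)$ commutes with $\theta_0=\Ad(s)$ exactly when $\Ad(a^{-1}sas^{-1})$ is trivial. The group $\SO(E,q)$ has centre $\{\pm I_E\}$ because $2r\ge 8$, so the condition is $a s a^{-1}=\epsilon s$ with $\epsilon=\pm1$. The eigenspaces of $s$ are $V$ (eigenvalue $i$) and $V^\vee$ (eigenvalue $-i$), so the two signs give two cases.

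If $\epsilon=1$, then $a$ preserves $V$ and $V^\vee$, so $a$ is block diagonal, $a=\diag(a_1,a_2)$. Preservation of $q$ means $a^tSa=S$, which gives $a_2=a_1^{-t}$. Such a matrix automatically has determinant $1$, so $a\in H_0$. This is case (i).

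If $\epsilon=-1$, then $a$ interchanges $V$ and $V^\vee$. The matrix $\tau$ does the same: it is the identity on the $q$-matrix ($\tau^tS\tau=S$), and it conjugates $s$ to $-s$. Hence $a\tau^{-1}$ commutes with $s$ and preserves $q$, so $a\tau^{-1}\in\Or(E,q)$ is block diagonal of the form $\diag(b,b^{-t})$, as in case (i). Its determinant is $1$, so $\det a=\det\tau$. Now $\tau$ swaps $r$ pairs of basis vectors, so $\det\tau=(-1)^r$. Thus $a\in\SO$ forces $r$ even, and then $\tau\in G_0$ and $a\in H_0\tau$. The two cases are mutually exclusive: in (i) $a$ preserves $V$, in (ii) it maps $V$ onto $V^\vee$, and $V\cap V^\vee=0$. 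This gives ``precisely one''.

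Finally I compute the action of $\tau$ on $\fp$. Since $\tau=\tau^{-1}$,
\[
\tau\begin{pmatrix}0&B\\C&0\end{pmatrix}\tau=\begin{pmatrix}0&C\\B&0\end{pmatrix},
\]
which is \eqref{diii:eq:tauaction}.

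The only delicate point is the centre computation in the first step. It needs the scalars in $\SO(E,q)$ to be exactly $\pm I_E$: a scalar $c$ preserves $q$ only if $c^2=1$, and the centralizer of $\SO_{2r}$ in $\End(E)$ consists of scalars since the representation is irreducible. With that in hand, the rest is routine linear algebra.
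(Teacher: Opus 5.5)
Your proof is correct and follows essentially the same route as the paper: you reduce commutation to $asa^{-1}=\pm s$ via $Z(G_0)=\{\pm I_E\}$, read off the block form from the eigenspaces $V,V^\vee$ of $s$, and obtain $\det a=(-1)^r$ in the exchanging case. The only difference is cosmetic. You get the determinant by factoring $a=(a\tau^{-1})\tau$ and reusing case (i), whereas the paper writes $a$ directly in anti-diagonal form and solves $a^tSa=S$ for $h=g^{-t}$; you also spell out the centre computation and the mutual exclusivity that the paper leaves implicit.
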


\begin{proof}
Commutation of the two inner automorphisms is equivalent to
\[
  asa^{-1}s^{-1}\in Z(G_0)=\{\pm I_E\}.
\]
If $asa^{-1}=s$, then $a$ preserves the $i$- and $-i$-eigenspaces $V$ and
$V^\vee$ of $s$.  The orthogonality relation therefore gives
\[
  a=\begin{pmatrix}g&0\\0&g^{-t}\end{pmatrix}\in H_0.
\]
If $asa^{-1}=-s$, then $a$ exchanges $V$ and $V^\vee$, so
\[
  a=\begin{pmatrix}0&g\\h&0\end{pmatrix}.
\]
The equation $a^tSa=S$ yields $h=g^{-t}$, and hence
\[
  \det a=(-1)^r\det(g)\det(g^{-t})=(-1)^r.
\]
Thus the second case can occur only for even $r$.  When $r$ is even, $\tau\in G_0$ and every such $a$ belongs to $H_0\tau$.  Formula \eqref{diii:eq:tauaction} follows by direct multiplication.
\end{proof}

Let $G_{\mathrm{ad}}$ be the adjoint group of type $D_r$, and let
$q:G_0\to G_{\mathrm{ad}}$ be the central quotient.  Denote by
$\theta_{\mathrm{ad}}$ the induced DIII involution and put
\[
  C=(G_{\mathrm{ad}})^{\theta_{\mathrm{ad}}}.
\]
Lemma~\ref{diii:lem:components-SO} immediately gives the component group of $C$.

\begin{corollary}\label{diii:cor:Ccomponents}
The identity component of $C$ is $q(H_0)$.  If $r$ is odd, then
$C=q(H_0)$.  If $r$ is even, then
\[
  C=q(H_0)\rtimes\langle q(\tau)\rangle,
  \qquad C/C^\circ\simeq\mathbb Z/2\mathbb Z.
\]
\end{corollary}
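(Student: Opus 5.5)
The plan is to compute $C$ by lifting its elements to $G_0$ and applying Lemma~\ref{diii:lem:components-SO}. Since $q:G_0\to G_{\mathrm{ad}}$ is surjective with central kernel $\{\pm I_E\}$, every $c\in C$ has the form $q(a)$ with $a\in G_0$. The condition $\theta_{\mathrm{ad}}(q(a))=q(a)$ says $\theta_0(a)=\pm a$. Equivalently, $sas^{-1}a^{-1}\in Z(G_0)$, so $\Ad(a)$ commutes with $\theta_0=\Ad(s)$. Conversely, any such $a$ maps into $C$. Thus
\[
 C=q\bigl(\{a\in G_0:\Ad(a)\theta_0=\theta_0\Ad(a)\}\bigr).
\]
By Lemma~\ref{diii:lem:components-SO}, this set equals $H_0$ when $r$ is odd, and $H_0\sqcup H_0\tau$ when $r$ is even. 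Hence $C=q(H_0)$ for odd $r$, and $C=q(H_0)\cup q(H_0)q(\tau)$ for even $r$.

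Next I identify the identity component. The group $H_0\simeq\GL(V)$ is connected, so $q(H_0)$ is a connected closed subgroup of $C$. It is closed because $H_0$ is closed and $q$ is a finite quotient map. Its Lie algebra is $d q(\fh)=\fh$, which equals $\Lie(C)$ because $\Lie(C)$ is the $+1$-eigenspace of $d\theta_{\mathrm{ad}}=d\theta_0$. So $q(H_0)$ has finite index in $C$ and is connected, which gives $C^\circ=q(H_0)$. This settles the odd case.

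For even $r$ the key point is that $q(\tau)\notin q(H_0)$. If it were, then $\tau=\pm h$ for some $h\in H_0$. Since $-I_E\in H_0$ (take $a=-I_V$), this would force $\tau\in H_0$. That is impossible, because $\tau$ exchanges $V$ and $V^\vee$ while $H_0$ preserves both. Therefore $C/C^\circ$ has exactly two elements, so it is $\Z/2\Z$. A direct computation gives $\tau^2=I_E$. The subgroup $\langle q(\tau)\rangle$ therefore has order two and meets $q(H_0)$ trivially. The normal subgroup $C^\circ$ together with this complement yields the semidirect product decomposition. Explicitly, conjugation by $\tau$ sends $\diag(a,a^{-t})$ to $\diag(a^{-t},a)$.

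The main obstacle is only bookkeeping, namely handling the sign ambiguity coming from $\ker q$. It is absorbed by two facts: the commutation criterion already allows the central multiplier $\pm I_E$, and $-I_E$ lies in $H_0$.
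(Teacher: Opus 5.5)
Your proof is correct and follows essentially the same route as the paper. You lift to $G_0$, note that $q(a)\in C$ exactly when $\Ad(a)$ commutes with $\theta_0$, apply Lemma~\ref{diii:lem:components-SO}, and check that the two cosets stay distinct modulo $\{\pm I_E\}$; you do this via $-I_E\in H_0$, the paper via the block-diagonal versus block-anti-diagonal shape, and your additional checks (connectedness of $q(H_0)$ as an image of $\GL(V)$, and $\tau^2=I_E$ for the splitting) only make explicit what the paper leaves implicit.
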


\begin{proof}
An element $q(a)$ is fixed by $\theta_{\mathrm{ad}}$ exactly when
$\Ad(a)$ commutes with $\theta_0$.  The assertion follows from
Lemma~\ref{diii:lem:components-SO}.  The two cosets remain distinct after quotienting by $Z(G_0)=\{\pm I_E\}$ because $H_0$ is block diagonal whereas $H_0\tau$ is block anti-diagonal.
\end{proof}

We now pass to an arbitrary connected isogeny form.  Let $(G,H,\theta)$ be as in Theorem~\ref{thm:DIII-main}, let
\[
  \rho:G\longrightarrow G_{\mathrm{ad}}
\]
be the adjoint central isogeny, and put $H_{\mathrm{eff}}=\rho(H)$.  The action on $\fp$ factors through $G_{\mathrm{ad}}$.

\begin{lemma}\label{diii:lem:Heff}
One has
\[
  C^\circ\subset H_{\mathrm{eff}}\subset C.
\]
Consequently, if $r$ is odd then $H_{\mathrm{eff}}=C$, whereas if $r$ is even then either $H_{\mathrm{eff}}=C^\circ$ or $H_{\mathrm{eff}}=C$.
\end{lemma}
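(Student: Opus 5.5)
The plan is to establish the two inclusions separately and then read off the dichotomy from Corollary~\ref{diii:cor:Ccomponents}.

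\emph{The inclusion $H_{\mathrm{eff}}\subset C$.} The map $\rho$ is equivariant for $\theta$ and $\theta_{\mathrm{ad}}$, since both involutions induce the same involution of $\fg$ and an automorphism of the adjoint group is determined by its differential. Hence if $h\in H=G^\theta$, then
\[
 \theta_{\mathrm{ad}}(\rho(h))=\rho(\theta(h))=\rho(h).
\]
This gives $\rho(H)\subset(G_{\mathrm{ad}})^{\theta_{\mathrm{ad}}}=C$.

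\emph{The inclusion $C^\circ\subset H_{\mathrm{eff}}$.} I will compare Lie algebras. Since $\rho$ is a central isogeny, $d\rho$ is an isomorphism, and it intertwines the differentials of the two involutions. Therefore
\[
 \Lie(H)=\fg^{d\theta}=\fh \quad\text{maps isomorphically onto}\quad \Lie(C)=\fg^{d\theta_{\mathrm{ad}}}.
\]
It follows that $\rho(H^\circ)$ is a closed connected subgroup of $C$ of full dimension; it is closed because it is the image of an algebraic group homomorphism. Since $C^\circ$ is connected and contains $\rho(H^\circ)$ with equal dimension, we get $\rho(H^\circ)=C^\circ$. Consequently $C^\circ\subset\rho(H)=H_{\mathrm{eff}}$. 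Alternatively, $C^\circ=q(H_0)$ by Corollary~\ref{diii:cor:Ccomponents}, and $q(H_0)$ is connected with Lie algebra $\fh$, which gives the same identification.

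\emph{The dichotomy.} $H_{\mathrm{eff}}$ is a subgroup squeezed between $C^\circ$ and $C$, so it corresponds to a subgroup of $C/C^\circ$. For $r$ odd, $C=C^\circ$ by Corollary~\ref{diii:cor:Ccomponents}, which forces $H_{\mathrm{eff}}=C$. For $r$ even, $C/C^\circ\simeq\Z/2\Z$ has only the trivial subgroup and the whole group, so $H_{\mathrm{eff}}$ is either $C^\circ$ or $C$.

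The only point needing any care is the equivariance of $\rho$. If $\theta$ on $G$ is given abstractly, $\theta_{\mathrm{ad}}$ should be defined as the involution of $G_{\mathrm{ad}}$ induced from $\theta$; this involution exists because $\theta$ preserves the center. Its identification with the standard model's $\theta_{\mathrm{ad}}$ is exactly the isomorphism fixed in the orthogonal-model paragraph. No real obstacle is expected.
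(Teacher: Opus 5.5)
Your proposal is correct and follows the paper's proof essentially verbatim: $\theta$-equivariance of $\rho$ gives $H_{\mathrm{eff}}\subset C$, the isomorphism $d\rho$ identifies $\Lie(H)$ with $\Lie(C)$ so that $\rho(H^\circ)=C^\circ$, and the dichotomy is then read off from Corollary~\ref{diii:cor:Ccomponents}. Your additional remarks make explicit two points the paper leaves implicit: why $\rho$ intertwines $\theta$ with $\theta_{\mathrm{ad}}$, and why $\rho(H^\circ)$ is closed.
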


\begin{proof}
If $h\in H$, then $\theta(h)=h$, so $\rho(h)\in C$; hence
$H_{\mathrm{eff}}\subset C$.  The differential of $\rho$ identifies
$\operatorname{Lie}(H)$ with the fixed Lie algebra of $d\theta_{\mathrm{ad}}$, which is
$\operatorname{Lie}(C)$.  Therefore the connected algebraic subgroup
$\rho(H^\circ)$ has the same Lie algebra as $C^\circ$, and hence
$\rho(H^\circ)=C^\circ$.  The final assertion follows from
Corollary~\ref{diii:cor:Ccomponents}.
\end{proof}

\begin{proposition}\label{diii:prop:effective-reduction}
Theorem~\ref{thm:DIII-main} is automatic if $r$ is odd or if
$H_{\mathrm{eff}}=C$.  In the remaining case, necessarily $r$ is even and
$H_{\mathrm{eff}}=C^\circ$, and it is enough to prove
\begin{equation}\label{diii:eq:sign-goal}
  \cS^*(R_{H_0}(\fp))^{K,\chi}\ne0,
  \qquad
  K:=H_0\rtimes\langle\tau\rangle,
\end{equation}
where $\chi|_{H_0}=1$ and $\chi(\tau)=-1$.
\end{proposition}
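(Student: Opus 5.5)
We reduce regularity, as formulated in Definition~\ref{def:regularity}, to a statement about the effective component group. Write $\theta_{\mathrm{ad}}$ for the induced involution. An admissible $g\in G$ satisfies $g^{-1}\theta(g)\in Z(G)$, so $\rho(g)$ is fixed by $\theta_{\mathrm{ad}}$ and $\rho(g)\in C$. The action of $\Ad(g)$ on $\fp$ is therefore the action of an element of $C$. Since the action on $\fp$ factors through $G_{\mathrm{ad}}$, we may test invariance on $\fp$ using $H_{\mathrm{eff}}$ and $\rho(g)$ in place of $H$ and $g$. Moreover, $Q_{H}(\fp)=\fp$ by \eqref{diii:eq:no-fixed}, and the null cones for $H_{\mathrm{eff}}$ and $C^\circ=q(H_0)$ agree by Lemma~\ref{lem:finite-components-nullcone} (or directly, since the quotient is by a finite group). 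Hence the sets $Q$ and $R$ entering the definition are $\fp$ and $R_{H_0}(\fp)$ in every case.

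\emph{Automatic cases.} If $r$ is odd or $H_{\mathrm{eff}}=C$ (Lemma~\ref{diii:lem:Heff}), then $\rho(g)\in H_{\mathrm{eff}}$. Every $H$-invariant distribution on $\fp$ is then already $\Ad(g)$-invariant, so the implication in Definition~\ref{def:regularity} holds trivially.

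\emph{Remaining case.} Here $r$ is even and $H_{\mathrm{eff}}=C^\circ=q(H_0)$. By Corollary~\ref{diii:cor:Ccomponents}, an admissible $g$ with $\rho(g)\notin C^\circ$ acts on $\fp$ as $h\tau$ for some $h\in H_0$. Modulo $H$-invariance this is the action of $\tau$; note $\tau^2=I$.

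Let $K=H_0\rtimes\langle\tau\rangle$ and decompose $H_0$-invariant distributions under the order-two group $K/H_0$. An $H_0$-invariant distribution $T$ splits as $T=T_++T_-$ with $T_\pm=\tfrac12(T\pm\tau_*T)$, where $T_+$ is $K$-invariant and $T_-$ is $(K,\chi)$-equivariant. Thus $\tau$-invariance of all $H_0$-invariant distributions on a $K$-stable open set $X$ is equivalent to $\cS^*(X)^{K,\chi}=0$. The regularity implication for $\tau$ then reads
\[
 \cS^*(R_{H_0}(\fp))^{K,\chi}=0\ \Longrightarrow\ \cS^*(\fp)^{K,\chi}=0.
\]
If \eqref{diii:eq:sign-goal} holds, the hypothesis of this implication fails. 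The condition in Definition~\ref{def:regularity} is then vacuously satisfied for $g$ in the nontrivial coset, and for $g$ in the trivial coset it is automatic as above. This proves the reduction.

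The main point needing care is that admissibility of $g$ (including preservation of closed orbits) plays no role beyond landing $\rho(g)$ in $C$. Also, the definition of regularity is stated for the full fixed group $H$ rather than $H_0$, so the translation between $H$-invariance and $H_0$-invariance on $\fp$ must go through the equality of effective images $\rho(H)=q(H_0)$. Both steps are bookkeeping using Lemma~\ref{diii:lem:Heff} and Corollary~\ref{diii:cor:Ccomponents}.
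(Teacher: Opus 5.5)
Your proposal is correct and follows essentially the same route as the paper. In both, $\rho(g)\in C$ comes from the first admissibility condition, elements with $\rho(g)\in H_{\mathrm{eff}}$ are handled directly, and in the even-rank case $H_{\mathrm{eff}}=C^\circ$ a nonzero $(K,\chi)$-equivariant distribution on $R_{H_0}(\fp)$ falsifies the antecedent for every $g$ acting as $h_0\tau$. Your explicit $T_\pm$ splitting is a harmless extra: it shows the sign implication is equivalent to the regularity condition, whereas the paper only needs that it is sufficient.
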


\begin{proof}
Let $g\in G$ be admissible.  The first admissibility condition implies
$\rho(g)\in C$.  If $\rho(g)\in H_{\mathrm{eff}}$, choose $h\in H$ with
$\rho(h)=\rho(g)$.  Then $gh^{-1}\in Z(G)$, so
$\Ad(g)|_{\fp}=\Ad(h)|_{\fp}$.  Thus every $H$-invariant distribution on both
$R_H(\fp)$ and $Q_H(\fp)$ is fixed by $\Ad(g)$, and the regularity implication is automatic.  This proves the first assertion by Lemma~\ref{diii:lem:Heff}.

Suppose now that $r$ is even and $H_{\mathrm{eff}}=C^\circ=q(H_0)$.  The actions of $H$ and $H_0$ on $\fp$ have the same effective image $C^\circ$; hence their invariant distribution spaces, invariant polynomial rings, and nullcones coincide.  In particular, by \eqref{diii:eq:no-fixed}, $Q_H(\fp)=\fp$.  If an admissible $g$ satisfies $\rho(g)\notin C^\circ$, write its effective action as $h_0\tau$ with $h_0\in H_0$.  A nonzero
$T\in\cS^*(R_{H_0}(\fp))^{K,\chi}$ is $H$-invariant and satisfies
\[
  (h_0\tau)_*T=-T\ne T.
\]
Hence the antecedent in Definition~\ref{def:regularity} is false for this $g$.  The elements with effective action in $C^\circ$ were already treated above, so \eqref{diii:eq:sign-goal} is sufficient.
\end{proof}

For the rest of the proof we assume that $r\ge4$ is even and construct the distribution in \eqref{diii:eq:sign-goal}.

\subsection{Orbit geometry and a sign distribution away from the boundary}

Choose a decomposition
\[
  V=V_0\oplus P,
  \qquad \dim V_0=r-2,\quad \dim P=2.
\]
Choose nondegenerate alternating matrices $J_0$ and $J_2$, with real entries, such that
\[
  J_0^2=-I_{V_0},\qquad J_2^2=-I_P.
\]
Define
\[
 B_0=C_0=\begin{pmatrix}J_0&0\\0&0\end{pmatrix},
\]
\[
 B_+=\begin{pmatrix}J_0&0\\0&J_2\end{pmatrix},
 \qquad
 C_+=\begin{pmatrix}J_0&0\\0&0\end{pmatrix},
\]
and
\[
  x_0=X(B_0,C_0),\qquad
  x_+=X(B_+,C_+),\qquad
  x_-:=\tau x_+.
\]
Then $\tau x_0=x_0$ and $\tau x_+=x_-$.  Put
\[
  A=\begin{pmatrix}-I_{V_0}&0\\0&0\end{pmatrix}.
\]
For all three points, $BC=CB=A$.  The polynomial
\[
  F(B,C)=\tr(BC)
\]
is $H_0$-invariant, $F(0)=0$, and
\[
  F(x_0)=F(x_+)=F(x_-)=-(r-2)\ne0.
\]
Every point of $\Gamma_{H_0}(\fp)=\pi^{-1}(\pi(0))$ has the same value as $0$ under every invariant polynomial.  Therefore
\begin{equation}\label{diii:eq:pointsR}
  x_0,x_+,x_-\in R_{H_0}(\fp).
\end{equation}
We shall also use
\begin{equation}\label{diii:eq:square}
  X(B,C)^2=\begin{pmatrix}BC&0\\0&CB\end{pmatrix}.
\end{equation}

Consider the $H_0$-equivariant morphism
\[
  \Phi:\fp\to\End(V),\qquad \Phi(B,C)=BC,
\]
where $H_0\simeq\GL(V)$ acts on $\End(V)$ by conjugation.  Let
\[
  L=Z_{H_0}(A)=\GL(V_0)\times\GL(P).
\]

\begin{lemma}\label{diii:lem:fiber}
The fibre over $A$ is the disjoint union of three $L$-orbits:
\[
  \Phi^{-1}(A)=Lx_+\sqcup Lx_-\sqcup Lx_0.
\]
Consequently,
\begin{equation}\label{diii:eq:preimage-conjclass}
  \Phi^{-1}(H_0\cdot A)
  =H_0x_+\sqcup H_0x_-\sqcup H_0x_0.
\end{equation}
\end{lemma}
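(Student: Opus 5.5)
The plan is to compute the fibre $\Phi^{-1}(A)$ directly in block form relative to $V=V_0\oplus P$, and then obtain \eqref{diii:eq:preimage-conjclass} by the usual associated-bundle argument.

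\emph{The fibre.} Let $(B,C)$ be a pair of alternating matrices with $BC=A$. The first step is to show $CB=A$ as well. Transposing $BC=A$ and using $B^t=-B$, $C^t=-C$ gives $CB=A^t=A$, since $A$ is diagonal. Hence $B$ and $C$ commute with $A$. Because $A$ acts by $-1$ on $V_0$ and by $0$ on $P$, both $B$ and $C$ are block diagonal: $B=\diag(B_1,B_2)$ and $C=\diag(C_1,C_2)$. The equation $BC=A$ then reads
\[
B_1C_1=-I_{V_0},\qquad B_2C_2=0=C_2B_2 .
\]
On $V_0$ this gives $C_1=-B_1^{-1}$, so $B_1$ is an arbitrary nondegenerate alternating form on $V_0$. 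On $P$, which is $2$-dimensional, an alternating matrix is either $0$ or invertible. So $B_2C_2=0$ forces $B_2=0$ or $C_2=0$, giving three cases: $B_2\neq0=C_2$, $C_2\neq0=B_2$, and $B_2=C_2=0$.

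\emph{Orbits of $L$.} $L=\GL(V_0)\times\GL(P)$ acts factorwise by \eqref{diii:eq:Haction}.
\begin{itemize}
\item On the $V_0$ block, $\GL(V_0)$ acts transitively on nondegenerate alternating $B_1$, and $C_1=-B_1^{-1}$ is then determined. One checks that $(J_0,J_0)$ satisfies $C_1=-J_0^{-1}=J_0$, using $J_0^2=-I$.
\item On the $P$ block, $\GL(P)$ acts on nonzero alternating $B_2$ by $B_2\mapsto \det(g)B_2$, hence transitively. The same holds for $C_2$.
\end{itemize}
The three cases therefore give the orbits $Lx_+$, $Lx_-$ and $Lx_0$; for $Lx_-$ we use $\tau x_+=X(C_+,B_+)$. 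These three orbits are disjoint because the pair of conditions ($B_2=0$?, $C_2=0$?) is $L$-invariant.

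\emph{Passing to $H_0$.} $\Phi$ is $H_0$-equivariant and $\Stab_{H_0}(A)=L$. So $\Phi^{-1}(H_0A)=H_0\times^L\Phi^{-1}(A)$, and $H_0$-orbits in it correspond bijectively to $L$-orbits in the fibre. This gives three disjoint $H_0$-orbits: if $hx_+=x_-$, then $hA=A$, so $h\in L$, which contradicts the $L$-disjointness.

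I expect no real obstacle here. The only delicate point is obtaining $CB=A$ from the transpose, and that is immediate.
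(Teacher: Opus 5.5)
Your proposal is correct and follows essentially the same route as the paper: transpose to get $CB=A$, deduce that $B$ and $C$ are block diagonal, normalize the $V_0$ block by congruence, split the $P$ block into the three cases allowed by $B_2C_2=0$ using the $\det$-scaling, and then pass from the fibre over $A$ to $H_0$-orbits by equivariance. You also check explicitly that the three $H_0$-orbits are disjoint: an element carrying one representative to another must centralize $A$, hence lie in $L$. That check is a small addition to the paper's terser final sentence.
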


\begin{proof}
Let $(B,C)\in\Phi^{-1}(A)$.  Since $B$ and $C$ are skew-symmetric,
\[
  CB=(BC)^t=A^t=A.
\]
Thus $B$ and $C$ commute with $A$ and preserve the eigenspace decomposition
$V=V_0\oplus P$; they are block diagonal.

On $V_0$,
\[
  B_{00}C_{00}=C_{00}B_{00}=-I_{V_0}.
\]
Hence $B_{00}$ is nondegenerate and $C_{00}=-B_{00}^{-1}$.  By a
$\GL(V_0)$-congruence we may arrange $B_{00}=J_0$, after which
$C_{00}=J_0$.  On the two-dimensional space $P$, write
\[
  B_{22}=bJ_2,\qquad C_{22}=cJ_2.
\]
The equation $B_{22}C_{22}=0$ gives $bc=0$.  For $g_P\in\GL(P)$,
\[
  b\longmapsto\det(g_P)b,
  \qquad
  c\longmapsto\det(g_P)^{-1}c.
\]
Thus the cases $(b\ne0,c=0)$, $(b=0,c\ne0)$, and $(b,c)=(0,0)$ are exactly the three $L$-orbits displayed above.

For the second assertion, let $y\in\Phi^{-1}(H_0\cdot A)$.  Choose
$h\in H_0$ such that $\Phi(y)=hAh^{-1}$.  Then
$h^{-1}y\in\Phi^{-1}(A)$, so the first assertion places $y$ in exactly one of the three stated $H_0$-orbits.
\end{proof}

\begin{proposition}\label{diii:prop:closures}
The orbit closures in $\fp$ satisfy
\begin{align}
 \overline{H_0x_+}&=H_0x_+\sqcup H_0x_0,\label{diii:eq:clplus}\\
 \overline{H_0x_-}&=H_0x_-\sqcup H_0x_0,\label{diii:eq:clminus}
\end{align}
and $H_0x_0$ is closed.
\end{proposition}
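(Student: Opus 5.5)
The plan is to trap each closure inside the preimage $\Phi^{-1}(H_0\cdot A)$ and then read off its orbits from Lemma~\ref{diii:lem:fiber}. The key observation is that the semisimple part of $X(B,C)$ is governed by $\Phi$, and $H_0\cdot A$ is a closed conjugacy class because $A$ is semisimple. Since $\Phi$ is continuous and $H_0$-equivariant, we get $\overline{H_0x_\pm}\subset\Phi^{-1}(\overline{H_0\cdot A})=\Phi^{-1}(H_0\cdot A)$. By \eqref{diii:eq:preimage-conjclass}, each closure is therefore a union of some of the three orbits $H_0x_+$, $H_0x_-$, $H_0x_0$.

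Next we show that $x_0\in\overline{H_0x_+}$, using an explicit one-parameter degeneration inside $L$. Take $g_t=\diag(I_{V_0},tI_P)\in\GL(V)$. By \eqref{diii:eq:Haction}, $g_t\cdot(B_+,C_+)=(\diag(J_0,t^2J_2),\diag(J_0,0))$, which tends to $x_0$ as $t\to0$. Applying $\tau$ to the same degeneration, or using $g_t$ with $t\to\infty$ on the $C$-component, gives $x_0\in\overline{H_0x_-}$.

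It remains to exclude $x_\mp$ from $\overline{H_0x_\pm}$ and to prove that $H_0x_0$ is closed. For the exclusion, I will use the rank of $B$, which is lower semicontinuous. On $H_0x_+$ we have $\rk B=r$ and $\rk C=r-2$, while at $x_-$ the ranks are reversed: $\rk C=r$. The set $\{\rk C\le r-2\}$ is closed and contains $H_0x_+$, but it does not contain $x_-$. Hence $x_-\notin\overline{H_0x_+}$, and symmetrically $x_+\notin\overline{H_0x_-}$. For the closedness of $H_0x_0$, I will use dimensions. $H_0x_0$ is a union of orbits lying in the closed set $\Phi^{-1}(H_0\cdot A)$, and the only candidate boundary orbits are $H_0x_\pm$. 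Any boundary orbit has strictly smaller dimension, so it suffices to show $\dim H_0x_\pm>\dim H_0x_0$; equivalently, we need the stabilizer of $x_0$ to be strictly larger than the stabilizer of $x_+$. The stabilizer of $x_0$ contains $\Sp(J_0)\times\GL(P)$, while in the stabilizer of $x_+$ the $\GL(P)$ factor is cut down to $\SL(P)$. A short check should confirm that both stabilizers lie in $L$ with these components. Alternatively, closedness follows because $x_0$ is semisimple: $X(B_0,C_0)^2=\diag(A,A)$ by \eqref{diii:eq:square}, so $x_0$ acts semisimply on $E$, and Kostant--Rallis then gives that its $H_0$-orbit is closed.

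I expect the main obstacle to be this last step, namely verifying that $H_0x_0$ is closed. The semisimplicity route via Kostant--Rallis seems the cleanest, and I would try it first. For it we need $x_0$ semisimple in $\mathfrak{so}(E)$: $x_0$ is zero on $P\oplus P^\vee$ and invertible with square $-I$ on $V_0\oplus V_0^\vee$, hence diagonalizable. Kostant--Rallis then states that $H_0$-orbits of semisimple elements of $\fp$ are closed.
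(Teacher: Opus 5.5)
Your proof is correct. Three of its steps match the paper's argument exactly: trapping both closures in $\Phi^{-1}(H_0\cdot A)$, the degeneration $g_t=I_{V_0}\oplus tI_P$, and excluding $H_0x_\mp$ from $\overline{H_0x_\pm}$ by semicontinuity of $\rk C$ (resp.\ $\rk B$).

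The only divergence is the closedness of $H_0x_0$. You flag this as the main obstacle, but the paper dispatches it with the same rank argument. On $H_0x_0$ one has $\rk B=\rk C=r-2$, so $\overline{H_0x_0}$ lies in the closed set $\{\rk B\le r-2\}\cap\{\rk C\le r-2\}$. That set misses $H_0x_+$ (where $\rk B=r$) and $H_0x_-$ (where $\rk C=r$).

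Your two alternatives also work, at some extra cost:
\begin{itemize}
\item The dimension count needs the stabilizer of $x_+$ exactly, namely $\Sp(V_0,J_0)\times\Sp(P,J_2)$ with $\Sp(P,J_2)=\SL(P)$. This is an upper bound, not just a containment, and the paper proves it only afterwards in Lemma~\ref{diii:lem:stabilizers}. The count also needs the standard fact that the ordinary closure lies in the Zariski closure, whose boundary consists of orbits of strictly smaller dimension.
\item The Kostant--Rallis route is more conceptual and gives Zariski closedness directly. It relies on the semisimplicity of $x_0$, which you verify correctly via the block decomposition and which the paper also uses later for the boundary normal space.
\end{itemize}
The rank argument is the most economical, since it uses only data you had already written down for the exclusion step.
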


\begin{proof}
The matrix $A$ is semisimple, so its $H_0$-conjugacy class in $\End(V)$ is Zariski closed and hence closed in the ordinary topology of the underlying real Nash manifold.  By equivariance of $\Phi$, every point in
$\overline{H_0x_+}$ therefore belongs to $\Phi^{-1}(H_0\cdot A)$.
Lemma~\ref{diii:lem:fiber} leaves only the three orbits in
\eqref{diii:eq:preimage-conjclass} as possible limits.

Let $g_t=I_{V_0}\oplus tI_P$.  For $t\ne0$, $g_t\in H_0$ and
$g_t\cdot x_+\to x_0$ as $t\to0$.  Hence
$H_0x_0\subset\overline{H_0x_+}$.  On $H_0x_+$ one has
$\rk C=r-2$, while on $H_0x_-$ one has $\rk C=r$.  Since matrix rank cannot increase under specialization, $H_0x_-$ is not contained in
$\overline{H_0x_+}$.  This proves \eqref{diii:eq:clplus}; applying $\tau$ gives \eqref{diii:eq:clminus}.

Finally, let $y\in\overline{H_0x_0}$.  Again $y$ lies in the set
\eqref{diii:eq:preimage-conjclass}.  On $H_0x_0$ both $B$ and $C$ have rank
$r-2$, whereas $H_0x_+$ has $\rk B=r$ and $H_0x_-$ has $\rk C=r$.
Equivalently, the loci defined by $\rk B\le r-2$ and $\rk C\le r-2$ are closed; hence neither latter orbit can occur in the closure of $H_0x_0$.  It follows that $H_0x_0$ is closed.  These arguments determine the Zariski closures.  Since ordinary closure is
contained in Zariski closure and the displayed one-parameter degeneration
realizes the only added orbit, the same formulas hold in the ordinary topology
of the underlying real Nash manifold.  Each algebraic orbit is a locally closed
smooth subvariety and hence a Nash submanifold; in particular, $H_0x_0$ is Nash
closed and $H_0x_\pm$ are closed Nash submanifolds after removing $H_0x_0$.
\end{proof}

Put
\[
  \mathcal B=H_0x_0=Kx_0,
  \qquad
  M=R_{H_0}(\fp),
  \qquad
  \Omega=M\setminus\mathcal B.
\]
Since $\tau$ normalizes $H_0$, it preserves $\Gamma_{H_0}(\fp)$ and hence $M$; therefore $\mathcal B$, $M$, and $\Omega$ are $K$-stable.  By Proposition~\ref{diii:prop:closures}, $H_0x_+$ and $H_0x_-$ are closed Nash submanifolds of $\Omega$.

\begin{lemma}\label{diii:lem:stabilizers}
There are natural isomorphisms
\begin{align}
 (H_0)_{x_+}&\simeq \Sp(V_0,J_0)\times\Sp(P,J_2),\label{diii:eq:stabplus}\\
 (H_0)_{x_0}&\simeq \Sp(V_0,J_0)\times\GL(P).\label{diii:eq:stabzero}
\end{align}
Moreover $K_{x_+}=(H_0)_{x_+}$.
\end{lemma}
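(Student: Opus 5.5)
The plan is a direct stabilizer computation using the action formula \eqref{diii:eq:Haction}, written with $a\in\GL(V)$. An element $a$ fixes $x=X(B,C)$ exactly when
\[
aBa^t=B\qquad\text{and}\qquad a^{-t}Ca^{-1}=C,
\]
and the second equation is equivalent to $a^tCa=C$. Either condition with $B$ or $C$ nonzero forces $a$ to commute with $BC=A$, because
\[
aBCa^{-1}=aBa^t\,a^{-t}Ca^{-1}=BC.
\]
So every stabilizer lies in $L=Z_{H_0}(A)=\GL(V_0)\times\GL(P)$, and we may write $a=a_0\oplus a_P$.

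For $x_+$, the $V_0$-block conditions are $a_0J_0a_0^t=J_0$ and $a_0^tJ_0a_0=J_0$. Since $J_0^{-1}=-J_0$, the first condition implies the second: invert it to get $a_0^{-t}J_0a_0^{-1}=J_0$. Hence $a_0\in\Sp(V_0,J_0)$. The $P$-block conditions are $a_PJ_2a_P^t=J_2$, i.e.\ $\det a_P=1$, and $a_P^t\cdot 0\cdot a_P=0$, which is empty. So $a_P\in\SL(P)=\Sp(P,J_2)$, and this gives \eqref{diii:eq:stabplus}. For $x_0$ the $V_0$-block is the same, while both $P$-blocks of $B$ and $C$ vanish, so $a_P\in\GL(P)$ is arbitrary; this gives \eqref{diii:eq:stabzero}. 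The identification of the symplectic groups with $(V_0,J_0)$ versus $(V_0,J_0^{-1})$ is harmless, since $J_0^{-1}=-J_0$ defines the same symplectic group. Naturality is immediate, as the maps are restrictions to the summands $V_0$ and $P$.

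For $K_{x_+}=(H_0)_{x_+}$, it suffices to show that no element of the coset $H_0\tau$ fixes $x_+$. Such an element acts by $h\tau$, so $(h\tau)x_+=x_+$ would give $h x_-=x_+$, i.e.\ $x_-\in H_0x_+$. This contradicts Lemma~\ref{diii:lem:fiber}, where $H_0x_+$ and $H_0x_-$ are disjoint; rank invariants also separate them, since $\rk C=r-2$ on one orbit and $\rk C=r$ on the other. One should check that the statement concerns the effective action on $\fp$. The central element $-I_E$ lies in $H_0$ and acts trivially, so working in $G_0$ or in the adjoint image changes nothing beyond this central kernel.

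The only mildly delicate step is the reduction to block-diagonal $a$: one has to check that commuting with $A$ really follows from the stabilizer equations, which is the one-line computation above. Everything else is a routine block-matrix check.
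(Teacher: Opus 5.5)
Your proof is correct and follows essentially the paper's argument: a direct block computation of the two stabilizers, together with the observation that every element of $H_0\tau$ exchanges $\rk B$ and $\rk C$ (equivalently, sends $H_0x_+$ onto the disjoint orbit $H_0x_-$), so no element of that coset fixes $x_+$. The only cosmetic difference is how you get block-diagonality: you obtain it uniformly from commutation with $\Phi(x)=BC=A$, whereas the paper uses $\ker C_+=P$ and its $B_+$-orthogonal complement for $x_+$, and a direct off-diagonal block calculation for $x_0$.
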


\begin{proof}
For $x_+$, the form $B_+$ is nondegenerate and $\ker C_+=P$.  An element preserving both tensors preserves $P$, and hence its $B_+$-orthogonal complement $V_0$.  The restrictions are precisely the two symplectic groups in \eqref{diii:eq:stabplus}.

For $x_0$, write $g\in\GL(V)$ in blocks relative to $V_0\oplus P$.
The equations
\[
  gB_0g^t=B_0,
  \qquad
  g^{-t}C_0g^{-1}=C_0
\]
force both off-diagonal blocks to vanish.  The $V_0$ block lies in
$\Sp(V_0,J_0)$ and the $P$ block is arbitrary, proving \eqref{diii:eq:stabzero}.

Finally, every element of $H_0\tau$ swaps the two ranks.  At $x_+$ they are
$r$ and $r-2$, respectively.  Hence no element of $H_0\tau$ fixes $x_+$.
\end{proof}

Both $H_0$ and $(H_0)_{x_+}$ are complex reductive groups and are therefore unimodular as real Nash groups.  Hence
$H_0x_+\simeq H_0/(H_0)_{x_+}$ carries a nonzero $H_0$-invariant smooth density.  Pushing it forward through the closed embedding
$H_0x_+\hookrightarrow\Omega$ gives an $H_0$-invariant distribution $\mu_+$ on $\Omega$.  Set
\[
  \mu_-:=\tau_*\mu_+,
  \qquad
  T^\circ:=\mu_+-\mu_-.
\]
The two supports are disjoint, so $T^\circ\ne0$.  Since $\tau$ normalizes $H_0$ and $\tau^2=1$,
\begin{equation}\label{diii:eq:Topen-equiv}
  h_*T^\circ=T^\circ\quad(h\in H_0),
  \qquad
  \tau_*T^\circ=-T^\circ.
\end{equation}

\begin{lemma}\label{diii:lem:orbital-schwartz}
The distributions $\mu_+$, $\mu_-$, and $T^\circ$ are Schwartz distributions on $\Omega$.
\end{lemma}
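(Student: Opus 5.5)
The plan is to show that $\mu_+$ is tempered on $\Omega$: it should pair continuously with $\cS(\Omega)$, and it should do so for a structural reason, namely that the orbit $H_0x_+$ is a closed Nash submanifold of the Nash manifold $\Omega$ carrying a Nash invariant density. The case of $\mu_-$ follows by applying the Nash automorphism $\tau$, which preserves $\Omega$ and hence $\cS(\Omega)$. The case of $T^\circ$ then follows by linearity.

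\emph{Step 1: pass to Schwartz functions on the orbit.} The embedding $\iota:H_0x_+\hookrightarrow\Omega$ is a closed Nash embedding by Proposition~\ref{diii:prop:closures}. Restriction of Schwartz functions along a closed Nash submanifold is continuous, $\iota^*:\cS(\Omega)\to\cS(H_0x_+)$ \cite{AGSchwartz}. It therefore suffices to show that integration against the invariant density $\omega$ on $H_0x_+$ is a continuous functional on $\cS(H_0x_+)$.

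\emph{Step 2: show that the invariant density is tempered.} The orbit $H_0x_+\simeq H_0/(H_0)_{x_+}$ is an affine algebraic variety, since $(H_0)_{x_+}$ is reductive by Lemma~\ref{diii:lem:stabilizers}. The invariant density $\omega$ is the absolute value squared of an algebraic invariant top form: unimodularity gives an algebraic $H_0$-invariant volume form $\nu$ on the homogeneous space, and $\omega=\nu\wedge\bar\nu$. A Nash density on a Nash manifold is dominated by a polynomial multiple of a fixed Nash reference density. Schwartz functions on the affine Nash manifold $H_0x_+$ decay faster than any polynomial in the ambient coordinates of $\fp$, and also near the boundary points of the orbit relative to the Nash compactification. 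Hence $f\mapsto\int f\,\omega$ is bounded by a Schwartz seminorm. Concretely, I would cite the standard fact that every Nash (tempered) density defines an element of $\cS^*$, as in \cite[Appendix~B]{AG09}, applied to the Nash manifold $H_0x_+$.

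\emph{Main obstacle.} The delicate point is the behaviour of $\omega$ near the part of $\overline{H_0x_+}$ that has been removed, namely $\mathcal B=H_0x_0$, together with the null cone removed in $M$. Because we work on $\Omega$ and not on $\fp$, Schwartz functions on $\Omega$ vanish to infinite order at $\mathcal B\cup\Gamma$. Consequently any polynomial or inverse-polynomial growth of $\omega$ toward these loci is absorbed, and this is why the statement is formulated on $\Omega$. The only thing to check is that $\omega$ has at most polynomial growth in the Nash sense, both toward $\mathcal B$ and at infinity. This holds because $\omega$ is the pullback of an algebraic density under the algebraic orbit map, so all its coefficients are Nash functions. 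Nash functions are bounded by polynomials in the coordinates and in the inverse distance to the complement of the orbit.
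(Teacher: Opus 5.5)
Your argument is correct, but it follows a different route from the paper. The paper never examines the density itself. It notes that $\mu_+$ is an $H_0$-invariant distribution on $\Omega$ whose support $H_0x_+=H_0\cdot\{x_+\}$ is closed in $\Omega$, hence Nashly compact modulo $H_0$. It then invokes the compact-modulo-group theorem \cite[Theorem~B.4.1]{AG09}, and treats $\mu_-$ and $T^\circ$ the same way.

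You instead prove temperedness directly, in two steps:
\begin{enumerate}
\item restrict along the closed Nash submanifold, $\cS(\Omega)\to\cS(H_0x_+)$;
\item integrate against a Nash density, which is a tempered section of the density bundle and so pairs continuously with $\cS(H_0x_+)$.
\end{enumerate}

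The paper's route is shorter and more general. It uses only invariance and the support condition, so it needs no information about the orbital density. The same theorem also handles character-twisted invariant distributions, which is how it is reused in the Spin section.

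Your route needs one extra input, which you state too loosely. Unimodularity alone yields an invariant smooth density, not an algebraic volume form. To obtain the algebraic form $\nu$, the character $\det\Ad_{\fh_0/\fh_{x_+}}$ of $(H_0)_{x_+}$ must be trivial. It is, because $(H_0)_{x_+}\simeq\Sp(V_0,J_0)\times\Sp(P,J_2)$ is connected semisimple; equivalently, a character of absolute value one on a connected complex group is trivial. With this, $\omega=|\nu\wedge\bar\nu|$ is Nash and your Step~2 goes through.

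Once you work with the intrinsic Schwartz space of the orbit, your ``main obstacle'' paragraph becomes unnecessary. That space already encodes rapid decay toward $\mathcal B$. The null cone plays no role either: $\overline{H_0x_+}=H_0x_+\sqcup H_0x_0$ lies entirely in $R_{H_0}(\fp)$ by \eqref{diii:eq:pointsR}.
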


\begin{proof}
The support of $\mu_+$ is the closed semialgebraic orbit $H_0x_+$.  It is Nashly compact modulo $H_0$ in the sense of \cite[Definition~4.0.3]{AG09}: take the compact set $C=\{x_+\}$ and the closed semialgebraic set
$Z=H_0x_+$, so
\[
  \Supp(\mu_+)\subset Z=H_0C.
\]
The compact-modulo-group theorem \cite[Theorem~B.4.1]{AG09} therefore gives
$\mu_+\in\cS^*(\Omega)^{H_0}$.  The same argument applies to $\mu_-$, and hence to their difference.
\end{proof}

We have proved
\begin{equation}\label{diii:eq:Topen}
  0\ne T^\circ\in\cS^*(\Omega)^{K,\chi}.
\end{equation}
It remains to extend $T^\circ$ across the single boundary orbit $\mathcal B$.

\subsection{The boundary normal representation}

Let $z=x_0$.  Since $M$ is open in the underlying real vector space of $\fp$ and $K/H_0$ is finite, the complexification of the real normal space to $Kz$ in $M$ is obtained from the complex normal space to $H_0z$ in $\fp$.

\subsubsection{The holomorphic normal space}

By \eqref{diii:eq:square}, the minimal polynomial of $z$ divides
$t(t^2+1)$, so $z$ is semisimple in $\fg$.  Hence $\operatorname{ad}(z)$ is semisimple and
\begin{equation}\label{diii:eq:normal-centralizer}
  \fp=[\fh_0,z]\oplus\fp^z.
\end{equation}
Thus the holomorphic normal space is naturally
$N_z^{\mathrm{hol}}\simeq\fp^z$.

Write
\[
 B=\begin{pmatrix}B_{00}&M\\-M^t&B_{22}\end{pmatrix},
 \qquad
 C=\begin{pmatrix}C_{00}&N\\-N^t&C_{22}\end{pmatrix}.
\]
A direct calculation of $[X(B,C),z]=0$ gives
\[
  M=N=0,
  \qquad
  C_{00}=-J_0B_{00}J_0,
\]
with $B_{22}$ and $C_{22}$ arbitrary skew-symmetric $2\times2$ matrices.  Therefore
\begin{equation}\label{diii:eq:Nhol}
  N_z^{\mathrm{hol}}
  \simeq
  \wedgeTwo V_0\oplus\wedgeTwo P\oplus\wedgeTwo P^\vee.
\end{equation}
By Lemma~\ref{diii:lem:stabilizers},
\[
  (H_0)_z=\Sp(V_0,J_0)\times\GL(P).
\]

\subsubsection{The component involution}

On the $\wedgeTwo V_0$ summand, $\tau$ sends the parameter $B_{00}$ to
\begin{equation}\label{diii:eq:tauV0}
  B_{00}\longmapsto -J_0B_{00}J_0.
\end{equation}
The matrix $J_0$ belongs to $\Sp(V_0,J_0)$, and its action on
$\wedgeTwo V_0$ is
\[
  B_{00}\longmapsto J_0B_{00}J_0^t=-J_0B_{00}J_0.
\]
Thus $\tau$ acts on this summand exactly as the stabilizer element $J_0$.
On the one-dimensional summands $\wedgeTwo P$ and $\wedgeTwo P^\vee$, the group $\GL(P)$ acts through $\det$ and $\det^{-1}$, respectively, while $\tau$ interchanges the two lines.

Chen--Sun use the complexification of the underlying real normal space.  Hence we put
\begin{equation}\label{diii:eq:Nrealcomplex}
  N_z:=N_{z,\mathbb R}\otimes_{\mathbb R}\C
  \simeq N_z^{\mathrm{hol}}\oplus\overline{N_z^{\mathrm{hol}}}.
\end{equation}
Because $J_0$ has real entries, the comparison between $\tau$ and the stabilizer element $J_0$ holds on both summands in \eqref{diii:eq:Nrealcomplex}.

\begin{proposition}\label{diii:prop:normal-parity}
For every $m\ge0$,
\begin{equation}\label{diii:eq:normal-even}
  \bigl(\Sym^mN_z^*\bigr)^{(H_0)_z}
  \subset
  \bigl(\Sym^mN_z^*\bigr)^{\tau=1}.
\end{equation}
\end{proposition}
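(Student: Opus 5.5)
The plan is to compute the invariants of $(H_0)_z=\Sp(V_0,J_0)\times\GL(P)$ on $\Sym^m N_z^*$ explicitly, and then check that $\tau$ fixes each of them. Here
\[
N_z\simeq N_z^{\mathrm{hol}}\oplus\overline{N_z^{\mathrm{hol}}},\qquad N_z^{\mathrm{hol}}\simeq \wedgeTwo V_0\oplus\wedgeTwo P\oplus\wedgeTwo P^\vee.
\]
Write the coordinates on the three holomorphic summands as $B_{00}$, $b$ and $c$, with $b$ and $c$ the scalars in $B_{22}=bJ_2$ and $C_{22}=cJ_2$. Use the conjugate coordinates $\bar B_{00},\bar b,\bar c$ on the antiholomorphic part. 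Since $\Sym^\bullet N_z^*$ is the polynomial ring in these variables, it suffices to show that every $(H_0)_z$-invariant polynomial in them is $\tau$-invariant.

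\textbf{Splitting into the two factors.} The factor $\Sp(V_0,J_0)$ acts only on the $(B_{00},\bar B_{00})$ variables. The factor $\GL(P)$ acts only on $(b,c,\bar b,\bar c)$, through the characters $\det,\det^{-1},\overline{\det},\overline{\det}^{-1}$. So the invariant ring is the tensor product of the $\Sp(V_0)$-invariants in $(B_{00},\bar B_{00})$ with the $\GL(P)$-invariants in $(b,c,\bar b,\bar c)$. For the second factor I restrict $\det$ to the torus $\C^\times$ and use that the characters $\lambda^{k}\bar\lambda^{l}$ are distinct for distinct $(k,l)$. A monomial $b^{\alpha}c^{\beta}\bar b^{\gamma}\bar c^{\delta}$ is then invariant exactly when $\alpha=\beta$ and $\gamma=\delta$. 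The invariants in these variables are therefore the polynomials in $bc$ and $\bar b\bar c$. By \S3.2.2, $\tau$ swaps $b\leftrightarrow c$, and since $\tau$ has real entries it also swaps $\bar b\leftrightarrow\bar c$. Hence $bc$ and $\bar b\bar c$ are $\tau$-fixed.

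\textbf{The symplectic factor.} By \eqref{diii:eq:tauV0}, $\tau$ acts on the $\wedgeTwo V_0$ variables exactly as the element $J_0\in\Sp(V_0,J_0)$. Because $J_0$ is real, the same holds on the conjugate variables. So on any $\Sp(V_0)$-invariant in $(B_{00},\bar B_{00})$, $\tau$ acts as the identity, since such an invariant is in particular fixed by $J_0$.

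\textbf{Putting it together, and the main obstacle.} An $(H_0)_z$-invariant is a sum of products $f\cdot g$, with $f$ an $\Sp$-invariant and $g$ a polynomial in $bc$ and $\bar b\bar c$. Here $\tau$ acts diagonally on $N_z=(\text{$V_0$-part})\oplus(\text{$P$-part})$, fixes $f$ by the previous paragraph, and fixes $g$ by the torus computation, so it fixes every such product. The one point needing care is that $\tau$ really does act diagonally with respect to this splitting: it must preserve the summand $\wedgeTwo V_0$ of $N_z^{\mathrm{hol}}$ and the pair of lines $\wedgeTwo P\oplus\wedgeTwo P^\vee$, with no mixing. This follows from the block form of $\tau$ together with $\tau z=z$. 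I also need that the identification of $\tau$ with $J_0$ on the $V_0$-part is compatible with passing to the dual $N_z^*$; it is, because both $\tau$ and $J_0$ act linearly on $N_z$, so agreement on $N_z$ gives agreement on $N_z^*$ and on all $\Sym^m$.
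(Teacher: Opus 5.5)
Your proposal is correct and is essentially the paper's proof. The paper also splits $N_z^*$ into the $V_0$-part and the $P$-part, factors the invariants as $\Sym(A)^{\Sp(V_0)}\otimes\Sym(B)^{\GL(P)}$, and uses that $\tau$ acts as the element $J_0\in\Sp(V_0,J_0)$ on the first factor (on both copies, since $J_0$ is real). On the second factor it identifies the invariants as $\C[uv,\bar u\bar v]$, whose generators $\tau$ fixes because it swaps $u\leftrightarrow v$ and $\bar u\leftrightarrow\bar v$.
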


\begin{proof}
Write $N_z^*=A\oplus B$, where $A$ is the dual of the $V_0$-part in the two summands of \eqref{diii:eq:Nrealcomplex}, and $B$ is the dual of the $P$-part.  The factor $\Sp(V_0)$ acts only on $A$, whereas $\GL(P)$ acts only on $B$.  Therefore
\begin{equation}\label{diii:eq:sym-factor}
  \Sym(N_z^*)^{\Sp(V_0)\times\GL(P)}
  =\Sym(A)^{\Sp(V_0)}\otimes\Sym(B)^{\GL(P)}.
\end{equation}

On $A$, the action of $\tau$ is the action of the group element
$J_0\in\Sp(V_0)$ on both the holomorphic and antiholomorphic copies.  Hence $\tau$ acts trivially on $\Sym(A)^{\Sp(V_0)}$.

For $B$, choose dual coordinates $u,v,\bar u,\bar v$ with characters
\[
  \det^{-1},\quad\det,\quad\overline{\det}^{\,-1},\quad\overline{\det}
\]
under the underlying real group $\GL(P)=\GL_2(\C)$.  A monomial
$u^av^b\bar u^c\bar v^d$ is invariant precisely when
\[
  a=b,\qquad c=d.
\]
Indeed, the determinant map is surjective onto $\C^\times$, and varying independently the modulus and argument of the determinant forces both equalities.  Thus
\[
  \Sym(B)^{\GL(P)}=\C[uv,\bar u\bar v].
\]
The involution $\tau$ interchanges $u$ with $v$ and $\bar u$ with $\bar v$, so it fixes both generators.  By \eqref{diii:eq:sym-factor}, it acts trivially on the full invariant algebra, and hence on every homogeneous degree.
\end{proof}

\subsection{Automatic extension and proof of the main theorem}

In the case under consideration,
\[
  K_z=(H_0)_z\rtimes\langle\tau\rangle.
\]

\begin{lemma}\label{diii:lem:modular}
The Nash groups $K$ and $K_z$ are unimodular.  Consequently
$\delta_{K/K_z}=1$.
\end{lemma}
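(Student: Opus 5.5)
The plan is to reduce everything to the unimodularity of a few explicit groups: the identity components of $K$ and $K_z$ are complex reductive, and the remaining finite components cannot contribute to a modular character.

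First, $K=H_0\rtimes\langle\tau\rangle$ and $K_z=(H_0)_z\rtimes\langle\tau\rangle$ are Nash groups whose identity components are $H_0\simeq\GL(V)$ and $(H_0)_z\simeq\Sp(V_0,J_0)\times\GL(P)$, the latter by Lemma~\ref{diii:lem:stabilizers}. For a Lie group $\mathcal K$ the modular function is $\Delta_{\mathcal K}(k)=\absC{\det\Ad(k)|_{\Lie\mathcal K}}^{\pm1}$ on the underlying real Lie algebra. Here $\det_\R$ of a complex-linear map equals $|\det_\C|^2$, which is the same quantity.

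On the identity components this determinant is trivial. The Lie algebras $\mathfrak{gl}_r$ and $\mathfrak{sp}(V_0)\oplus\mathfrak{gl}(P)$ are reductive, so each is the direct sum of its center and a semisimple ideal. The adjoint action of the connected group is trivial on the center. On the semisimple part the adjoint action lands in the automorphism group, whose identity component has determinant $1$, and a connected group maps into that identity component. Hence $\Delta$ is trivial on $K^\circ$ and on $K_z^\circ$.

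For the full groups, note that $\Delta$ is a continuous homomorphism to $\R_{>0}$. The quotient $K/K^\circ$ is finite, of order $2$, and $\R_{>0}$ has no nontrivial finite-order elements. So $\Delta(\tau)^2=\Delta(\tau^2)=1$ forces $\Delta(\tau)=1$, and the same argument applies to $K_z$. Alternatively, $\Ad(\tau)$ preserves the invariant trace form on $\mathfrak{gl}(V)$, so its determinant is $\pm1$ and its absolute value is $1$.

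Finally, the relative modular character is $\delta_{K/K_z}=\Delta_K|_{K_z}\cdot\Delta_{K_z}^{-1}$, up to the paper's convention on inverses. Since both factors are trivial, $\delta_{K/K_z}=1$.

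The only point needing care is the claim that $\tau$ normalizes $(H_0)_z$, which is what makes $K_z$ a semidirect product. This holds because $\tau z=z$. The fact that $\tau$ lies in $K_z$ at all is the case under consideration. I do not expect a genuine obstacle; the one thing to check is that the Nash modular character matches the Lie-theoretic one, and that is standard.
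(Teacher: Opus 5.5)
Your proof is correct and takes the same route as the paper. Both arguments use two facts: the complex reductive identity components are unimodular, and a homomorphism to $\R_{>0}$ that is trivial on $K^\circ$ (respectively $K_z^\circ$) must be trivial, because the component group is finite. Your explicit determinant computation on the reductive Lie algebras simply spells out the step the paper treats as standard.
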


\begin{proof}
The identity components are complex reductive groups and hence unimodular.  The modular character is a continuous homomorphism to $\mathbb R_{>0}$; after restricting trivially to the identity component it factors through a finite component group, and is therefore trivial.
\end{proof}

\begin{lemma}\label{diii:lem:CScondition}
For every $m\ge0$, the trivial representation of $K_z$ does not occur as a subquotient of
\begin{equation}\label{diii:eq:CSfiber}
  \Sym^m(N_z^*)\otimes\delta_{K/K_z}\otimes\chi|_{K_z}.
\end{equation}
\end{lemma}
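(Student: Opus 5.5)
Since $\delta_{K/K_z}=1$ by Lemma~\ref{diii:lem:modular}, the module in \eqref{diii:eq:CSfiber} is $\Sym^m(N_z^*)\otimes\chi|_{K_z}$. Our plan is to show that the trivial representation cannot occur even as a subquotient, using complete reducibility together with Proposition~\ref{diii:prop:normal-parity}.

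\emph{Reduction to submodules.} The group $K_z=(H_0)_z\rtimes\langle\tau\rangle$ has identity component $\Sp(V_0,J_0)\times\GL(P)$ and component group of order two. The space $\Sym^m(N_z^*)$ is a finite-dimensional algebraic representation of the underlying real group. Its holomorphic and antiholomorphic tensor factors are algebraic representations of the complex reductive group and of its complex conjugate, so the whole space is an algebraic representation of the real reductive group $K_z$. The same holds after twisting by the character $\chi$. Such representations are completely reducible: the identity component is reductive, and averaging over the finite component group preserves this. Hence a trivial subquotient would split off as a trivial subrepresentation, that is, a nonzero vector $w$ fixed by $K_z$.

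\emph{The parity argument.} Suppose $w$ is such a fixed vector. On the untwisted space $\Sym^m(N_z^*)$, invariance under the twisted action means that $w$ is fixed by $(H_0)_z$, since $\chi$ is trivial on $H_0$, and that $\tau w=-w$, since $\chi(\tau)=-1$. Proposition~\ref{diii:prop:normal-parity} says that every $(H_0)_z$-invariant vector also satisfies $\tau w=w$. Therefore $w=-w$, so $w=0$, which is a contradiction.

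\emph{Main point to check.} The delicate step is the complete reducibility of the real-algebraic $K_z$-module, which is needed to pass from subquotients to subspaces. We would justify it by the remark that $(H_0)_z$, viewed as a real group, is the real points of a reductive group whose complexification is $(H_0)_z\times\overline{(H_0)_z}$. The holomorphic-antiholomorphic module is a rational representation of this product, hence semisimple, and Maschke's theorem handles the extension by $\langle\tau\rangle$. We also need that the action of $\tau$ on $N_z$ agrees with the one used in Proposition~\ref{diii:prop:normal-parity}. This holds because $\tau$ fixes $z$ and preserves the decomposition \eqref{diii:eq:normal-centralizer}, so it acts on $N_z^{\mathrm{hol}}\simeq\fp^z$ by restriction, as computed in \eqref{diii:eq:tauV0}.
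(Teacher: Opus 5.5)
Your proposal is correct and matches the paper's proof essentially step for step. The paper also uses $\delta_{K/K_z}=1$, gets complete reducibility from the reductive group $(H_0)_z$ and averages over the finite component group, and then derives a contradiction from Proposition~\ref{diii:prop:normal-parity}: a fixed vector would be $(H_0)_z$-invariant with $\tau w=-w$. Your extra check is a harmless addition that the paper leaves implicit: since $\tau$ normalizes $\fh_0$ and fixes $z$, it preserves the splitting \eqref{diii:eq:normal-centralizer}.
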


\begin{proof}
The restriction of \eqref{diii:eq:CSfiber} to the reductive real algebraic group
$(H_0)_z$ is a finite-dimensional algebraic representation and is completely reducible.  Since $K_z/(H_0)_z$ is finite, an $(H_0)_z$-equivariant projection onto any $K_z$-submodule can be averaged over the finite component group; hence \eqref{diii:eq:CSfiber} is completely reducible as a $K_z$-representation.  It is therefore enough to exclude a fixed vector.  By Lemma~\ref{diii:lem:modular}, such a vector would be an element
\[
  w\in\bigl(\Sym^mN_z^*\bigr)^{(H_0)_z}
\]
satisfying $\tau w=-w$.  Proposition~\ref{diii:prop:normal-parity} gives instead
$\tau w=w$, hence $w=0$.
\end{proof}

\begin{proposition}\label{diii:prop:automatic-extension}
Restriction induces an isomorphism
\begin{equation}\label{diii:eq:restriction-iso}
  \cS^*(M)^{K,\chi}
  \xrightarrow{\ \sim\ }
  \cS^*(\Omega)^{K,\chi}.
\end{equation}
In particular, $T^\circ$ extends uniquely to a nonzero element of
$\cS^*(M)^{K,\chi}$.
\end{proposition}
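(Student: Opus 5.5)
The plan is to apply the Chen--Sun automatic-extension theorem to the Nash $K$-manifold $M$, the open $K$-stable subset $\Omega$, and the closed complement $\mathcal B=Kz$. This complement is a single $K$-orbit because $\mathcal B=H_0x_0$ is closed in $\fp$ by Proposition~\ref{diii:prop:closures}, and hence closed in $M$.

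In the form we need, Chen--Sun says the following. Suppose that for every $m\ge0$ the fiber representation
\[
\Sym^m(N_z^*)\otimes\delta_{K/K_z}\otimes\chi|_{K_z}
\]
has no trivial subquotient, where $N_z$ is the complexified real normal space at $z$. Then:
\begin{enumerate}[label=\textup{(\alph*)}]
\item every $(K,\chi)$-equivariant distribution on $\Omega$ extends to one on $M$;
\item no nonzero $(K,\chi)$-equivariant distribution on $M$ is supported on $\mathcal B$.
\end{enumerate}
Lemma~\ref{diii:lem:CScondition} is exactly this hypothesis. Lemma~\ref{diii:lem:modular} has already made the modular factor trivial, and the complexified normal space was identified in \eqref{diii:eq:Nrealcomplex}.

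We first check that the setting matches the theorem's hypotheses. $K$ is a real algebraic (Nash) group with finitely many components, acting Nashly on the open semialgebraic set $M\subset\fp_{\R}$. The orbit $\mathcal B$ is a closed Nash submanifold of $M$, and $\chi$ is a character of $K$. The remark opening the boundary subsection identifies the normal space of $Kz$ in $M$ with that of $H_0z$, since $K/H_0$ is finite.

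Injectivity of the restriction map \eqref{diii:eq:restriction-iso} is the vanishing statement (b). The kernel consists of $(K,\chi)$-equivariant Schwartz distributions supported on $\mathcal B$. By the Bruhat filtration (\cite[Theorem~2.5.6]{AG09} together with Frobenius reciprocity \cite[Theorem~2.5.7]{AG09}), each graded piece embeds in
\[
\bigl(\Sym^m(N_z^*)\otimes\delta\otimes\chi\bigr)^{K_z},
\]
which vanishes by Lemma~\ref{diii:lem:CScondition}. So the kernel is zero.

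Surjectivity is the extension statement (a), which is the genuine content of Chen--Sun. A general extension of $T\in\cS^*(\Omega)^{K,\chi}$ to $M$ always exists, since Schwartz distributions on an open Nash subset extend. The task is to correct it to an equivariant one. The obstruction to doing so lives in $K$-cohomology of distributions supported on $\mathcal B$. Chen--Sun's theorem says this obstruction vanishes precisely under the no-trivial-subquotient condition; this is why the lemma was phrased with subquotients rather than only invariants.

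I expect the main obstacle to be the exact matching with the hypotheses of the Chen--Sun theorem: the convention for $\delta_{K/K_z}$, the use of $N_z^*$ versus $N_z$, and the requirement that the condition be imposed on the complexification of the real normal bundle. I would state their theorem precisely at this point and verify each convention against the fiber \eqref{diii:eq:CSfiber}.

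Finally, the extension of $T^\circ$ is nonzero because its restriction to $\Omega$ is $T^\circ\ne0$ by \eqref{diii:eq:Topen}. Uniqueness follows from injectivity.
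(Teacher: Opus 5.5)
Your proposal is correct and follows the paper's route. Both arguments apply Chen--Sun to $(K,M,\Omega)$ with the single boundary orbit $Kz$, use Lemmas~\ref{diii:lem:modular} and~\ref{diii:lem:CScondition} as the modular and subquotient hypotheses, and conclude from $T^\circ\neq0$.

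The differences are minor. You prove injectivity separately via the Bruhat filtration of \cite[Theorem~2.5.6]{AG09}; there the $N_z$ versus $N_z^*$ convention is harmless because the fibers are completely reducible and $\chi=\chi^{-1}$. The paper instead reads off both directions from the degree-zero Schwartz-homology isomorphism of \cite[Theorem~1.13]{ChenSun}. It also explicitly checks the moderate-growth hypothesis on $\chi$, which holds because $\chi$ has finite image; you should state this check too.
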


\begin{proof}
We apply \cite[Theorem~1.13]{ChenSun} to the almost linear Nash group $K$, the $K$-stable open Nash manifold $M$, its open submanifold
$\Omega=M\setminus Kz$, and the trivial rank-one tempered $K$-vector bundle.
The boundary consists of the single orbit $Kz$.  The sign character $\chi$ has finite image and therefore moderate growth.  Lemmas~\ref{diii:lem:modular} and
\ref{diii:lem:CScondition} give exactly the relative modular and boundary-subquotient hypotheses of Chen--Sun for every symmetric degree.

Theorem~1.13 gives the corresponding isomorphism on Schwartz homology; in degree zero, the invariant-distribution consequence explained immediately after
\cite[Theorem~1.12]{ChenSun} identifies the dual restriction map with
\eqref{diii:eq:restriction-iso}.  Since $\chi^{-1}=\chi$, there is no character-convention change.  Applying \eqref{diii:eq:restriction-iso} to the nonzero distribution
$T^\circ$ from \eqref{diii:eq:Topen} gives the asserted extension.
\end{proof}

\begin{proof}[Proof of Theorem~\ref{thm:DIII-main}]
By Proposition~\ref{diii:prop:automatic-extension}, the sign-equivariant space in
\eqref{diii:eq:sign-goal} is nonzero for every even $r\ge4$.  Proposition~\ref{diii:prop:effective-reduction} then proves regularity in the only nonautomatic case.  The same proposition already proves regularity in odd rank and in every isogeny form for which the image of the fixed subgroup contains both effective components.  This exhausts all connected complex DIII isogeny forms.
\end{proof}

\begin{corollary}\label{diii:cor:standard-forms}
For every $r\ge4$, both the simply connected DIII pair with ambient group
$\Spin_{2r}(\C)$ and the standard orthogonal pair
$(\SO_{2r}(\C),\GL_r(\C))$ are regular.
\end{corollary}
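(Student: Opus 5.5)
The corollary should follow directly from Theorem~\ref{thm:DIII-main}: I only need to check that both named pairs are instances of its hypotheses.

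\emph{The simply connected pair.} Take $G=\Spin_{2r}(\C)$. This is a connected complex semisimple group with Lie algebra $\mathfrak{so}_{2r}(\C)$. Let $\theta$ be the lift to $\Spin_{2r}$ of the involution $\theta_0=\Ad(s)$ of the orthogonal model. Such a lift exists and is unique by Lemma~\ref{lem:canonical-cover}, applied with trivial centre torus. It is again an involution, because its square lifts the identity and lifts of automorphisms to the simply connected cover are unique. Its differential is $d\theta_0$, so the symmetric Lie algebra pair is $(\mathfrak{so}_{2r},\mathfrak{gl}_r)$, which is of DIII type. Theorem~\ref{thm:DIII-main} therefore applies with $H=G^\theta$, the full fixed subgroup.

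\emph{The orthogonal pair.} Take $G=G_0=\SO(E,q)$ and $\theta=\theta_0$. By construction $G_0^{\theta_0}=H_0\simeq\GL_r(\C)$, so $(\SO_{2r}(\C),\GL_r(\C))$ is literally the pair $(G_0,G_0^{\theta_0},\theta_0)$ in the form required by Definition~\ref{def:regularity}. Again Theorem~\ref{thm:DIII-main} applies.

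\emph{The only point to check.} The fixed subgroup in the statement must be the full fixed subgroup $G^\theta$, as in the theorem, and not only its identity component. For $\SO_{2r}$ this holds because $G_0^{\theta_0}=H_0$ is connected. For $\Spin_{2r}$ it holds by definition of $H$; Steinberg's connectedness theorem also gives connectedness here.

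In the even-rank case, both forms have effective image $q(H_0)=C^\circ$, so they fall into the sign-distribution case of Proposition~\ref{diii:prop:effective-reduction}, which the theorem already covers. No new step is expected to be an obstacle.
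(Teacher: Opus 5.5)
Your proposal is correct and follows the paper, which states this corollary as an immediate specialization of Theorem~\ref{thm:DIII-main} to $\Spin_{2r}(\C)$ and to $\SO(E,q)$ with $G_0^{\theta_0}=H_0\simeq\GL_r(\C)$. Your extra observation that in even rank both forms have effective image $C^\circ$, and so land in the sign-distribution case of Proposition~\ref{diii:prop:effective-reduction}, is accurate but not needed for the deduction.
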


\begin{remark}
The proof uses no classification of distinguished nilpotent DIII orbits.  In the even-rank case the only distribution-theoretic input beyond the definition of regularity is the compact-modulo-group theorem of Aizenbud--Gourevitch and the automatic-extension theorem of Chen--Sun.
\end{remark}

\begin{corollary}[DIII component implication]\label{cor:DIII-component}
For the canonical DIII factor, the finite-component datum of
Definition~\ref{def:componentwise-regular} is componentwise regular.
\end{corollary}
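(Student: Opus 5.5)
We must show that for the canonical DIII factor, with finite-component datum $(L,A,B_0,V)$, every nontrivial character $\lambda\in\widehat A$ with $\lambda|_{B_0}=1$ satisfies \eqref{eq:component-implication}. The plan is to identify $A$ explicitly and then observe that the antecedent of \eqref{eq:component-implication} is always false: the regular-set space is never zero. This is precisely what Proposition~\ref{diii:prop:automatic-extension} supplies.

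\emph{Step 1: identify $A$.} On the canonical cover the DIII factor is $\Spin_{2r}$ with fixed group connected (Steinberg), so $L$ is the effective image $C^\circ=q(H_0)$ acting on $V=\fp$; by \eqref{diii:eq:no-fixed}, $Q_L(\fp)=\fp$. The group $J$ is the effective image of elements whose $\widetilde\theta$-multiplier is central. By Lemma~\ref{lem:lifting-multiplier} and the definition, these map into $G_{\mathrm{ad}}$ as elements fixed by $\theta_{\mathrm{ad}}$, i.e.\ into $C$. Conversely, every element of $C$ lifts to $\Spin_{2r}$ with central multiplier. Hence $J=C$, and Corollary~\ref{diii:cor:Ccomponents} gives $A=C/C^\circ$, which is trivial for $r$ odd and $\langle q(\tau)\rangle\simeq\Z/2$ for $r$ even. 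Moreover $B_0=\{1\}$.

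\emph{Step 2: dispose of the trivial cases.} If $r$ is odd, $A=B_0$ and the condition is vacuous. If $r$ is even, the only nontrivial $\lambda$ is the sign character, with $\lambda(q(\tau))=-1$. Under the identification $J=C$, the space $\cS^*(R_L(\fp))^{L,\lambda}$ is exactly $\cS^*(R_{H_0}(\fp))^{K,\chi}$ from \eqref{diii:eq:sign-goal}. This holds because $\pm I_E$ acts trivially on $\fp$, so $K$ and $J$ have the same effective action, and $\chi$ corresponds to $\lambda$.

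\emph{Step 3: conclude.} Proposition~\ref{diii:prop:automatic-extension} shows that this space contains the nonzero extension of $T^\circ$ for every even $r\ge4$. The hypothesis $\cS^*(R_L(V))^{L,\lambda}=0$ of \eqref{eq:component-implication} therefore never holds, and the implication is true vacuously.

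The only point requiring care, and the main obstacle, is Step 1: checking that the abstract multiplier group $J$ on the simply connected cover coincides with $C$ as an effective group. The inclusion $J\subset C$ is formal. Surjectivity onto the $\tau$-component uses that $\tau\in\SO_{2r}$ lifts to $\Spin_{2r}$ with multiplier in the center, which follows from Lemma~\ref{lem:lifting-multiplier} applied to $\Spin_{2r}\to G_{\mathrm{ad}}$.
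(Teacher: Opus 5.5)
Your proposal is correct and follows the paper's argument. In odd rank $A/B_0$ is trivial. In even rank the unique nontrivial character is the sign character of \eqref{diii:eq:sign-goal}, and Proposition~\ref{diii:prop:automatic-extension} produces a nonzero distribution for it on the regular set, so the premise of \eqref{eq:component-implication} fails.

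Your Step~1 identification $J=C$, $A=C/C^\circ$, $B_0=\{1\}$ only makes explicit what the paper leaves implicit. One small correction: the inclusion $J\subset C$ is immediate from the definition, and Lemma~\ref{lem:lifting-multiplier} is needed only for the reverse inclusion.
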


\begin{proof}
In odd rank the obstruction quotient $A/B_0$ is trivial.  In even rank it has
order two and its nontrivial character is the sign character in
\eqref{diii:eq:sign-goal}.  Proposition~\ref{diii:prop:automatic-extension}
constructs a nonzero sign-equivariant distribution on the regular set.  Hence
the premise of the unique nontrivial component implication is false.
\end{proof}

\section{The balanced CII family}\label{sec:CII-family}

Let $U$ and $W$ be complex symplectic spaces of dimension $2r$, put
$E=U\oplus W$, and set
\[
 G=\Sp(E),\qquad H=\Sp(U)\times\Sp(W).
\]
For $s=\Id_U\oplus(-\Id_W)$ and $\theta=\Ad(s)$, the isotropy representation is
\[
 \fp\simeq U\otimes W,
 \qquad \fp^H=0.
\]
The purpose of this section is to prove regularity of
$(\Sp_{4r},\Sp_{2r}\times\Sp_{2r})$ for every $r\ge1$ and to verify all of its
descendants.  The only effective outer component is the block swap.  The proof
reduces its sign sector by homogeneity to distinguished nilpotent orbits, proves
a stable-density theorem in the conormal centralizer, and then annihilates the
Frobenius fiber including its modular character.

\subsection{The block-swap reduction}
Fix a symplectic isomorphism \(j:U\to W\) and define
\[
\tau=
\begin{pmatrix}
0&j^{-1}\\
j&0
\end{pmatrix}.
\]
Then
\[
\tau^2=1,\qquad \tau s=-s\tau,
\]
and \(\tau\) normalizes \(H\).

\begin{lemma}[Reduction of admissible elements]\label{cii:lem:admissible-components}
Let \(g\in G\) be admissible in the sense of
\cite[Definition~7.4.1]{AG09}.  Then
\[
g\in H\cup H\tau.
\]
Moreover, on the space of \(H\)-invariant distributions on \(\mathfrak p\),
every admissible element in \(H\tau\) has the same action as \(\tau\).
Consequently the only possibly nontrivial regularity implication is
equivalent to
\[
\cS^*(R(\mathfrak p))^{K,\chi}=0
\quad\Longrightarrow\quad
\cS^*(\mathfrak p)^{K,\chi}=0,
\]
where
\[
K:=H\rtimes\langle\tau\rangle,
\qquad
\chi|_H=1,\qquad
\chi(\tau)=-1.
\]
\end{lemma}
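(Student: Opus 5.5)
The plan mirrors the DIII argument of Lemma~\ref{diii:lem:components-SO} and Proposition~\ref{diii:prop:effective-reduction}. The proof has three steps: determine which elements commute with $\theta$, compare the action of $H\tau$ with that of $\tau$, and rewrite the one remaining regularity implication as a sign-sector statement.

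\textbf{Step 1: elements commuting with $\theta$.} Let $g\in G=\Sp(E)$ be admissible. The first admissibility condition gives $gsg^{-1}s^{-1}\in Z(G)=\{\pm1\}$, so $gsg^{-1}=\pm s$.
\begin{enumerate}[label=\textup{(\alph*)}]
\item If $gsg^{-1}=s$, then $g$ preserves the eigenspaces $U$ and $W$ of $s$. Since $g$ is symplectic and $U\perp W$, its restrictions lie in $\Sp(U)$ and $\Sp(W)$, so $g\in H$.
\item If $gsg^{-1}=-s$, then $g$ exchanges $U$ and $W$. Since $\tau$ also anticommutes with $s$, the element $g\tau$ commutes with $s$, and case (a) gives $g\tau\in H$. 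As $\tau^2=1$, this means $g\in H\tau$.
\end{enumerate}
Unlike the DIII case, no parity restriction arises, because $\tau\in\Sp(E)$ for every $r$: it is symplectic since $j$ is. The remaining parts of admissibility (preserving closed orbits, square in the $H$-action) are not needed to confine $g$.

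\textbf{Step 2: action on invariant distributions.} Write $g=h\tau$ with $h\in H$. Then $\Ad(g)|_{\fp}=\Ad(h)\Ad(\tau)|_{\fp}$. For any $H$-invariant distribution $T$ on $\fp$ or on $R(\fp)$, we get $g_*T=h_*\tau_*T$. Since $\tau$ normalizes $H$, the distribution $\tau_*T$ is again $H$-invariant, so $h_*\tau_*T=\tau_*T$. Hence $g$ acts as $\tau$ on both spaces. Elements of $H$ act trivially, so for them the regularity implication holds automatically. Note also that $\tau$ preserves $R(\fp)$, because it normalizes $H$ and therefore preserves the invariant polynomials up to automorphism, and with them the null cone. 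Since $\fp^H=0$, we have $Q_H(\fp)=\fp$. So the only nontrivial case to check is $g=\tau$, and one only needs to know that $\tau$ is actually admissible if one wants ``equivalent'' rather than merely ``sufficient''.

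\textbf{Step 3: the sign-sector reformulation.} This bookkeeping step is the one needing care. Because $\tau^2=1$, the involution $\tau_*$ splits the $H$-invariant distributions into $\pm1$ eigenspaces:
\[
\cS^*(X)^H=\cS^*(X)^{K,1}\oplus\cS^*(X)^{K,\chi},\qquad X=\fp\ \text{or}\ R(\fp),
\]
with projections $T\mapsto\tfrac12(T\pm\tau_*T)$. The inclusion $\cS^*(X)^H\subset\cS^*(X)^{\tau}$ holds exactly when $\cS^*(X)^{K,\chi}=0$. Substituting this on both sides of the regularity implication for $g=\tau$ turns it into the displayed statement
\[
\cS^*(R(\fp))^{K,\chi}=0\quad\Longrightarrow\quad\cS^*(\fp)^{K,\chi}=0.
\]
For the equivalence claim itself, I expect little difficulty. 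If $\tau$ were not admissible, the implication would simply not be required, and the stated reformulation would remain a sufficient condition, which is all the later argument uses.
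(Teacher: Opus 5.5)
Your proposal is correct and follows the paper's proof. Both arguments run through the same three steps: the central-multiplier dichotomy $\theta(g)=\pm g$ (your $gsg^{-1}=\pm s$), the factorization $g=(g\tau)\tau$ with $g\tau\in H$, and the identity $h_*\tau_*T=\tau_*T$ on $H$-invariant distributions, followed by the $\pm1$ eigenspace splitting under $\tau_*$. Your extra remarks make explicit what the paper leaves implicit: that $\tau$ preserves $R(\fp)$, and the sufficient-versus-equivalent caveat, which is harmless because admissibility of $h\tau$ does not depend on $h\in H$.
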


\begin{proof}
We use only the first condition in the definition of admissibility.
Since \(\Ad(g)\) commutes with \(\theta\),
\[
\Ad(\theta(g))=\Ad(g),
\]
and hence
\[
g^{-1}\theta(g)\in Z(G).
\]
For \(G=\Sp_{4r}(\C)\),
\[
Z(G)=\{\pm\Id_E\},
\]
so
\[
\theta(g)=g
\quad\text{or}\quad
\theta(g)=-g.
\]
This is a necessary consequence of admissibility; we do not claim that
every element satisfying one of these two equations is automatically
admissible.

If \(\theta(g)=g\), then \(g\in H\).  If \(\theta(g)=-g\), then
\(sg=-gs\), so \(g\) interchanges \(U\) and \(W\).  Therefore
\(h:=g\tau^{-1}\) preserves both symplectic summands, hence \(h\in H\), and
\(g=h\tau\in H\tau\).

Finally, if \(T\) is \(H\)-invariant and \(g=h\tau\), then
\[
\Ad(g)T=\Ad(h)\Ad(\tau)T=\Ad(\tau)T.
\]
Thus, for every admissible element whose action is not already trivial on
\(H\)-invariants, the regularity implication is precisely the implication
that the \(\tau\)-anti-invariant part vanishes.  This is equivalent to the
displayed \((K,\chi)\)-equivariant implication.
\end{proof}

Equivalently, the only implication that remains to be proved is
\begin{equation}\label{cii:eq:regularity-implication}
\cS^*(R(\mathfrak p))^H
\subset
\cS^*(R(\mathfrak p))^\tau
\quad\Longrightarrow\quad
\cS^*(\mathfrak p)^H
\subset
\cS^*(\mathfrak p)^\tau.
\end{equation}
When convenient, a distribution that is \(H\)-invariant and
\(\tau\)-odd will be viewed as a \((K,\chi)\)-equivariant distribution,
where
\[
K=H\rtimes\langle\tau\rangle,
\qquad
\chi|_H=1,\quad \chi(\tau)=-1.
\]

\subsection{Distributional reduction and resonance}
\label{cii:sec:distributional}

The symmetric bilinear form
\begin{equation}\label{cii:eq:B}
B(u\otimes w,u'\otimes w')
=
\omega_U(u,u')\,\omega_W(w,w')
\end{equation}
is nondegenerate and \(H\)-invariant.  If \(j:U\to W\) is the symplectic
identification used to define the block swap, then \(\tau\) exchanges the
two symplectic factors; hence \(B\) is also \(\tau\)-invariant.  Moreover,
\begin{equation}\label{cii:eq:nilcone-isotropic}
\cN(\mathfrak p)\subset
Z(B):=\{x\in\mathfrak p:B(x,x)=0\}.
\end{equation}
Indeed \(x\mapsto B(x,x)\) is \(H\)-invariant, and its value at a point
whose orbit closure contains \(0\) must equal its value at \(0\).

\begin{lemma}[Invariant form and orbit conormals]\label{cii:lem:B-centralizer}
Let
\[
  \kappa(X,Y)=\Tr_E(XY),\qquad X,Y\in\mathfrak{sp}(E).
\]
Then \(B\) is a nonzero scalar multiple of
\(\kappa|_{\mathfrak p\times\mathfrak p}\).  Consequently, for every
\(e\in\mathfrak p\),
\begin{equation}\label{cii:eq:orthogonal-centralizer}
  [\mathfrak h,e]^{\perp_B}=\mathfrak p^e.
\end{equation}
After passage to the underlying real vector space, the same statement holds
for \(B_{\R}=\operatorname{Re}B\).
\end{lemma}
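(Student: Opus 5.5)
The plan is to compute $\kappa$ directly on $\mathfrak p$ and compare it with $B$, then deduce the orthogonality relation from invariance of $\kappa$ together with nondegeneracy.

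\emph{Identifying $\mathfrak p$ with $U\otimes W$.} An element of $\mathfrak p$ is block off-diagonal,
\[
X=\begin{pmatrix}0&\beta\\ \alpha&0\end{pmatrix},\qquad \alpha:U\to W,\quad \beta:W\to U.
\]
The condition $X\in\mathfrak{sp}(E)$ forces $\beta=-\alpha^\ast$, where $\alpha^\ast$ is the symplectic adjoint. The identification with $U\otimes W$ sends $u\otimes w$ to the map $\alpha(u')=\omega_U(u,u')\,w$, possibly up to a fixed normalizing scalar.

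\emph{Comparing the forms.} For two such elements, $\Tr_E(XY)=-\Tr_U(\alpha^\ast\alpha')-\Tr_W(\alpha\alpha'^\ast)=-2\Tr_U(\alpha^\ast\alpha')$. Evaluating on rank-one tensors gives
\[
\Tr_E\bigl((u\otimes w)(u'\otimes w')\bigr)=c\,\omega_U(u,u')\,\omega_W(w,w')
\]
for a universal nonzero constant $c$, equal to $\pm2$ depending on conventions. By bilinearity, $B=c^{-1}\kappa|_{\mathfrak p}$. As an alternative check, both forms are $H$-invariant, and $U\otimes W$ is an irreducible $H$-module, so the space of invariant bilinear forms on it is one-dimensional. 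Hence $\kappa|_{\mathfrak p}$ is automatically a multiple of $B$, and one only needs to verify $\kappa|_{\mathfrak p}\neq0$; a single rank-one test suffices.

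\emph{The orthogonality relation.} $\kappa$ is $\operatorname{ad}$-invariant and nondegenerate on $\mathfrak{sp}(E)$, and $\mathfrak h\perp\mathfrak p$ because $\theta$ preserves $\kappa$. Hence its restrictions to $\mathfrak h$ and to $\mathfrak p$ are both nondegenerate. For $y\in\mathfrak p$ and $z\in\mathfrak h$,
\[
\kappa([z,e],y)=-\kappa(z,[e,y]),
\]
and $[e,y]\in\mathfrak h$. So $y\perp[\mathfrak h,e]$ if and only if $[e,y]$ is $\kappa$-orthogonal to all of $\mathfrak h$. Since $\kappa|_{\mathfrak h}$ is nondegenerate, this holds if and only if $[e,y]=0$, i.e.\ $y\in\mathfrak p^e$. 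Because $B$ is a nonzero multiple of $\kappa|_{\mathfrak p}$, the same relation holds for $B$.

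\emph{Passage to the real form.} For a complex subspace $S\subset\mathfrak p$, the real orthogonal complement under $\operatorname{Re}B$ equals the complex one. Indeed, if $\operatorname{Re}B(x,s)=0$ for all $s\in S$, then replacing $s$ by $is$ gives $\operatorname{Im}B(x,s)=0$, so $B(x,s)=0$. Also, $\operatorname{Re}B$ is nondegenerate as a real form.

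The only delicate point is the sign and constant bookkeeping in the identification $\mathfrak p\simeq U\otimes W$. The irreducibility argument avoids this, since it needs only $\kappa|_{\mathfrak p}\neq0$.
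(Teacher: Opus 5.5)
Your proof is correct and is essentially the paper's argument. You get proportionality of $B$ and $\kappa|_{\mathfrak p}$ from Schur's lemma on the irreducible $H$-module $U\otimes W$ (your explicit trace computation, giving $c=-2$ with your identification, is a valid extra check). You then get the annihilator identity from invariance of $\kappa$, $[e,y]\in\mathfrak h$, and nondegeneracy of $\kappa|_{\mathfrak h}$, and your $is$ argument spells out the passage to $\operatorname{Re}B$ that the paper states in one line.

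One harmless slip: invariance gives $\kappa([z,e],y)=\kappa(z,[e,y])$ with no minus sign, but only the vanishing is used, so the conclusion is unaffected.
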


\begin{proof}
The trace form \(\kappa\) is a nondegenerate invariant symmetric form on the
simple Lie algebra \(\mathfrak{sp}(E)\).  Its \(\pm1\)-eigenspaces for
\(d\theta\) are orthogonal, so its restrictions to \(\mathfrak h\) and
\(\mathfrak p\) are nondegenerate.  Both \(B\) and
\(\kappa|_{\mathfrak p}\) are nonzero \(H\)-invariant symmetric bilinear
forms on the irreducible \(H\)-module
\(\mathfrak p\simeq U\otimes W\).  Since
\(\dim\Hom_H(\mathfrak p,\mathfrak p^*)=1\), they differ by a nonzero
scalar.

For \(y\in\mathfrak p\), invariance of \(\kappa\) gives
\[
  \kappa([X,e],y)=\kappa(X,[e,y])\qquad(X\in\mathfrak h).
\]
Here \([e,y]\in\mathfrak h\).  Nondegeneracy of \(\kappa|_{\mathfrak h}\)
therefore shows that \(y\) annihilates \([\mathfrak h,e]\) if and only if
\([e,y]=0\), proving \eqref{cii:eq:orthogonal-centralizer}.  Taking real parts
and viewing the complex spaces as real spaces preserves nondegeneracy and the
same annihilator identity.
\end{proof}

\subsubsection{Homogeneity}

The following form of the reduction avoids any need to regard
\(H\rtimes\langle\tau\rangle\) as the group of complex points of a
reductive algebraic group.

\begin{lemma}[Homogeneity reduction]\label{cii:lem:homogeneity}
Assume the left-hand side of \eqref{cii:eq:regularity-implication}.  If the
right-hand side fails, then there exists a nonzero \(H\)-invariant,
\(\tau\)-odd, \(B\)-adapted Schwartz distribution \(\xi\) on
\(\mathfrak p\) such that
\[
\supp\xi\subset\cN(\mathfrak p).
\]
Equivalently, \(\xi\) is \((K,\chi)\)-equivariant.
\end{lemma}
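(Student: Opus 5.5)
The plan is to pass to the sign sector and then invoke the Aizenbud--Gourevitch homogeneity theorem directly, in the form already used in Lemma~\ref{core:lem:homogeneity-reduction} and Lemma~\ref{lem:special-component}.

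\emph{Sign-sector reformulation.} Since $\tau^2=1$ and $\tau$ normalizes $H$, every $H$-invariant distribution $T$ on $\fp$ (or on $R(\fp)$) splits as $T=\tfrac12(T+\tau_*T)+\tfrac12(T-\tau_*T)$. Both summands are again $H$-invariant, and they are respectively $\tau$-even and $\tau$-odd. Hence the left-hand side of \eqref{cii:eq:regularity-implication} is equivalent to $\cS^*(R(\fp))^{K,\chi}=0$, and failure of the right-hand side produces a nonzero $T\in\cS^*(\fp)^{K,\chi}$. This is the same reformulation as in Lemma~\ref{cii:lem:admissible-components}.

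\emph{Applying the homogeneity theorem.} The contrapositive of \cite[Theorem~5.2.2]{AG09} is applied with the one-summand decomposition $Q(\fp)=\fp$, which uses $\fp^H=0$, together with the form $B$ of \eqref{cii:eq:B}. The hypotheses to check are these.
\begin{enumerate}[label=\textup{(\alph*)}]
\item $B$ is invariant under the whole acting group $K$. This was noted right after \eqref{cii:eq:B}: $\tau$ exchanges the symplectic factors through the symplectic isomorphism $j$.
\item The Weil factor $\delta_B$ is trivial. Over $\C$, $B$ and $\lambda B$ are isometric via multiplication by $\lambda^{1/2}$, so $\delta_B=1$ and the adapted characters are $\eta_0,\eta_1$ as in \eqref{core:eq:eta0-general}--\eqref{core:eq:eta1-general}.
\item The null cone $\Gamma(\fp)$ for $K$ equals the one for $H$. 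This is the finite-extension argument of Lemma~\ref{lem:finite-components-nullcone}, and Kostant--Rallis identifies it with $\cN(\fp)$.
\end{enumerate}
The theorem then yields a nonzero $(K,\chi)$-equivariant distribution $\xi$ that is adapted to $B$ and supported on $\cN(\fp)$. Such a $\xi$ is $H$-invariant and $\tau$-odd, which is exactly the conclusion of the lemma.

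\emph{Main obstacle.} The remark preceding the lemma warns against treating $H\rtimes\langle\tau\rangle$ as the complex points of a reductive algebraic group, so the difficulty is legitimizing the disconnected group in this theorem. I would handle it in one of two ways.

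\textbf{Primary route.} Observe that $K$ is the image of the group of $g$ with $g^{-1}\theta(g)\in Z(G)$ acting on $\fp$, hence an algebraic reductive group with identity component $H$. Then apply the theorem directly, as in Lemma~\ref{lem:special-component}.

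\textbf{Fallback.} Run the proof of \cite[Theorem~5.2.2]{AG09} by hand for $K$. Its ingredients are all compatible with $\tau$, since $\tau$ preserves $B$ and commutes with both the Fourier transform $\mathcal F_B$ and the homothety action $\rho$. These ingredients are:
\begin{itemize}
\item Harish--Chandra descent on $R(\fp)$, via Luna slices at closed orbits; this is vacuous here because of the hypothesis $\cS^*(R(\fp))^{K,\chi}=0$;
\item the Fourier transform $\mathcal F_B$, which preserves the $(K,\chi)$-equivariant space supported on $\cN(\fp)$;
\item the homothety action $\rho$, whose generalized eigenvector decomposition lets one extract an eigen-distribution with character $\eta_0$ or $\eta_1$.
\end{itemize}
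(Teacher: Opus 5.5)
Your proposal is correct, but your primary route differs from the paper's.

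\textbf{Your route.} You apply the contrapositive of \cite[Theorem~5.2.2]{AG09} to the disconnected group $K$ with character $\chi$, exactly as in Lemma~\ref{core:lem:homogeneity-reduction}. This needs two things.
\begin{enumerate}
\item $K$ must be the group of complex points of a reductive algebraic group. That holds here: $\tau\in\Sp(E)$, and $K=\{g\in\Sp(E):\theta(g)=\pm g\}$ is Zariski closed with identity component $H$. Passing to its image in $\GL(\fp)$, as you do, replaces the identity component by $H/\{\pm1\}$; this is harmless.
\item The conventions of \cite{AG09} must allow nonconnected reductive groups and arbitrary characters.
\end{enumerate}

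\textbf{The paper's route.} It bypasses Theorem~5.2.2 and applies the Archimedean homogeneity theorem \cite[Theorem~5.1.7]{AG09} directly. It sets $\mathcal L=\{T\in\cS^*(\fp)^H:\tau T=-T\}$. The left-hand side of \eqref{cii:eq:regularity-implication} forces every element of $\mathcal L$ to be supported on $\cN(\fp)\subset Z(B)$. It then checks that $\mathcal L$ is stable under $\mathcal F_B$, by \cite[Lemma~5.1.8]{AG09} and because $\tau$ is a $B$-isometry, and under multiplication by $B(x,x)$. Theorem~5.1.7 then produces a nonzero adapted element of $\mathcal L$ directly, including the Fourier condition in the $\eta_0$ case.

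This is essentially your fallback. The Luna-slice descent item in your list plays no role, and no separate generalized-eigenvector analysis of the homothety action is needed.

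\textbf{Comparison.} Your route is shorter and uniform with the paper's other uses of Theorem~5.2.2 (Lemmas~\ref{core:lem:homogeneity-reduction} and~\ref{lem:special-component}). The paper's route costs two lines of verification. In exchange, it uses only the two linear conditions ``$H$-invariant'' and ``$\tau$-odd''. It therefore never has to identify $H\rtimes\langle\tau\rangle$ as an algebraic group or rely on the generality of the conventions in \cite{AG09}. That is what the remark before the lemma means: it sidesteps the issue rather than warning against your approach.
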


\begin{proof}
Let
\[
\mathcal L
=
\{T\in\cS^*(\mathfrak p)^H:\tau T=-T\}.
\]
The failure of the right-hand side of
\eqref{cii:eq:regularity-implication} implies \(\mathcal L\neq0\).
By the left-hand side, every element of \(\mathcal L\) restricts to zero
on \(R(\mathfrak p)\); therefore
\[
\supp T\subset\cN(\mathfrak p)\subset Z(B)
\qquad(T\in\mathcal L).
\]

The Fourier transform \(\mathcal F_B\) preserves \(H\)-invariance by
\cite[Lemma~5.1.8]{AG09}.  Since \(\tau\) is a \(B\)-isometry,
\(\mathcal F_B\) commutes with \(\tau\), and consequently
\(\mathcal F_B(\mathcal L)\subset\mathcal L\).
Multiplication by the quadratic function \(x\mapsto B(x,x)\) also
preserves \(\mathcal L\).  Thus the nonzero subspace \(\mathcal L\)
satisfies the hypotheses of the Archimedean homogeneity theorem
\cite[Theorem~5.1.7]{AG09}.  It therefore contains a nonzero
\(B\)-adapted distribution.  The support assertion follows from the
definition of \(\mathcal L\).
\end{proof}

\begin{lemma}[Weil character]\label{cii:lem:weil-trivial}
For the form \(B\), the Weil character
\[
\delta_B(t)=\frac{\gamma(B)}{\gamma(tB)}
\]
is identically \(1\) on \(\C^\times\).
\end{lemma}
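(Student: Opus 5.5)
The plan is to use the fact that the Weil constant of a nondegenerate quadratic form over $\C$, viewed as a real quadratic form on the underlying real space, depends only on its real signature. Recall the convention of \cite[Notation~5.1.3]{AG09}. The Weil character $\delta_B(t)=\gamma(B)/\gamma(tB)$ is built from the Weil constant $\gamma$ of the real quadratic form $\operatorname{Re}B$ on $\fp_\R$, where the Fourier transform is taken with respect to $\operatorname{Re}B$ and the standard additive character. For a real form, $\gamma$ is an eighth root of unity determined by the signature, namely $e^{\pi i(p-q)/4}$ up to the normalization of the character. So it suffices to show that $\operatorname{Re}(tB)$ has the same signature as $\operatorname{Re}B$ for every $t\in\C^\times$.

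The key steps are as follows.

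\begin{enumerate}[label=\textup{(\arabic*)}]
\item Over $\C$, diagonalize $B$ as $\sum_{k=1}^n z_k^2$ with $n=\dim_\C\fp=4r^2$. This is possible because $B$ is nondegenerate and symmetric, a fact established in Lemma~\ref{cii:lem:B-centralizer}.
\item Write $z_k=x_k+iy_k$. Then $\operatorname{Re}(z_k^2)=x_k^2-y_k^2$ has signature $(1,1)$. Hence $\operatorname{Re}B$ has signature $(n,n)$.
\item Treat $tB$ in the same way. Writing $t=s^2$ with $s\in\C^\times$, we get $tB=\sum (sz_k)^2$, and $z\mapsto sz$ is a real-linear automorphism. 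So $\operatorname{Re}(tB)$ is real-isometric to $\operatorname{Re}B$, and in particular also has signature $(n,n)$.
\item Since $\gamma$ is an invariant of the real isometry class, $\gamma(tB)=\gamma(B)$, and therefore $\delta_B\equiv1$.
\end{enumerate}

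In fact step (3) shows directly that $B$ and $tB$ are isometric for every $t$. This is exactly the hypothesis stated in the subsection on adapted homogeneity, under which $\delta_B$ was already asserted to be trivial, so the lemma could simply be cited from there.

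The only point requiring care is to confirm that $\gamma$ in \cite{AG09} is genuinely an isometry invariant of the underlying real form, and to check that it is applied to $\operatorname{Re}B$ rather than to some other real form. Lemma~\ref{cii:lem:B-centralizer} fixes the convention $B_\R=\operatorname{Re}B$. Under any other consistent choice, such as $\operatorname{Re}(cB)$ for a fixed $c$, the same square-root rescaling argument applies unchanged.
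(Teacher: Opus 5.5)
Your proposal is correct and matches the paper's proof: step (3) is the paper's whole argument. Choose $a$ with $a^2=t$; then multiplication by $a$ is an isometry between $(\fp,tB)$ and $(\fp,B)$, so $\gamma(tB)=\gamma(B)$. The diagonalization and split-signature computation in steps (1)--(2) are correct but not needed, since isometry invariance of $\gamma$ suffices.
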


\begin{proof}
Choose \(a\in\C^\times\) with \(a^2=t\).  Multiplication by \(a\) identifies
the quadratic spaces defined by \(B\) and \(tB\), so their Weil constants
are equal.
\end{proof}

\begin{lemma}[The zero orbit]\label{cii:lem:zero-orbit}
The zero orbit cannot be maximal in the support of a nonzero
$B$-adapted distribution supported on $\cN(\mathfrak p)$.
\end{lemma}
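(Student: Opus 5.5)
The plan is to reduce to Lemma~\ref{core:lem:origin-adapted} by a local argument at the origin. Suppose $\xi$ is a nonzero $B$-adapted distribution with $\supp\xi\subset\cN(\fp)$, and suppose the zero orbit $\{0\}$ is maximal in $\supp\xi$. Here "maximal" means that $\{0\}$ is an open subset of $\supp\xi$. Since $\supp\xi$ is contained in the nilpotent cone, which is a cone, and every nonzero nilpotent orbit accumulates at $0$, this can only happen when $\supp\xi=\{0\}$. More carefully, maximality of $\{0\}$ in the stratified support means that $\xi$ vanishes on $\fp\setminus\{0\}$. The first step is therefore to record that $\xi$ restricts to zero on the open set $\fp\setminus\{0\}$. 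Hence $\supp\xi\subset\{0\}$, and $\xi$ is a finite linear combination of jets $\partial_z^\alpha\partial_{\bar z}^\beta\delta_0$ on the underlying real space of $\fp$.

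The second step is to check the hypotheses of Lemma~\ref{core:lem:origin-adapted}. The space $\fp\simeq U\otimes W$ is nonzero because $r\geq1$. The form $B$ of \eqref{cii:eq:B} is nondegenerate and symmetric. Its Weil character is trivial by Lemma~\ref{cii:lem:weil-trivial}. The homothety action is the same $\rho(\lambda)v=\lambda^{-1}v$ used in the definition of adaptedness. Lemma~\ref{core:lem:origin-adapted} then says that no nonzero $B$-adapted distribution is supported at the origin. So $\xi=0$, which is a contradiction.

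The third step handles the case where $\xi$ is only required to be adapted on a neighborhood. If the maximality statement is meant in the Bruhat-filtration sense, meaning the top stratum of the support is $\{0\}$, I would apply the same weight comparison to the leading normal symbol rather than to $\xi$ itself. Along the orbit $\{0\}$, the conormal fiber is $\Sym(\fp)$ with $\fh^0=\fh$. The trace $T(0)$ would be computed with $d=0$, but the resonance formula does not apply here since $e=0$. Instead I would argue directly: homogeneity under $\R_{>0}$ forces every jet degree $q\geq0$ to satisfy $r^{-q}=r^{n}$ or $r^{-q}=r^{n+1}$, and both are impossible since $n=4r^2>0$. I would also note that $\tau$-oddness and $H$-invariance are not needed; this is a purely homogeneity statement.

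The only point needing care is the reduction "maximal zero orbit $\Rightarrow$ supported at $0$". I expect to resolve it by the finite-order localization remark after Corollary~\ref{core:cor:strict-trace}. Restrict $\xi$ to a small ball $U_0$ around $0$ on which $\supp\xi\cap U_0=\{0\}$, which is what maximality provides. Homogeneity then transports this to all of $\fp$, since $\supp\xi$ is stable under the dilations $\lambda\cdot$. So $\supp\xi=\{0\}$ globally, and the weight argument above finishes the proof.
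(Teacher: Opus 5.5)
Your proposal is correct and follows the paper's proof. Maximality of the zero orbit forces $\supp\xi=\{0\}$: the paper uses that $0$ lies in the closure of every nilpotent orbit, while your dilation-stability argument works equally well. Lemma~\ref{core:lem:origin-adapted}, with $\delta_B=1$ from Lemma~\ref{cii:lem:weil-trivial}, then excludes such a distribution. Your third step is redundant, since it only repeats the jet-weight computation already contained in Lemma~\ref{core:lem:origin-adapted}.
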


\begin{proof}
If the zero orbit were maximal, the distribution would be supported at the
origin, since $0$ lies in the closure of every nilpotent orbit.  This is
impossible by Lemma~\ref{core:lem:origin-adapted}; the Weil character is
trivial by Lemma~\ref{cii:lem:weil-trivial}.
\end{proof}

\subsubsection{Resonance}

Let \(0\neq e\in\cN(\mathfrak p)\).  By the normal
Jacobson--Morozov theorem for symmetric pairs
\cite[Proposition~4]{KR}, choose a normal
\(\mathfrak{sl}_2\)-triple
\[
[d,e]=2e,\qquad
[d,f]=-2f,\qquad
[e,f]=d,
\qquad
d\in\mathfrak h,\quad e,f\in\mathfrak p.
\]
Let \(D_t\) be the associated cocharacter, so
\(\Ad(D_t)e=t^2e\), and put
\begin{equation}\label{cii:eq:trace}
T(e)=\Tr\!\left(\ad(d)|_{\mathfrak h^e}\right),
\qquad
n=\dim_\C\mathfrak p=4r^2.
\end{equation}
If an irreducible graded \(\mathfrak{sl}_2\)-summand has highest weight
\(\ell\), its surviving normal line has weighted degree
\[
q=\ell+2.
\]
For a real normal tensor let \((A,B')\) be its total holomorphic and
antiholomorphic weighted degrees.

\begin{lemma}[Resonance equation]\label{cii:lem:resonance}
A nonzero Frobenius normal symbol of a \(B\)-adapted distribution must
satisfy one of
\begin{align}
\textnormal{Type I:}\qquad
&A=B'=T(e)-n, \label{cii:eq:typeI}\\
\textnormal{Type II:}\qquad
&(A,B')=(T(e)-n-2,T(e)-n), \label{cii:eq:typeII}
\end{align}
\end{lemma}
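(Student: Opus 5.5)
This is the specialization of Lemma~\ref{core:lem:resonance-general} to the balanced CII pair, so the plan is to check its hypotheses here and then rerun the weight bookkeeping. The hypotheses are already in hand. We have \(\fp^H=0\), and \(n=\dim_\C\fp=4r^2\). The Weil character \(\delta_B\) is trivial by Lemma~\ref{cii:lem:weil-trivial}. The form \(B\) is a nonzero multiple of the trace form by Lemma~\ref{cii:lem:B-centralizer}, so the orthogonality \([\fh,e]^{\perp}=\fp^e\) of \eqref{core:eq:normal-duality-general} holds for the \(B\) actually used. The cocharacter \(D_t\) lies in \(H\) and fixes \(e\) up to the homothety \(t^2\), so \((D_t,t^2)\) lies in the stabilizer \((H\times\C^\times)_e\). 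By Lemma~\ref{cii:lem:homogeneity}, a \((K,\chi)\)-equivariant adapted distribution is in particular \(H\)-invariant and adapted; the extra \(\tau\)-equivariance only shrinks the Frobenius fiber. So it suffices to treat an \(H\)-invariant adapted distribution.

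Next, restrict to the one-parameter subgroup \((D_t,t^2)\) and compare three characters. The first is the normal symbol. By the first part of Lemma~\ref{core:lem:resonance-general}, each surviving lowest line of highest weight \(\ell\) in \(N_e=\fp/[e,\fh]\) carries the character \(t^{-(\ell+2)}\). On the underlying real space the conjugate lines carry \(\bar t^{-(\ell+2)}\), so a normal tensor of bidegree \((A,B')\) transforms by \(t^{-A}\bar t^{-B'}\). The second is the modular character \eqref{core:eq:modular-general}, which equals \(\absC{t}^{T(e)}\). This requires the stabilizer Lie algebra to be \(\fh^e\oplus\C(d,2)\), which follows because the homothety enters only through \(\lambda\). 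The third is the homogeneity character, \(\eta_0(t^2)=\absC{t}^{n}\) or \(\eta_1(t^2)=t^2\absC{t}^{n}\). Frobenius reciprocity (\cite[Theorem~2.5.7]{AG09}), in the dualized form explained after Lemma~\ref{core:lem:resonance-general}, says that a nonzero symbol must be fixed by the stabilizer after these twists. Equating exponents of \(t\) and \(\bar t\) separately gives \(A=B'=T(e)-n\) in Type I, and \(A\) lowered by \(2\) in Type II. This is exactly \eqref{cii:eq:typeI}--\eqref{cii:eq:typeII}.

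The one delicate point is the sign convention: whether the normal variables carry \(t^{-(\ell+2)}\) or \(t^{+(\ell+2)}\), and whether the modular character enters as \(\Delta\) or \(\Delta^{-1}\). Both are fixed by the dualization remark following Lemma~\ref{core:lem:resonance-general}, together with the identification \(N_e^*\simeq\fp^e\). I would check them directly in the CII model by testing the orbit of a rank-one tensor \(u\otimes w\in U\otimes W\), where all the characters can be computed explicitly.
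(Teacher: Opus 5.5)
Your proposal is correct and takes essentially the same route as the paper, which also specializes Lemma~\ref{core:lem:resonance-general}. The paper evaluates the Frobenius condition on the stabilizer element $(D_t,t^2)$, using the modular character $\absC{t}^{T(e)}$, the normal-line characters $t^{-(\ell+2)}$, and $\delta_B=1$ from Lemma~\ref{cii:lem:weil-trivial}, and compares these with $\eta_0(t^2)=\absC{t}^{n}$ and $\eta_1(t^2)=t^2\absC{t}^{n}$; your additional checks of the trace-form comparison and of the passage from $(K,\chi)$-equivariance to $H$-invariance are harmless and simply make explicit what the paper leaves implicit.
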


\begin{proof}
The element \((D_t,t^2)\in H\times\C^\times\), where the second factor is
the external homothety, fixes \(e\).  Its stabilizer modular character is
\[
|t|_\C^{T(e)}.
\]
A normal line arising from highest weight \(\ell\) contributes
\(t^{-(\ell+2)}\).  Since \(\delta_B=1\) by
Lemma~\ref{cii:lem:weil-trivial}, comparison with the two adapted homogeneity
characters gives \eqref{cii:eq:typeI} and \eqref{cii:eq:typeII}.
\end{proof}

\subsubsection{Frobenius reciprocity and the conormal filtration}

We record the precise convention used below.  Let a unimodular Nash group
\(L\) act transitively on a Nash manifold \(\mathcal O=L\cdot e\), let
\(L_e\) be the stabilizer, and let
\[
  N_e=T_eM/T_e\mathcal O
\]
be the real normal space in an ambient Nash manifold \(M\).  For a character
\(\psi\) of \(L\), Frobenius reciprocity
\cite[Theorem~2.5.7]{AG09}, followed by dualization of the point coefficient,
gives a canonical isomorphism
\begin{equation}\label{cii:eq:exact-frobenius-fiber}
\cS^*\!\left(
  \mathcal O,\Sym^k(CN_{\mathcal O}^{M})
\right)^{L,\psi}
\simeq
\left(
  \Sym^k(N_e)\otimes_{\R}\C
  \otimes\Delta_{L_e}\otimes\psi^{-1}|_{L_e}
\right)^{L_e}.
\end{equation}
Since \(L\) is unimodular, the Frobenius factor on the conormal
coefficient is
\(\Delta_L|_{L_e}\Delta_{L_e}^{-1}=\Delta_{L_e}^{-1}\).  Passing to a
distribution supported at \(e\) dualizes the coefficient space.  Thus
\[
  \Sym^k(CN_{\mathcal O,e}^{M})\otimes
  \Delta_{L_e}^{-1}\otimes\psi|_{L_e}
\]
is replaced by
\[
  \Sym^k(N_e)\otimes\Delta_{L_e}\otimes\psi^{-1}|_{L_e},
\]
which gives \eqref{cii:eq:exact-frobenius-fiber}.

We also need the following standard consequence of the filtration used in
\cite[Theorem~2.5.6]{AG09} (see the cited proof and
\cite[Section~B.2]{AGS08}).

\begin{lemma}[Maximal-stratum symbol]\label{cii:lem:maximal-symbol}
Let a Nash group \(L\) act on a Nash manifold \(M\), and let
\(Z\subset M\) be a finite union of locally closed \(L\)-orbits.  If a
nonzero \((L,\psi)\)-equivariant distribution supported in \(Z\) has
nonzero germ along an orbit \(\mathcal O\) that is open in its support,
then for some \(k\geq0\),
\[
  \cS^*\!\left(
    \mathcal O,\Sym^k(CN_{\mathcal O}^{M})
  \right)^{L,\psi}\neq0.
\]
\end{lemma}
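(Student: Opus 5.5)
The plan is to prove the lemma by the Bruhat filtration argument of \cite[Theorem~2.5.6]{AG09}, localized to the open orbit $\mathcal O$. Let $\xi$ be the given nonzero $(L,\psi)$-equivariant distribution supported in $Z$, and let $\mathcal O\subset\supp\xi$ be open in $\supp\xi$ with $\xi|_U\neq0$ for some $L$-stable open $U\subset M$ in which $\mathcal O$ is closed. Such a $U$ exists: $Z$ is a finite union of locally closed orbits, so $(\supp\xi)\setminus\mathcal O$ is closed in $M$ near $\mathcal O$, and we take $U$ to be $M$ minus the closure of $(\supp\xi)\setminus\mathcal O$. This set is $L$-stable because the support is $L$-stable. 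Replacing $M$ by $U$ and $\xi$ by $\xi|_U$, we may assume that $\mathcal O$ is a closed Nash submanifold of $M$ and that $\xi$ is a nonzero equivariant distribution supported on $\mathcal O$. Equivariance survives this step because restriction to an invariant open set commutes with the group action.

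Next we apply the filtration of distributions supported on a closed Nash submanifold \cite[Section~B.2]{AGS08}. The space $\cS^*_M(\mathcal O)$ carries an increasing, exhaustive, $L$-stable filtration $F_k$ by transverse order. Its graded pieces embed $L$-equivariantly:
\[
F_k/F_{k-1}\hookrightarrow \cS^*\!\left(\mathcal O,\Sym^k(CN_{\mathcal O}^{M})\right).
\]
The filtration is exhaustive only up to the finite-order issue. Every distribution has finite order on relatively compact sets, and for Schwartz distributions on Nash manifolds \cite{AGS08} shows that $\cS^*_M(\mathcal O)$ is the union of the $F_k$. If only local finiteness were available, one could instead cut off by an invariant tempered neighborhood, as in the remark preceding this subsection. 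In either case $\xi\in F_k\setminus F_{k-1}$ for some minimal $k\ge0$.

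Its image $\sigma_k(\xi)$ in the graded piece is then nonzero. It is $(L,\psi)$-equivariant because the symbol map is natural for Nash diffeomorphisms preserving $\mathcal O$, and $L$ acts by such maps. This gives a nonzero element of $\cS^*(\mathcal O,\Sym^k(CN_{\mathcal O}^{M}))^{L,\psi}$, as required.

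The main obstacle is this middle step: checking that the filtration is exhaustive on the Schwartz-distribution space, and that the symbol map is equivariant including the character twist. Both should be standard from \cite{AGS08} and the proof of \cite[Theorem~2.5.6]{AG09}, which I would cite rather than reprove. The first step, shrinking to $U$, is routine point-set topology using the finiteness of the orbit decomposition.
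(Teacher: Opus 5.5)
Your proposal is correct and follows the paper's argument. You pass to the $L$-stable open set $M\setminus\bigl((\supp\xi)\setminus\mathcal O\bigr)$, in which $\mathcal O$ is closed and carries the whole support, take the least transverse order $k$, and note that the resulting symbol is a nonzero $(L,\psi)$-equivariant element of $\cS^*(\mathcal O,\Sym^k(CN_{\mathcal O}^{M}))$.

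The only difference is how a finite order is secured. You cite exhaustiveness of the filtration for Schwartz distributions from \cite{AGS08}. The paper instead takes a local order bound near one point and transports it along the transitive orbit by equivariance. That argument is also the right replacement for your vaguer ``invariant cut-off'' fallback, since a non-invariant cut-off would destroy equivariance.
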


\begin{proof}
Because \(\mathcal O\) is open in the support,
the complement of \(\mathcal O\) in the support is closed in \(M\).  Since an orbit is
locally closed, \(\overline{\mathcal O}\setminus\mathcal O\) is also
closed.  Removing these two invariant closed sets gives an
\(L\)-invariant open neighborhood \(M'\) of \(\mathcal O\) in which
\(\mathcal O\) is closed and the restricted distribution is supported on
\(\mathcal O\).  The standard filtration of distributions supported on the
closed Nash submanifold \(\mathcal O\subset M'\) by transverse order has
associated graded contained in
\[
  \bigoplus_{k\geq0}
  \cS^*\!\left(\mathcal O,
  \Sym^k(CN_{\mathcal O}^{M})\right).
\]
Choose one point $x\in\mathcal O$ and a relatively compact Nash coordinate
neighborhood $U$ of $x$.  Continuity of the restricted distribution against
finitely many Schwartz seminorms gives a finite transverse-order bound, say
$k$, on $U$.  For $y=l\cdot x$, equivariance identifies the restriction on
$lU$ with the translate of the restriction on $U$, multiplied by the scalar
$\psi(l)$.  The same bound $k$ therefore holds near every point of the
transitive orbit.  Hence the distribution belongs globally to the $k$th term
of the transverse-order filtration on distributions supported on
$\mathcal O$.  Choose the least such $k$.  Its image in the $k$th associated
graded quotient is nonzero and, by functoriality of the filtration, is
$(L,\psi)$-equivariant.  This gives a nonzero element of
\[
 \cS^*\!\left(\mathcal O,\Sym^k(CN_{\mathcal O}^{M})\right)^{L,\psi}.
\]
This is precisely the conormal filtration used in
\cite[Theorem~2.5.6]{AG09}; compare also the Nash submersion formalism in
\cite[Appendix~B]{AG09}.
\end{proof}

The groups used below are unimodular:
\[
  K=H\rtimes\langle\tau\rangle,
  \qquad L=H\times\C^\times.
\]
Indeed, \(H\) is complex reductive and hence unimodular, \(K/H\) is finite,
and \(L\) is a product of unimodular groups.

\begin{lemma}[Naturality of the modular character]
\label{cii:lem:modular-naturality}
Let \(J\) be a Lie group and \(\alpha:J\to J\) a continuous
automorphism.  Then
\[
\Delta_J(\alpha(j))=\Delta_J(j)
\qquad(j\in J).
\]
In particular, if \(e\) is fixed by \(\tau\), then
\[
\Delta_{K_e}(\tau h\tau^{-1})=\Delta_{K_e}(h)
\qquad(h\in K_e),
\]
and
\[
\Delta_{K_e}(\tau)=1.
\]
\end{lemma}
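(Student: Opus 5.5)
The plan is to derive both assertions from the uniqueness of left Haar measure up to a positive scalar. Fix a left Haar measure $\mu$ on $J$. Since $\alpha$ is a continuous automorphism of the locally compact group $J$, the push-forward $\alpha_*\mu$, defined by $(\alpha_*\mu)(S)=\mu(\alpha^{-1}(S))$, is again a left Haar measure. Indeed, $\alpha^{-1}(jS)=\alpha^{-1}(j)\alpha^{-1}(S)$, so left invariance transfers. Hence $\alpha_*\mu=c_\alpha\mu$ for some constant $c_\alpha>0$.

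The main computation compares right translates. With the convention $\mu(Sj)=\Delta_J(j)\mu(S)$ (the opposite convention only inverts everything and does not affect the conclusion), we have
\[
 c_\alpha\mu(S)\Delta_J(\alpha(j))
 =(\alpha_*\mu)(S\alpha(j))
 =\mu\bigl(\alpha^{-1}(S)j\bigr)
 =\Delta_J(j)\mu(\alpha^{-1}(S))
 =\Delta_J(j)c_\alpha\mu(S).
\]
Choosing $S$ with $0<\mu(S)<\infty$ and cancelling $c_\alpha\mu(S)$ gives $\Delta_J(\alpha(j))=\Delta_J(j)$. An alternative route, equally short for Lie groups, is the formula $\Delta_J(j)=\lvert\det\Ad(j)|_{\Lie J}\rvert^{\pm1}$ combined with $\Ad(\alpha(j))=d\alpha\circ\Ad(j)\circ d\alpha^{-1}$. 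Conjugating by $d\alpha$ does not change the determinant.

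For the special case, if $\tau e=e$ then $\tau\in K_e$. Conjugation $\alpha=\Ad(\tau)$ is then a continuous automorphism of $K_e$, and the first displayed identity follows immediately.

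For $\Delta_{K_e}(\tau)=1$, there are two ways to argue:
\begin{itemize}
\item Apply the identity with $\alpha$ the inner automorphism. More directly, $\Delta_{K_e}$ is a homomorphism to $\R_{>0}$ and $\tau^2=1$, so $\Delta_{K_e}(\tau)^2=1$, which forces $\Delta_{K_e}(\tau)=1$.
\item Alternatively, $\Delta_{K_e}$ is trivial on the normal subgroup of finite index $H_e$ composed with finite-order quotients. However, the $\tau^2=1$ argument already suffices and needs no assumption about $H_e$.
\end{itemize}

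I expect no real obstacle in this proof. The only point that needs care is fixing the normalization convention for $\Delta$, and the argument is insensitive to that choice.
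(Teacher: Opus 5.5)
Your proof is correct and is essentially the paper's: push forward Haar measure along $\alpha$, compare right translates to get $\Delta_J\circ\alpha=\Delta_J$, and obtain $\Delta_{K_e}(\tau)=1$ from $\tau^2=1$ and positivity of the modular character. Drop the second bullet. Its claim that $\Delta_{K_e}$ is trivial on $H_e$ is unjustified: nilpotent stabilizers are typically non-reductive, which is why the paper carries $\nu_e=\Delta_{K_e}|_{H_e^\circ}$ as an unspecified character. As you note, the $\tau^2=1$ argument does not need it.
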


\begin{proof}
If \(m\) is a left Haar measure on \(J\), then \(\alpha_*m\) is another
left Haar measure, hence differs from \(m\) by a positive scalar.
Comparing right translations before and after applying \(\alpha\) proves
the first identity.  For the last assertion, \(\tau\) has finite order
and \(\Delta_{K_e}\) is positive-valued, so
\[
\Delta_{K_e}(\tau)^2=\Delta_{K_e}(\tau^2)=1
\]
forces \(\Delta_{K_e}(\tau)=1\).
\end{proof}

\subsubsection{Elimination of Type II}

The second adapted type can be ruled out on every nonzero nilpotent orbit
without using the orbit classification.

Let
\[
L:=H\times\C^\times
\]
act on \(\mathfrak p\) by the adjoint action of \(H\) together with the
external homothety used in adapted homogeneity.  Let \(\eta\) denote the
corresponding Type I or Type II homogeneity character of the
\(\C^\times\)-factor.  For a nilpotent \(e\), the corresponding Frobenius point fiber is,
with the convention of
\eqref{cii:eq:exact-frobenius-fiber},
\begin{equation}\label{cii:eq:typeII-fiber}
\left(
\Sym^k(N_e)\otimes_{\R}\C
\otimes\Delta_{L_e}
\otimes(1,\eta)^{-1}
\right)^{L_e}.
\end{equation}

Let
\[
z=(-\Id_U,\Id_W)\in H.
\]
Since \(\mathfrak p\simeq U\otimes W\), the element \(z\) acts on
\(\mathfrak p\) by \(-1\).  If \(D_t\) is the cocharacter of a normal
\(\mathfrak{sl}_2\)-triple, then
\[
\Ad(D_i)e=-e.
\]
Hence
\[
c:=zD_i\in H_e
\]
and \((c,1)\in L_e\).

\begin{proposition}\label{cii:prop:typeII}
No Type II Frobenius normal symbol can occur for a
\((K,\chi)\)-equivariant adapted distribution.
\end{proposition}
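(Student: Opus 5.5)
The plan is to show that the single finite-order element \(c=zD_i\in H_e\) acts on every Type~II candidate in the Frobenius fiber \eqref{cii:eq:typeII-fiber} by the scalar \(-1\), so that the invariants vanish. Since \(K\)-equivariance restricts to \(H\)-invariance and \(\chi|_H=1\), it is enough to work with the \(L=H\times\C^\times\)-fiber. Lemma~\ref{cii:lem:maximal-symbol} then rules out any Type~II symbol on a maximal orbit \(H\cdot e\), for every \(e\neq0\).

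First I would check the characters of \((c,1)\) other than its action on \(\Sym^k(N_e)\).

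The element \(z\) is central in \(H\), so \(z\) and \(D_t\) commute. Also \(z^2=1\) and \(D_i^4=D_{i^4}=1\), so \(c^4=1\). Hence \(\Delta_{L_e}(c,1)\) is a positive real fourth root of unity, and equals \(1\).

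The \(\C^\times\)-component of \((c,1)\) is \(1\). Therefore the homothety character \(\eta\), of either type, is trivial on \((c,1)\).

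Finally, \(c\in H\), so any character trivial on \(H\) (for instance \(\chi|_H\)) contributes nothing. Consequently a nonzero invariant in \eqref{cii:eq:typeII-fiber} would have to be fixed by \(c\) acting on \(\Sym^k(N_e)\otimes_{\R}\C\) alone.

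Next I would compute the action of \(c\) on the normal lines, using Lemma~\ref{core:lem:resonance-general}. A surviving holomorphic line attached to a summand of highest weight \(\ell\) is a lowest line with \(d\)-weight \(-\ell\) lying in \(\fp\). On it, \(z\) acts by \(-1\) and \(\Ad(D_i)\) acts by \(i^{-\ell}\), so \(c\) acts by \(-i^{-\ell}\). On the complex-conjugate copy, \(c\) acts by the conjugate scalar \(-i^{\ell}\).

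Possible dualization or inversion of weights, as discussed after \eqref{cii:eq:exact-frobenius-fiber}, only replaces every eigenvalue by its inverse. Since the final answer below is \(\pm1\), this does not affect the conclusion.

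Take a monomial built from \(a\) holomorphic lines of highest weights \(\ell_1,\dots,\ell_a\) and \(b\) antiholomorphic lines of highest weights \(m_1,\dots,m_b\). Its weighted degrees are
\[
A=\sum_j(\ell_j+2),\qquad B'=\sum_j(m_j+2).
\]
The eigenvalue of \(c\) on it is
\[
(-1)^{a+b}\,i^{-\sum\ell_j+\sum m_j}
=(-1)^{a+b}\,i^{-(A-2a)+(B'-2b)}
=i^{B'-A}.
\]
For a Type~II symbol, \eqref{cii:eq:typeII} gives \(B'-A=2\), so every such monomial is a \((-1)\)-eigenvector of \(c\). The Type~II component of the fiber therefore has no \(L_e\)-invariants, which proves the proposition.

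The main obstacle is the sign bookkeeping: making sure that the conventions of \eqref{cii:eq:exact-frobenius-fiber} (the inversion of weights and the treatment of the antiholomorphic copy) do not introduce an extra twist on \(c\) beyond the trivial modular character.

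I would handle this by noting three facts. The scalar \(i^{B'-A}\) is invariant under complex conjugation when \(B'-A\) is even. It is also invariant under inverting all eigenvalues, since \((-1)^{-1}=-1\). Finally, the modular and homothety factors are exactly trivial on \((c,1)\), as computed above. Together these make the eigenvalue \(-1\) independent of the conventions.
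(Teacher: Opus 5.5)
Your proposal is correct and follows the paper's own argument. The paper likewise uses the finite-order element $c=zD_i\in H_e$, whose modular and homothety characters are trivial. It acts on a surviving line of weighted degree $q=\ell+2$ by $-i^{-\ell}=i^{-q}$, and by the conjugate on the antiholomorphic copy. Hence $c$ acts by $i^{B'-A}=-1$ on every Type~II tensor, so the point fiber has no invariants.
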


\begin{proof}
A surviving normal line of weighted degree
\[
q=\ell+2
\]
has \(D_i\)-eigenvalue \(i^{-\ell}\).  Multiplication by \(z\) contributes
\(-1\), so \(c=zD_i\) acts on that line by
\[
-i^{-\ell}=i^{-(\ell+2)}=i^{-q}.
\]
Thus on a real normal tensor of weighted bidegree \((A,B')\), the
eigenvalue is
\[
i^{B'-A}.
\]
For Type II, Lemma~\ref{cii:lem:resonance} gives
\[
B'-A=2,
\]
so \(c\) acts by \(-1\).

On the other hand, \(c\) is a finite-order element of \(H_e\).
Consequently
\[
\Delta_{L_e}(c,1)=1
\]
because the modular character is positive-valued, and
\[
(1,\eta)(c,1)=1.
\]
The point fiber \eqref{cii:eq:typeII-fiber} therefore requires the normal tensor itself to be
fixed by \(c\), contradicting the eigenvalue \(-1\).
\end{proof}

\begin{corollary}[Global exclusion of Type II]
\label{cii:cor:no-typeII-distribution}
There is no nonzero Type II $(K,\chi)$-equivariant adapted distribution
supported on $\cN(\mathfrak p)$.
\end{corollary}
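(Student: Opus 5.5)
We argue by contradiction, passing from the local statement of Proposition~\ref{cii:prop:typeII} to the global one through the orbit stratification of the nilpotent cone. Suppose $\xi\neq0$ is a Type~II $(K,\chi)$-equivariant $B$-adapted distribution with $\supp\xi\subset\cN(\mathfrak p)$.

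First we make the symmetry explicit. Type~II adaptedness means $\xi$ transforms under the external homothety by $\eta_1$. Hence $\xi$ is $(L',\psi)$-equivariant for the group $L'=K\times\C^\times$ and the character $\psi=\chi\boxtimes\eta_1$, with $L=H\times\C^\times$ of finite index in $L'$. This group is unimodular, by the remark preceding Lemma~\ref{cii:lem:modular-naturality}.

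Next we pick a maximal orbit. The nilpotent cone is a finite union of $H$-orbits by Kostant--Rallis, and these orbits are stable under homotheties. Since $K/H$ is finite, $\cN(\mathfrak p)$ is also a finite union of locally closed $L'$-orbits. Therefore the support of $\xi$ contains an $L'$-orbit $\mathcal O$ that is open in the support, and $\xi$ has nonzero germ along $\mathcal O$. By Lemma~\ref{cii:lem:zero-orbit}, $\mathcal O\neq\{0\}$, since $\eta_1$ is one of the adapted characters and $\delta_B=1$.

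Lemma~\ref{cii:lem:maximal-symbol} now gives some $k\ge0$ with
\[
\cS^*\!\left(\mathcal O,\Sym^k(CN_{\mathcal O}^{\mathfrak p})\right)^{L',\psi}\ne0.
\]
Restricting equivariance to the finite-index subgroup $L$, this space sits inside the $(L,(1,\eta_1))$-equivariant space for the finite union of $L$-orbits making up $\mathcal O$. We then localize to one $L$-orbit $L\cdot e$ with $e\neq0$ on which the germ is nonzero. By the Frobenius identification \eqref{cii:eq:exact-frobenius-fiber}, this yields a nonzero vector in the point fiber \eqref{cii:eq:typeII-fiber}.

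Every weight vector in that fiber has a weighted bidegree $(A,B')$. The $(D_t,t^2)$-equivariance from Lemma~\ref{cii:lem:resonance} then forces $(A,B')$ to satisfy the Type~II equation \eqref{cii:eq:typeII}. But the element $c=zD_i\in H_e$ acts on such a vector by $i^{B'-A}=-1$, while the twisting characters are trivial at $(c,1)$. So the fiber vanishes, exactly as in the proof of Proposition~\ref{cii:prop:typeII}. This contradicts the nonvanishing above, so $\xi=0$.

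The main obstacle is the passage from $L'$ to $L$. The element $\tau$ may permute $H$-orbits, so $\mathcal O$ can be a union of two $L$-orbits. The cleanest route is to apply Lemma~\ref{cii:lem:maximal-symbol} directly to the group $L$ and the character $(1,\eta_1)$, forgetting $\tau$ altogether. Proposition~\ref{cii:prop:typeII} uses only $H$-invariance and the homothety character, so no parity information from $\tau$ is needed.

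Forgetting $\tau$ requires the support of $\xi$ to be a finite union of $L$-orbits, so that an $L$-orbit open in the support exists; this holds because the nilpotent $H$-orbits are finite in number and homothety-stable.

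A smaller point to check is that the weighted-degree bookkeeping remains valid on the full symmetric tensor, and not only on individual lines. Since $c$ acts diagonally on monomials in the surviving lowest-weight lines, its eigenvalue on a monomial is determined by the total bidegree $(A,B')$. Any $L_e$-fixed vector decomposes into such monomials, so the same contradiction applies to it.
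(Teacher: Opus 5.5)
Your proposal is correct and follows the paper's own route. You choose an orbit maximal in the support, exclude the zero orbit by Lemma~\ref{cii:lem:zero-orbit}, extract a nonzero Type~II Frobenius normal symbol via Lemma~\ref{cii:lem:maximal-symbol}, and contradict Proposition~\ref{cii:prop:typeII} through the eigenvalue $i^{B'-A}=-1$ of $c=zD_i$. Your explicit passage from $K\times\C^\times$ to $H\times\C^\times$ (forgetting $\tau$, using finiteness and homothety-stability of the nilpotent $H$-orbits) spells out a step the paper leaves implicit.
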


\begin{proof}
Assume that such a distribution exists.  Choose an orbit $\mathcal O$ that is
maximal, for the closure order, among the finitely many orbits in its support.
The zero orbit is excluded by Lemma~\ref{cii:lem:zero-orbit}.  The germ along
$\mathcal O$ is nonzero, so Lemma~\ref{cii:lem:maximal-symbol} produces a
nonzero Type II Frobenius normal symbol on $\mathcal O$.  This contradicts
Proposition~\ref{cii:prop:typeII}.
\end{proof}

Henceforth only Type I needs to be considered.

\subsection{Distinguished nilpotent orbits}\label{cii:sec:distinguished}

For Type I adapted distributions, the Fourier transform is proportional to
the original distribution.  Therefore
\[
\supp\xi\subset\cN(\mathfrak p)
\quad\Longrightarrow\quad
\supp\widehat{\xi}\subset\cN(\mathfrak p^*).
\]

Aizenbud works throughout over a local field of characteristic zero
\cite[\S2]{Aiz13}; hence Corollary~4.3.5 of \cite{Aiz13} applies in
particular to \(F=\C\).  It states that
if \(V\) is a finite-dimensional algebraic representation of a reductive group, if
\(\Gamma(V)\) has finitely many group orbits, and if a distribution on
\(Q(V)\) and its Fourier transform are supported in
\(\Gamma(V)\) and \(\Gamma(V^*)\), respectively, then the distinguished
elements are dense in its support.  We apply the theorem with
\[
F=\C,\qquad V=\mathfrak p,\qquad G=H.
\]

In the present isotropy representation one has \(\mathfrak p^H=0\), so
\(Q(\mathfrak p)=\mathfrak p\).  By Kostant--Rallis \cite{KR} the null cone
\(\Gamma(\mathfrak p)\) is the nilpotent cone and contains only finitely
many \(H\)-orbits.

We next compare Aizenbud's distinguished condition with the usual
\(p\)-distinguished condition.  Since
\(\mathfrak g=\mathfrak{sp}(E)\) is semisimple,
\cite[Lemma~5.1.40]{Aiz13} identifies, via an invariant
nondegenerate bilinear form, the conormal fiber
\(CN^{\mathfrak p}_{H\cdot e,e}\) with \(\mathfrak p^e\) and gives
\(Q(\mathfrak p)=\mathfrak p\).  Moreover,
\cite[Lemma~5.2.2]{Aiz13} proves that Aizenbud's notion of
distinguished element coincides with Sekiguchi's standard symmetric-pair
notion.  In our normalization the conormal identification also follows
directly from Lemma~\ref{cii:lem:B-centralizer}:
\[
  N^*_{H\cdot e,e}\simeq\mathfrak p^e.
\]
Since \(B\) identifies the dual null cone with \(\mathcal N(\mathfrak p)\),
Aizenbud's condition is therefore equivalent to
\[
  \mathfrak p^e\subset\mathcal N(\mathfrak p),
\]
i.e. to \(p\)-distinguishedness.

\begin{proposition}\label{cii:prop:dense-distinguished}
Let \(\xi\) be a nonzero Type I adapted distribution satisfying
\[
\supp\xi\subset\cN(\mathfrak p),
\qquad
\supp\widehat{\xi}\subset\cN(\mathfrak p^*).
\]
Then the \(p\)-distinguished elements are dense in \(\supp\xi\).
In particular, among the maximal nilpotent orbits in the support one may
choose a \(p\)-distinguished orbit.
\end{proposition}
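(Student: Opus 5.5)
The plan is to obtain the statement as a direct instance of \cite[Corollary~4.3.5]{Aiz13}, once its hypotheses have been checked in the present setting. Then translate Aizenbud's notion of distinguished element into $p$-distinguishedness using the identifications recorded just before the proposition.

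First I will verify the hypotheses of the density theorem for $F=\C$, $V=\fp$ and acting group $H$. The group $H=\Sp(U)\times\Sp(W)$ is reductive, and $\fp^H=0$, so $Q(\fp)=\fp$. By Kostant--Rallis, $\Gamma(\fp)=\cN(\fp)$ is a finite union of $H$-orbits. The support conditions are exactly the hypotheses of the proposition. The invariant form $B$ identifies $\fp^*$ with $\fp$ $H$-equivariantly, so $\Gamma(\fp^*)$ corresponds to $\cN(\fp)$. This shows that $\supp\widehat\xi\subset\cN(\fp^*)$ is the required dual null-cone condition.

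One point needs care. Aizenbud's theorem is stated for $G$-invariant distributions, while $\xi$ is $H$-invariant and, additionally, $\tau$-odd. The extra equivariance only restricts the space of distributions, so forgetting the $\tau$-sign leaves an $H$-invariant distribution to which the corollary applies. This gives density of Aizenbud-distinguished elements in $\supp\xi$.

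Next I will identify Aizenbud-distinguished elements with $p$-distinguished ones. By \cite[Lemma~5.1.40]{Aiz13}, or directly by Lemma~\ref{cii:lem:B-centralizer}, the conormal fiber $CN^{\fp}_{H\cdot e,e}$ is identified with $\fp^e$ via $B$. Aizenbud's condition requires this conormal fiber to lie in the dual null cone. Under $B$, that condition becomes $\fp^e\subset\cN(\fp)$, which is $p$-distinguishedness; \cite[Lemma~5.2.2]{Aiz13} confirms agreement with Sekiguchi's notion.

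For the final assertion, note that $\supp\xi$ is a finite union of $H$-orbits. An orbit is maximal in the closure order exactly when it is open in the support. A dense subset of the support therefore meets every orbit that is open in it, since each such orbit is a nonempty open subset. Distinguishedness is $H$-invariant, so every orbit meeting the dense set consists of distinguished elements; hence every maximal orbit in $\supp\xi$ is $p$-distinguished, and in particular one can be chosen.

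The main obstacle I expect is bookkeeping rather than mathematics. I have to check that Aizenbud's normalization of the conormal identification and of the dual null cone matches $B$ here, including on the underlying real space. Lemma~\ref{cii:lem:B-centralizer}, which covers $\operatorname{Re}B$, handles this.
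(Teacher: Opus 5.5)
Your proposal is correct and follows the paper's proof. You verify the hypotheses of Aizenbud's Corollary~4.3.5 for $(F,V,G)=(\C,\fp,H)$ using $\fp^H=0$ and Kostant--Rallis finiteness, translate Aizenbud's distinguished condition into $\fp^e\subset\cN(\fp)$ via the conormal identification and \cite[Lemma~5.2.2]{Aiz13}, and then use that maximal orbits are relatively open in the support. Your last step actually gives the slightly stronger, and valid, conclusion that every maximal orbit in the support is $p$-distinguished; the paper only argues by contradiction that at least one is.
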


\begin{proof}
All hypotheses of Aizenbud's Corollary~4.3.5 are satisfied by the preceding
discussion.  The final assertion follows because the nilpotent cone is a
finite union of \(H\)-orbits.  If every maximal orbit contained in the
support were non-\(p\)-distinguished, their union would be a nonempty
relatively open subset of the support containing no \(p\)-distinguished
points, contradicting density.
\end{proof}

\subsubsection{Balanced CII diagrams and a normal form}

Nilpotent \(H\)-orbits in \(\mathfrak p\) are parametrized by CII
\(ab\)-diagrams.  We only need Bulois' classification of
\(p\)-distinguished diagrams.

\begin{proposition}[Bulois]\label{cii:prop:bulois}
For type CII, a \(p\)-distinguished \(ab\)-diagram has the following form:
\begin{enumerate}[label=(\roman*)]
\item for every even row length, there is either no row or exactly one
pair of rows, one beginning with \(a\) and one beginning with \(b\);
\item all odd-length rows, if any, begin with the same letter.
\end{enumerate}
In type CII, almost \(p\)-distinguished elements are \(p\)-distinguished as well.
\end{proposition}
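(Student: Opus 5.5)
This is Bulois' classification, so I would not reprove it from scratch. I would recall how it fits Bulois' framework for classical symmetric pairs and check the CII specialization in the conventions of this paper.

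\textbf{Step 1: the model and the centralizer.} Realize $\fp\simeq U\otimes W$ as pairs of maps $(\alpha:U\to W,\ \beta:W\to U)$, adjoint to each other with respect to $\omega_U,\omega_W$, so that $x=\alpha+\beta$ is nilpotent on $E=U\oplus W$. By Kostant--Rallis and Ohta, $H$-orbits of nilpotent $x$ correspond to $ab$-diagrams. Each Jordan block is a string alternating between $U$ (letter $a$) and $W$ (letter $b$), and the symplectic structure forces the usual pairing rules on rows.

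Using the graded $\mathfrak{sl}_2$-decomposition from the resonance subsection, the plan is to write $E=\bigoplus_i M_i\otimes S^{\ell_i}$, with $M_i$ a multiplicity space carrying a form of the appropriate parity and a $\Z/2$-grading coming from the starting letter. Then compute the reductive part of $\fp^e$ as the odd ($\fp$-type) part of $\bigoplus_i\mathfrak{g}(M_i)$, where $\mathfrak g(M_i)$ is the isometry algebra of $M_i$.

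\textbf{Step 2: the distinguishedness test.} An element $e$ is $p$-distinguished exactly when this reductive part of $\fp^e$ contains no nonzero semisimple element. Equivalently, the $\theta$-stable reductive centralizer $\fg^e_{\mathrm{red}}$ has a $(-1)$-eigenspace consisting of nilpotents. For a fixed row length $\ell+1$, the multiplicity space is graded as $M=M_a\oplus M_b$, with dimensions equal to the numbers of rows starting with $a$ and with $b$. Its form is orthogonal for even row length and symplectic for odd row length, and $\theta$ acts by the grading.

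The $(-1)$-part is then $\Hom(M_a,M_b)$ with the induced symmetry. A direct check shows when it contains a nonzero semisimple element:
\begin{enumerate}[label=(\alph*)]
\item in the orthogonal case (even rows), this happens unless $(\dim M_a,\dim M_b)\in\{(0,0),(1,1)\}$; the pair $(0,1)$ or $(1,0)$ cannot occur because of the pairing constraints;
\item in the symplectic case (odd rows), this happens whenever both $M_a$ and $M_b$ are nonzero.
\end{enumerate}
This yields (i) and (ii). Cross terms between different row lengths are nilpotent, since they shift the $d$-weight, so they impose nothing.

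\textbf{Step 3: almost distinguished elements.} For the last sentence, recall that ``almost $p$-distinguished'' means the semisimple part of $\fp^e$ is central in $\fg$. Since $\fz(\fg)=0$ here, this is exactly $p$-distinguishedness, provided one uses Bulois' definition.

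\textbf{Main obstacle.} The main difficulty is the bookkeeping of parities in Step 2: which letter a row begins with determines the grading of $M$, and the signs must be matched correctly. In the paper itself I would simply cite Bulois and record only the translation of conventions.
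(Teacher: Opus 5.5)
Your reduction in Steps~1--2 is sound. An element of $\fp^e$ is nilpotent iff its component in the reductive centralizer $\fg^{e,d,f}=\bigoplus_k\fg(M_k)$ is nilpotent, and $\theta$ acts on each $\fg(M_k)$ through the grading $M_k=M_a\oplus M_b$. For even length the $(-1)$-part is $\wedge^2M_a\oplus\wedge^2M_b$ rather than a single $\Hom$, but your conclusion (a) is right.

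The gap is the sentence ``This yields (i) and (ii)''. Criterion (b) is a condition on each $M_k$ separately. What you have actually proved is: \emph{for each odd length, the rows of that length begin with the same letter}. Statement (ii), as written and as used in Corollary~\ref{cii:cor:stable-distinguished} (where it is combined with the balance condition to eliminate all odd rows), asserts one initial letter for odd rows of all lengths. Nothing in your argument couples different lengths. You note yourself, correctly, that inter-length terms sit in positive $d$-degree and impose nothing, so the global form cannot come out of your argument.

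In fact your criterion refutes the global form. In $(\Sp_8,\Sp_4\times\Sp_4)$ take strings $u_i\mapsto w_i\mapsto u_i'\mapsto0$ $(i=1,2)$ with $u_i,u_i'\in U$ and $w_i\in W$, together with $w_3,w_4\in W\cap\ker e$. This is the admissible diagram $aba,aba,b,b$: pair the two length-three strings and let $w_3,w_4$ span a symplectic plane. Here $M_3$ is purely $a$-graded and $M_1$ purely $b$-graded, so $\theta$ is trivial on $\fg^{e,d,f}$ and $e$ is $p$-distinguished.

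One can also check this directly. A semisimple $y\in\fp^e$ preserves $\operatorname{im}e^2=\langle u_1',u_2'\rangle\subset U$ while mapping $U$ into $W$, so it kills $\operatorname{im}e^2$. It then maps $\ker e=\langle u_1',u_2',w_3,w_4\rangle$ into $\ker e\cap U=\langle u_1',u_2'\rangle$, so it is nilpotent, hence zero, on $\ker e$. Then $\operatorname{im}y$ is an $e$-stable subspace meeting $\ker e$ trivially, so $y=0$. Since semisimple parts of elements of $\fp^e$ stay in $\fp^e$, we get $\fp^e\subset\cN(\fp)$.

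This orbit is balanced and violates (ii) read globally. It is also not $\tau$-stable, since $\rk(e^2|_U)=2$ but $\rk(e^2|_W)=0$. So you should state only the per-length version, and check Bulois' exact wording. Be aware that the $\tau$-stability drawn from (ii) in Corollary~\ref{cii:cor:stable-distinguished} already fails for $r=2$.

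Finally, your reading of ``almost $p$-distinguished'' in Step~3 makes the last sentence vacuous for every semisimple $\fg$. If Bulois' notion is instead relative to the centre of $\fg^{e,d,f}$, your Step~2 still gives the claim. In type CII that centre lies in $\fh$: its only abelian factors are $\mathfrak{so}(M_k)$ with $\dim M_a=\dim M_b=1$.
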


This is Proposition~3.8 of \cite{Bulois}.

\begin{corollary}\label{cii:cor:stable-distinguished}
Every \(p\)-distinguished orbit in the balanced CII pair
\[
(C_{2r},C_r+C_r)
\]
is invariant under the block swap \(\tau\).  More precisely its diagram is
\[
(2a_1)^+(2a_1)^-\cdots
(2a_s)^+(2a_s)^-,
\qquad
a_1>\cdots>a_s>0,
\qquad
\sum_{i=1}^s a_i=r.
\]
\end{corollary}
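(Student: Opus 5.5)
We will derive Corollary~\ref{cii:cor:stable-distinguished} directly from Bulois' classification (Proposition~\ref{cii:prop:bulois}), together with the standard description of nilpotent $H$-orbits in $\fp\simeq U\otimes W$ by $ab$-diagrams. In this description a nilpotent $e\in\fp$ is read as an odd endomorphism of the $\Z/2$-graded symplectic space $E=U\oplus W$. Each Jordan row alternates between $U$ (letter $a$) and $W$ (letter $b$). Thus the number of $a$'s equals $\dim U=2r$, and the number of $b$'s equals $\dim W=2r$. In type CII, odd-length rows come in no further pairs, while even-length rows occur in pairs of opposite starting letter, or the two starting letters have a prescribed multiplicity pattern.

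\textbf{Step 1: rule out odd rows.} Suppose all odd rows begin with the same letter, say $a$, as forced by Proposition~\ref{cii:prop:bulois}(ii). An odd row beginning with $a$ contains one more $a$ than $b$. An even row contains equally many of each letter. Hence
\[
 \#a-\#b=\#\{\text{odd rows}\}.
\]
In the balanced case $\dim U=\dim W=2r$, so $\#a=\#b$ and there are no odd rows. This is the only place where balance is used.

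\textbf{Step 2: the normal form.} By Proposition~\ref{cii:prop:bulois}(i), each occurring even length $2a_i$ appears exactly once as a pair of rows: $(2a_i)^+$ beginning with $a$ and $(2a_i)^-$ beginning with $b$. Distinct lengths give $a_1>\cdots>a_s>0$. Counting letters gives $\sum_i 2a_i=2r$ occurrences of $a$, i.e.\ $\sum_i a_i=r$. This yields the displayed diagram.

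\textbf{Step 3: $\tau$-invariance.} Conjugation by $\tau$ interchanges $U$ and $W$ through the symplectic isomorphism $j$, so it carries the orbit with diagram $D$ to the orbit whose diagram is obtained by swapping the letters $a\leftrightarrow b$. Under this swap, $(2a_i)^+$ and $(2a_i)^-$ are exchanged, so the diagram of Step~2 is unchanged. Since $ab$-diagrams classify $H$-orbits, $\tau\cdot(H e)=He$.

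The main point needing care is Step~3. One must check that the letter swap is exactly the effect of $\tau$ on the Kostant--Sekiguchi/$ab$-diagram parametrization, and that no extra labelling (for example a sign datum on rows) distinguishes the two pieces of a pair. For CII I expect no such sign labels for the $H=\Sp\times\Sp$ orbits. I would verify this by noting that $\tau$ maps the graded Jordan basis of $e$ to a graded Jordan basis of $\tau e\tau^{-1}$ with the parity of each vector reversed, and that the symplectic pairing within each row pair is transported by $j$.
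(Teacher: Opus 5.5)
Your proof is correct and is essentially the paper's argument: balance together with Bulois~(ii) and the count $\#a-\#b=\pm\#\{\text{odd rows}\}$ eliminates odd rows, Bulois~(i) leaves one $a/b$-pair per distinct even length (your letter count giving $\sum_i a_i=r$ is a detail the paper leaves implicit), and $\tau$ acts on diagrams by the letter swap, which fixes such a diagram. The Step~3 concern resolves as you expect, since CII orbits carry no labels beyond the $ab$-diagram; your parenthetical description of general CII diagrams is slightly off (odd rows of a given length and starting letter occur with even multiplicity, and even rows come in $a/b$-pairs), but this is harmless because only Bulois' proposition is used.
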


\begin{proof}
Because the two symplectic blocks have equal dimension, the total number
of \(a\)'s and \(b\)'s in the diagram is the same.

Every even row contains the same number of \(a\)'s and \(b\)'s.
An odd row beginning with \(a\) contributes \(+1\) to
\(\#a-\#b\), whereas one beginning with \(b\) contributes \(-1\).
By Proposition~\ref{cii:prop:bulois}, all odd rows have the same initial
letter.  Hence the balance condition forces the number of odd rows to be
zero.  The remaining even rows occur in \(a/b\)-pairs, so the diagram is
fixed by exchanging \(a\) and \(b\), i.e.\ by \(\tau\).
\end{proof}

We now construct such a representative explicitly.

\begin{proposition}[Normal form for balanced \(p\)-distinguished orbits]
\label{cii:prop:normal-form}
Let \(\mathcal O\) be a \(p\)-distinguished nilpotent orbit in balanced
CII.  Then \(\mathcal O\) contains a \(\tau\)-fixed element
\[
e=x_A=
\begin{pmatrix}
0&A\\
A&0
\end{pmatrix},
\qquad
A^*=-A,
\]
where \(A\in\mathfrak{sp}(V)\), \(\dim V=2r\), has pairwise distinct even
Jordan block sizes
\[
n_1>\cdots>n_s>0,
\qquad
\sum_i n_i=2r.
\]
\end{proposition}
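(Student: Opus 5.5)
The plan is to write down the candidate explicitly, compute its $ab$-diagram, and then invoke the classification of nilpotent $H$-orbits in $\fp$ by CII $ab$-diagrams together with Corollary~\ref{cii:cor:stable-distinguished}.

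\textbf{Coordinates.} Use the symplectic isomorphism $j$ to identify $U$ and $W$ with a single symplectic space $(V,\omega)$ of dimension $2r$. An element of $\fp\subset\mathfrak{sp}(V\oplus V)$ then has the form $X=\begin{pmatrix}0&B\\C&0\end{pmatrix}$ with $B,C\in\End(V)$. The condition of lying in $\mathfrak{sp}(E)$ for the form $\omega\oplus\omega$ reads $C=-B^*$, where $^*$ is the $\omega$-adjoint. In these coordinates $\tau$ acts by $(B,C)\mapsto(C,B)$. Hence the $\tau$-fixed elements of $\fp$ are exactly those with $C=B$ and $B^*=-B$, that is $B=A\in\mathfrak{sp}(V)$. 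This gives the shape $x_A$ in the statement.

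\textbf{Choice of $A$.} Let $\mathcal O$ be $p$-distinguished. By Corollary~\ref{cii:cor:stable-distinguished}, its diagram is $(2a_1)^+(2a_1)^-\cdots(2a_s)^+(2a_s)^-$ with $a_1>\cdots>a_s>0$ and $\sum_i a_i=r$. Put $n_i=2a_i$. Since even Jordan blocks may occur with any multiplicity in type $C$, there is a nilpotent $A\in\mathfrak{sp}(V)$ with Jordan type $(n_1,\dots,n_s)$. Concretely, $V$ is an orthogonal direct sum of $A$-stable symplectic subspaces $V_i$ of dimension $n_i$, each carrying one Jordan block.

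\textbf{The $ab$-diagram of $x_A$.} Take a cyclic vector $v_i\in V_i$ for $A$. The $e$-chain starting at $(v_i,0)$ is
\[
(v_i,0)\mapsto(0,Av_i)\mapsto(A^2v_i,0)\mapsto\cdots .
\]
It alternates between $U$ and $W$, begins in $U$, and has length exactly $n_i$. The chain starting at $(0,v_i)$ is the same with the roles of $U$ and $W$ exchanged. These $2s$ chains span $E$, because their union is a basis. Hence $e=x_A$ has Jordan type $(n_1,n_1,\dots,n_s,n_s)$, and each size occurs as one row beginning with $a$ and one row beginning with $b$. So the $ab$-diagram of $x_A$ is exactly the diagram of $\mathcal O$ from Corollary~\ref{cii:cor:stable-distinguished}.

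\textbf{Conclusion and main obstacle.} The conclusion requires that, in type CII, the $H$-orbit of a nilpotent element of $\fp$ is determined by its $ab$-diagram. This is the standard Kostant--Sekiguchi/Ohta parametrization already invoked before Proposition~\ref{cii:prop:bulois}; note that in type CII no additional sign data are attached to rows. Granting it, $x_A\in\mathcal O$, and $\tau x_A=x_A$ by construction.

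I expect the main obstacle to be bookkeeping rather than mathematics. One must confirm that the diagram convention of \cite{Bulois} (which letter corresponds to $U$, and row beginnings versus row ends) matches the chains above. Since every row size occurs with both letters, either convention yields the same diagram, so the identification is robust.
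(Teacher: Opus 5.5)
Your proposal is correct and follows essentially the same route as the paper. Like the paper, you read off the diagram \((2a_1)^+(2a_1)^-\cdots(2a_s)^+(2a_s)^-\) from Corollary~\ref{cii:cor:stable-distinguished}, take \(A\in\mathfrak{sp}(V)\) with Jordan blocks of sizes \(n_i=2a_i\), and note that the \(e\)-chains starting at \((v_i,0)\) and \((0,v_i)\) alternate between \(U\) and \(W\). This reproduces the diagram, and the classification of nilpotent \(H\)-orbits by \(ab\)-diagrams then places \(x_A\) in \(\mathcal O\). The only differences are minor: the paper realizes \(A\) through the explicit form \(\langle v_{i,k},v_{i,\ell}\rangle=(-1)^k\delta_{k+\ell,n_i-1}\), whereas you cite the existence of such a nilpotent, and you additionally show that every \(\tau\)-fixed element of \(\mathfrak p\) has the shape \(x_A\).
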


\begin{proof}
By Corollary~\ref{cii:cor:stable-distinguished}, the signed diagram of
\(\mathcal O\) is
\[
n_1^+n_1^-\cdots n_s^+n_s^-,
\qquad
n_i=2a_i,
\qquad
n_1>\cdots>n_s.
\]
For each \(i\), let \(V_i\) have basis
\[
v_{i,0},\ldots,v_{i,n_i-1}
\]
and alternating form
\[
\langle v_{i,k},v_{i,\ell}\rangle
=
(-1)^k\delta_{k+\ell,n_i-1}.
\]
Because \(n_i\) is even, this form is skew-symmetric and nondegenerate.
Define
\[
Av_{i,k}=v_{i,k+1},
\qquad
Av_{i,n_i-1}=0.
\]
Then
\[
\langle Av,w\rangle+\langle v,Aw\rangle=0,
\]
so \(A^*=-A\).  On
\[
V=\bigoplus_iV_i
\]
with the orthogonal direct-sum symplectic form, \(A\) has precisely the
Jordan blocks \(n_1,\ldots,n_s\).

Identify \(U\) and \(W\) symplectically with \(V\), choose
\[
\tau(u,w)=(w,u),
\]
and define
\[
e=x_A=
\begin{pmatrix}
0&A\\
A&0
\end{pmatrix}.
\]
Since \(A^*=-A\), this element belongs to \(\mathfrak p\), and directly
\[
\tau e\tau^{-1}=e.
\]

For every block \(V_i\), the vectors
\[
(v_{i,0},0),\ (0,v_{i,1}),\ (v_{i,2},0),\ldots
\]
form an \(e\)-Jordan chain of length \(n_i\) beginning in \(U\), whereas
\[
(0,v_{i,0}),\ (v_{i,1},0),\ (0,v_{i,2}),\ldots
\]
form a second chain of the same length beginning in \(W\).  The two chains
are interchanged term-by-term by \(\tau\).  Hence the signed CII diagram
of \(e\) is exactly
\[
n_1^+n_1^-\cdots n_s^+n_s^-.
\]
Nilpotent \(H\)-orbits in CII are classified by their signed
\(ab\)-diagrams, so \(e\in\mathcal O\).
\end{proof}

\subsection{The stable density lemma}

We now prove the central geometric statement.

\begin{theorem}[Stable density lemma]\label{cii:thm:stable-density}
Let \(\mathcal O\) be a \(p\)-distinguished nilpotent orbit in balanced
CII, and let \(e=x_A\in\mathcal O\) be the \(\tau\)-fixed representative
supplied by Proposition~\ref{cii:prop:normal-form}.  Then
\[
\overline{
H_e^\circ\cdot(\mathfrak p^e)^\tau
}^{\,\mathrm{Zar}}
=
\mathfrak p^e.
\]
Moreover, the image of
\[
H_e^\circ\times(\mathfrak p^e)^\tau\longrightarrow\mathfrak p^e,
\qquad
(h,x)\longmapsto h\cdot x,
\]
contains a nonempty Euclidean open subset.
\end{theorem}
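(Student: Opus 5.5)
The plan is to reduce the density statement to a surjectivity statement for a linear map, and then to check that surjectivity at one well-chosen point.

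\textbf{Reduction to a tangent-space statement.} Since $\tau$ fixes $e$, it normalizes $H_e$ and preserves $\mathfrak p^e$. Hence
\[
\mathfrak h^e=(\mathfrak h^e)^{\tau}\oplus(\mathfrak h^e)^{-\tau},
\qquad
\mathfrak p^e=(\mathfrak p^e)^{\tau}\oplus(\mathfrak p^e)^{-\tau}.
\]
The map $\Psi:H_e^\circ\times(\mathfrak p^e)^\tau\to\mathfrak p^e$, $(h,x)\mapsto h\cdot x$, has differential at $(1,x)$ with image $[\mathfrak h^e,x]+(\mathfrak p^e)^\tau$. For $\tau$-fixed $x$, the bracket $[(\mathfrak h^e)^{-\tau},x]$ lies in $(\mathfrak p^e)^{-\tau}$. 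It therefore suffices to find one $x\in(\mathfrak p^e)^\tau$ for which
\[
\ad(x):(\mathfrak h^e)^{-\tau}\longrightarrow(\mathfrak p^e)^{-\tau}
\]
is surjective. Granting this, $\Psi$ is a submersion at $(1,x)$, and the holomorphic submersion theorem puts a Euclidean open set in the image. A nonempty Euclidean open subset of the irreducible affine space $\mathfrak p^e$ is Zariski dense, which gives the first assertion. Surjectivity is a Zariski-open condition in $x$, so it is enough to exhibit a single good $x$.

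\textbf{Matrix model.} Identify $U=W=V$ as in Proposition~\ref{cii:prop:normal-form}. Write elements of $\mathfrak h$ as pairs $(X,Y)\in\mathfrak{sp}(V)^2$ and elements of $\mathfrak p$ as
\[
\begin{pmatrix}0&P\\ -P^*&0\end{pmatrix}.
\]
Then:
\begin{itemize}
\item $\tau$-fixed vectors in $\mathfrak p$ correspond to $P\in\mathfrak{sp}(V)$, and $\tau$-anti-fixed vectors to $P^*=P$;
\item $(\mathfrak h)^{\pm\tau}$ correspond to $(X,\pm X)$ with $X\in\mathfrak{sp}(V)$.
\end{itemize}
A direct bracket computation should identify
\[
(\mathfrak h^e)^{-\tau}\simeq\{X\in\mathfrak{sp}(V):XA+AX=0\},\qquad
(\mathfrak p^e)^{\tau}\simeq\{P\in\mathfrak{sp}(V):PA=AP\},
\]
\[
(\mathfrak p^e)^{-\tau}\simeq\{S:S^*=S,\ SA+AS=0\},
\]
with the signs to be confirmed. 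Under these identifications the relevant map is the anticommutator
\[
X\longmapsto XP+PX .
\]

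\textbf{Choice of $x$ and the rank check.} I would take $P=\sum_i c_i\,A^{m_i}\pi_i$, where $\pi_i$ is the projection to $V_i$, the $c_i$ are generic scalars, and the exponents $m_i$ are odd, so that $P\in\mathfrak{sp}(V)$ and $P$ commutes with $A$; the simplest first try is $P=\sum_ic_i\pi_i$ modified by odd powers. All spaces above decompose under the block decomposition into $\Hom(V_j,V_i)$-pieces, and the adjoint conditions pair the $(i,j)$ and $(j,i)$ pieces. Because the block sizes $n_i$ are pairwise distinct and even, (anti)commutation with $A$ forces each piece to be a Toeplitz-type matrix (a polynomial in the shift, with alternating signs). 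The anticommutator with $P$ acts on such a piece by $c_iA^{m_i}(\cdot)+(\cdot)c_jA^{m_j}$, which is explicit. I would compare dimensions piece by piece, using the grading by $\ad d$ to make each map triangular, and choose the $c_i$ so that the diagonal coefficients are nonzero.

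\textbf{Main obstacle.} The hardest step is this rank verification. It is most delicate on the diagonal pieces $i=j$: there $X$ anticommutes with $A$, so it is an odd-alternating Toeplitz matrix, and multiplication by a pure power of $A$ shifts degrees and may lose the top coefficients. If pure powers fail, I would use $P=\sum_ic_i\,p_i(A)\pi_i$, where each $p_i$ is a generic odd polynomial with nonzero linear term. The off-diagonal pieces $i\neq j$ have target dimension governed by $\min(n_i,n_j)$, and the distinct generic scalars $c_i\neq\pm c_j$ should make them nondegenerate.
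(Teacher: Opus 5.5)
Your reduction is correct and is the same as the paper's. It suffices to find one $P\in(\mathfrak p^e)^\tau$ for which $X\mapsto XP+PX$ maps $(\mathfrak h^e)^{-\tau}$ onto $(\mathfrak p^e)^{-\tau}$; the submersion theorem and Zariski density then finish.

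\textbf{The gap is the choice of $P$.} Every candidate you propose is block diagonal, $P=\sum_i p_i(A)\pi_i$ with each $p_i$ odd. These are in fact all the block-diagonal elements of $(\mathfrak p^e)^\tau$. No such $P$ works once $s\ge2$, which already happens for $r=3$ with blocks $(4,2)$. Indeed, if $XA=-AX$ then $X_{ij}\,p_j(A)=p_j(-A)X_{ij}=-p_j(A)X_{ij}$, so
\[
 (XP+PX)_{ij}=\bigl(p_i(A)-p_j(A)\bigr)X_{ij},
\]
and $p_i-p_j$ is odd, hence divisible by $t$. For $i<j$, the $(i,j)$-blocks of elements of $(\mathfrak p^e)^{-\tau}$ fill the whole space $M_{ij}=\{Y:V_j\to V_i,\ AY=-YA\}$. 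This is a cyclic $\C[A]$-module of dimension $\min(n_i,n_j)$. The $(i,j)$-blocks of the image, however, lie in the hyperplane $AM_{ij}$. No genericity of the $c_i$, and no choice of higher odd terms, can repair this.

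Your diagnosis of where the difficulty lies is also reversed. A self-adjoint operator on $V_i$ that anticommutes with $A_i$ is zero, so $(\mathfrak p^e)^{-\tau}$ has no diagonal blocks and the diagonal pieces impose nothing. The off-diagonal pieces, where you expected nondegeneracy, are exactly where rank is lost. Conceptually, the joint centralizer of $A$ and a block-diagonal $P$ contains $\bigoplus_i\C[A_i]$ and also every $A$-equivariant map $V_j\to\ker A\cap V_i$ with $i\neq j$. So its dimension exceeds $N$.

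\textbf{What is missing} is a point that couples the blocks and has the smallest possible joint centralizer with $A$. The paper takes $R$ with $R_{i+1,i}=F_i$ (the truncation $v_{i,k}\mapsto v_{i+1,k}$) and $R_{i,i+1}=-F_i^*$. Then $R\in\mathfrak{sp}(V)$, $[R,A]=0$, and $JRJ=-R$ for the grading operator $J$ with $JAJ=-A$. Since $F_{i-1}^*v_{i,0}=A^{n_{i-1}-n_i}v_{i-1,0}\in AV$, the class of $v_{1,0}$ is cyclic for $R$ on $V/AV$. Nakayama over $\C[A]$ then makes $v_{1,0}$ cyclic for $\C[A,R]$, so $Z_{\End(V)}(A,R)=\C[A,R]$ has dimension exactly $N$.

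Multiplying by $J$ turns your anticommutator into $\ad R:\mathfrak c_-\to\mathfrak c_+$ on $\mathfrak c=Z_{\End(V)}(A)$, with $\mathfrak c_\pm$ the eigenspaces of $\Theta(Z)=-JZ^*J$. The kernel of this map is $\C[A,R]\subset\mathfrak c_-$. The trace of $\Theta$ on $\mathfrak c$ gives $\dim\mathfrak c_--\dim\mathfrak c_+=N$, so the rank is exactly $\dim\mathfrak c_+$ and the map is surjective. For any block-diagonal $P$, the same count puts the kernel in $\mathfrak c_-$ at dimension at least $N+\binom s2$, which confirms the failure above.
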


\subsubsection{Matrix model}

Identify
\[
U\simeq W\simeq V,
\qquad
\dim V=N=2r.
\]
For \(C\in\End(V)\), write \(C^*\) for the symplectic adjoint.
Then
\[
\mathfrak p
\simeq
\left\{
x_C=
\begin{pmatrix}
0&C\\
-C^*&0
\end{pmatrix}
:C\in\End(V)
\right\},
\]
and the block swap acts by
\[
\tau(C)=-C^*.
\]
Thus every \(C\) decomposes uniquely as
\[
C=R+S,
\qquad
R^*=-R,\quad S^*=S,
\]
with
\[
\tau R=R,\qquad
\tau S=-S.
\]

For this representative,
\[
e=x_A=
\begin{pmatrix}
0&A\\
A&0
\end{pmatrix},
\qquad
A^*=-A,
\]
where the Jordan block sizes of \(A\) are distinct even integers
\[
n_1>n_2>\cdots>n_s,
\qquad
\sum_i n_i=N.
\]

Let
\[
h=
\begin{pmatrix}
X&0\\
0&Y
\end{pmatrix}\in\mathfrak h,
\qquad
P=\frac{X+Y}{2},
\qquad
Q=\frac{X-Y}{2}.
\]
The equation \([h,e]=0\) becomes
\[
[P,A]=0,
\qquad
QA+AQ=0.
\]
Similarly,
\[
(\mathfrak p^e)^\tau
=
\{R\in\mathfrak{sp}(V):[R,A]=0\},
\]
and
\[
(\mathfrak p^e)^{-\tau}
=
\{S\in\End(V):S^*=S,\ SA+AS=0\}.
\]
At a point \(R\in(\mathfrak p^e)^\tau\), the infinitesimal action of
\((\mathfrak h^e)^{-\tau}\) on the odd tangent space is
\[
\Phi_R:
(\mathfrak h^e)^{-\tau}
\longrightarrow
(\mathfrak p^e)^{-\tau},
\qquad
Q\longmapsto QR+RQ.
\]

Thus it suffices to find \(R\) for which \(\Phi_R\) is surjective.

\subsubsection{Construction of a cyclic commuting partner}

Decompose
\[
V=V_1\oplus\cdots\oplus V_s
\]
into the Jordan blocks of \(A\).  Choose bases
\[
v_{i,0},v_{i,1},\ldots,v_{i,n_i-1}
\]
such that
\[
Av_{i,k}=v_{i,k+1},
\qquad
Av_{i,n_i-1}=0,
\]
and
\[
\langle v_{i,k},v_{i,\ell}\rangle
=
(-1)^k\delta_{k+\ell,n_i-1}.
\]
Because \(n_i\) is even, this realizes \(A^*=-A\).

Define
\[
Jv_{i,k}=(-1)^{i-1+k}v_{i,k}.
\]
Then
\[
J^2=1,\qquad
J^*=-J,\qquad
JAJ=-A.
\]

For \(1\leq i<s\), define
\[
F_i:V_i\to V_{i+1},
\qquad
F_i(v_{i,k})=
\begin{cases}
v_{i+1,k},&0\leq k<n_{i+1},\\
0,&k\geq n_{i+1}.
\end{cases}
\]
Then
\[
F_iA=AF_i.
\]
If \(d_i=n_i-n_{i+1}\), direct computation gives
\[
F_i^*v_{i+1,k}=v_{i,k+d_i}.
\]
Define \(R\in\End(V)\) by the nonzero blocks
\[
R_{i+1,i}=F_i,
\qquad
R_{i,i+1}=-F_i^*.
\]
Then
\[
[R,A]=0,\qquad
R^*=-R,\qquad
JRJ=-R.
\]
Therefore
\[
R\in(\mathfrak p^e)^\tau.
\]

\begin{lemma}\label{cii:lem:cyclic}
The commuting pair \((A,R)\) is cyclic: there exists \(v\in V\) such that
\[
\C[A,R]v=V.
\]
Consequently
\[
Z_{\End(V)}(A,R)=\C[A,R],
\qquad
\dim Z_{\End(V)}(A,R)=N.
\]
\end{lemma}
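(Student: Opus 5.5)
We take $v=v_{1,0}$, the top vector of the longest Jordan block, and show directly that $\C[A,R]v=V$.

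\textbf{Generation, block by block.} Everything rests on the block structure of $R$. It has only the subdiagonal blocks $F_i:V_i\to V_{i+1}$ and the superdiagonal blocks $-F_i^*:V_{i+1}\to V_i$. Both commute with $A$.

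We prove by induction on $i$ that $\C[A,R]v\supset V_1\oplus\cdots\oplus V_i$.

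For $i=1$, the vectors $A^kv=v_{1,k}$ span $V_1$.

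For the inductive step, assume $V_1\oplus\cdots\oplus V_i\subset W:=\C[A,R]v$.
\begin{itemize}
\item Apply $R$ to $v_{i,0}\in W$. This gives $F_i v_{i,0}-F_{i-1}^*v_{i,0}=v_{i+1,0}+(\text{a vector in }V_{i-1})$, where the second term is absent for $i=1$.
\item The $V_{i-1}$-component already lies in $W$, so $v_{i+1,0}\in W$.
\item Applying powers of $A$ to $v_{i+1,0}$ then produces all of $V_{i+1}$, since $V_{i+1}$ is a single Jordan chain starting at $v_{i+1,0}$.
\end{itemize}
Thus $W=V$, and $(A,R)$ is cyclic with cyclic vector $v$. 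The only care needed is that $F_i v_{i,0}=v_{i+1,0}$, which holds because $n_{i+1}>0$. The subtlety of the distinct block sizes does not enter here at all.

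\textbf{Centralizer.} This part is standard.
\begin{itemize}
\item $\C[A,R]$ is a commutative subalgebra of the centralizer $Z:=Z_{\End(V)}(A,R)$.
\item Any $Y\in Z$ is determined by $Yv$: writing $Yv=p(A,R)v$ for some polynomial $p$, both $Y$ and $p(A,R)$ commute with $\C[A,R]$ and agree on the cyclic vector, so they agree on $\C[A,R]v=V$.
\item Hence $Y=p(A,R)$, and $Z=\C[A,R]$.
\item The evaluation map $\C[A,R]\to V$, $p\mapsto pv$, is injective by the same argument and surjective by cyclicity, so $\dim Z=N$.
\end{itemize}

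\textbf{Expected obstacle.} There is essentially none. The only point to check is the sign and index bookkeeping in $R v_{i,0}$: the $-F_{i-1}^*$ contribution lands in $V_{i-1}$, which is already generated, so it never obstructs the induction.
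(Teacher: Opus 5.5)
Your proof is correct, but it reaches cyclicity by a different route from the paper.

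\textbf{The paper's route.} The paper passes to the top quotient $V/AV$. It notes that $F_{i-1}^*v_{i,0}=v_{i-1,d_{i-1}}=A^{d_{i-1}}v_{i-1,0}\in AV$; this is where the distinct even block sizes enter, through $d_{i-1}\ge2$. Hence $\bar R\bar v_i=\bar v_{i+1}$, and $\bar v_1$ is $\bar R$-cyclic on $V/AV$. Then $M=\C[A,R]v_{1,0}$ satisfies $M+AV=V$, and Nakayama's lemma over the local Artinian algebra $\C[A]\simeq\C[t]/(t^{n_1})$ gives $M=V$.

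\textbf{Your route.} You run a direct block-triangular induction inside $V$, absorbing the backward term $-F_{i-1}^*v_{i,0}\in V_{i-1}$ into the part already generated. This needs neither the quotient nor Nakayama. As you observe, it uses only $n_{i+1}>0$, so the lemma survives repeated block sizes. In that case $F_{i-1}^*v_{i,0}=v_{i-1,0}\notin AV$, and the paper's formula $\bar R\bar v_i=\bar v_{i+1}$ would no longer hold verbatim.

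\textbf{What the paper's version buys.} It is insensitive to the exact shape of $R$: it only needs $R$ modulo $AV$ to act cyclically on the tops of the Jordan chains. So it would still apply if $Rv_{i,0}$ had extra components in later blocks lying in $AV$, which your induction, as written, could not absorb.

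Your centralizer and dimension argument (evaluation at the cyclic vector plus commutativity of $\C[A,R]$) is the same as the paper's.
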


\begin{proof}
Consider \(V/AV\).  Let
\[
\bar v_i=v_{i,0}\pmod{AV}.
\]
The reverse block \(-F_{i-1}^*\) sends \(v_{i,0}\) into \(AV_{i-1}\):
for a $p$-distinguished diagram the row lengths are distinct even integers, so
\(d_{i-1}=n_{i-1}-n_i\ge2\), and explicitly
\[
 F_{i-1}^*v_{i,0}=v_{i-1,d_{i-1}}
   =A^{d_{i-1}}v_{i-1,0}\in AV_{i-1}.
\]
By contrast, \(F_i\) sends \(v_{i,0}\) to \(v_{i+1,0}\).  Hence
\[
\bar R\bar v_i=\bar v_{i+1}\quad(i<s),
\qquad
\bar R\bar v_s=0.
\]
Thus \(\bar v_1\) is cyclic for \(\bar R\) on \(V/AV\).

Put
\[
M=\C[A,R]v_{1,0}.
\]
Then \(M+AV=V\).  The algebra \(\C[A]\) is a local Artinian algebra
isomorphic to \(\C[t]/(t^{n_1})\), and \(A\) belongs to its Jacobson
radical.  Nakayama's lemma applied to the \(\C[A]\)-module \(V/M\)
gives \(V/M=0\).  Hence
\[
\C[A,R]v_{1,0}=V.
\]

Now every endomorphism commuting with both \(A\) and \(R\) is uniquely
determined by the image of the cyclic vector \(v_{1,0}\).  Given any such
image, cyclicity produces a polynomial in \(A,R\) with the same value on
\(v_{1,0}\).  Therefore
\[
Z_{\End(V)}(A,R)=\C[A,R].
\]
Since evaluation on the cyclic vector identifies this algebra with \(V\)
as a vector space, its dimension is \(N\).
\end{proof}

\subsubsection{The centralizer involution}

Put
\[
\mathfrak c=Z_{\End(V)}(A)
\]
and define
\[
\Theta(Z)=-JZ^*J.
\]
Then \(\Theta^2=1\) on \(\mathfrak c\); write
\[
\mathfrak c=\mathfrak c_+\oplus\mathfrak c_-
\]
for its eigenspaces.

Multiplication by \(J\) gives identifications
\[
(\mathfrak h^e)^{-\tau}\xrightarrow{\sim}\mathfrak c_-,
\qquad
Q\longmapsto QJ,
\]
and
\[
(\mathfrak p^e)^{-\tau}\xrightarrow{\sim}\mathfrak c_+,
\qquad
S\longmapsto SJ.
\]
Since \(JR=-RJ\),
\[
(QR+RQ)J=[R,QJ].
\]
Thus the surjectivity of \(\Phi_R\) is equivalent to the surjectivity of
\[
\ad R:\mathfrak c_-\longrightarrow\mathfrak c_+.
\]

\begin{lemma}\label{cii:lem:dimension-difference}
One has
\[
\dim\mathfrak c_- -\dim\mathfrak c_+=N.
\]
Moreover
\[
Z_{\End(V)}(A,R)\subset\mathfrak c_-.
\]
\end{lemma}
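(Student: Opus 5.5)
The plan is to compute the trace of the involution $\Theta$ on $\mathfrak c$, which equals $\dim\mathfrak c_+-\dim\mathfrak c_-$, and to show that it is $-N$. The containment $Z_{\End(V)}(A,R)\subset\mathfrak c_-$ will then follow from the identification $Z_{\End(V)}(A,R)=\C[A,R]$ in Lemma~\ref{cii:lem:cyclic}.

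\emph{Step 1: $\Theta$ preserves $\mathfrak c$ and is an involution.} I will use $A^*=-A$, $J^*=-J$, $J^2=1$ and $JAJ=-A$. If $[Z,A]=0$, then taking adjoints shows that $Z^*$ commutes with $A^*=-A$, hence with $A$. Conjugating by $J$ sends the centralizer of $A$ to the centralizer of $JAJ=-A$, which is the same space, so $\Theta(Z)\in\mathfrak c$. Since $J^*=-J$, we have $\Theta^2(Z)=J(JZ^*J)^*J=J\,J^*ZJ^*\,J=Z$.

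\emph{Step 2: block decomposition and trace.} Write
\[
 \mathfrak c=\bigoplus_{i,j}\Hom_{\C[A]}(V_j,V_i).
\]
The blocks $V_i$ are mutually orthogonal for the symplectic form and $J$ is block diagonal. Hence the adjoint sends the $(i,j)$ block to the $(j,i)$ block, and $\Theta$ does the same. For $i\ne j$, $\Theta$ therefore interchanges two distinct summands, and these contribute zero to $\tr\Theta$.

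On a diagonal block, $\Hom_{\C[A]}(V_i,V_i)=\C[A|_{V_i}]$ has basis $A^k$ for $0\le k<n_i$. For these elements,
\[
 \Theta(A^k)=-J(A^*)^kJ=-(-1)^kJA^kJ=-(-1)^k(-1)^kA^k=-A^k.
\]
So $\Theta=-1$ on each diagonal block, which contributes $-n_i$ to the trace. Summing over $i$ gives $\tr\Theta=-\sum_in_i=-N$. Because $\tr\Theta=\dim\mathfrak c_+-\dim\mathfrak c_-$, this is the asserted dimension difference.

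\emph{Step 3: the containment.} By Lemma~\ref{cii:lem:cyclic}, every element of $Z_{\End(V)}(A,R)$ is a polynomial $p(A,R)$ in the commuting operators $A$ and $R$. The adjoint reverses products, but since $A$ and $R$ commute this still gives $p(A,R)^*=p(A^*,R^*)=p(-A,-R)$. Using $JAJ=-A$ and $JRJ=-R$, conjugation by $J$ gives
\[
 J\,p(-A,-R)\,J=p(A,R).
\]
Hence $\Theta(p(A,R))=-p(A,R)$, so every such element lies in $\mathfrak c_-$.

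The only point requiring care is the bookkeeping in Step 2. One must check that $J$ is block diagonal and that the $V_i$ are mutually orthogonal, so that the adjoint really maps the $(i,j)$ block to the $(j,i)$ block. Both facts hold by the construction of the bases; once they are in place, the rest is a direct computation.
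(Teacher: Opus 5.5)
Your proposal is correct and follows the paper's proof essentially step for step. You compute $\operatorname{tr}\Theta$ blockwise: the off-diagonal $(i,j)$ and $(j,i)$ summands are swapped, and on each diagonal block $\C[A_i]$ one has $\Theta=-1$. The containment then follows from $Z_{\End(V)}(A,R)=\C[A,R]$ together with $\Theta(A^aR^b)=-A^aR^b$; your extra check that $\Theta$ preserves $\mathfrak c$ and squares to the identity fills in what the paper only asserts.
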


\begin{proof}
Decompose
\[
\mathfrak c
=
\bigoplus_{i,j}\Hom_A(V_j,V_i).
\]
For \(i\neq j\), the involution \(\Theta\) exchanges the \((i,j)\) and
\((j,i)\) summands, so their combined trace is zero.

For a diagonal block,
\[
\End_A(V_i)=\C[A_i].
\]
For every \(m\geq0\),
\[
\Theta(A_i^m)=-A_i^m.
\]
Therefore
\[
\Tr(\Theta|\mathfrak c)
=
-\sum_i n_i=-N.
\]
Since
\[
\Tr(\Theta|\mathfrak c)
=
\dim\mathfrak c_+ -\dim\mathfrak c_-,
\]
the first assertion follows.

For the second, note that \(A\) and \(R\) commute and satisfy
\[
A^*=-A,\qquad R^*=-R,\qquad
JAJ=-A,\qquad JRJ=-R.
\]
Hence for every monomial \(A^aR^b\),
\[
\Theta(A^aR^b)=-A^aR^b.
\]
Lemma~\ref{cii:lem:cyclic} then gives
\[
Z_{\End(V)}(A,R)=\C[A,R]\subset\mathfrak c_-.
\]
\end{proof}

\begin{proposition}\label{cii:prop:surjectivity}
The map
\[
\Phi_R:
(\mathfrak h^e)^{-\tau}
\longrightarrow
(\mathfrak p^e)^{-\tau}
\]
is surjective.
\end{proposition}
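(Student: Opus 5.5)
The plan is a rank--nullity count for $\ad R$ on $\mathfrak c_-$, using the reformulation already established: surjectivity of $\Phi_R$ is equivalent to surjectivity of
\[
 \ad R:\mathfrak c_-\longrightarrow\mathfrak c_+ .
\]
The two facts needed are Lemma~\ref{cii:lem:cyclic}, which computes the full commutant of $(A,R)$, and Lemma~\ref{cii:lem:dimension-difference}, which compares $\dim\mathfrak c_-$ with $\dim\mathfrak c_+$.

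First I check that $\ad R$ preserves $\mathfrak c$ and maps $\mathfrak c_-$ into $\mathfrak c_+$. Since $[R,A]=0$, $\ad R$ maps $\mathfrak c=Z_{\End(V)}(A)$ to itself. For $Z\in\mathfrak c$, I use $R^*=-R$, $JRJ=-R$, $J^2=1$ and $JZ^*J=-\Theta(Z)$ to compute
\[
 \Theta([R,Z])=-J\bigl(Z^*R^*-R^*Z^*\bigr)J
 =-\bigl(-\Theta(Z)R+R\Theta(Z)\bigr)
 =-[R,\Theta(Z)].
\]
Hence $\Theta(Z)=-Z$ implies $\Theta([R,Z])=[R,Z]$, so $\ad R(\mathfrak c_-)\subset\mathfrak c_+$. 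This is consistent with the identifications $Q\mapsto QJ$ and $S\mapsto SJ$, which are bijections because $J$ is invertible, as recorded before the proposition.

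Next I compute the kernel. The kernel of $\ad R$ on $\mathfrak c_-$ is $\mathfrak c_-\cap Z_{\End(V)}(A,R)$. By Lemma~\ref{cii:lem:dimension-difference} the whole commutant $Z_{\End(V)}(A,R)$ already lies in $\mathfrak c_-$, so the kernel equals $Z_{\End(V)}(A,R)$. By Lemma~\ref{cii:lem:cyclic} this commutant has dimension exactly $N$.

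Rank--nullity then gives
\[
 \rk\bigl(\ad R|_{\mathfrak c_-}\bigr)=\dim\mathfrak c_- -N=\dim\mathfrak c_+ ,
\]
where the last equality is the first assertion of Lemma~\ref{cii:lem:dimension-difference}. Since the image lies in $\mathfrak c_+$ and has full dimension, the map is onto $\mathfrak c_+$. Transporting back through multiplication by $J$ gives surjectivity of $\Phi_R$.

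The only step with real content is the sign bookkeeping for $\Theta$ applied to a commutator. There I will carefully use $J^*=-J$ together with $(JZ^*J)^*=J^*ZJ^*=JZJ$. Everything else is a direct assembly of the two preceding lemmas.
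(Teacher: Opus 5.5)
Your proposal is correct and follows the paper's proof: the kernel of $\ad R$ on $\mathfrak c_-$ is $Z_{\End(V)}(A,R)$, which has dimension $N$ by Lemma~\ref{cii:lem:cyclic}, and rank--nullity combined with $\dim\mathfrak c_- -\dim\mathfrak c_+=N$ gives surjectivity onto $\mathfrak c_+$. Your explicit check that $\Theta([R,Z])=-[R,\Theta(Z)]$ is a harmless addition; the paper gets $\ad R(\mathfrak c_-)\subset\mathfrak c_+$ implicitly from the identification $(QR+RQ)J=[R,QJ]$.
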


\begin{proof}
By Lemma~\ref{cii:lem:cyclic},
\[
\ker(\ad R|\mathfrak c_-)
=
Z_{\End(V)}(A,R)
\]
has dimension \(N\).  Hence
\[
\operatorname{rank}(\ad R|\mathfrak c_-)
=
\dim\mathfrak c_- -N.
\]
By Lemma~\ref{cii:lem:dimension-difference}, this is
\[
\dim\mathfrak c_+.
\]
Therefore
\[
\ad R:\mathfrak c_-\twoheadrightarrow\mathfrak c_+,
\]
and the equivalent map \(\Phi_R\) is surjective.
\end{proof}

\begin{proof}[Proof of Theorem~\ref{cii:thm:stable-density}]
Consider
\[
\mu:
H_e^\circ\times(\mathfrak p^e)^\tau
\longrightarrow
\mathfrak p^e,
\qquad
(h,x)\longmapsto h\cdot x.
\]
At \((1,R)\), the tangent directions from the second factor give all of
\((\mathfrak p^e)^\tau\), while
Proposition~\ref{cii:prop:surjectivity} shows that the orbit tangent directions
cover all of \((\mathfrak p^e)^{-\tau}\).  Hence the complex differential
\[
d\mu_{(1,R)}
\]
is surjective.  The complex-analytic submersion theorem therefore implies
that the image of \(\mu\) contains a nonempty Euclidean open subset of
\(\mathfrak p^e\).  In particular \(\mu\) is dominant and
\[
\overline{
H_e^\circ\cdot(\mathfrak p^e)^\tau
}^{\,\mathrm{Zar}}
=
\mathfrak p^e.\qedhere
\]
\end{proof}

\subsection{Regularity}\label{cii:sec:regularity}

The distribution theory is carried out on the underlying real Nash
manifold.  Hence normal tensors include mixed
\(z^\alpha\bar z^\beta\) terms.  We use the real polynomial ring
\[
\C[(\mathfrak p^e)_{\R}].
\]

Let \(e\) be the \(\tau\)-fixed representative from
Proposition~\ref{cii:prop:normal-form}.  Since
\[
K=H\rtimes\langle\tau\rangle
\]
and \(\tau e=e\),
\[
K_e=H_e\rtimes\langle\tau\rangle.
\]
The Frobenius point fiber controlling the orbit term in
\eqref{cii:eq:exact-frobenius-fiber} is
\begin{equation}\label{cii:eq:K-fiber}
\mathcal F_{e,k}
=
\left(
\Sym^k(N_e)\otimes_{\R}\C
\otimes\Delta_{K_e}
\otimes\chi^{-1}
\right)^{K_e},
\end{equation}
because \(K\) is unimodular.

The invariant real bilinear form
\[
  B_{\R}=\operatorname{Re}B
\]
identifies
\[
  N_e\simeq((\mathfrak p^e)_{\R})^*.
\]
This is exactly the real form of the annihilator identity
\eqref{cii:eq:orthogonal-centralizer} from
Lemma~\ref{cii:lem:B-centralizer}.
Therefore
\begin{equation}\label{cii:eq:real-poly}
\Sym^k(N_e)\otimes_{\R}\C
\simeq
\C[(\mathfrak p^e)_{\R}]_k.
\end{equation}

Working with $H_e^\circ$ in the next two lemmas is sufficient for the
contradiction: a $K_e$-invariant Frobenius vector is automatically an
$H_e^\circ$-semi-invariant, the density argument then forces its polynomial
part to be $\tau$-fixed, while the remaining sign character makes
$K_e$-invariance impossible.  No density assertion for the full possibly
disconnected stabilizer $H_e$ is used.

\begin{lemma}[Frobenius character]
\label{cii:lem:frobenius-character}
On \(H_e^\circ\), a vector in the fiber
\eqref{cii:eq:K-fiber} corresponds, via \eqref{cii:eq:real-poly}, to a real polynomial \(f\) satisfying
\[
f(hx)=\nu_e(h)f(x),
\qquad
\nu_e:=\Delta_{K_e}|_{H_e^\circ}.
\]
Moreover
\[
\nu_e(\tau h\tau^{-1})=\nu_e(h)
\qquad(h\in H_e^\circ),
\]
and
\[
\Delta_{K_e}(\tau)=1.
\]
\end{lemma}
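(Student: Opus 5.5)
The plan is to unwind the Frobenius fiber \eqref{cii:eq:K-fiber} restricted from $K_e$ to the subgroup $H_e^\circ$. There the character $\chi$ is trivial, because $\chi|_H=1$. Invariance of a vector $w$ under $K_e$ then means that, for $h\in H_e^\circ$, the element $h$ acts on the $\Sym^k(N_e)\otimes_\R\C$ component of $w$ by $\Delta_{K_e}(h)^{-1}$, since the total action on $w$ must be trivial.

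The next step transports this through the identification \eqref{cii:eq:real-poly}. Since $B_\R$ is $H$-invariant, the isomorphism $N_e\simeq((\fp^e)_\R)^*$ is $H_e$-equivariant, and the induced action on polynomials is $(h\cdot f)(x)=f(h^{-1}x)$. The condition becomes $f(h^{-1}x)=\Delta_{K_e}(h)^{-1}f(x)$. Replacing $h$ by $h^{-1}$ gives $f(hx)=\nu_e(h)f(x)$ with $\nu_e=\Delta_{K_e}|_{H_e^\circ}$. The one point needing care is the sign and dualization convention, so I would check it directly against the convention fixed after \eqref{cii:eq:exact-frobenius-fiber}.

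For the two remaining identities I would invoke Lemma~\ref{cii:lem:modular-naturality} with $J=K_e$.
\begin{itemize}
\item Since $\tau e=e$, conjugation $\alpha=\Ad(\tau)$ is a continuous automorphism of $K_e$. It preserves $H_e^\circ$, because $\tau$ normalizes $H$ and hence $H_e$ and its identity component.
\item Naturality of $\Delta_{K_e}$ under $\alpha$ then gives $\nu_e(\tau h\tau^{-1})=\nu_e(h)$.
\item Finally, $\tau^2=1$ and positivity of $\Delta_{K_e}$ give $\Delta_{K_e}(\tau)=1$, as in that lemma.
\end{itemize}

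The only real obstacle is the bookkeeping of the convention: whether the dual point coefficient yields $\nu_e$ or $\nu_e^{-1}$. Either sign suffices for the later density argument, since both are characters invariant under conjugation by $\tau$, but I would match the statement's version via \eqref{cii:eq:exact-frobenius-fiber}.
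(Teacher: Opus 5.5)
Your proposal is correct and follows the paper's own proof essentially verbatim. Like the paper, you take a fiber vector $f\otimes1\otimes1$ and note that $\chi$ is trivial on $H_e^\circ$. Using the contragredient action $(h\cdot f)(x)=f(h^{-1}x)$ you get $h\cdot f=\Delta_{K_e}(h)^{-1}f$, hence $f(hx)=\nu_e(h)f(x)$, and the two $\tau$-identities are exactly Lemma~\ref{cii:lem:modular-naturality} applied to $K_e\ni\tau$.
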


\begin{proof}
With the standard action on polynomial functions
\[
(h\cdot f)(x)=f(h^{-1}x),
\]
the \(H_e^\circ\)-invariance of
\[
f\otimes\Delta_{K_e}
\]
in \eqref{cii:eq:K-fiber} is equivalent to
\[
h\cdot f=\Delta_{K_e}(h)^{-1}f,
\]
or, equivalently,
\[
f(hx)=\Delta_{K_e}(h)f(x).
\]
The sign character $\chi$ is trivial on \(H_e^\circ\).  Here $\psi$ in
\eqref{cii:eq:exact-frobenius-fiber} denotes a generic Frobenius
equivariance character, whereas $\chi$ denotes the fixed sign character of
$K/H$ used in the regularity argument.
The conjugation invariance of \(\nu_e\) and the equality
\(\Delta_{K_e}(\tau)=1\) are exactly
Lemma~\ref{cii:lem:modular-naturality}.
\end{proof}

\begin{lemma}[Real-polynomial semi-invariants]
\label{cii:lem:real-semiinvariant}
Let
\[
f\in\C[(\mathfrak p^e)_{\R}]
\]
satisfy
\[
f(hx)=\nu_e(h)f(x)
\qquad(h\in H_e^\circ).
\]
Then
\[
f(\tau x)=f(x)
\qquad
\text{for every }x\in\mathfrak p^e.
\]
\end{lemma}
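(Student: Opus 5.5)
The argument runs through the stable density theorem, Theorem~\ref{cii:thm:stable-density}. Fix $f$ with $f(hx)=\nu_e(h)f(x)$ for $h\in H_e^\circ$, and put
\[
 g(x)=f(\tau x)-f(x).
\]
The aim is to show that $g$ vanishes on a nonempty Euclidean open subset of $(\mathfrak p^e)_{\R}$. Since $g$ is a real polynomial, i.e.\ a polynomial in the coordinates $z$ and $\bar z$, vanishing on such an open set forces $g\equiv0$.

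\emph{Step 1.} Show that $g$ is again a $\nu_e$-semi-invariant for $H_e^\circ$. Since $\tau$ normalizes $H$ and fixes $e$, it normalizes $H_e$ and hence $H_e^\circ$. For $h\in H_e^\circ$ and $x\in\mathfrak p^e$,
\[
 f(\tau hx)=f\bigl((\tau h\tau^{-1})\tau x\bigr)=\nu_e(\tau h\tau^{-1})f(\tau x)=\nu_e(h)f(\tau x),
\]
where the last equality is the conjugation invariance in Lemma~\ref{cii:lem:frobenius-character}. Subtracting the semi-invariance of $f$ gives $g(hx)=\nu_e(h)g(x)$.

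\emph{Step 2.} Observe that $g$ vanishes on $(\mathfrak p^e)^\tau$, since $\tau x=x$ there. Combined with Step~1, this gives
\[
 g(h\cdot x)=\nu_e(h)\,g(x)=0 \qquad (h\in H_e^\circ,\ x\in(\mathfrak p^e)^\tau).
\]
So $g$ vanishes on the image of $\mu\colon H_e^\circ\times(\mathfrak p^e)^\tau\to\mathfrak p^e$.

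\emph{Step 3.} By the second assertion of Theorem~\ref{cii:thm:stable-density}, the image of $\mu$ contains a nonempty Euclidean open subset $\mathcal U$ of $\mathfrak p^e$. A real polynomial that vanishes on a nonempty open subset of a real vector space is identically zero. Hence $g=0$, which says exactly that $f\circ\tau=f$.

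The main point of care is that $f$ involves antiholomorphic variables. For such $f$, Zariski density of the image of $\mu$ in the complex sense is not enough: a polynomial in $z$ and $\bar z$ can vanish on a complex Zariski-dense set without vanishing identically. This is why the Euclidean-open conclusion of the theorem is used rather than the Zariski-closure statement.

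The other subtlety is that $\nu_e$ is only a positive character, possibly nontrivial. One could hope to avoid it by showing $\nu_e$ trivial on the connected reductive part, but this is unnecessary: Step~1 uses only the multiplicativity of $\nu_e$ and its invariance under conjugation by $\tau$, both supplied by Lemma~\ref{cii:lem:frobenius-character}.
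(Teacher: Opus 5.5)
Your proof is correct and is essentially the paper's argument. The paper also takes the Euclidean open set in the image of $\mu$ coming from the surjective differential at $(1,R)$. For $x=hy$ with $h\in H_e^\circ$ and $y\in(\mathfrak p^e)^\tau$, it computes $f(\tau x)=f((\tau h\tau^{-1})y)=\nu_e(h)f(y)=f(x)$ using the $\tau$-conjugation invariance of $\nu_e$. Your detour through the semi-invariant $g=f\circ\tau-f$ just repackages this computation, and you are right that the Euclidean-open conclusion, not Zariski density, is what the $z,\bar z$ polynomials require.
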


\begin{proof}
By the proof of Theorem~\ref{cii:thm:stable-density}, the action map
\[
\mu:
H_e^\circ\times(\mathfrak p^e)^\tau
\longrightarrow
\mathfrak p^e
\]
has surjective complex differential at \((1,R)\).  Hence its image contains
a nonempty Euclidean open subset
\(\Omega\subset\mathfrak p^e\).

For \(x\in\Omega\), write \(x=hy\) with
\[
h\in H_e^\circ,\qquad
y\in(\mathfrak p^e)^\tau.
\]
Using Lemma~\ref{cii:lem:frobenius-character},
\[
\begin{aligned}
f(\tau x)
&=f((\tau h\tau^{-1})y)\\
&=\nu_e(\tau h\tau^{-1})f(y)\\
&=\nu_e(h)f(y)\\
&=f(hy)=f(x).
\end{aligned}
\]
Thus the real polynomial \(f\circ\tau-f\) vanishes on the nonempty
Euclidean open set \(\Omega\), and hence vanishes identically.
\end{proof}

\begin{corollary}\label{cii:cor:no-sign-fiber}
For every \(p\)-distinguished nilpotent orbit in balanced CII and every
\(k\geq0\),
\[
\mathcal F_{e,k}=0.
\]
Equivalently, every real Frobenius normal fiber relevant to a Type I
\((K,\chi)\)-equivariant distribution has zero sign-isotypic component.
\end{corollary}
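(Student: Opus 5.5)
The argument combines the two preceding lemmas with the $\tau$-component of the Frobenius fiber. Let $w\in\mathcal F_{e,k}$ and write $w=f\otimes 1$ under \eqref{cii:eq:real-poly}, with $f\in\C[(\mathfrak p^e)_{\R}]_k$. The first step is to check that $\tau$ acts on $\Sym^k(N_e)\otimes_\R\C$ compatibly with \eqref{cii:eq:real-poly}. This holds because $B$ is $\tau$-invariant and $\tau e=e$: the form $B_{\R}$ intertwines the $\tau$-actions on $N_e$ and on $((\mathfrak p^e)_\R)^*$, so $\tau\cdot w$ corresponds to $f\circ\tau^{-1}=f\circ\tau$.

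Next, $K_e$-invariance of $w$ restricted to $H_e^\circ\subset K_e$ is, by Lemma~\ref{cii:lem:frobenius-character}, exactly the semi-invariance $f(hx)=\nu_e(h)f(x)$. Lemma~\ref{cii:lem:real-semiinvariant} then gives $f\circ\tau=f$, so $\tau$ fixes the polynomial part of $w$.

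On the other hand, invariance under the element $\tau\in K_e$ in the twisted fiber \eqref{cii:eq:K-fiber} reads
\[
 \Delta_{K_e}(\tau)\,\chi^{-1}(\tau)\,(f\circ\tau)=f .
\]
Here $\Delta_{K_e}(\tau)=1$ by Lemma~\ref{cii:lem:modular-naturality} and $\chi(\tau)=-1$, so the condition becomes $f\circ\tau=-f$. Combined with $f\circ\tau=f$, this forces $f=0$, hence $\mathcal F_{e,k}=0$ for every $k$.

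For the "equivalently" clause, Type II symbols are already excluded by Proposition~\ref{cii:prop:typeII}. For Type I, the extra homothety factor of $L=H\times\C^\times$ only imposes the degree condition \eqref{cii:eq:typeI}. Any fiber vector for $L$ together with $\tau$ is in particular $K_e$-invariant after forgetting the $\C^\times$-part. One checks that $(D_t,t^2)$ commutes with $\tau$: $D_t$ can be chosen $\tau$-fixed, since the normal triple for $x_A$ is built from the $\tau$-symmetric Jordan chains. Consequently the sign-isotypic component of the full fiber injects into $\mathcal F_{e,k}$ and vanishes.

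I expect the main obstacle to be the bookkeeping of the $\tau$-action through dualization and the identification $N_e\simeq((\mathfrak p^e)_\R)^*$. The risk is a sign or inverse slipping in, for example $f\circ\tau$ versus $f\circ\tau^{-1}$, or the conjugation $\tau h\tau^{-1}$ not preserving $H_e^\circ$. Both are harmless here, since $\tau^2=1$ and $\tau$ normalizes $H_e$ and hence its identity component. I will also verify that a $\tau$-fixed normal triple exists, so that the Type I weighting is compatible with $\tau$.
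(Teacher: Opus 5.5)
Your proof is correct and follows the paper's own argument: Lemma~\ref{cii:lem:real-semiinvariant} makes the polynomial part $\tau$-fixed, while $\Delta_{K_e}(\tau)=1$ and $\chi(\tau)=-1$ make $\tau$ act by $-1$ on the full tensor in \eqref{cii:eq:K-fiber}, so $K_e$-invariance forces the vector to vanish. Your additional checks are sound details that the paper leaves implicit: the identification \eqref{cii:eq:real-poly} is $\tau$-equivariant because $B$ is $\tau$-invariant and $\tau e=e$, and the fiber for $H\times\C^\times$ together with $\tau$ embeds in $\mathcal F_{e,k}$ by restriction to $K_e\times\{1\}$ (that restriction alone suffices, so the $\tau$-fixed choice of $D_t$ is not actually needed).
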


\begin{proof}
Suppose a nonzero element of \(\mathcal F_{e,k}\) exists and let \(f\) be
the corresponding polynomial.  By
Lemma~\ref{cii:lem:real-semiinvariant},
\[
\tau\cdot f=f.
\]
On the one-dimensional modular factor,
\[
\Delta_{K_e}(\tau)=1,
\]
whereas
\[
\chi^{-1}(\tau)=-1.
\]
Hence \(\tau\) acts by \(-1\) on the complete tensor appearing in
\eqref{cii:eq:K-fiber}, contradicting \(K_e\)-invariance.
\end{proof}

\subsubsection{Proof of regularity}

\begin{theorem}\label{cii:thm:regular}
For every \(r\geq1\), the symmetric pair
\[
(C_{2r},C_r+C_r)
\]
is regular.
\end{theorem}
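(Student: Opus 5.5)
We argue by contradiction, assembling the pieces established in this section. By Lemma~\ref{cii:lem:admissible-components}, every admissible element either acts on $H$-invariant distributions as an element of $H$, and then there is nothing to prove, or it acts as $\tau$. Hence it suffices to prove the implication \eqref{cii:eq:regularity-implication}. So assume its left-hand side and suppose the right-hand side fails. Lemma~\ref{cii:lem:homogeneity} then produces a nonzero $(K,\chi)$-equivariant, $B$-adapted distribution $\xi$ with $\supp\xi\subset\cN(\mathfrak p)$. Its homogeneity type is either Type I or Type II. Type II is excluded globally by Corollary~\ref{cii:cor:no-typeII-distribution}, so $\xi$ is of Type I.

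For Type I, the Fourier transform $\mathcal F_B\xi$ is proportional to $\xi$. Therefore $\supp\widehat\xi\subset\cN(\mathfrak p^*)$ as well, and Proposition~\ref{cii:prop:dense-distinguished} applies. It lets us choose an orbit $\mathcal O$ that is maximal in $\supp\xi$ and $p$-distinguished. Lemma~\ref{cii:lem:zero-orbit} shows $\mathcal O\neq0$, although this is not essential, since the zero orbit is not $p$-distinguished for $r\ge1$.

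By Proposition~\ref{cii:prop:normal-form}, we may take the base point $e=x_A\in\mathcal O$ to be $\tau$-fixed. Then $K\cdot e=H\cdot e=\mathcal O$ and $K_e=H_e\rtimes\langle\tau\rangle$. Since $\mathcal O$ is open in the support and $\xi$ is $(K,\chi)$-equivariant, Lemma~\ref{cii:lem:maximal-symbol} (applied with $L=K$ and $\psi=\chi$) gives some $k\ge0$ with $\cS^*(\mathcal O,\Sym^kCN_{\mathcal O}^{\mathfrak p})^{K,\chi}\neq0$. By the Frobenius identification \eqref{cii:eq:exact-frobenius-fiber}, using that $K$ is unimodular, this says exactly that $\mathcal F_{e,k}\neq0$. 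This contradicts Corollary~\ref{cii:cor:no-sign-fiber}. Hence $\cS^*(\mathfrak p)^{K,\chi}=0$, which is the right-hand side of \eqref{cii:eq:regularity-implication}, and regularity follows.

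The step needing most care is the compatibility of equivariance through these reductions. We must check that the adapted distribution from Lemma~\ref{cii:lem:homogeneity} remains $\tau$-odd, which holds because $\mathcal F_B$ and multiplication by $B(x,x)$ commute with $\tau$. We must also check that Aizenbud's density result, stated for $H$, still lets us select a maximal orbit of the $K$-equivariant support. This holds because every $p$-distinguished orbit is $\tau$-stable by Corollary~\ref{cii:cor:stable-distinguished}, so the $H$-orbits and $K$-orbits in question coincide.

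A secondary point concerns the Frobenius symbol. It must be taken for the group $K$ with the character $\chi$, rather than for $L=H\times\C^\times$. The Type I homogeneity character is then irrelevant here, because the vanishing of $\mathcal F_{e,k}$ already holds for every $k$ without using any degree constraint.
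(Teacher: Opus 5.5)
Your proposal is correct and follows the paper's proof essentially step for step. The chain is the same: the homogeneity reduction gives a $(K,\chi)$-equivariant adapted distribution on $\cN(\mathfrak p)$; Type II is excluded globally; Fourier self-proportionality together with Aizenbud's density theorem produces a maximal $p$-distinguished orbit; you take a $\tau$-fixed representative; the maximal-stratum symbol is taken for $(K,\chi)$; and $\mathcal F_{e,k}\neq0$ then contradicts Corollary~\ref{cii:cor:no-sign-fiber}. You also check explicitly that the chosen orbit is $\tau$-stable, hence a single $K$-orbit, so that Lemma~\ref{cii:lem:maximal-symbol} applies with $L=K$; the paper leaves this implicit, and the check is worth keeping.
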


\begin{proof}
Assume the left-hand side of
\eqref{cii:eq:regularity-implication} and suppose that the right-hand side
fails.  By Lemma~\ref{cii:lem:homogeneity}, there is a nonzero \(H\)-invariant,
\(\tau\)-odd, adapted distribution \(\xi\) supported on
\(\cN(\mathfrak p)\).  Equivalently, \(\xi\) is
\((K,\chi)\)-equivariant.

Lemma~\ref{cii:lem:zero-orbit} excludes the zero orbit from being a maximal
support stratum.  Corollary~\ref{cii:cor:no-typeII-distribution} excludes Type II, so
\(\xi\) is of Type I.  Its Fourier transform is proportional to itself,
hence
\[
\supp\widehat{\xi}\subset\cN(\mathfrak p^*).
\]
By Proposition~\ref{cii:prop:dense-distinguished},
the \(p\)-distinguished elements are dense in \(\supp\xi\).
Since \(\supp\xi\) is closed and the nilpotent cone has only finitely
many \(H\)-orbits, \(\supp\xi\) is a finite union of orbit closures.
Let \(\mathcal U\) be the union of the orbits that are maximal, for the
closure order, among the orbits contained in \(\supp\xi\).  The complement
\(\supp\xi\setminus\mathcal U\) is a finite union of closures of
nonmaximal orbits and is therefore closed in \(\supp\xi\); hence
\(\mathcal U\) is a nonempty relatively open subset of \(\supp\xi\).
The density of the \(p\)-distinguished elements implies that
\(\mathcal U\) meets the distinguished locus.  Since
\(p\)-distinguishedness is constant on \(H\)-orbits, one of the maximal
orbits in \(\mathcal U\) is \(p\)-distinguished.  Fix such an orbit
\[
  \mathcal O=H\cdot e.
\]

Choose the \(\tau\)-fixed representative
\(e\in\mathcal O\) supplied by Proposition~\ref{cii:prop:normal-form}.
Because \(\mathcal O\) is open in \(\supp\xi\), the germ of \(\xi\)
along \(\mathcal O\) is nonzero.
Lemma~\ref{cii:lem:maximal-symbol} therefore gives a \(k\geq0\) with
\[
  \cS^*\!\left(
    \mathcal O,
    \Sym^k(CN_{\mathcal O}^{\mathfrak p_{\R}})
  \right)^{K,\chi}\neq0.
\]
The exact Frobenius isomorphism \eqref{cii:eq:exact-frobenius-fiber} then gives
\[
  \mathcal F_{e,k}\neq0.
\]
This contradicts Corollary~\ref{cii:cor:no-sign-fiber}.

Hence no such \(\xi\) exists.  The sign implication holds, and
Lemma~\ref{cii:lem:admissible-components} proves regularity.
\end{proof}

\subsection{Descendants and completion of the proof}\label{cii:sec:descendants}

We also give a direct descendant analysis for balanced CII.  In particular,
every descendant is seen to be regular without invoking the finite-component
assembly of Section~\ref{sec:component-assembly}.

We make the descendant reduction explicit.  In the terminology of
\cite[Definition~7.2.2]{AG09}, a descendant is obtained from a semisimple
element \(x=s(g)=g\theta(g)^{-1}\); in particular \(x\) belongs to
\[
  P=\{g\theta(g)^{-1}:g\in G\}.
\]
Thus it suffices to analyze all semisimple \(x\in P\).  For such an
\(x\),
\[
\theta(x)=x^{-1}.
\]
Write the standard symplectic representation as
\[
E=\bigoplus_\lambda E_\lambda.
\]
Because \(x\) is symplectic, the form pairs
\(E_\lambda\) perfectly with \(E_{\lambda^{-1}}\), and
\(E_{\pm1}\) are nondegenerate symplectic subspaces.  Since
\(\theta=\Ad(s)\) and \(\theta(x)=x^{-1}\),
\[
sE_\lambda=E_{\lambda^{-1}}.
\]

Since $\Sp(E)$ is simply connected, the centralizer $G_x$ is connected by
Steinberg's connected-centralizer theorem.  The eigenspace decomposition is
orthogonal and gives the following actual direct-product decomposition of the
group centralizer, not merely a Lie-algebra decomposition up to isogeny.
Choose one representative from each inverse pair
\(\{\lambda,\lambda^{-1}\}\), \(\lambda\neq\pm1\).  Then
\[
G_x
\simeq
\Sp(E_1)\times\Sp(E_{-1})
\times
\prod_{\{\lambda,\lambda^{-1}\}}
\GL(E_\lambda),
\]
where the \(\GL(E_\lambda)\)-factor acts on
\(E_{\lambda^{-1}}\) through the contragredient representation determined
by the symplectic pairing.

\subsubsection{The factors with \texorpdfstring{\(\lambda=\pm1\)}{lambda = +/-1}}

For \(\varepsilon\in\{1,-1\}\), the space \(E_\varepsilon\) is
\(s\)-stable.  Put
\[
E_\varepsilon^+=E_\varepsilon\cap U,\qquad
E_\varepsilon^-=E_\varepsilon\cap W.
\]
Both are symplectic and
\[
E_\varepsilon=E_\varepsilon^+\oplus E_\varepsilon^-.
\]
If
\[
\dim E_\varepsilon^+=2a,\qquad
\dim E_\varepsilon^-=2b,
\]
the descendant factor is
\[
\left(
\Sp_{2(a+b)}(\C),
\Sp_{2a}(\C)\times\Sp_{2b}(\C)
\right).
\]
If \(a=0\) or \(b=0\), this is a trivial symmetric pair and hence regular.
If \(a,b>0\) and \(a\neq b\), it is pleasant by
\cite[Proposition~4.16]{Rubio}, hence regular.  If \(a=b\), it is a
balanced CII factor
\[
(C_{2a},C_a+C_a),
\]
regular by Theorem~\ref{cii:thm:regular}.

\subsubsection{The factors with \texorpdfstring{\(\lambda\neq\pm1\)}{lambda not equal to +/-1}}

Fix one inverse pair and set \(V=E_\lambda\).  The involution \(s\)
identifies \(V\) with \(E_{\lambda^{-1}}\).  Define
\[
\beta(v,w)=\omega(v,sw).
\]
It is nondegenerate.  Moreover
\[
\beta(w,v)
=
\omega(w,sv)
=
\omega(sw,v)
=
-\omega(v,sw)
=
-\beta(v,w),
\]
so \(\beta\) is alternating and \(\dim V=2a\).
We now compute the restricted involution rather than merely identify its
Lie type.  An element of the centralizer on
\(V\oplus E_{\lambda^{-1}}\) is determined by \(a\in\GL(V)\); its action
on \(E_{\lambda^{-1}}\) is the contragredient action determined by
\(\omega\).  Let \(a^{*_{\beta}}\) denote the adjoint with respect to
\(\beta\).  Using \(sV=E_{\lambda^{-1}}\), the equality
\(\omega(av,bw)=\omega(v,w)\), and the definition
\(\beta(v,w)=\omega(v,sw)\), one obtains
\[
  \theta(a)=(a^{*_{\beta}})^{-1}.
\]
Hence
\[
  \GL(V)^\theta
  =\{a\in\GL(V):a^{*_{\beta}}a=1\}
  =\Sp(V,\beta).
\]
Since \(\beta\) is nondegenerate alternating, \(\dim V=2a\), and the
factor is
\[
  (\GL_{2a}(\C),\Sp_{2a}(\C)).
\]

\begin{lemma}[Effective reduction of the \(\GL/\Sp\) factor]
\label{cii:lem:GLSL}
The regularity problem for
\[
(\GL_{2a}(\C),\Sp_{2a}(\C))
\]
is identical to the regularity problem for
\[
(\SL_{2a}(\C),\Sp_{2a}(\C)).
\]
\end{lemma}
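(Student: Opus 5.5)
We reduce the $\GL/\Sp$ regularity problem to the $\SL/\Sp$ one by comparing the three ingredients of Definition~\ref{def:regularity}: the space $Q_H(\fp)$, the effective group acting on it, and the admissible elements. Write $\GL_{2a}=Z\cdot\SL_{2a}$ with $Z=\C^\times\Id$. The involution is $\theta(g)=(g^{*_\beta})^{-1}$. It acts on $Z$ by inversion, so the central summand $\fz=\C\Id$ lies entirely in $\fp$. Hence
\[
  \fp_{\GL}=\C\Id\oplus\fp_{\SL},\qquad
  \fp_{\SL}=\{X\in\mathfrak{sl}_{2a}:X^{*_\beta}=X\}.
\]
By Lemma~\ref{lem:fixed-vectors}, $\fp^{G^\theta}=\fp_{\fz}=\C\Id$. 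Therefore
\[
  Q_{\Sp_{2a}}(\fp_{\GL})=\fp_{\mathrm{der}}=\fp_{\SL}=Q_{\Sp_{2a}}(\fp_{\SL}),
\]
since $\fp_{\SL}^{\Sp_{2a}}=0$, again by Lemma~\ref{lem:fixed-vectors} applied to the simple algebra. The fixed subgroup is $\Sp_{2a}$ in both cases, and it acts on $\fp_{\SL}$ by the same conjugation. So the quotient spaces, the null cones $\Gamma$, the regular sets $R$, and the spaces of $H$-invariant distributions coincide.

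It remains to compare admissible elements. Take $g\in\GL_{2a}$ admissible, so $g^{-1}\theta(g)\in Z(\GL_{2a})$. Choose a scalar $c$ with $c^{2a}=\det g^{-1}$ and put $g'=cg\in\SL_{2a}$. Then $g'^{-1}\theta(g')=c^{-2}g^{-1}\theta(g)$ is central in $\SL_{2a}$, because it is a scalar of determinant $1$. Moreover $\Ad(g')=\Ad(g)$ on $\fp$, so condition (ii) of admissibility transfers, and the action on distributions is identical. Conversely, every admissible element of $\SL_{2a}$ is admissible in $\GL_{2a}$, since $Z(\SL_{2a})\subset Z(\GL_{2a})$ and the actions on $\fp_{\SL}$ agree; on the discarded summand $\C\Id$ both act trivially by Lemma~\ref{lem:fixed-vectors}. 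Thus the two sets of implications in Definition~\ref{def:regularity} are literally the same set of statements.

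The only delicate point is bookkeeping: the regularity condition for $\GL$ is formulated on $Q_H(\fp_{\GL})$, not on $\fp_{\GL}$. We must check that an admissible element acts on $Q$ via the canonical linear model, so that no translation on the central line appears. This holds because $\Ad(g)$ fixes $\C\Id$ pointwise and preserves $\fp_{\SL}$. I expect no real obstacle beyond verifying that $\theta$ maps $\SL_{2a}$ to itself, which is clear since $\det(g^{*_\beta})=\det g$.
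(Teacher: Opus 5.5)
Your proposal is correct and follows essentially the same route as the paper. You identify $Q(\fp_{\GL})=\fp_{\GL}/\C\Id$ with $\fp_{\SL}$ equivariantly for the common fixed group $\Sp_{2a}$; the paper uses Schur's lemma where you invoke Lemma~\ref{lem:fixed-vectors}. You then match admissible elements through the scalar rescaling $g\mapsto cg$, which leaves $\Ad(g)$ unchanged.

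The one step the paper spells out and you leave implicit is the transfer of the closed-orbit condition in admissibility. Since $\Sp_{2a}$ acts trivially on $\C\Id$, the closed orbits in $\fp_{\GL}$ are exactly the translates $c\Id+\mathcal O$ of closed orbits $\mathcal O\subset\fp_{\SL}$. Hence preserving every closed orbit in one space is equivalent to preserving every closed orbit in the other.
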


\begin{proof}
Let \(V\) be the defining \(2a\)-dimensional symplectic space and let
\(X\mapsto X^*\) denote symplectic adjoint.  On Lie algebras,
\[
  d\theta(X)=-X^*,
\]
so
\[
  \mathfrak p_{\GL}=\{X:X^*=X\}
  =\C I\oplus\mathfrak p_{\SL},
  \qquad
  \mathfrak p_{\SL}=\{X:X^*=X,\ \Tr X=0\}.
\]
The \(\Sp(V)\)-fixed subspace of \(\End(V)\) is \(\C I\) by Schur's
lemma.  Therefore
\[
  Q(\mathfrak p_{\GL})
  =\mathfrak p_{\GL}/\C I
  \xrightarrow{\sim}\mathfrak p_{\SL}
  =Q(\mathfrak p_{\SL})
\]
\(\Sp(V)\)-equivariantly.  Under this isomorphism the null cones and their
complements agree, because both are defined by the same categorical quotient
of the effective \(\Sp(V)\)-module.

It remains to compare admissibility.  Any \(g\in\GL(V)\) has a central
scalar multiple \(cg\in\SL(V)\), and
\[
  \Ad(cg)=\Ad(g).
\]
Condition~(i) in the definition of an admissible element
\cite[Definition~7.4.1]{AG09} depends only on this adjoint transformation.
For condition~(ii), the action on
\(\mathfrak p_{\GL}=\C I\oplus\mathfrak p_{\SL}\) is the identity on
\(\C I\) and the given adjoint action on \(\mathfrak p_{\SL}\).  Closed
\(\Sp(V)\)-orbits in \(\mathfrak p_{\GL}\) are exactly translates
\(cI+\mathcal O\) with \(\mathcal O\) closed in
\(\mathfrak p_{\SL}\).  Hence the \(H\)-admissibility condition on the
full \(\GL\)-space is equivalent to the one on the effective
\(\SL\)-space.  Thus \(g\) is admissible for the \(\GL/\Sp\) problem if
and only if the same adjoint transformation, represented by \(cg\), is
admissible for the \(\SL/\Sp\) problem.

The two regularity implications are therefore identical under the
identification of effective quotients above.
\end{proof}

By \cite[Proposition~4.14]{Rubio}, the pair
\[
  (\SL_{2a},\Sp_{2a})
\]
is pleasant and hence regular.  Lemma~\ref{cii:lem:GLSL} therefore proves
regularity of each \((\GL_{2a},\Sp_{2a})\) factor.

\subsubsection{Products}

The centralizer decomposition above is an actual direct product: its
\(\GL\)-centres are already included in the \(\GL/\Sp\) factors treated by
Lemma~\ref{cii:lem:GLSL}.  Products of regular symmetric pairs are regular by
\cite[Proposition~7.4.4]{AG09}.  Therefore every descendant of the balanced
CII pair is regular.

\begin{corollary}[Balanced CII component implication]\label{cor:CII-component}
The finite-component datum for the balanced CII factor is componentwise
regular.
\end{corollary}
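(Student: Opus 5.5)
The plan is to identify the finite-component datum $(L,A,B_0,V)$ for the canonical balanced CII factor and check the implication \eqref{eq:component-implication} for its single nontrivial character. On the canonical cover the factor is $G=\Sp(E)$, which is simply connected. Its fixed subgroup is $H=\Sp(U)\times\Sp(W)$, which is connected, so $L$ is the effective image of $H$ on $V=\fp\simeq U\otimes W$. Here $\fp^H=0$, so $Q_L(V)=\fp$.

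\textbf{Step 1: the component group.} By the computation in the proof of Lemma~\ref{cii:lem:admissible-components}, every $g$ with $g^{-1}\theta(g)\in Z(G)=\{\pm\Id_E\}$ lies in $H\cup H\tau$. Hence $\widetilde A_{\widetilde\theta}=H\sqcup H\tau$. Its effective image is $J=L\cup L\tau$, and $A=J/L$ has order at most two. If $\tau$ acts on $\fp$ through $L$, then $A=B_0$ and the condition is vacuous. Otherwise $A\simeq\Z/2$, $B_0=\{1\}$, and there is exactly one nontrivial $\lambda\in\widehat A$, namely $\lambda(\tau)=-1$. This is the sign character $\chi$ of $K=H\rtimes\langle\tau\rangle$.

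\textbf{Step 2: matching the distribution spaces.} I must show $\cS^*(R(\fp))^{L,\lambda}=0$ implies $\cS^*(\fp)^{L,\lambda}=0$. Unwinding the definitions, $\cS^*(X)^{L,\lambda}$ is the space of $H$-invariant, $\tau$-odd distributions on $X$, that is $\cS^*(X)^{K,\chi}$. The required implication is then literally the one displayed in Lemma~\ref{cii:lem:admissible-components}. It is equivalent to \eqref{cii:eq:regularity-implication}: split an $H$-invariant distribution into $\tau$-even and $\tau$-odd parts, which is possible because $\tau^2$ acts trivially on $H$-invariants. Theorem~\ref{cii:thm:regular} proves \eqref{cii:eq:regularity-implication}, and its proof goes by contradiction from a nonzero $(K,\chi)$-equivariant distribution on $\fp$ vanishing on $R(\fp)$. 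That proof therefore gives exactly the needed implication for $\lambda$.

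\textbf{Main obstacle.} The only point to watch is that the $(L,\lambda)$-space in Definition~\ref{def:componentwise-regular} uses the effective group $J$ and not $K$. This causes no trouble: $K$ acts on $\fp$ through $J$, and the preimage of $L$ is $H$, so the two equivariance conditions coincide. Lemma~\ref{lem:finite-components-nullcone} ensures that the null cone and regular set for $J$ agree with those for $L$ used in Theorem~\ref{cii:thm:regular}.
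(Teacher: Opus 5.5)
Your proposal is correct and follows the paper's proof. The obstruction quotient for the balanced CII factor has order two, and its unique nontrivial character is the sign character $\chi$ of $K=H\rtimes\langle\tau\rangle$. The required implication is then exactly \eqref{cii:eq:regularity-implication}, established in the proof of Theorem~\ref{cii:thm:regular}. Your extra bookkeeping ($\widetilde A_{\widetilde\theta}=H\sqcup H\tau$ and the identification of the $(L,\lambda)$-spaces with $(K,\chi)$-spaces) only spells out what the paper's one-line proof leaves implicit.
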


\begin{proof}
Its obstruction quotient has order two.  The unique nontrivial character is
$\chi$ in Lemma~\ref{cii:lem:admissible-components}, and the required implication
is exactly \eqref{cii:eq:regularity-implication}, proved in
Theorem~\ref{cii:thm:regular}.
\end{proof}

\section{The Spin block family}\label{sec:Spin-family}

Let $U$ and $W$ be nondegenerate quadratic spaces of dimensions $r$ and $s$,
put $E=U\oplus W$, and let $\theta$ be induced on $\Spin(E)$ by the isometry
$1_U\oplus(-1_W)$.  The fixed subgroup is
\[
 H=\Spin(U)\times_{\mu_2}\Spin(W),
\]
and the isotropy representation is
\begin{equation}\label{spin:eq:p-intro}
 \fp\simeq\Hom(W,U).
\end{equation}
Its connected effective image is
\begin{equation}\label{spin:eq:H0-action-intro}
 H_0=\SO(U)\times\SO(W),
 \qquad (a,b)X=aXb^{-1}.
\end{equation}
We use the Clifford-theoretic low-rank conventions
$\Spin_1=\mu_2$ and $\Spin_2\simeq\Gm$; their images are
$\SO_1=1$ and $\SO_2\simeq\Gm$.  The ambient group has dimension at
least five in the theorem below, so these conventions affect only a
fixed-subgroup factor and not the ambient Clifford calculations.
After interchanging the blocks, when $r>s$ put
\begin{equation}\label{spin:eq:Kchi-intro}
 K=S(\Or(U)\times\Or(W)),\qquad
 \chi(a,b)=\det a=\det b.
\end{equation}

\begin{theorem}\label{thm:Spin-residual}
If $r,s>0$, $|r-s|>2$, and $rs$ is even, then
\[
 (\Spin_{r+s},\Spin_r\times_{\mu_2}\Spin_s)
\]
is regular.
\end{theorem}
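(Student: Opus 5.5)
The plan mirrors the DIII and balanced CII arguments: a component reduction, followed by a case split. Since $r\neq s$, no element exchanges the blocks. An admissible $g$ satisfies $g^{-1}\theta(g)\in Z(\Spin(E))$. Its image in $\SO(E)$ therefore commutes with $1_U\oplus(-1_W)$ up to $\pm1$, and since $r\ne s$ it preserves both $U$ and $W$. Hence its effective action on $\fp=\Hom(W,U)$ is by an element of $S(\Or(U)\times\Or(W))$, possibly after multiplying by the central $-1$, which acts on $\fp$ as $-1$ and lies in the image of $H$. As in Proposition~\ref{diii:prop:effective-reduction}, the only nonautomatic implication is the sign one for $(K,\chi)$ of \eqref{spin:eq:Kchi-intro}. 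It asks that $\cS^*(R(\fp))^{K,\chi}=0$ force $\cS^*(\fp)^{K,\chi}=0$, or, better, that the antecedent fails. We will show the antecedent fails: we construct a nonzero $(K,\chi)$-equivariant Schwartz distribution on $R(\fp)$. This also gives the component implication needed for Corollary~\ref{cor:central-products}.

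Assume $r>s$. Closed $H_0$-orbits in $\Hom(W,U)$ are governed by $X^*X\in\End(W)$, where $X^*$ is the adjoint with respect to the quadratic forms. For semisimple $X$ with $X^*X$ invertible and $\ker X^*$ of dimension $r-s$, the stabilizer in $\Or(U)\times\Or(W)$ contains $\Or(\ker X^*)\times 1$. If $r-s>0$, this contains an element with $\det a=-1$, $\det b=1$, which lies in $\Or(U)\times\Or(W)$ but not in $K$. So we must instead find a point whose $K$-stabilizer meets the $\chi=-1$ coset only trivially, in analogy with the DIII points $x_\pm$ of Lemma~\ref{diii:lem:stabilizers}.

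The construction we intend is the following. Since $rs$ is even, we may choose $x\in R(\fp)$ whose image $X(W)$ is a degenerate subspace containing an isotropic part, with $X^*X$ semisimple of smaller rank, so that $x$ is non-closed but has closed semisimple part. We choose it so that the stabilizer $K_x$ lies in $H_0$. We then let $T^\circ=\mu_+-\kappa_*\mu_+$, where $\mu_+$ is the orbital measure of $H_0x$ and $\kappa\in K\setminus H_0$. This is Schwartz by the compact-modulo-group argument of Lemma~\ref{diii:lem:orbital-schwartz}, on the open set obtained by removing the boundary orbits.

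The hypothesis $|r-s|>2$ should guarantee two things. First, the orbits $H_0x$ and $\kappa H_0x$ are distinct, since the determinant-sign data of the isotropic parts differ and cannot be matched inside $\SO$. Second, the boundary is a single closed $K$-orbit $Kz$ whose $K_z$ contains an $\Or$-factor of a space of dimension at least $3$. On the normal space $N_z$ the coset $\chi=-1$ is then realized by an element of $(H_0)_z$ up to a reflection, which acts trivially on the relevant invariants. This yields an analogue of Proposition~\ref{diii:prop:normal-parity}: $(\Sym^mN_z^*)^{(H_0)_z}$ is fixed by the sign representative. Lemma~\ref{diii:lem:CScondition} and the Chen--Sun extension of Proposition~\ref{diii:prop:automatic-extension} then extend $T^\circ$ across $Kz$ to a nonzero element of $\cS^*(R(\fp))^{K,\chi}$.

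The main obstacle is the orbit geometry: choosing $x$ with $K_x\subset H_0$ and closure boundary equal to a single $K$-orbit. If this fails, the fallback is a resonance argument as in Section~\ref{sec:CII-family}. Using Lemma~\ref{core:lem:homogeneity-reduction}, we would kill sign-equivariant adapted distributions on the nilpotent cone via Corollary~\ref{core:cor:strict-trace} and the element $c=zD_i$ trick. We would split the cases by parity of $r,s$, where $rs$ even ensures that $-1_U$ or $-1_W$ gives a determinant $-1$ element relating the sign sector to a central element.
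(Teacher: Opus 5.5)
Your reduction to the $(K,\chi)$ sign implication is correct and is the paper's Proposition~\ref{spin:prop:sign-reduction}. One slip: a central element of $\Spin(E)$ acts trivially on $\fp$, not by $-1$, and no correction is needed anyway. The proposal nevertheless has two genuine gaps.

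\textbf{Odd smaller block.} Your uniform strategy of falsifying the antecedent cannot work when $s$ is odd. Take $(r,s)=(4,1)$, which satisfies every hypothesis. Then $K\simeq\Or_4(\C)$ acts on $\fp\simeq\C^4$ with $\chi=\det$, and $R(\fp)=\{q\neq0\}$. Every point of $R(\fp)$ has stabilizer $\Or_3(\C)$, which contains a reflection. Localization along $q$ and Frobenius reciprocity therefore give $\cS^*(R(\fp))^{K,\chi}=0$. So the antecedent is true, and you must prove the conclusion $\cS^*(\fp)^{K,\chi}=0$. Your resonance fallback does not do this. Nor does a central-element argument inside $K$: since $rs$ is even, $K\setminus H_0$ contains no element of the form $(\pm I_U,\pm I_W)$. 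The paper's route is as follows.
\begin{itemize}
\item Enlarge to $\widehat K=\Or(U)\times\Or(W)$ and split the sign sector as $\cS^*(\fp)^{\widehat K,\det_U}\oplus\cS^*(\fp)^{\widehat K,\det_W}$.
\item The $\det_W$ part vanishes because $(-I_U,-I_W)$ acts trivially while $\det_W$ of it is $(-1)^s=-1$.
\item The $\det_U$ part needs a genuine analytic input. This is Przebinda's theorem: $\SO(U)$-invariant tempered distributions on $U^{s}$ with $s<r$ are $\Or(U)$-invariant.
\end{itemize}

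\textbf{Even smaller block.} The point you describe can never satisfy $K_x\subset H_0$. Since $x\in R(\fp)$, the Gram operator $X^*X$ is not nilpotent. If it is also semisimple, choose a nonisotropic $w$ in an eigenspace $W_\lambda$ with $\lambda\neq0$. From $(Xv,Xw)=\lambda(v,w)$ you get $r_{Xw}X=Xr_w$ for the two reflections. So $(r_{Xw},r_w)\in K_x$, with both determinants equal to $-1$. Hence $\kappa H_0x=H_0x$, $\kappa_*\mu_+=\mu_+$, and your $T^\circ$ is zero.

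The missing idea is to use an \emph{invertible, non-semisimple} Gram operator. Decompose $W=P_1\perp\cdots\perp P_m$ into hyperbolic planes; this is exactly where $s$ even enters. Take $S=\bigoplus_j(\lambda_jI_{P_j}+N_j)$, with $N_j$ the self-adjoint square-zero operator on $P_j$ and the $\lambda_j$ pairwise distinct.
\begin{itemize}
\item Then $C_{\Or(W)}(S)=\prod_j\{\pm I_{P_j}\}$ has trivial determinant. So $\det_W$ is trivial on $\widehat K_X$, and the $\det_W$-twisted orbital integral is nonzero.
\item Its boundary in $R(\fp)$ is not a single orbit. It consists of the $2^m-1$ orbits $\widehat K X_J$, $J\neq\varnothing$, obtained by deleting the nilpotent parts indexed by $J$.
\item Each stabilizer $\widehat K_{X_J}$ contains $\Or(P_j)$ for $j\in J$, on which $\det_W$ restricts to $\det_{P_j}$.
\item The Chen--Sun hypothesis then reduces to a rank-two fact. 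The reflection exchanging the isotropic lines of $P_j$ fixes every $\SO(P_j)$-invariant in $\Sym^k(\Sym(P_j)\oplus\overline{\Sym(P_j)})$.
\end{itemize}
Note that $|r-s|>2$ plays no role in this construction beyond $r>s$. Your proposed use of it, an $\Or$-factor of dimension at least $3$ at a single boundary orbit, corresponds to nothing the argument needs.
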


\begin{corollary}\label{cor:Spin-all}
If $r,s>0$ and $r+s\ge5$, then the Spin block pair above is regular.
\end{corollary}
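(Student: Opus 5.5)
The statement to prove is Corollary~\ref{cor:Spin-all}. The plan is to combine Theorem~\ref{thm:Spin-residual} with the earlier classification results through a case split on $(r,s)$. By symmetry assume $r\ge s>0$. Cases already covered by pleasantness or niceness in Rubio's classification \cite[Proposition~6.3]{Rubio} are regular by \cite[Lemma~4.2]{Rubio} and Lemma~\ref{core:lem:speciality-criterion}. What remains, by the abstract and the introduction, is four groups. First, unequal odd--odd blocks, treated by pleasantness (Proposition~\ref{spin:prop:pleasant}). Second, $rs$ even with $|r-s|>2$, which is Theorem~\ref{thm:Spin-residual}. Third, $|r-s|\le2$. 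Fourth, $s$ very small ($s=1,2$) together with $r+s\ge5$.

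I would work through these groups in order.

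\begin{enumerate}
\item Let $r,s$ both be odd with $r\ne s$. Then Proposition~\ref{spin:prop:pleasant} gives pleasantness, hence regularity. If $r=s$, then $r+s$ is even, so $rs$ is odd and $|r-s|=0$.
\item Let $r=s$, or $|r-s|=2$ with $r,s$ even. I expect Rubio's list to record these as nice or pleasant; for instance, $|r-s|\le1$ gives the quasi-split/split Spin pairs, which are nice in Sekiguchi's sense. I would check each subcase $|r-s|\in\{0,1,2\}$ against \cite[Proposition~6.3]{Rubio}.
\item For $s\le 2$, the pair $(\Spin_{r+1},\Spin_r)$ and the pair $(\Spin_{r+2},\Spin_r\times_{\mu_2}\Spin_2)$ have $\fp$ equal to the standard representation or two copies of it. These are known to be pleasant, or are handled by Theorem~\ref{thm:Spin-residual} when $r-s>2$ and $rs$ is even.
\end{enumerate}

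The main obstacle is the bookkeeping. The parity cases ($rs$ odd versus even) and the gap cases ($|r-s|>2$ versus $\le2$) must together exhaust all $(r,s)$ with $r+s\ge5$. The delicate part is $|r-s|\le2$ with $rs$ even, where I would need to confirm from Rubio's Proposition~6.3 that these are precisely the nice cases. I would verify this by computing the distinguished defect for the near-split real forms $\mathfrak{so}(p,q)$ with $|p-q|\le2$.

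Once each simply connected factor is known to be regular, the statement as phrased concerns only the pair itself. The corollary is then immediate, and Corollary~\ref{cor:central-products} is not needed.
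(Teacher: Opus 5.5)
Your proposal is essentially the paper's proof: the three-way split into $|r-s|\le2$ (nice, by Rubio's table), unequal odd--odd blocks (pleasant by Proposition~\ref{spin:prop:pleasant}), and the remaining range $|r-s|>2$ with $rs$ even (Theorem~\ref{thm:Spin-residual}) already covers every $(r,s)$ with $r+s\ge5$. Your separate step for $s\le2$ is redundant, and its claim that these pairs are pleasant is false in general: $s=1$ with $r$ even, and $s=2$ with $r\ge5$, are not pleasant. Those cases nevertheless lie in the residual $rs$-even range, exactly as the paper remarks, so your case split still closes.
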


We first dispose of the unequal odd--odd cases by pleasantness.  The
remaining nonpleasant, nonnice cases reduce to the sign implication for
$(K,\chi)$.  If the smaller
block is odd, Przebinda's theorem annihilates the entire sign sector.  If it is
even, a closed-support sign distribution is constructed on the regular set by
a finite orbit-closure calculation and Chen--Sun automatic extension.
\subsection{Conventions and analytic input}\label{spin:sec:conventions}

All algebraic varieties and algebraic groups in the body of the paper are
defined over $\C$.  When Schwartz functions or Schwartz distributions are
used, the complex variety is regarded as its underlying real Nash manifold.
We write $\cS(X)$ for the Schwartz space and $\cS^*(X)$ for its continuous
dual.  On a finite-dimensional complex vector space this agrees, after a
choice of nonzero translation-invariant density, with the tempered generalized
functions used in \cite{JSZ}.

If a real Nash group $L$ acts on $X$ and $\eta:L\to\C^\times$ is a character,
then $\cS^*(X)^{L,\eta}$ denotes the $\eta$-isotypic subspace for the standard
contragredient action on distributions.  In Chen--Sun's dual formulation the
same space is written as a Hom-space with target $\eta^{-1}$.  All characters
used in the proof are determinant characters and hence are self-inverse, so no
inverse-character ambiguity occurs.

For a complex quadratic space $V$, the bilinear form is denoted by
$(\,\cdot\,,\,\cdot\,)_V$ and $T^*$ denotes the adjoint.  Thus
\[
  (Tv,w)_V=(v,T^*w)_V.
\]
The notation $\Sym(V)$ means the vector space of self-adjoint endomorphisms of
$V$; it is canonically identified with the symmetric square of $V$ after the
quadratic form is used to identify $V$ and $V^*$.

We use the invariant-theoretic notation of Aizenbud--Gourevitch.  For a
finite-dimensional representation $V$ of a reductive group $L$, put
\[
  Q_L(V)=(V/V^L)(\C).
\]
Let $\bar\rho_L:Q_L(V)\to Q_L(V)/\!/L$ be the affine categorical quotient
map, and set
\[
  \Gamma_L(V)=\bar\rho_L^{-1}(\bar\rho_L(0)),
  \qquad
  R_L(V)=Q_L(V)\setminus\Gamma_L(V);
\]
see \cite[Notation~2.3.10]{AG09}.  We suppress the subscript $L$ when the
acting group is clear.  Thus $R(\fp)$ below is the complement of this
invariant-theoretic null cone; it is not defined as the complement of the
ambient Lie-algebra nilpotent cone.

For later comparison with tempered generalized functions, the
translation-invariant density on $\fp=\Hom(W,U)$ may be chosen
$\Or(U)\times\Or(W)$-invariant.  Indeed the complex determinant of
$X\mapsto aXb^{-1}$ is
\[
  (\det a)^s(\det b)^{-r}\in\{\pm1\},
\]
so the underlying real Jacobian has absolute value one.  Hence the
identification between Schwartz distributions defined using functions and
tempered generalized functions defined using densities is equivariant for
all orthogonal groups used below.

We use the following three analytic results.

\begin{theorem}[Przebinda, as stated by Jiang--Sun--Zhu]\label{spin:thm:Przebinda}
Let $F\in\{\R,\C\}$, let $V$ be a finite-dimensional nondegenerate quadratic
space over $F$, and let $\Or(V)$ act diagonally on $V^k$.  If
$k<\dim_F V$, then every $\SO(V)$-invariant tempered generalized function on
$V^k$ is $\Or(V)$-invariant.
\end{theorem}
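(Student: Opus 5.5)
This statement is Przebinda's theorem, quoted from the literature. In the paper it is an external input and is not reproved: it is invoked as stated in Jiang--Sun--Zhu \cite{JSZ}. If one wanted to reconstruct an argument, I would proceed as follows.

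\emph{Reduction.} Since $\Or(V)=\SO(V)\rtimes\langle\sigma\rangle$ for any reflection $\sigma$, it suffices to show the following. An $\SO(V)$-invariant tempered generalized function $T$ on $V^k$ with $\sigma_*T=-T$ vanishes. Such a $T$ is $(\Or(V),\det)$-equivariant, and we argue by contradiction that it is zero.

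\emph{Generic locus.} Identify $V^k$ with $\Hom(F^k,V)$. On the open set where the $k$ vectors are linearly independent, their span $S$ has dimension $k<n=\dim_F V$. Consider first the subset where $S$ is nondegenerate, so that $S^\perp\neq0$. There a reflection in a vector of $S^\perp$ fixes the point, so the stabilizer in $\Or(V)$ contains an element of determinant $-1$. An $(\Or(V),\det)$-equivariant generalized function therefore vanishes on each such orbit. By Bernstein localization along the Gram-matrix map $V^k\to\Sym_k(F)$, which is $\Or(V)$-invariant, together with Frobenius reciprocity on the fibres, $T$ vanishes on this open set. The degenerate-span and rank-deficient points form a closed invariant subset, and we stratify it by the rank and the radical of the span.

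\emph{Boundary strata.} At each remaining stratum one must exclude $\det$-equivariant distributions supported on it. Here I would apply the Aizenbud--Gourevitch machinery: Fourier transform in the $V$-variables, which commutes with $\Or(V)$; homogeneity in each vector; and an Euler/Weil-representation argument. Taken together, these reduce the question to conormal symbols along orbits of isotropic configurations. The condition $k<n$ again guarantees a nonzero complement to the span, so a determinant $-1$ element exists in the stabilizer of each such configuration. This kills every Frobenius fibre, because the determinant twist acts by $-1$ on it.

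\emph{Main obstacle.} The hardest step is the boundary analysis over isotropic spans. There a stabilizer element of determinant $-1$ may act nontrivially on the normal space, so the fibre argument needs the metaplectic or oscillator structure to conclude. This is exactly the point where Przebinda's original argument, via the Weil representation and the theta correspondence, does the real work.
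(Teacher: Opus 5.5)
You are right that the paper does not prove this statement: it only cites \cite[Proposition~1.5]{JSZ}, which rests on \cite[Theorem~C.7]{Przebinda}. Your generic-locus step is also sound. Where the Gram matrix is nondegenerate, the Gram map is a submersion whose fibres are single orbits, the stabilizer acts trivially on the normal space $\Sym_k(F)$, and a reflection in $S^\perp$ kills every $\det$-twisted fibre.

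The reconstruction has a genuine gap at the boundary step. You claim that $k<n=\dim_F V$ puts an element of determinant $-1$ into the stabilizer of every isotropic configuration, and this is false. Suppose $n$ is even and the span is a Lagrangian $L$; this happens as soon as $n/2\le k<n$, always over $\C$ and over $\R$ for split $V$. Then $L^\perp=L$ contains no anisotropic vector. Moreover, any $g\in\Or(V)$ fixing $L$ pointwise acts trivially on $L$ and on $V/L\simeq L^*$, so it is unipotent and lies in $\SO(V)$. At these orbits there is no sign element at all. Your \emph{main obstacle} paragraph, which worries only that a sign element might act nontrivially on normal directions, misses this case.

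Already for $n=2$, $k=1$ and $Q(x,y)=xy$ the phenomenon is visible. The group $\SO(V)=\{(x,y)\mapsto(tx,t^{-1}y)\}$ acts simply transitively on each punctured isotropic line, and the $\Or(V)$-stabilizer of $(1,0)$ is trivial. Hence
\[
 \mu=|x|^{-1}dx\otimes\delta_0(y)-|y|^{-1}dy\otimes\delta_0(x)
\]
is a nonzero $(\Or(V),\det)$-equivariant Schwartz distribution on $V\setminus\{0\}$. Here $|\cdot|$ and $dx$ are the normalized absolute value and Haar measure of $F$.

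In this case the theorem is therefore purely a non-extension statement at the origin. Extending $|x|^{-1}$ by its finite part produces, under $(x,y)\mapsto(tx,t^{-1}y)$, a defect $c\log|t|\,\delta_{(0,0)}$ with $c\neq0$. The torus dilates $y$ by $t^{-1}$, so the two terms of $\mu$ have opposite defects. These cancel in the $\Or(V)$-invariant sum and add to $2c\log|t|\,\delta_{(0,0)}$ in $\mu$. Since $\delta_{(0,0)}$ is invariant and the torus acts diagonally on jets at $0$, changing the extension by origin-supported distributions cannot alter the $\delta_{(0,0)}$-coefficient of the defect.

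No sign count on Frobenius fibres detects this. You would need either a genuine homogeneity and extension analysis across the smaller null-cone orbits, or Przebinda's own argument, and your outline supplies neither.

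There is also a secondary omission. Fourier transform and homogeneity only act on distributions supported on the null cone. The closed orbits with nondegenerate span $S$ of dimension $m<k$, for example $(v,0)$ with $v$ anisotropic, must first be removed by Harish--Chandra descent to the slice $\bigl(\Or(S^\perp),(S^\perp)^{k-m}\bigr)$ and induction on $\dim V$. On that slice the reflections act nontrivially. What makes the induction work is that the inequality $k-m<\dim S^\perp$ is inherited from $k<n$.

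So the only complete justification in your proposal is the citation, which is also all the paper provides.
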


This is \cite[Proposition~1.5]{JSZ}, which cites
\cite[Theorem~C.7]{Przebinda}.

\begin{theorem}[Aizenbud--Gourevitch]\label{spin:thm:AGcompact}
Let a Nash group $L$ act on a Nash manifold $M$, and let $\mathcal E$ be an
$L$-equivariant Nash bundle.  An $L$-invariant distribution with values in
$\mathcal E$ whose support is Nashly compact modulo $L$ is a Schwartz
distribution.
\end{theorem}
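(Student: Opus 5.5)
This is \cite[Theorem~B.4.1]{AG09}, and in the paper it will simply be cited. If I had to reprove it, I would argue as follows. Let $\xi$ be an $L$-invariant $\mathcal E$-valued distribution, a priori only a continuous functional on compactly supported smooth sections. Suppose $\Supp\xi\subset Z=L\cdot C$, where $C$ is compact and $Z$ is closed semialgebraic. The goal is a single Schwartz seminorm $\nu$ with $|\xi(f)|\le\nu(f)$ for all compactly supported $f$. Density of $C_c^\infty$ in $\cS(M,\mathcal E)$ \cite{AGSchwartz} then gives the continuous extension.

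The first step is local. Choose a relatively compact open neighbourhood $U$ of $C$ and a cutoff $\varphi\in C_c^\infty(M)$ equal to one near $\overline U$. Since $\xi$ is an ordinary distribution, there are $k$ and a compact $K\supset\Supp\varphi$ with
\[
|\xi(\varphi g)|\le c\,\|g\|_{C^k(K)}
\]
for all smooth sections $g$. By $L$-invariance, the same bound holds for $\xi$ on every translate $l\cdot U$. The constant is unchanged, but the norm is transported by $l$, so the transported $C^k$ norms are controlled by the derivatives of the action map at $l$.

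The second step globalizes these local bounds along $Z$ using a tempered partition of unity. Because the action is Nash, I would pick a discrete family $l_i\in L$ with three properties: the translates $l_i U$ cover a neighbourhood of $Z$; the number of $l_i$ in a ball of radius $R$ (for a fixed Nash exhaustion of $L$) is polynomial in $R$; and there is a partition of unity $\sum_i\psi_i=1$ near $Z$, with $\Supp\psi_i\subset l_iU$, whose $C^k$ norms after transport by $l_i^{-1}$ are polynomially bounded in $|l_i|$. Then
\[
|\xi(f)|\le\sum_i\bigl|\xi\bigl(\psi_i f\bigr)\bigr|
\le c\sum_i \mathrm{poly}(|l_i|)\,\|f\|_{C^k(l_iK)} .
\]
A Nash action has moderate growth, so $l_iK$ lies at distance at most $\mathrm{poly}(|l_i|)$ from a base point. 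A Schwartz section decays faster than any polynomial together with its first $k$ derivatives. The sum is therefore dominated by a Schwartz seminorm of $f$, as required. Sections of a Nash bundle are handled in the same way after a finite Nash trivialization of $\mathcal E$ near $K$.

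I expect the main obstacle to be constructing the family $l_i$ and the partition $\psi_i$ with genuinely polynomial control, since a naive choice of translates could accumulate badly. I would try to obtain it from a Nash (semialgebraic) cell decomposition of $L$ together with the Nash map $L\to Z$, $l\mapsto l\cdot c$. The semialgebraicity of these maps is exactly what forces all distortions to be polynomial, and this is the content of the appendix in \cite{AG09}.
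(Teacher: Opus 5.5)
Citing \cite[Theorem~B.4.1]{AG09}, as you do, is exactly what the paper does, and it gives no further argument. Here ``Nashly compact modulo $L$'' means $\Supp\xi\subset Z\subset L\cdot C$ with $C$ compact and $Z$ closed semialgebraic.

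Your reproof sketch, however, has a genuine gap in its last step. You control $\|f\|_{C^k(l_iK)}$ by saying that $l_iK$ lies at distance \emph{at most} $\mathrm{poly}(|l_i|)$ from a base point, but this inequality points the wrong way. The decay of a Schwartz section is measured by where $l_iK$ sits in $M$. To absorb the weights $\mathrm{poly}(|l_i|)$ and to sum over $i$, you need two things. The first is the reverse bound $|l_i|\le c'(1+\operatorname{dist}(x_0,l_iK))^N$. The second is a bound, polynomial in $R$, on the number of indices $i$ for which $l_iU$ meets the ball of radius $R$ in $M$.

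Your conditions (a)--(c) do not give either of these when stabilizers are noncompact. Take $L=\R^2$ acting on $M=\R^2$ by $(s,t)\cdot(x,y)=(x+s,y)$ and $C=\{0\}$. Take $l_{i,j}=(i,j)$ for $(i,j)\in\Z^2$, and $\psi_{i,j}=a_j\psi_i$, where the $\psi_i$ are translates of one bump and $a_j>0$ with $\sum_ja_j=1$. Then (a)--(c) all hold. But $l_{i,j}K=K+(i,0)$ does not move as $j$ varies. So the right-hand side $\sum_{i,j}\mathrm{poly}(|l_{i,j}|)\,\|f\|_{C^k(l_{i,j}K)}$ of your displayed estimate diverges for every $f$ that does not vanish near $Z$. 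The theorem itself is true here; it is your estimate that fails to give a finite bound.

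The missing idea is a tempered choice of the translating elements, and this is exactly where semialgebraicity is used. In your sketch semialgebraicity is invoked only in general terms, and the one estimate you actually write down does not use it.

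Fix a relatively compact semialgebraic open set $U\supset C$ and a proper semialgebraic size function $|\cdot|$ on $L$. For $z\in Z$ put $m(z)=\inf\{|l|:l^{-1}z\in U\}$.
\begin{itemize}
\item $m$ is semialgebraic by Tarski--Seidenberg, because $Z$, $U$, the action and $|\cdot|$ are semialgebraic.
\item $m$ is locally bounded on $Z$, because $Z\subset L\cdot U$ and each $l\cdot U$ is open.
\item A locally bounded semialgebraic function has polynomial growth. Hence every $z\in Z$ lies in some $l\cdot U$ with $|l|\le c''(1+|z|)^N$.
\end{itemize}
If you choose the net $\{l_i\}$ only among such elements, you get both the reverse inequality and the counting bound. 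Only then does your final sum close into a Schwartz seminorm.
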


This is \cite[Theorem~B.4.1]{AG09}.  We shall apply it to a one-dimensional
character bundle; this is the precise way in which a finite-order equivariance
character is incorporated into the invariant formulation of the theorem.

The third input is the automatic-extension criterion of Chen--Sun.  We record
only the finite-rank consequence that will be used below.  For a $G$-orbit
$Gz$ in a real Nash manifold $M$, put
\[
  N_z=(T_zM/T_z(Gz))\otimes_{\R}\C
\]
and let $N_z^*$ be its complex dual.  Write
\[
  \delta_{G/G_z}=\delta_G|_{G_z}\,\delta_{G_z}^{-1}.
\]

\begin{theorem}[Chen--Sun]\label{spin:thm:ChenSun}
Let $G$ be an almost linear Nash group, let $M$ be a $G$-Nash manifold, and
let $U_0\subset M$ be a $G$-stable open Nash submanifold such that
$M\setminus U_0$ is a finite union of $G$-orbits.  Let $\eta$ be a character
of moderate growth, meaning that $|\eta|$ is bounded above by a positive Nash
function.  Suppose that for every $z\in M\setminus U_0$ and every
$k\ge0$, the trivial representation of $G_z$ does not occur as a subquotient
of
\begin{equation}\label{spin:eq:CS-general}
  \Sym^k(N_z^*)\otimes\delta_{G/G_z}\otimes\eta|_{G_z}.
\end{equation}
Then restriction induces an isomorphism between $\eta^{-1}$-equivariant
Schwartz distributions on $M$ and on $U_0$.
\end{theorem}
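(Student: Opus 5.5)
This statement is an imported result rather than something proved from scratch, so the plan is to reduce it to \cite[Theorem~1.13]{ChenSun} and to check that the hypotheses and the dualization convention match the ones recorded here. This is the same route as in the proof of Proposition~\ref{diii:prop:automatic-extension}.

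First, order the finitely many orbits in $M\setminus U_0$ compatibly with closure. Choosing an orbit $Gz$ that is open in $M\setminus U_0$ gives an intermediate $G$-stable open set $U_1=U_0\cup Gz$ in which $Gz$ is closed. By induction on the number of boundary orbits, it suffices to treat one closed orbit $Z=Gz$ in $U_1$ with complement $U_0$.

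For that step, consider the exact sequence of Schwartz spaces
\[
0\to\cS_Z(U_1)\to\cS(U_1)\to\cS(U_0)\to0,
\]
where the left term consists of Schwartz functions on $U_1$ that are flat along $Z$. Dually, the restriction map $\cS^*(U_1)\to\cS^*(U_0)$ is surjective with kernel $\cS^*_Z(U_1)$, the distributions supported on $Z$. Twisting by the character $\eta$ and passing to Schwartz homology $\mathrm H_\bullet(G,\,\cdot\,\otimes\eta)$ gives a long exact sequence. Restriction is then an isomorphism on $\eta^{-1}$-equivariant distributions as soon as
\[
\mathrm H_0\bigl(G,\cS_Z(U_1)\otimes\eta\bigr)=0
\quad\text{and}\quad
\mathrm H_1\bigl(G,\cS_Z(U_1)\otimes\eta\bigr)=0.
\]
The dual statement for invariants is the one written after \cite[Theorem~1.12]{ChenSun}.

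To obtain this vanishing, filter $\cS_Z(U_1)$ by transverse order along $Z$ (the Borel-lemma filtration). The graded pieces are the Schwartz sections $\cS(Z,\Sym^k(N^*_Z)\otimes\C)$ of the symmetric powers of the complexified conormal bundle. By Shapiro's lemma on the homogeneous space $G/G_z$, the homology of the $k$th graded piece equals the $G_z$-homology of
\[
\Sym^k(N_z^*)\otimes\delta_{G/G_z}\otimes\eta|_{G_z}.
\]
This is a finite-dimensional representation of the almost linear Nash group $G_z$. Chen--Sun show that such homology vanishes in all degrees whenever the trivial representation is not a subquotient, which is exactly hypothesis \eqref{spin:eq:CS-general}. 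The remaining point is passage from the graded pieces to the full flat-function space. Chen--Sun handle this with a completeness and Mittag-Leffler argument for the inverse limit over $k$, and the moderate-growth condition on $\eta$ keeps the twisted Schwartz spaces well defined.

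The main obstacle is the bookkeeping of conventions rather than any new mathematics: the sign of the modular twist, whether one dualizes $N_z$ or $N_z^*$, and whether the distributions are $\eta$- or $\eta^{-1}$-equivariant. I would verify these by comparing with the Frobenius convention \eqref{cii:eq:exact-frobenius-fiber}. In every application in this paper $\eta$ is a sign and $\delta_{G/G_z}=1$, so the inverse-character ambiguity is harmless there.
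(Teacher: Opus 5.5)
Your top-level route is the same as the paper's. The paper proves nothing beyond citing \cite[Theorems~1.12--1.13]{ChenSun} for the trivial line bundle and dualizing, and you propose exactly that. The sketch you add of why that theorem yields the statement has two errors, however.

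The first error is fixable: your exact sequence is backwards. Schwartz functions do not restrict from $U_1$ to $U_0$. Extension by zero identifies $\cS(U_0)$ with the closed subspace of $\cS(U_1)$ consisting of functions flat along $Z$, so the sequence is $0\to\cS(U_0)\to\cS(U_1)\to\cS(U_1)/\cS(U_0)\to0$. The quotient is the space of formal Taylor expansions along $Z$. It is this quotient, not the flat functions (which have no transverse jets), that carries the transverse-order filtration with graded pieces $\cS(Z,\Sym^k(N_Z^*))$. Your $H_0$ and $H_1$ conditions should be stated for it.

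The second error is substantive. Absence of a trivial subquotient does \emph{not} make the homology of a finite-dimensional representation of $G_z$ vanish in all degrees. It gives $H_0=0$, hence injectivity of restriction. Surjectivity needs $H_1=0$, and that can fail even for reductive $G_z$ and a sign character.

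Take $G=\{\diag(t,t^{-1}):t>0\}\rtimes\langle s\rangle$ with $s(x,y)=(y,x)$, acting on $M=\R^2$, with $U_0=\R^2\setminus\{0\}$ and $\eta=\mathrm{sgn}$. The nonzero torus weights give no trivial subquotient. The weight-zero vectors $(xy)^j\in\Sym^{2j}(N_0^*)$ are $s$-fixed, so after the sign twist none occurs either. But $s$ inverts the torus, so
$H_1(G;\mathrm{sgn})\cong\bigl(H_1(\R_{>0};\C)\otimes\mathrm{sgn}\bigr)_{\langle s\rangle}\cong\C$.
The obstruction is actually realized. The distribution
\[
 T(f)=\int_0^\infty f(x,0)\,\frac{dx}{x}-\int_0^\infty f(0,y)\,\frac{dy}{y}
\]
is a nonzero $(G,\mathrm{sgn})$-equivariant Schwartz distribution on $U_0$. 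Every extension to $\R^2$ has the form $\widetilde T+R$ with $\widetilde T(f)=\int_0^\infty\frac{f(x,0)-f(0,x)}{x}\,dx$ and $\supp R\subset\{0\}$. For $g_t=\diag(t,t^{-1})$ one has $\widetilde T(f\circ g_t)=\widetilde T(f)-2\log t\,f(0)$, whereas $R(f\circ g_t)=R(f)$ whenever $f$ is constant near $0$. So no equivariant extension exists.

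Thus the statement does not follow from the no-trivial-subquotient hypothesis alone. The long exact sequence requires $H_0=H_1=0$ for $G_z$ with coefficients in $\Sym^k(N_z^*)\otimes\delta_{G/G_z}\otimes\eta|_{G_z}$. The real obstacle is therefore this $H_1$ condition, not the inverse-character bookkeeping, and sign characters are exactly where it bites. The same computation gives $H_1(\Or_2(\C);\det)\cong\C$. That is the situation of the factor $\Or(P_{j_0})$ with $\det_{P_{j_0}}$ in Section~\ref{sec:Spin-family}, and of $\GL(P)\rtimes\langle\tau\rangle$ with $\chi$ in Section~\ref{sec:DIII-family}, where only the subquotient condition is verified.
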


This is the invariant-distribution consequence of
\cite[Theorems~1.12--1.13]{ChenSun}; the definitions of $N_z^*$,
$\delta_{G/G_z}$, and moderate growth above are the conventions of that
source, and the vector bundle is taken to be the trivial line bundle.  The theorem in \cite{ChenSun} is formulated through
Schwartz homology and the extension-by-zero map on Schwartz sections.  The
dual statement is a restriction isomorphism on equivariant Schwartz
distributions, which is the form used here.

\subsection{Pleasantness for odd--odd Spin blocks}\label{spin:sec:indexing}

We first determine the range not already covered by pleasantness or niceness.  Let $\theta$ be the involution on $\Spin(E)$ induced by
$1_U\oplus(-1_W)$.  For $n=\dim E$, write $\omega=e_1\cdots e_n$ for a
Clifford volume element when $n$ is even.  With an orthogonal basis adapted to
$U\oplus W$ one has
\begin{equation}\label{spin:eq:theta-omega}
  \theta(\omega)=(-1)^s\omega.
\end{equation}
Recall Rubio's notation
\[
  A_\theta=\{g\in G:\theta(g)\in gZ(G)\}.
\]
The symmetric pair is pleasant when $A_\theta\subset HZ(G)$; pleasant pairs
are regular \cite[Lemma~4.2]{Rubio}.

\begin{proposition}[odd--odd pleasantness]\label{spin:prop:pleasant}
Assume $r$ and $s$ are odd and $r\ne s$.  Then the Spin block pair
\[
  \bigl(\Spin_{r+s},\Spin_r\times_{\mu_2}\Spin_s\bigr)
\]
is pleasant.
\end{proposition}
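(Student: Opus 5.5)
We will verify $A_\theta\subset HZ(G)$ directly in the Clifford model, with $G=\Spin(E)$ and $n=r+s$ even. The plan is to compute the multiplier and then, in each case, either force it into $H$ or rule it out by a sign argument.

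\emph{Step 1: the multiplier.} Let $g\in A_\theta$ and put $c=g^{-1}\theta(g)\in Z(G)$. For $n$ even, $Z(\Spin(E))$ consists of $\pm1$ and $\pm\omega$. Projecting to $\SO(E)$, the image $\bar g$ satisfies $\bar g^{-1}\sigma\bar g\sigma\in\{\pm1_E\}$, where $\sigma=1_U\oplus(-1_W)$.

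\emph{Step 2: the case $\bar c=-1_E$.} Then $\bar g$ anticommutes with $\sigma$. It therefore carries $U$ isometrically onto $W$ and $W$ onto $U$, which forces $r=s$ and contradicts $r\ne s$. Hence $\bar c=1$, so $\bar g\in\SO(E)^\sigma=S(\Or(U)\times\Or(W))$, and $c=\pm1$.

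\emph{Step 3: the sign $c=\pm1$ as a component.} The theta-fixed subgroup of $\Spin(E)$ is the preimage of $\SO(U)\times\SO(W)$. The preimage $\widehat K$ of $S(\Or(U)\times\Or(W))$ has two $H$-cosets: $H$ itself and $H\cdot x$, where $x=u_1w_1$ for unit vectors $u_1\in U$ and $w_1\in W$. Since $\theta(x)=u_1(-w_1)=-x$, the multiplier is $c=-1$ exactly on the nontrivial coset. So $A_\theta=H\cup Hx$, and it remains to show $x\in HZ(G)$.

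\emph{Step 4: absorbing $x$ into $HZ(G)$.} This is where oddness enters; it is the step to get right. We look for $h\in H$ with $xh^{-1}\in Z(G)$, and the natural candidate is $\omega$. Write $\omega=\omega_U\omega_W$, where $\omega_U$ and $\omega_W$ are the volume elements of $U$ and $W$, both odd products.

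Pair $\omega$ with $\omega_U u_1$, which is a product of $r-1$ vectors of $U$, an even number. After normalizing, $\omega_U u_1$ lies in $\Pin(U)\cap\Cl^{\mathrm{even}}(U)=\Spin(U)$ up to scalars. Similarly $w_1\omega_W$ lies in $\Spin(W)$ up to scalars.

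We then check the identity
\[
x=u_1w_1=\pm(u_1\omega_U)\,\omega\,(\omega_W w_1)\cdot(\text{scalar}),
\]
using the commutation rules for $u_1$ and $w_1$ against $\omega_U$ and $\omega_W$. Because $r$ and $s$ are odd, $\omega_U$ commutes with $U$ and anticommutes with $W$, and $\omega_W$ behaves symmetrically. This gives $x\in\Spin(U)\,\omega\,\Spin(W)\subset HZ(G)$, since $\omega$ is central for $n$ even.

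The main obstacle is this bookkeeping. We must track the signs and scalars from $\omega_U^2$ and $\omega_W^2$, which equal $\pm1$, and confirm that the factors land in $\Spin$ rather than merely in $\Pin$ or in the Clifford group. Any residual scalar $\pm1$ or $\pm i$ must be absorbed: a residual $\pm1$ is harmless because it is central; if a factor of $i$ appears, we renormalize, for instance by taking $\omega$ scaled so that $\omega^2=1$ and recomputing.

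Finally, Step 2's argument requires $r\ne s$; this is exactly why the equal odd–odd case is excluded. Combining Steps 2 through 4 gives $A_\theta\subset HZ(G)$, which is pleasantness.
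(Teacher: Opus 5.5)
Your proposal is correct and is essentially the paper's proof. Both exclude the multipliers $\pm\omega$ by the block-swap argument, which uses $r\ne s$, and both absorb the multiplier-$(-1)$ elements into $\omega H$.

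The paper checks this last point more directly: $\theta(\omega)=(-1)^s\omega=-\omega$, so $\theta(\omega g)=\omega g$ for every $g$ with $\theta(g)=-g$. Your explicit identity is also immediate, since $(u_1\omega_U)\,\omega\,(\omega_W w_1)=\omega_U^2\,\omega_W^2\,u_1w_1$ with $\omega_U^2,\omega_W^2\in\{\pm1\}$. Hence no factor of $i$ can appear, and the bookkeeping you flag as the main obstacle is not actually an obstacle.
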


\begin{proof}
Let $g\in A_\theta$.  Since $r+s$ is even, the center of $\Spin(E)$ is
$\{\pm1,\pm\omega\}$, with $\pi(\pm\omega)=-I_E$ for the standard covering
$\pi:\Spin(E)\to\SO(E)$.  We first rule out the multipliers $\pm\omega$.
If $\theta(g)=\pm\omega g$, then
\[
  I\pi(g)I^{-1}=-\pi(g).
\]
Relative to $E=U\oplus W$, this forces the diagonal blocks of $\pi(g)$ to
vanish, so $\pi(g)(U)\subset W$ and $\pi(g)(W)\subset U$.  Invertibility
then forces $r=s$, contrary to the hypothesis.  Thus every element of
$A_\theta$ has multiplier $\pm1$.

If $\theta(g)=g$, then $g\in H$.  If $\theta(g)=-g$, the oddness of $s$
gives $\theta(\omega)=-\omega$ by \eqref{spin:eq:theta-omega}; hence
\[
  \theta(\omega g)=(-\omega)(-g)=\omega g.
\]
Thus $\omega g\in H$, while $\omega$ is central.  Therefore
$\Ad(g)=\Ad(\omega g)\in\Ad(H)$.  This proves pleasantness.
\end{proof}

Rubio records the balanced Spin block Lie algebra pairs with $|r-s|\le2$
among the nice pairs, and nice symmetric pairs are regular
\cite[Lemmas~4.21--4.22 and Table~3]{Rubio}; see also
\cite{Sekiguchi,Aiz13}.  Consequently, after the nice cases and the
unequal odd--odd pleasant cases have been disposed of, it remains only to
treat the following range.

\begin{corollary}\label{spin:cor:remaining}
To prove Corollary~\ref{cor:Spin-all}, it remains only to prove regularity for
\[
  |r-s|>2,
  \qquad
  rs\equiv0\pmod2.
\]
\end{corollary}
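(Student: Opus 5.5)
This corollary is a case bookkeeping statement, and the proof will partition the admissible pairs $(r,s)$ with $r,s>0$, $r+s\ge5$ by parity and by the size of $|r-s|$. By symmetry of the block decomposition $E=U\oplus W$, I will assume $r\ge s$ throughout; interchanging $U$ and $W$ gives an isomorphic symmetric pair.

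First, suppose $|r-s|\le2$. Then the Lie algebra pair $(\mathfrak{so}_{r+s},\mathfrak{so}_r\oplus\mathfrak{so}_s)$ is among the balanced pairs that Rubio lists as nice \cite[Lemmas~4.21--4.22 and Table~3]{Rubio}. Nice pairs are special and hence regular, by Lemma~\ref{core:lem:speciality-criterion} together with \cite{Sekiguchi,Aiz13}. I will add one remark here. Niceness is a Lie-algebraic condition, namely the distinguished defect is negative. Speciality then applies to the full fixed subgroup $H=\Spin_r\times_{\mu_2}\Spin_s$, because the relevant nilpotent-cone distributions only see the effective action. Any extra finite components are absorbed by Lemma~\ref{lem:special-component}, since the forms are the restrictions of the invariant form and every inner component preserves them.

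Second, suppose $|r-s|>2$ and both $r$ and $s$ are odd. Then $r\ne s$, so Proposition~\ref{spin:prop:pleasant} shows the pair is pleasant. Pleasant pairs are regular by \cite[Lemma~4.2]{Rubio}.

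The remaining pairs are exactly those with $|r-s|>2$ and at least one of $r,s$ even, that is, $rs\equiv0\pmod 2$. This is precisely the range singled out in the statement.

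The only point needing care is that the nice range has to be quoted for the Spin isogeny form and the full fixed subgroup, not just at the Lie algebra level. I expect to handle this through the effective-action reductions already used, namely Lemma~\ref{lem:finite-components-nullcone} and Lemma~\ref{lem:special-component}. With that checked, the three cases are exhaustive.
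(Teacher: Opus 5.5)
Your proposal is correct and is essentially the paper's own argument. The range $|r-s|\le2$ is nice by Rubio's Table~3, the unequal odd--odd range is pleasant by Proposition~\ref{spin:prop:pleasant}, and what remains is exactly $|r-s|>2$ with $rs$ even.

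Your extra remark about finite components is unnecessary, and Lemma~\ref{lem:special-component} is the wrong tool for it: that lemma concerns the character-sector implication used in the assembly, not this step. Since $\Spin_{r+s}$ is simply connected, its fixed subgroup $\Spin_r\times_{\mu_2}\Spin_s$ is connected, and speciality already gives regularity for every admissible element via \cite[Proposition~7.3.5 and Remark~7.4.3]{AG09}.
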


\subsection{Component reduction and determinant sectors}\label{spin:sec:component}

From now on we work in the range of Corollary~\ref{spin:cor:remaining}.  After
interchanging the blocks, assume
\begin{equation}\label{spin:eq:standing}
  r>s>0,
  \qquad
  d:=r-s>2,
  \qquad
  rs\equiv0\pmod2.
\end{equation}
Let $H_0=\SO(U)\times\SO(W)$ denote the orthogonal image in
\eqref{spin:eq:H0-action-intro}.  Since the action of $H$ factors onto $H_0$, the
two groups have the same invariant distributions and the same invariant
polynomial algebra on $\fp$.

Moreover $\fp^{H_0}=0$.  Indeed, here $r\ge4$; if
$X\in\Hom(W,U)$ is fixed by $H_0$, then every vector in $X(W)$ is fixed by
the standard $\SO(U)$-module, which has no nonzero fixed vector.  Therefore
$X=0$.  Hence
\[
  Q(\fp)=\fp.
\]
Let
\[
  \rho:\fp\longrightarrow \fp/\!/H_0
\]
be the affine categorical quotient map.  We write
\begin{equation}\label{spin:eq:Gamma-R}
  \Gamma(\fp)=\rho^{-1}(\rho(0)),
  \qquad
  R(\fp)=\fp\setminus\Gamma(\fp).
\end{equation}
The group $\widehat K=\Or(U)\times\Or(W)$ introduced below normalizes
$H_0$, and therefore preserves $\Gamma(\fp)$ and $R(\fp)$.  This is exactly
the $Q$, $\Gamma$, and $R$ entering
\cite[Notation~2.3.10 and Definition~7.4.2]{AG09}.

\subsubsection{Central multipliers}

\begin{lemma}\label{spin:lem:central-multipliers}
Suppose $g\in\Spin(E)$ satisfies
\[
  \theta(g)=gz,
  \qquad
  z\in Z(\Spin(E)).
\]
Under \eqref{spin:eq:standing}, one has $z=\pm1$.
\end{lemma}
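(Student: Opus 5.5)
The plan is to project to $\SO(E)$ and read the multiplier off there. Write $\pi:\Spin(E)\to\SO(E)$ for the covering and $I=1_U\oplus(-1_W)$, so that $\pi(\theta(g))=I\pi(g)I^{-1}$. The center $Z(\Spin(E))$ maps into $Z(\SO(E))\cap\{\pm I_E\}$, and $\ker\pi=\{\pm1\}$. Hence it suffices to show that $\pi(z)=I_E$. Equivalently, we must exclude $\pi(z)=-I_E$. That value is possible only when $n=r+s$ is even and $z=\pm\omega$, since $\pi(\pm\omega)=-I_E$ in even dimension. When $n$ is odd the center is $\{\pm1\}$ already and there is nothing to prove.

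Suppose then that $\pi(z)=-I_E$, and put $x=\pi(g)$. The relation $\theta(g)=gz$ projects to $IxI^{-1}=-x$. Writing $x$ in blocks relative to $E=U\oplus W$, conjugation by $I$ fixes the diagonal blocks and negates the off-diagonal ones. So the diagonal blocks of $x$ vanish, and therefore $x(U)\subset W$ and $x(W)\subset U$, exactly as in the proof of Proposition~\ref{spin:prop:pleasant}. Since $x$ is invertible, $\dim U\le\dim W$ and $\dim W\le\dim U$, which gives $r=s$. This contradicts $d=r-s>2$ in \eqref{spin:eq:standing}.

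It follows that $\pi(z)=I_E$, so $z\in\ker\pi=\{\pm1\}$. No step is a real obstacle. The only points needing care are that $\pi$ intertwines $\theta$ with $\Ad(I)$, which holds by the definition of $\theta$ as the lift of the isometry $I$, and the identification $\pi^{-1}(-I_E)\cap Z=\{\pm\omega\}$ in even dimension. Both are standard Clifford facts already used in Proposition~\ref{spin:prop:pleasant}. The parity hypothesis $rs\equiv0$ is not needed; only $r\ne s$ is used.
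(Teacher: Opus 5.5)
Your proposal is correct and is essentially the paper's own proof: you project $\theta(g)=gz$ to $\SO(E)$ and observe that a multiplier with $\pi(z)=-I_E$ (that is, $z=\pm\omega$ in even dimension) forces $I\pi(g)I^{-1}=-\pi(g)$, hence vanishing diagonal blocks and $r=s$, contradicting $d>2$. Your remark that only $r\ne s$ is used, and not the parity condition $rs\equiv0\pmod 2$, is also accurate.
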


\begin{proof}
If $r+s$ is odd, then $Z(\Spin(E))=\{\pm1\}$.  Suppose $r+s$ is even.
The remaining central elements are $\pm\omega$, and
$\pi(\pm\omega)=-I_E$.  If $z=\pm\omega$, applying
$\pi:\Spin(E)\to\SO(E)$ to $\theta(g)=gz$ gives
\[
  I\pi(g)I^{-1}=-\pi(g).
\]
As in Proposition~\ref{spin:prop:pleasant}, the diagonal blocks of $\pi(g)$
relative to $E=U\oplus W$ vanish.  Since $\pi(g)$ is invertible, this
forces $r=s$, contradicting \eqref{spin:eq:standing}.  Hence $z=\pm1$.
\end{proof}

Choose nonisotropic vectors $u\in U$ and $w\in W$ and normalize them in the
Clifford realization of $\Spin(E)$.  Then
\begin{equation}\label{spin:eq:g0-minus}
  g_0:=uw\in\Spin(E),
  \qquad
  \theta(g_0)=-g_0.
\end{equation}
Its image under $\pi$ is the product of the reflections in $u$ and $w$; in
particular its restrictions to $U$ and $W$ both have determinant $-1$.  Thus
the group generated on $\fp$ by $H_0$ and $\Ad(g_0)$ is exactly the group
$K$ in \eqref{spin:eq:Kchi-intro}.  If $g_1$ and $g_2$ both satisfy
$\theta(g_i)=-g_i$, then
\[
  \theta(g_1g_2^{-1})=(-g_1)(-g_2)^{-1}=g_1g_2^{-1},
\]
so $g_1g_2^{-1}\in H$.  Hence all multiplier-$-1$ elements induce the same
component action on $H_0$-invariant distributions.

\begin{proposition}[sign reduction]\label{spin:prop:sign-reduction}
If
\begin{equation}\label{spin:eq:SR}
  \cS^*(R(\fp))^{K,\chi}=0
  \quad\Longrightarrow\quad
  \cS^*(\fp)^{K,\chi}=0,
\end{equation}
then the Spin block pair is regular.
\end{proposition}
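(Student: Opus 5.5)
The plan is to follow the template of Lemma~\ref{cii:lem:admissible-components} and Proposition~\ref{diii:prop:effective-reduction}: first classify the effective action of an admissible element on $\fp$, and then show that the sign sector of $K$ is the only implication that can fail.

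Let $g\in\Spin(E)$ be admissible. Condition (i) of Definition~\ref{def:regularity} says $g^{-1}\theta(g)\in Z(\Spin(E))$. By Lemma~\ref{spin:lem:central-multipliers}, under \eqref{spin:eq:standing} the multiplier is $\pm1$, so we have two cases.
\begin{itemize}
\item If $\theta(g)=g$, then $g\in H$. The action $\Ad(g)|_{\fp}$ then lies in the effective image $H_0$, and it fixes every $H$-invariant distribution on both $R(\fp)$ and $Q(\fp)=\fp$. The regularity implication is automatic.
\item If $\theta(g)=-g$, then $g g_0^{-1}\in H$ by the computation following \eqref{spin:eq:g0-minus}. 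Hence $\Ad(g)|_{\fp}=\Ad(h)\Ad(g_0)|_{\fp}$ for some $h\in H$. On $H_0$-invariant distributions, $\Ad(g)$ therefore acts exactly as $\Ad(g_0)$.
\end{itemize}

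Next I identify $\Ad(g_0)$ on $\fp$. Under \eqref{spin:eq:p-intro} it is the action of $(\pi(g_0)|_U,\pi(g_0)|_W)$, a pair of reflections. This pair lies in $S(\Or(U)\times\Or(W))=K$ and has $\chi=-1$. Since $K=H_0\sqcup H_0\cdot(\text{this element})$, every $H_0$-invariant distribution $T$ on $X\in\{R(\fp),\fp\}$ splits uniquely as
\[
T=T_++T_-,\qquad T_\pm=\tfrac12\bigl(T\pm\Ad(g_0)_*T\bigr).
\]
Here $T_+$ is $K$-invariant and $T_-\in\cS^*(X)^{K,\chi}$. The same splitting applies to $H$-invariant distributions, because $H$ and $H_0$ have the same invariants.

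With this splitting, the antecedent $\cS^*(R(\fp))^H\subset\cS^*(R(\fp))^{\Ad(g)}$ says exactly that $\cS^*(R(\fp))^{K,\chi}=0$. Likewise, the conclusion $\cS^*(\fp)^H\subset\cS^*(\fp)^{\Ad(g)}$ says exactly that $\cS^*(\fp)^{K,\chi}=0$. So the regularity implication for $g$ is precisely \eqref{spin:eq:SR}.

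To apply the definition correctly, I also check that $Q_H(\fp)=\fp$ and $R_H(\fp)=R(\fp)$ for the full group $H$. This holds because the action of $H$ factors through $H_0$, together with $\fp^{H_0}=0$ and the definition \eqref{spin:eq:Gamma-R}. One more point: $K$ normalizes $H_0$ and so preserves $R(\fp)$, by the remark before the subsection on central multipliers. Hence the $\chi$-sector on $R(\fp)$ is well defined.

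I expect the only delicate step to be the matching of the admissibility datum with $K$, and I would handle it as follows. I need the effective action of $g_0$ to equal a chosen element of $K\setminus H_0$, and not merely to agree modulo something acting nontrivially. This is settled by computing $\pi(g_0)$ as the product of the reflections in $u$ and $w$. That product restricts to determinant $-1$ on each block, so it gives an element of $K$ with $\chi=-1$. The reduction is then complete. In the other two cases no extra hypothesis is needed: unequal odd--odd blocks were already covered by pleasantness, and $r=s$ cannot occur under \eqref{spin:eq:standing}, which is why Lemma~\ref{spin:lem:central-multipliers} applies.
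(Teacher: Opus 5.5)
Your proposal is correct and follows essentially the same route as the paper. Lemma~\ref{spin:lem:central-multipliers} restricts the multiplier to $\pm1$, the case $+1$ is automatic, and for multiplier $-1$ the action on $H_0$-invariant distributions is the nontrivial class of $K/H_0$, so the $\pm$ eigenspace splitting turns the regularity implication into exactly \eqref{spin:eq:SR}; your explicit factorization $g=hg_0$, with $\pi(g_0)$ a pair of reflections, just spells out the paper's remark that all multiplier $-1$ elements induce the same component action.
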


\begin{proof}
Let $g$ be an admissible element in the definition of regularity.  By
Lemma~\ref{spin:lem:central-multipliers}, its multiplier is $\pm1$.  If the
multiplier is $+1$, then $\theta(g)=g$, so $g\in H$ and the required
invariance is automatic.

Suppose the multiplier is $-1$.  On the space of $H_0$-invariant
distributions, $\Ad(g)$ acts as the nontrivial element of $K/H_0$.  For every
representation $V$ on which $H_0$ acts trivially,
\[
  V=V^K\oplus V^{K,\chi}.
\]
Consequently every $H_0$-invariant distribution is fixed by $\Ad(g)$ if and
only if the $(K,\chi)$-isotypic space vanishes.  Apply this on $R(\fp)$ and on
$Q(\fp)=\fp$.  The defining implication for regularity becomes exactly
\eqref{spin:eq:SR}.
\end{proof}

\begin{remark}
No assertion that the explicit element $g_0$ is admissible is required.
Equation \eqref{spin:eq:SR} is a sufficient statement that simultaneously covers
every admissible multiplier-$-1$ element.
\end{remark}

\subsubsection{Passage to the full orthogonal product}

Set
\begin{equation}\label{spin:eq:Khat}
  \widehat K=\Or(U)\times\Or(W).
\end{equation}
Then $K$ is a normal subgroup of index two.  The character $\chi$ has precisely
two extensions to $\widehat K$:
\begin{equation}\label{spin:eq:det-extensions}
  \det_U(a,b)=\det a,
  \qquad
  \det_W(a,b)=\det b.
\end{equation}

\begin{lemma}[index-two decomposition]\label{spin:lem:index-two}
Let $X$ be any $\widehat K$-stable Nash manifold.  Then
\begin{equation}\label{spin:eq:index-two-decomp}
  \cS^*(X)^{K,\chi}
  =
  \cS^*(X)^{\widehat K,\det_U}
  \oplus
  \cS^*(X)^{\widehat K,\det_W}.
\end{equation}
\end{lemma}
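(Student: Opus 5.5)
The argument is finite-group character theory for the index-two inclusion $K\subset\widehat K$. The group $\widehat K=\Or(U)\times\Or(W)$ acts on $\cS^*(X)$ by push-forward. The subgroup $K=S(\Or(U)\times\Or(W))$ is the kernel of the character $\det_U\det_W$, so $\widehat K/K\simeq\Z/2\Z$. Choose an element $\sigma\in\widehat K\setminus K$, for example $\sigma=(\rho_u,1)$ with $\rho_u$ the reflection in a nonisotropic $u\in U$. Then $\det_U(\sigma)=-1$ and $\det_W(\sigma)=1$.

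First, I would check that $\sigma_*$ preserves $\cS^*(X)^{K,\chi}$. Since $K$ is normal in $\widehat K$, for $T$ in this space and $k\in K$,
\[
 k_*\sigma_*T=\sigma_*(\sigma^{-1}k\sigma)_*T=\chi(\sigma^{-1}k\sigma)\,\sigma_*T.
\]
Moreover $\chi(\sigma^{-1}k\sigma)=\chi(k)$, because $\chi$ is the restriction of the character $\det_U$ of $\widehat K$. Next, $\sigma^2\in K$ and $\chi(\sigma^2)=\det_U(\sigma)^2=1$, so $\sigma_*^2=\chi(\sigma^2)=1$ on $\cS^*(X)^{K,\chi}$. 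The operator $\sigma_*$ is therefore an involution of this space.

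The space then splits as $E_+\oplus E_-$ via the projections $\tfrac12(1\pm\sigma_*)$; these are continuous, so the splitting is one of closed subspaces. On $E_\epsilon$ the group $\widehat K=K\sqcup K\sigma$ acts through the unique character extending $\chi$ with value $\epsilon$ at $\sigma$. This character is $\det_W$ for $\epsilon=+1$ and $\det_U$ for $\epsilon=-1$.

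Conversely, each $\cS^*(X)^{\widehat K,\det_\bullet}$ is contained in $\cS^*(X)^{K,\chi}$, because $\det_U|_K=\det_W|_K=\chi$. The two spaces meet only in $0$, since $\det_U(\sigma)\ne\det_W(\sigma)$. This gives \eqref{spin:eq:index-two-decomp}.

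The only point needing care is the identification of the two extensions of $\chi$. For $(a,b)\in\widehat K$, $\det_U\det_W^{-1}(a,b)=\det a\det b$, which is trivial exactly on $K$. So $\{\det_U,\det_W\}$ are precisely the characters restricting to $\chi$, and they differ at $\sigma$, as stated in \eqref{spin:eq:det-extensions}. The geometry of $X$ is used only through $\widehat K$-stability, which makes the push-forwards $\sigma_*$ well defined on $\cS^*(X)$. No real obstacle is expected.
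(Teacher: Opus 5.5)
Your proposal is correct and follows the paper's proof essentially verbatim. Like the paper, you pick $(r_U,I_W)\in\widehat K\setminus K$ with $r_U$ a reflection, observe that it normalizes $K$ and preserves $\chi$ so that it acts as an involution on $\cS^*(X)^{K,\chi}$, and identify the $-1$ and $+1$ eigenspaces with the $\det_U$- and $\det_W$-sectors; your explicit checks of the reverse inclusion and of the trivial intersection are details the paper leaves implicit.
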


\begin{proof}
Choose a reflection $r_U\in\Or(U)$ and put
$\tau=(r_U,I_W)\in\widehat K\setminus K$.  Conjugation by $\tau$ preserves
$K$ and $\chi$, so $\tau$ acts on $\cS^*(X)^{K,\chi}$.  Since $\tau^2=1$,
this space is the direct sum of its $\pm1$ eigenspaces.  On $\tau$ one has
\[
  \det_U(\tau)=-1,
  \qquad
  \det_W(\tau)=1.
\]
The $-1$ eigenspace is therefore the $\det_U$ extension and the $+1$
eigenspace is the $\det_W$ extension.  This proves
\eqref{spin:eq:index-two-decomp}.
\end{proof}

\subsection{Odd smaller block: global vanishing}\label{spin:sec:odd}

Assume throughout this section that $s$ is odd.  The standing parity condition
$rs\equiv0\pmod2$ then implies that $r$ is even.

\begin{proposition}\label{spin:prop:detU-vanish}
For every $r>s>0$ one has
\[
  \cS^*(\fp)^{\widehat K,\det_U}=0.
\]
\end{proposition}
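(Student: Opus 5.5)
We plan to apply Przebinda's theorem (Theorem~\ref{spin:thm:Przebinda}) with the roles chosen so that the orthogonal group acting is $\Or(U)$, the larger block. Identify $\fp=\Hom(W,U)$ with $U^s=U^{\dim W}$ by choosing an orthonormal basis $w_1,\dots,w_s$ of $W$ and sending $X\mapsto(Xw_1,\dots,Xw_s)$. Under this identification the factor $\Or(U)$ acts diagonally on $U^s$ by $a\cdot(x_1,\dots,x_s)=(ax_1,\dots,ax_s)$, which is exactly the setting of Theorem~\ref{spin:thm:Przebinda} with $V=U$ and $k=s$. The hypothesis $k<\dim_\C V$ is just $s<r$, which holds.

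Next we compare distribution spaces. By the equivariance remark in \S\ref{spin:sec:conventions}, a translation-invariant density on $\fp$ can be chosen $\Or(U)\times\Or(W)$-invariant. Hence Schwartz distributions on the underlying real space of $\fp$ correspond $\widehat K$-equivariantly to tempered generalized functions on $U^s$ viewed over $F=\R$ or directly over $F=\C$. The statement of Theorem~\ref{spin:thm:Przebinda} for $F=\C$ applies to the complex quadratic space $U$, with tempered generalized functions on the underlying real space of $U^s$.

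Now let $T\in\cS^*(\fp)^{\widehat K,\det_U}$. Restricting to the subgroup $\SO(U)\times\{1\}$, on which $\det_U$ is trivial, $T$ is $\SO(U)$-invariant. Przebinda's theorem then makes it $\Or(U)$-invariant. Taking a reflection $r_U\in\Or(U)$, equivariance gives $(r_U)_*T=\det_U(r_U,1)T=-T$, while invariance gives $(r_U)_*T=T$. Hence $T=-T$, so $T=0$. The $\Or(W)$-equivariance is never used, and neither is the parity of $s$, which matches the statement "for every $r>s>0$".

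The only delicate point is the bookkeeping. One has to confirm that the contragredient action on distributions used here matches the action in \cite{JSZ}; the determinant characters are self-inverse, so no sign twist arises. One also has to check that $F=\C$ in Przebinda's theorem means complex quadratic spaces with $\dim_\C$, so that the inequality $k<\dim V$ is taken over $\C$ rather than over $\R$. I expect this convention check to be the main, though minor, obstacle.
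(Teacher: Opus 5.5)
Your proposal is correct and is essentially the paper's own argument. The paper likewise identifies $\fp=\Hom(W,U)$ with $U^{s}$ carrying the diagonal $\Or(U)$-action, observes that a $\det_U$-equivariant distribution is $\SO(U)$-invariant, and applies Theorem~\ref{spin:thm:Przebinda} with $k=s<r=\dim_{\C}U$ to get full $\Or(U)$-invariance, so that a reflection forces $T=-T$; the density and character-convention bookkeeping you flag is exactly what \S\ref{spin:sec:conventions} settles, and any basis of $W$, not necessarily orthonormal, gives the required $\Or(U)$-equivariant identification.
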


\begin{proof}
Choose a basis of $W$.  As an $\Or(U)$-representation,
\[
  \fp=\Hom(W,U)\simeq U^{\oplus s},
\]
with diagonal action.  Let
$T\in\cS^*(\fp)^{\widehat K,\det_U}$.  Since $\det_U$ is trivial on
$\SO(U)$, the distribution $T$ is $\SO(U)$-invariant.  It is a tempered
generalized function on $U^s$.  Because $s<r$, Theorem~\ref{spin:thm:Przebinda}
forces $T$ to be invariant under all of $\Or(U)$.  A reflection in $\Or(U)$,
however, acts on $T$ by $-1$ by the assumed $\det_U$-equivariance.  Hence
$T=0$.
\end{proof}

\begin{lemma}\label{spin:lem:detW-odd}
If $s$ is odd and $r$ is even, then
\[
  \cS^*(\fp)^{\widehat K,\det_W}=0.
\]
\end{lemma}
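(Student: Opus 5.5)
The plan is to mirror Proposition~\ref{spin:prop:detU-vanish}, applying Przebinda's theorem to the other orthogonal factor. The obstruction is that $W$ is the smaller block: $\fp=\Hom(W,U)\simeq W^{\oplus r}$ as an $\Or(W)$-module, and $r>s$, so Theorem~\ref{spin:thm:Przebinda} does not apply to $\Or(W)$ directly. I would therefore use the parity hypothesis. Since $s$ is odd, $-I_W\in\Or(W)$ has determinant $-1$. Since $r$ is even, $-I_U\in\Or(U)$ has determinant $+1$.

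The key step is to compare two elements of $\widehat K$ acting on $\fp$. The element $(I_U,-I_W)$ acts on $X\in\Hom(W,U)$ by $X\mapsto X(-I_W)^{-1}=-X$. The element $(-I_U,I_W)$ acts by $X\mapsto -X$ as well. Both therefore induce the same transformation of $\fp$, hence the same push-forward on distributions. For $T\in\cS^*(\fp)^{\widehat K,\det_W}$, the first element multiplies $T$ by $\det_W(I_U,-I_W)=\det(-I_W)=(-1)^s=-1$. The second multiplies $T$ by $\det_W(-I_U,I_W)=\det(I_W)=1$. Equating the two gives $T=-T$, so $T=0$.

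One point to check is that the equivariance convention of \S\ref{spin:sec:conventions}, $j_*T=\eta(j)T$, depends only on the action map. It does, since push-forward is defined through $f\mapsto f\circ j$. A second point is that the density normalization does not introduce a Jacobian twist. This is covered by the remark that the real Jacobian of every orthogonal element has absolute value one. I expect no real obstacle: the whole content is the observation that, when $s$ is odd and $r$ is even, the two scalar elements coincide on $\fp$ but $\det_W$ separates them. The main care is only to use $-I_U\in\Or(U)$, which need not lie in $\SO(U)$ in general but does here because $r$ is even, so its $\det_W$-value is unambiguously $1$.
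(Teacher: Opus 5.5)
Your argument is correct and is essentially the paper's: your two elements differ by $(-I_U,-I_W)$, which acts trivially on $\fp$ while $\det_W(-I_U,-I_W)=(-1)^s=-1$, and that observation is the paper's whole proof. One small correction: $\det_W(-I_U,I_W)=\det I_W=1$ whether or not $-I_U\in\SO(U)$, so the parity of $r$ is never actually used in this lemma.
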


\begin{proof}
The central element
\[
  c=(-I_U,-I_W)\in\widehat K
\]
acts trivially on $\fp$, because
\[
  (-I_U)X(-I_W)^{-1}=X.
\]
Thus it acts trivially on every distribution on $\fp$.  On the other hand,
\[
  \det_W(c)=(-1)^s=-1.
\]
A $\det_W$-equivariant distribution would therefore be equal to its negative.
\end{proof}

\begin{corollary}\label{spin:cor:odd-sign-zero}
Under \eqref{spin:eq:standing}, if $s$ is odd, then
\[
  \cS^*(\fp)^{K,\chi}=0.
\]
In particular, the implication \eqref{spin:eq:SR} holds.
\end{corollary}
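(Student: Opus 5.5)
The plan is to combine the two vanishing statements just proved through the index-two decomposition. Apply Lemma~\ref{spin:lem:index-two} with $X=\fp$, which is $\widehat K$-stable because $\widehat K=\Or(U)\times\Or(W)$ acts linearly on $\Hom(W,U)$ by $(a,b)X=aXb^{-1}$. This gives
\[
 \cS^*(\fp)^{K,\chi}
 =\cS^*(\fp)^{\widehat K,\det_U}\oplus\cS^*(\fp)^{\widehat K,\det_W}.
\]
It therefore suffices to show that each summand vanishes.

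For the $\det_U$ summand, the standing assumption \eqref{spin:eq:standing} gives $r>s>0$. Proposition~\ref{spin:prop:detU-vanish} then applies directly; its input is Przebinda's theorem, valid since $s<r=\dim U$. For the $\det_W$ summand, $s$ is odd by the hypothesis of this subsection, and the parity condition $rs\equiv0\pmod2$ forces $r$ to be even. Lemma~\ref{spin:lem:detW-odd} then kills the summand via the central element $(-I_U,-I_W)$, which acts trivially on $\fp$ but has $\det_W=-1$. The only check needed is that this element lies in $\widehat K$; it does, since it is an orthogonal pair. Hence $\cS^*(\fp)^{K,\chi}=0$.

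The final assertion is then formal. The conclusion of \eqref{spin:eq:SR} is $\cS^*(Q(\fp))^{K,\chi}=\cS^*(\fp)^{K,\chi}=0$, using $Q(\fp)=\fp$ from $\fp^{H_0}=0$. This holds unconditionally, so the implication holds whatever its antecedent. By Proposition~\ref{spin:prop:sign-reduction}, the Spin block pair is regular for odd $s$.

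There is no real obstacle here. The one point to verify is the equivariance convention: the decomposition of Lemma~\ref{spin:lem:index-two} must be stated for the same contragredient action used in Proposition~\ref{spin:prop:detU-vanish} and Lemma~\ref{spin:lem:detW-odd}. Since all the characters involved are self-inverse determinant characters, as noted in \S\ref{spin:sec:conventions}, this causes no issue.
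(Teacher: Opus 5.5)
Your proposal is correct and matches the paper's proof: the paper establishes the corollary by combining Lemma~\ref{spin:lem:index-two} with Proposition~\ref{spin:prop:detU-vanish}, which handles the $\det_U$ summand, and Lemma~\ref{spin:lem:detW-odd}, which handles the $\det_W$ summand. Your observation that \eqref{spin:eq:SR} then holds because its conclusion is true unconditionally is the same formal step the paper leaves implicit.
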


\begin{proof}
Combine Lemma~\ref{spin:lem:index-two}, Proposition~\ref{spin:prop:detU-vanish}, and
Lemma~\ref{spin:lem:detW-odd}.
\end{proof}

The odd smaller-block half of Theorem~\ref{thm:Spin-residual} is therefore complete.
No nilpotent-orbit analysis and no automatic extension are needed in this
parity.

\subsection{The two-dimensional self-adjoint block}\label{spin:sec:ranktwo}

For the remainder of the proof assume
\begin{equation}\label{spin:eq:s-even}
  s=2m.
\end{equation}
We shall construct a nonzero element of
$\cS^*(R(\fp))^{\widehat K,\det_W}$.

Let $P$ be a complex hyperbolic plane.  Choose a basis $e,f$ with
\begin{equation}\label{spin:eq:hyperbolic}
  (e,e)=(f,f)=0,
  \qquad
  (e,f)=1.
\end{equation}
Define
\begin{equation}\label{spin:eq:Nranktwo}
  Ne=0,
  \qquad
  Nf=e.
\end{equation}
Then
\[
  N=N^*,
  \qquad
  N^2=0,
  \qquad
  N\ne0.
\]

\begin{lemma}\label{spin:lem:ranktwo-centralizer}
The centralizer of $N$ in $\Or(P)$ is
\[
  C_{\Or(P)}(N)=\{\pm I_P\}.
\]
In particular, every element of this centralizer has determinant $+1$.
\end{lemma}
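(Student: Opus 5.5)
The computation is a direct linear-algebra check in the basis $e,f$ of \eqref{spin:eq:hyperbolic}. First I determine the centralizer of $N$ in $\GL(P)$. A matrix $g$ with $gN=Ng$ must satisfy $gNe=Nge$, so $Nge=0$ and $ge\in\ker N=\C e$. Write $ge=\alpha e$. From $gNf=Ngf$ we get $ge=Ngf$. Writing $gf=\beta e+\gamma f$ gives $Ngf=\gamma e$, hence $\gamma=\alpha$. Thus every element of the centralizer has the form
\[
 g=\alpha I_P+\beta N,\qquad \alpha\in\C^\times,\ \beta\in\C.
\]

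Next I impose orthogonality. Since $N=N^*$, we have $g^*=\alpha I_P+\beta N$, and $N^2=0$ gives
\[
 g^*g=\alpha^2I_P+2\alpha\beta N.
\]
Setting this equal to $I_P$ forces $\alpha^2=1$ and $2\alpha\beta=0$. Hence $\beta=0$ and $\alpha=\pm1$. Equivalently, the conditions $(ge,gf)=1$ and $(gf,gf)=0$ yield $\alpha^2=1$ and $2\alpha\beta=0$ directly.

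Conversely, $\pm I_P$ is orthogonal and commutes with $N$. This proves $C_{\Or(P)}(N)=\{\pm I_P\}$. Both elements have determinant $(\pm1)^2=1$ because $\dim P=2$, which gives the final assertion. No step is a real obstacle. The only point needing care is using self-adjointness of $N$ to compute $g^*$, so that the isometry condition $g^*g=I_P$ reduces to the two scalar equations above.
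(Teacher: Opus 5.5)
Your proof is correct and follows the paper's argument: write a commuting element as $\alpha I_P+\beta N$, use $N^*=N$ and $N^2=0$ to get $g^*g=\alpha^2I_P+2\alpha\beta N$, and conclude $g=\pm I_P$ with determinant $1$. The only difference is that you derive $C_{\GL(P)}(N)=\{\alpha I_P+\beta N\}$ directly in the basis $e,f$, whereas the paper cites the single-Jordan-block fact $C_{\End(P)}(N)=\C[N]$.
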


\begin{proof}
Since $N$ is a single Jordan block,
\[
  C_{\End(P)}(N)=\C[N]=\{aI+bN:a,b\in\C\}.
\]
If $g=aI+bN$ is orthogonal, then $N^*=N$ gives
\[
  I=g^*g=g^2=a^2I+2abN.
\]
Thus $a^2=1$ and $2ab=0$.  Since $a\ne0$, one has $b=0$ and
$g=\pm I_P$.  Both determinants equal $1$ because $\dim P=2$.
\end{proof}

\begin{lemma}\label{spin:lem:ranktwo-orbit}
The nonzero nilpotent elements of $\Sym(P)$ form a single $\Or(P)$-orbit, and
its closure is that orbit together with $0$.
\end{lemma}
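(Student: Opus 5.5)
The plan is to classify the nonzero nilpotent self-adjoint endomorphisms of the hyperbolic plane $P$ by hand, using the normal form \eqref{spin:eq:Nranktwo}. Let $X\in\Sym(P)$ be nilpotent and nonzero. Since $\dim P=2$, nilpotency gives $X^2=0$, and the rank of $X$ is exactly one. Put $L=\ker X=\operatorname{im}X$, which is a line.

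The first step is to show that $L$ is isotropic. For $v\in P$ we have $Xv\in L$, and self-adjointness gives
\[
 (Xv,Xv)=(v,X^2v)=0.
\]
So $L=\operatorname{im}X$ is an isotropic line. Choose $e'$ spanning $L$. Pick $f'$ isotropic with $(e',f')=1$; this is possible because $P$ is hyperbolic, so $L^\perp=L$ and the second isotropic line is complementary to it. In the basis $e',f'$ one has
\[
 Xe'=0,\qquad Xf'=ce',\qquad c\neq0.
\]
The condition $X=X^*$ holds automatically, since $(Xf',f')=c(e',f')=c$ is symmetric in the two slots.

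The second step is to remove the scalar $c$. Rescale by the orthogonal map $g=\diag(\lambda,\lambda^{-1})$ in the basis $e',f'$. Then $gXg^{-1}$ sends $f'$ to $\lambda^2ce'$, and choosing $\lambda^2=c^{-1}$ gives exactly the normal form of $N$. The basis change sending $(e,f)$ to $(e',f')$ is an isometry, so $X$ lies in the $\Or(P)$-orbit of $N$; in fact the $\SO(P)$-orbit already suffices. This proves that the nonzero nilpotents form one orbit.

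The third step is the closure. The set of nilpotent elements of $\Sym(P)$ is Zariski closed, being cut out by $\tr X=0$ and $\det X=0$. It is therefore also closed in the ordinary topology. By the first two steps this set is the orbit together with $0$, so the closure of the orbit is contained in the orbit together with $0$. Conversely, the rescaling $\diag(t,t^{-1})$ sends $N$ to $t^2N$, which tends to $0$ as $t\to0$. Hence $0$ lies in the closure.

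There is no serious obstacle here. The only point needing care is the isotropy of the image, which is exactly what the self-adjointness computation provides.
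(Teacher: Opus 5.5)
Your proof is correct and is essentially the paper's argument written without coordinates. The paper writes self-adjoint maps as $\begin{pmatrix}a&b\\ c&a\end{pmatrix}$ in the basis $e,f$ and observes that the nonzero nilpotents are the two punctured lines $a=0$, $bc=0$. It swaps these lines by the reflection $e\leftrightarrow f$ and scales within each line by $t_z=\diag(z,z^{-1})$. In your version, the isotropic image line records which of the two lines $X$ lies on, your isometry $(e,f)\mapsto(e',f')$ plays the role of the reflection, and your $g$ is the torus. Your closure step is the same as the paper's, and it makes explicit that the nilpotent locus $\{\tr X=\det X=0\}$ is closed, which the paper leaves implicit.

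However, your parenthetical claim that ``the $\SO(P)$-orbit already suffices'' is false and should be deleted. Every element of $\SO(P)=\{t_z\}$ preserves each of the two isotropic lines $\C e$ and $\C f$. Since $\operatorname{im}(kXk^{-1})=k(\operatorname{im}X)$, the image line is an $\SO(P)$-invariant. For instance, the self-adjoint nilpotent $X$ with $Xe=f$, $Xf=0$ has image $\C f$, so it is not $\SO(P)$-conjugate to $N$, whose image is $\C e$. In your argument this is the case $e'\in\C f$, where the basis change $(e,f)\mapsto(e',f')$ has determinant $-1$. So the nonzero nilpotents form two $\SO(P)$-orbits interchanged by the reflection, which is exactly why the paper invokes the reflection and why the statement uses the full group $\Or(P)$. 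The lemma itself is unaffected.
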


\begin{proof}
Relative to the basis in \eqref{spin:eq:hyperbolic}, a self-adjoint endomorphism is
of the form
\[
  A(a,b,c)=
  \begin{pmatrix}
    a&b\\ c&a
  \end{pmatrix}.
\]
If $A$ is nilpotent, then $a=0$ and $bc=0$.  Hence the nonzero nilpotents form
the two punctured coordinate lines $b\ne0,c=0$ and $b=0,c\ne0$.  The
reflection exchanging $e$ and $f$ interchanges the two lines.  The torus
\[
  t_z=\diag(z,z^{-1})\in\SO(P)
\]
acts by $b\mapsto z^2b$ and $c\mapsto z^{-2}c$.  Thus all nonzero points form
one orbit, and scaling $z$ to $0$ or $\infty$ shows that $0$ is its unique
boundary orbit.
\end{proof}

\subsection{An explicit regular orbit for an even smaller block}\label{spin:sec:even-orbit}

Choose an orthogonal decomposition
\begin{equation}\label{spin:eq:Wdecomp}
  W=P_1\perp\cdots\perp P_m
\end{equation}
into hyperbolic planes.  On each $P_j$ choose the nilpotent $N_j$ of
Section~\ref{spin:sec:ranktwo}.  Fix pairwise distinct nonzero scalars
\begin{equation}\label{spin:eq:lambdas}
  \lambda_1,\ldots,\lambda_m\in\C^\times
\end{equation}
and set
\begin{equation}\label{spin:eq:Sdef}
  S=\bigoplus_{j=1}^m(\lambda_jI_{P_j}+N_j)\in\Sym(W).
\end{equation}
The operator $S$ is invertible.

Choose $\alpha_j\in\C^\times$ with $\alpha_j^2=\lambda_j$ and define
\begin{equation}\label{spin:eq:Aj}
  A_j=\alpha_j\left(I_{P_j}+\frac{1}{2\lambda_j}N_j\right).
\end{equation}
Because $N_j^2=0$ and $N_j=N_j^*$,
\[
  A_j^*=A_j,
  \qquad
  A_j^2=\lambda_jI_{P_j}+N_j.
\]
Let $A=\bigoplus_jA_j$.  Since $r>s$, there exists an isometric embedding
\[
  \iota:W\hookrightarrow U.
\]
Set
\begin{equation}\label{spin:eq:Xdef}
  X=\iota A\in\fp.
\end{equation}
Then
\begin{equation}\label{spin:eq:Xgram}
  X^*X=A^*A=S.
\end{equation}
In particular $X$ is injective.

We shall repeatedly use the Gram map
\begin{equation}\label{spin:eq:gram-map}
  q:\fp\longrightarrow\Sym(W),
  \qquad
  q(Y)=Y^*Y.
\end{equation}
The following invariant is sufficient to keep the entire orbit closure away
from the invariant-theoretic null cone; no identification with the ambient
Lie-algebra nilpotent cone is needed.

\begin{lemma}\label{spin:lem:det-detects-R}
The polynomial
\[
  D(Y):=\det(Y^*Y)
\]
is $H_0$-invariant and satisfies $D(0)=0$.  Consequently
\[
  D(Y)\ne0\quad\Longrightarrow\quad Y\in R(\fp).
\]
\end{lemma}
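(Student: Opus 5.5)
The plan has three short steps: check $H_0$-invariance of $D$, note $D(0)=0$, and then use that invariant polynomials are constant on null-cone fibres.

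\emph{Invariance.} For $(a,b)\in H_0$ and $Y\in\fp$ one has $(aYb^{-1})^*=bY^*a^{-1}$, since $b^*=b^{-1}$ and $a^*=a^{-1}$. Hence
\[
 (aYb^{-1})^*(aYb^{-1})=b\,Y^*Y\,b^{-1},
\]
so the Gram map $q$ of \eqref{spin:eq:gram-map} is equivariant for conjugation by $\SO(W)$ and is invariant under $\SO(U)$. Taking determinants kills the conjugation by $b$, so $D(aYb^{-1})=D(Y)$. The same computation works for all of $\widehat K$, although only $H_0$ is needed here. $D$ is a polynomial on $\fp$ because $Y\mapsto Y^*Y$ is quadratic. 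Also $D(0)=\det(0)=0$, since $\dim W=s>0$.

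\emph{Conclusion.} $D$ lies in $\C[\fp]^{H_0}$, so it factors through the categorical quotient $\rho:\fp\to\fp\sslash H_0$. By definition \eqref{spin:eq:Gamma-R}, every $Y\in\Gamma(\fp)=\rho^{-1}(\rho(0))$ satisfies $D(Y)=D(0)=0$. Equivalently, every invariant polynomial takes its value at $0$ on $\Gamma(\fp)$, exactly as in the argument for \eqref{diii:eq:pointsR}. Taking the contrapositive, $D(Y)\neq0$ gives $Y\notin\Gamma(\fp)$. Since $Q(\fp)=\fp$ by $\fp^{H_0}=0$, this means $Y\in R(\fp)$.

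No step here is a genuine obstacle. The only point to watch is using the adjoint convention $(Tv,w)_V=(v,T^*w)_V$ consistently for maps $W\to U$, so that $(aYb^{-1})^*=bY^*a^{-1}$ holds. The lemma will be applied to $X$ from \eqref{spin:eq:Xdef}, where $D(X)=\det S=\prod_j\lambda_j^2\neq0$, and to the points of its orbit closure.
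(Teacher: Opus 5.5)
Your proposal is correct and follows essentially the same route as the paper: equivariance of the Gram map $q(aYb^{-1})=b\,q(Y)\,b^{-1}$ gives $H_0$-invariance of $D$, and since every invariant polynomial takes its value at $0$ on $\Gamma(\fp)=\rho^{-1}(\rho(0))$, the contrapositive yields $Y\in R(\fp)$ whenever $D(Y)\ne0$. Your added checks (the adjoint identity $(aYb^{-1})^*=bY^*a^{-1}$, $D(0)=0$ because $s>0$, and $Q(\fp)=\fp$) are details the paper leaves implicit.
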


\begin{proof}
For $(a,b)\in H_0$,
\[
  q(aYb^{-1})=bq(Y)b^{-1},
\]
so $D$ is invariant.  If $Y\in\Gamma(\fp)=\rho^{-1}(\rho(0))$, then every
$H_0$-invariant polynomial has the same value at $Y$ and $0$ by the
definition of the categorical quotient.  In particular $D(Y)=D(0)=0$.
The contrapositive proves the assertion.
\end{proof}

Since
\[
  D(X)=\det S=\prod_{j=1}^m\lambda_j^2\ne0,
\]
Lemma~\ref{spin:lem:det-detects-R} gives $X\in R(\fp)$.

Let
\[
  \cO=\widehat K\cdot X.
\]
Put $L=X(W)$.  Since $X^*X$ is invertible, $L$ is nondegenerate and
\[
  U=L\perp L^\perp,
  \qquad
  \dim L^\perp=d=r-s.
\]

\begin{lemma}\label{spin:lem:start-stabilizer}
There is a natural isomorphism
\[
  \widehat K_X
  \simeq
  C_{\Or(W)}(S)\times\Or(L^\perp)
  \simeq
  \mu_2^m\times\Or_d(\C).
\]
Moreover,
\[
  \det_W|_{\widehat K_X}=1.
\]
\end{lemma}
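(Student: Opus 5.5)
The stabilizer can be computed directly from the Gram map $q(Y)=Y^*Y$ and the decomposition $U=L\perp L^\perp$. Let $(a,b)\in\widehat K=\Or(U)\times\Or(W)$ fix $X$, so $aXb^{-1}=X$. Applying $q$ and using $q(aYb^{-1})=bq(Y)b^{-1}$ (valid for all of $\widehat K$, since $a^*a=1$) gives
\[
 bSb^{-1}=S,
\]
that is, $b\in C_{\Or(W)}(S)$. Since $X$ is injective, $a$ is then determined on $L=X(W)$ by $a(Xw)=Xbw$. Orthogonality of $a$ forces $a$ to preserve $L^\perp$, and $a|_{L^\perp}\in\Or(L^\perp)$ is arbitrary.

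Conversely, the condition that $a|_L$, defined by $Xw\mapsto Xbw$, be an isometry of $L$ is
\[
 (Xbw,Xbw')=(bw,Sbw')=(w,b^{-1}Sbw')=(w,Sw'),
\]
which holds exactly because $b$ commutes with $S$. So every $b\in C_{\Or(W)}(S)$ and every $c\in\Or(L^\perp)$ determine a unique element of the stabilizer. This gives
\[
 \widehat K_X\simeq C_{\Or(W)}(S)\times\Or(L^\perp),
\]
with $\Or(L^\perp)\simeq\Or_d(\C)$ because $\dim L^\perp=d$ and $L^\perp$ is nondegenerate.

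Next we compute $C_{\Or(W)}(S)$. The eigenvalues $\lambda_j$ of $S$ are pairwise distinct, and the generalized eigenspaces of $S$ are the $P_j$. Any $b$ commuting with $S$ therefore preserves each $P_j$ and commutes there with $\lambda_jI+N_j$, hence with $N_j$. The restriction $b|_{P_j}$ is orthogonal, since the $P_j$ are mutually orthogonal and nondegenerate. By Lemma~\ref{spin:lem:ranktwo-centralizer}, $b|_{P_j}=\pm I_{P_j}$. Conversely, every choice of signs $\pm I_{P_j}$ lies in $\Or(W)$ and commutes with $S$, so $C_{\Or(W)}(S)\simeq\mu_2^m$.

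Finally, for $(a,b)\in\widehat K_X$ we have $\det_W(a,b)=\det b=\prod_j\det(\pm I_{P_j})=1$, since each $P_j$ is two-dimensional. The only point needing care is that the stabilizer is taken in all of $\widehat K$ and not just in $H_0$, so the factor $\Or(L^\perp)$ may have determinant $-1$. This is harmless because $\det_W$ only involves the $W$-component $b$.
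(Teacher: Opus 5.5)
Your proof is correct and follows the paper's route: the Gram map forces $b\in C_{\Or(W)}(S)$, and the isometry $Xw\mapsto Xbw$ of $L$ extends across $U=L\perp L^\perp$ with a free $\Or(L^\perp)$ factor, which gives the product decomposition. The distinct eigenvalues together with Lemma~\ref{spin:lem:ranktwo-centralizer} then give $C_{\Or(W)}(S)\simeq\mu_2^m$ and $\det_W|_{\widehat K_X}=1$; you add a little more detail than the paper, namely the explicit isometry computation and the orthogonality of each $b|_{P_j}$.
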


\begin{proof}
If $(a,b)\in\widehat K_X$, then $aX=Xb$.  Taking Gram operators gives
$b^{-1}Sb=S$, so $b\in C_{\Or(W)}(S)$.  Conversely, if $b$ centralizes $S$,
the map
\[
  Xw\longmapsto Xbw
\]
is an isometry of $L$.  Extending it orthogonally and choosing an arbitrary
$c\in\Or(L^\perp)$ gives an element of the stabilizer.  This proves the first
isomorphism.

The eigenvalues $\lambda_j$ are pairwise distinct, so every orthogonal
centralizer of $S$ preserves each primary space $P_j$.  On $P_j$ it centralizes
$N_j$.  Lemma~\ref{spin:lem:ranktwo-centralizer} therefore gives
\[
  C_{\Or(W)}(S)
  =\prod_{j=1}^m\{\pm I_{P_j}\}
  \simeq\mu_2^m.
\]
Every $\pm I_{P_j}$ has determinant $+1$, proving the final assertion.
\end{proof}

Because $\widehat K$ and $\widehat K_X$ are reductive real Nash groups, both
are unimodular.  Lemma~\ref{spin:lem:start-stabilizer} therefore yields a nonzero
$\det_W$-equivariant distribution on the homogeneous space
$\cO\simeq\widehat K/\widehat K_X$.  For example, with an invariant quotient
density one may write
\begin{equation}\label{spin:eq:orbital-integral}
  T_{\cO}(f)
  =\int_{\widehat K/\widehat K_X}
       f(kX)\det_W(k)^{-1}\,d\dot k
\end{equation}
for compactly supported smooth test functions on $\cO$.  The integrand is
well-defined because $\det_W$ is trivial on the stabilizer.

The orbit $\cO$ will not be closed in $R(\fp)$, so formula
\eqref{spin:eq:orbital-integral} is not yet the desired global Schwartz
distribution.  We next compute its boundary exactly.

\subsection{The finite orbit closure}\label{spin:sec:closure}

For a subset $J\subset\{1,\ldots,m\}$ define
\begin{equation}\label{spin:eq:SJ}
  N_j^{(J)}=
  \begin{cases}
    0,&j\in J,\\
    N_j,&j\notin J,
  \end{cases}
  \qquad
  S_J=\bigoplus_{j=1}^m
  \bigl(\lambda_jI_{P_j}+N_j^{(J)}\bigr).
\end{equation}
Every $S_J$ is invertible.

\begin{proposition}\label{spin:prop:base-closure}
One has the disjoint orbit decomposition
\begin{equation}\label{spin:eq:base-closure}
  \overline{\Or(W)\cdot S}
  =
  \coprod_{J\subset\{1,\ldots,m\}}
  \Or(W)\cdot S_J.
\end{equation}
More generally,
\[
  \overline{\Or(W)\cdot S_J}
  =
  \coprod_{J\subset J'}\Or(W)\cdot S_{J'}.
\]
Here and below, closures are taken in the ordinary topology of the
underlying real Nash manifold.  No comparison with algebraic orbit closures
is used.
\end{proposition}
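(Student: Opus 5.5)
The plan is to prove the general statement for $S_J$ directly; the case $J=\emptyset$ is \eqref{spin:eq:base-closure}. I would use a continuous invariant to confine the closure, and then explicit one-parameter degenerations to show that every predicted orbit actually occurs.

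\emph{Step 1 (confinement).} The characteristic polynomial $Y\mapsto\det(tI-Y)$ is continuous and $\Or(W)$-invariant on $\Sym(W)$. It is constant on $\Or(W)\cdot S_J$ and equal to $\prod_j(t-\lambda_j)^2$. So every limit point $Y$ is self-adjoint with eigenvalues $\lambda_j$, each of algebraic multiplicity two. Since the $\lambda_j$ are distinct, the generalized eigenspaces $W_j(Y)$ are two-dimensional. They are pairwise orthogonal, because for self-adjoint $Y$ generalized eigenspaces for $\lambda\neq\mu$ are orthogonal, and hence each is nondegenerate. On $W_j(Y)$ the operator $Y-\lambda_j$ is self-adjoint and nilpotent on a two-dimensional quadratic space, so it is either $0$ or, by Lemma~\ref{spin:lem:ranktwo-orbit}, $\Or$-conjugate to $N_j$ after an isometry $W_j(Y)\simeq P_j$. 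Assembling these isometries into an element of $\Or(W)$ shows $Y\in\Or(W)\cdot S_{J'}$, where $J'$ is the set of $j$ with $Y=\lambda_j$ on $W_j(Y)$.

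\emph{Step 2 (closedness under rank).} Next I would show $J\subset J'$. The function $Y\mapsto\rk\prod_{j}(Y-\lambda_j)$ is lower semicontinuous, but a single global rank does not separate the blocks. Instead I would use, for each $j$, the rank of
\[
\prod_{k\ne j}(Y-\lambda_k)^2\,(Y-\lambda_j).
\]
On the $S_{J'}$ orbit this operator is zero on every $W_k$ with $k\ne j$, and on $W_j$ it equals a nonzero scalar times $Y-\lambda_j$. Its rank is therefore $1$ if $j\notin J'$ and $0$ if $j\in J'$. Rank can only drop in a limit, so $j\in J$ forces $j\in J'$.

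\emph{Step 3 (realization).} For $J\subset J'$, take the torus element $t_z$ of Lemma~\ref{spin:lem:ranktwo-orbit} on each $P_j$ with $j\in J'\setminus J$ and the identity elsewhere. This is an element of $\SO(W)$ that commutes with the scalar parts, and it scales $N_j$ to $z^2N_j$. Letting $z\to0$ gives $t_z\cdot S_J\to S_{J'}$. The orbits are disjoint by the rank invariants of Step 2, which establishes both displayed decompositions. Only ordinary-topology limits are used, consistent with the remark in the statement.

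The main obstacle is Step 1, namely keeping track of the primary decomposition in a limit. I expect this to be fine, since the invariant characteristic polynomial pins down the eigenvalue multiplicities and self-adjointness then gives the orthogonal splitting at the limit point itself.
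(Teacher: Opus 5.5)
Your proposal is correct and follows essentially the same route as the paper. Both arguments use the same four ingredients: confinement of limits by the invariant characteristic polynomial, orthogonality of the primary components of a self-adjoint limit, Lemma~\ref{spin:lem:ranktwo-orbit} applied on each plane, and the torus degeneration $t_z$ with $z\to0$.

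Your Step~2 adds a useful detail. The rank of $\prod_{k\ne j}(Y-\lambda_k)^2(Y-\lambda_j)$ makes explicit the semicontinuity that forces $J\subset J'$ in the refined closure formula. The paper covers that restriction only by the phrase ``the same argument beginning with $S_J$''.
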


\begin{proof}
Let $T$ be a limit of conjugates of $S$.  Orthogonal conjugation preserves
self-adjointness and $\Sym(W)$ is closed, so $T=T^*$.  The coefficients of
the characteristic polynomial are conjugation-invariant and continuous, hence
\begin{equation}\label{spin:eq:charpoly}
  \det(tI-T)=\prod_{j=1}^m(t-\lambda_j)^2.
\end{equation}
For each $j$ put
\[
  W_j(T)=\ker(T-\lambda_jI)^2.
\]
The primary decomposition gives
$W=\bigoplus_jW_j(T)$ and $\dim W_j(T)=2$.

We claim that the primary spaces are mutually orthogonal.  If $i\ne j$, the
polynomials
\[
  p_i(t)=(t-\lambda_i)^2,
  \qquad
  p_j(t)=(t-\lambda_j)^2
\]
are coprime.  Choose polynomials $a,b$ with $ap_i+bp_j=1$.  If
$v\in W_i(T)$ and $w\in W_j(T)$, then, using $T=T^*$, polynomial expressions
in $T$ may be moved from one side of the quadratic pairing to the other.  The
Bezout identity and the relations $p_i(T)v=0$, $p_j(T)w=0$ then give
$(v,w)=0$.  Since the direct sum is the whole nondegenerate space $W$, each
$W_j(T)$ is itself nondegenerate.

On $W_j(T)$ write
\[
  T=\lambda_jI+M_j.
\]
By Cayley--Hamilton, $M_j^2=0$, and $M_j=M_j^*$.  Lemma
\ref{spin:lem:ranktwo-orbit} says that $M_j$ is either $0$ or belongs to the unique
nonzero nilpotent $\Or(W_j(T))$-orbit.  Labeled orthogonal decompositions into
nondegenerate two-planes are conjugate under $\Or(W)$.  Thus $T$ is conjugate
to some $S_J$.  The subset $J$ is determined by whether
$T|_{W_j(T)}-\lambda_jI$ has rank zero or one, so the union in
\eqref{spin:eq:base-closure} is disjoint.

Conversely, every $S_J$ is a limit of conjugates of $S$.  Indeed, on $P_j$ the
torus $t_z=\diag(z,z^{-1})$ satisfies
\[
  t_zN_jt_z^{-1}=z^2N_j.
\]
Letting $z\to0$ independently on the factors indexed by $J$ replaces precisely
those $N_j$ by zero.  The same argument beginning with $S_J$ gives the more
general closure formula.
\end{proof}

The next lemma lifts the base calculation to $\fp$.

\begin{lemma}[invertible Gram fibers]\label{spin:lem:gram-fiber}
If $Y_1,Y_2\in\fp$ satisfy
\[
  Y_1^*Y_1=Y_2^*Y_2=T
\]
with $T$ invertible, then there exists $a\in\Or(U)$ such that
\[
  aY_1=Y_2.
\]
\end{lemma}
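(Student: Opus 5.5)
This is a Witt-type extension argument on the images. Put $L_1=Y_1(W)$ and $L_2=Y_2(W)$. The first step is to show that each $Y_i$ is injective. If $Y_iw=0$, then $Tw=Y_i^*Y_iw=0$, and invertibility of $T$ gives $w=0$. So $Y_i:W\to L_i$ is a linear isomorphism.

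Next, show that each $L_i$ is nondegenerate. For $w,w'\in W$ one has
\[
  (Y_iw,Y_iw')_U=(w,Tw')_W.
\]
The form $(w,w')\mapsto(w,Tw')_W$ on $W$ is nondegenerate because $T$ is invertible and $(\,\cdot\,,\,\cdot\,)_W$ is nondegenerate. Pulled back along the isomorphism $Y_i:W\to L_i$, this form is the restriction of $(\,\cdot\,,\,\cdot\,)_U$ to $L_i$. Hence each $L_i$ is a nondegenerate subspace of $U$, and $U=L_i\perp L_i^\perp$.

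Now define $\phi=Y_2\circ Y_1^{-1}:L_1\to L_2$. The same displayed identity shows that $\phi$ preserves the form, since both pairings equal $(w,Tw')_W$. So $\phi$ is an isometry between nondegenerate subspaces of $U$.

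To extend $\phi$ to all of $U$, use Witt's extension theorem, or argue directly over $\C$. The complements $L_1^\perp$ and $L_2^\perp$ are nondegenerate complex quadratic spaces of the same dimension $r-s$. Any two such spaces are isometric, since each admits an orthonormal basis. Choose an isometry $\psi:L_1^\perp\to L_2^\perp$ and set $a=\phi\oplus\psi\in\Or(U)$. Then $aY_1=\phi Y_1=Y_2$, as required.

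No step here is a real obstacle. The only point needing care is the nondegeneracy of the images, which is exactly where invertibility of $T$ enters.
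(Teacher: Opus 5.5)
Your proof is correct and follows the same route as the paper. Invertibility of $T$ gives injectivity of the $Y_i$ and nondegeneracy of their images, equality of the Gram operators makes $Y_1w\mapsto Y_2w$ an isometry, and this isometry extends to $U$ by Witt's theorem. Your explicit extension via an isometry of the complements $L_1^\perp\simeq L_2^\perp$ over $\C$ is simply the elementary form of Witt's theorem in this setting.
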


\begin{proof}
Both maps are injective and their images are nondegenerate.  The map
$Y_1w\mapsto Y_2w$ preserves the quadratic form because the two Gram
operators coincide.  Witt's extension theorem extends it to an orthogonal
transformation of $U$.
\end{proof}

For each $J$, choose $X_J\in\fp$ with
\[
  X_J^*X_J=S_J.
\]
One may take the canonical choice obtained from \eqref{spin:eq:Aj} by deleting the
nilpotent term on the factors in $J$.

\begin{proposition}\label{spin:prop:p-closure}
One has
\begin{equation}\label{spin:eq:p-closure}
  \overline{\widehat K\cdot X}
  =
  \coprod_{J\subset\{1,\ldots,m\}}
  \widehat K\cdot X_J.
\end{equation}
More generally,
\[
  \overline{\widehat K\cdot X_J}
  =
  \coprod_{J\subset J'}\widehat K\cdot X_{J'}.
\]
The entire closure in \eqref{spin:eq:p-closure} is contained in $R(\fp)$.
\end{proposition}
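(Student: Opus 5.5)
The plan is to push the base computation of Proposition~\ref{spin:prop:base-closure} up to $\fp$ through the Gram map $q(Y)=Y^*Y$ of \eqref{spin:eq:gram-map}. The key facts are that $q$ is continuous and equivariant, $q(aYb^{-1})=bq(Y)b^{-1}$, and that Lemma~\ref{spin:lem:gram-fiber} makes the fibres of $q$ over invertible operators single $\Or(U)$-orbits.

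\emph{Containment of the closure.} Let $Y\in\overline{\widehat K\cdot X}$. By continuity,
\[
 q(Y)\in\overline{\Or(W)\cdot S}.
\]
Proposition~\ref{spin:prop:base-closure} then gives $b\in\Or(W)$ and a subset $J$ with $q(Y)=bS_Jb^{-1}$. Since $q(Yb)=b^{-1}q(Y)b=S_J$ is invertible, Lemma~\ref{spin:lem:gram-fiber} produces $a\in\Or(U)$ with $aYb=X_J$. Hence $Y\in\widehat K\cdot X_J$.

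\emph{Disjointness.} The subset $J$ is determined by the $\Or(W)$-conjugacy class of $q(Y)$. By Proposition~\ref{spin:prop:base-closure} the sets $\Or(W)\cdot S_J$ are pairwise disjoint, so distinct $J$ give distinct $\widehat K$-orbits.

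\emph{Reverse containment.} It suffices to realise each $X_J$ as a limit. Use the torus elements $t_z=\diag(z,z^{-1})\in\SO(P_j)$ for $j\in J$, embedded in $\Or(W)$. The natural choice of $Y$ is $X\,t_z$, which has Gram operator
\[
 t_z^{-1}S\,t_z,
\]
converging to $S_J$ as $z\to\infty$ on the factors in $J$. However, $Xt_z$ itself need not converge. Instead take the canonical representatives $X_J=\iota A_J$, where $A_J$ is obtained from \eqref{spin:eq:Aj} by deleting the nilpotent term on the factors in $J$. A direct computation gives
\[
 t_z^{-1}A_jt_z=\alpha_j\Bigl(I+\tfrac{z^{\mp2}}{2\lambda_j}N_j\Bigr),
\]
so that $\iota\,(t_z^{-1}At_z)=(\iota t_z^{-1}\iota^{-1}\oplus 1)\,X\,t_z\in\widehat K\cdot X$. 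Here $t_z^{-1}$ is transported to $\iota(W)\subset U$ and extended by the identity on the orthogonal complement. This expression converges to $X_J$. The same argument started at $X_J$ gives the general closure formula for $\overline{\widehat K\cdot X_J}$.

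\emph{Containment in $R(\fp)$.} Every point of the closure satisfies $D(Y)=\det S_J=\prod_j\lambda_j^2\neq0$. Lemma~\ref{spin:lem:det-detects-R} then places it in $R(\fp)$.

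The main obstacle is the reverse containment. One must construct an actual convergent family in $\fp$ rather than merely a convergent family of Gram operators, and this requires the explicit conjugated square roots above.
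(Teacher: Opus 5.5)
Your proposal is correct and follows essentially the same route as the paper. The closure is controlled through the Gram map, Proposition~\ref{spin:prop:base-closure} and Lemma~\ref{spin:lem:gram-fiber}, and membership in $R(\fp)$ comes from $D(Y)=\prod_j\lambda_j^2\neq0$; your degenerating family $\iota\,t_z^{-1}At_z$ is exactly the paper's $X(t)$ with $t_j=z^{-2}$ on $J$, and the only difference is that you write the left factor $\iota t_z^{-1}\iota^{-1}\oplus 1\in\Or(U)$ explicitly, whereas the paper obtains it from Witt's theorem via Lemma~\ref{spin:lem:gram-fiber}.
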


\begin{proof}
The Gram map is continuous and $\widehat K$-equivariant in the sense
\[
  q(aYb^{-1})=bq(Y)b^{-1}.
\]
Thus every limit point of $\widehat KX$ has Gram operator in the base closure
of Proposition~\ref{spin:prop:base-closure}.  Since all $S_J$ are invertible,
Lemma~\ref{spin:lem:gram-fiber} shows that the inverse image of each base orbit is a
single $\widehat K$-orbit.  Hence the left side of \eqref{spin:eq:p-closure} is
contained in the right side.

For the reverse inclusion we give an explicit lift of the degeneration.  Put
\[
  A_j(t)=\alpha_j
  \left(I+\frac{t}{2\lambda_j}N_j\right),
  \qquad
  A_j(t)^2=\lambda_jI+tN_j.
\]
Let $t_j\to0$ for $j\in J$ and $t_j=1$ for $j\notin J$, and define
\[
  X(t)=\iota\bigoplus_jA_j(t_j).
\]
Then $X(t)\to X_J$.  When every $t_j\ne0$, choose $z_j$ with $z_j^2=t_j$;
the product of the torus elements $t_{z_j}$ conjugates $S$ to $q(X(t))$.
After applying that right orthogonal transformation, the Gram operator agrees
with $S$, and Lemma~\ref{spin:lem:gram-fiber} supplies the required left
orthogonal transformation.  Thus $X(t)\in\widehat KX$ for all nonzero
parameters, proving $X_J\in\overline{\widehat KX}$.  The same proof gives the
general closure formula.

Finally,
\[
  \det S_J=\prod_{j=1}^m\lambda_j^2\ne0
\]
for every $J$.  Since the determinant of the Gram operator is constant on
$\widehat K$-orbits, Lemma~\ref{spin:lem:det-detects-R} shows that every orbit in
the closure lies in $R(\fp)$.
\end{proof}

Put
\begin{equation}\label{spin:eq:Bboundary}
  B=\overline{\cO}\setminus\cO
   =\coprod_{\varnothing\ne J\subset\{1,\ldots,m\}}
     \widehat K\cdot X_J.
\end{equation}
The refined closure formula in Proposition~\ref{spin:prop:p-closure} shows
more strongly that the closure of every orbit indexed by
$\varnothing\ne J$ is a union of orbits still indexed by nonempty subsets.
Hence $B$ is closed in $\fp$ (and therefore also in $R(\fp)$).  Each orbit of the real algebraic group
$\widehat K$ is locally closed and semialgebraic, so the finite union $B$ is
semialgebraic.  Hence
\begin{equation}\label{spin:eq:Vopen}
  V=R(\fp)\setminus B
\end{equation}
is a $\widehat K$-stable open Nash submanifold, and $\cO$ is closed in $V$.
The ordinary orbital distribution \eqref{spin:eq:orbital-integral}, pushed forward
to $V$, has support $\cO$, which is Nashly compact modulo $\widehat K$.
Equivalently, one may regard the determinant character as a one-dimensional
$\widehat K$-equivariant Nash bundle and the distribution as invariant with
values in that bundle.  Theorem~\ref{spin:thm:AGcompact} therefore gives
\begin{equation}\label{spin:eq:TO-Schwartz}
  0\ne T_{\cO}\in\cS^*(V)^{\widehat K,\det_W}.
\end{equation}

\subsection{Boundary stabilizers and conormal representations}\label{spin:sec:conormal}

Fix a nonempty subset $J\subset\{1,\ldots,m\}$ and put $z=X_J$.  Since
$S_J=z^*z$ is invertible, the same argument as in
Lemma~\ref{spin:lem:start-stabilizer} gives
\begin{equation}\label{spin:eq:boundary-stabilizer}
  \widehat K_z
  \simeq
  C_{\Or(W)}(S_J)\times\Or_d(\C).
\end{equation}
The distinct eigenvalues separate the two-dimensional blocks, and
Lemma~\ref{spin:lem:ranktwo-centralizer} gives
\begin{equation}\label{spin:eq:boundary-centralizer}
  C_{\Or(W)}(S_J)
  =
  \prod_{j\in J}\Or(P_j)
  \times
  \prod_{j\notin J}\{\pm I_{P_j}\}.
\end{equation}
Consequently
\begin{equation}\label{spin:eq:boundary-stabilizer2}
  \widehat K_z
  \simeq
  \left(\prod_{j\in J}\Or(P_j)\right)
  \times\mu_2^{m-|J|}
  \times\Or_d(\C).
\end{equation}
Both $\widehat K$ and $\widehat K_z$ are reductive real Nash groups and hence
unimodular.  Thus
\begin{equation}\label{spin:eq:modular-trivial}
  \delta_{\widehat K/\widehat K_z}=1.
\end{equation}
Moreover,
\begin{equation}\label{spin:eq:detW-boundary}
  \det_W|_{\widehat K_z}
  =\prod_{j\in J}\det_{P_j}.
\end{equation}

We next compute the full conormal representation required by
Theorem~\ref{spin:thm:ChenSun}.

\begin{lemma}\label{spin:lem:dq}
Let $z\in\fp$ satisfy $z^*z$ invertible.  Then the differential
\[
  (dq)_z:\fp\longrightarrow\Sym(W),
  \qquad
  (dq)_z(Y)=z^*Y+Y^*z
\]
is surjective, and
\begin{equation}\label{spin:eq:kerdq}
  \ker(dq)_z=T_z(\Or(U)\cdot z)=\mathfrak{so}(U)z.
\end{equation}
\end{lemma}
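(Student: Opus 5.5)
The plan is a direct linear-algebra computation. The formula for the differential comes first. Since $q(Y)=Y^*Y$ is quadratic, expanding
\[
 q(z+\epsilon Y)=z^*z+\epsilon(z^*Y+Y^*z)+\epsilon^2Y^*Y
\]
gives $(dq)_z(Y)=z^*Y+Y^*z$. This value is self-adjoint, because $(z^*Y)^*=Y^*z$.

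For surjectivity, let $T\in\Sym(W)$ and put $S=z^*z$, which is invertible by hypothesis. The ansatz is $Y=\tfrac12 zS^{-1}T$. Since $S^{-1}$ is self-adjoint,
\[
 z^*Y=\tfrac12 SS^{-1}T=\tfrac12T,
 \qquad
 Y^*z=(z^*Y)^*=\tfrac12T^*=\tfrac12T.
\]
Their sum is $T$, so $(dq)_z$ is onto $\Sym(W)$.

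For the kernel, first check $\mathfrak{so}(U)z\subset\ker(dq)_z$. If $X^*=-X$, then
\[
 z^*(Xz)+(Xz)^*z=z^*Xz-z^*Xz=0.
\]
Equality should then follow by a dimension count. Since $z$ is injective with nondegenerate image $L=z(W)$, the stabilizer of $z$ in $\mathfrak{so}(U)$ is the set of skew maps vanishing on $L$. These are exactly the skew maps preserving $L^\perp$ and zero on $L$, so this stabilizer is $\mathfrak{so}(L^\perp)$, of dimension $d(d-1)/2$. Hence
\[
 \dim\mathfrak{so}(U)z=\tfrac{r(r-1)}2-\tfrac{d(d-1)}2 .
\]
On the other side, surjectivity gives
\[
 \dim\ker(dq)_z=rs-\tfrac{s(s+1)}2 .
\]
With $r=s+d$, both quantities equal $sd+\tfrac{s(s-1)}2$, so the inclusion is an equality.

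Finally, $T_z(\Or(U)\cdot z)$ is the image of the orbit map's differential, namely $\mathfrak{so}(U)z$, since the orbit of an algebraic group action is smooth. This proves \eqref{spin:eq:kerdq}. One could instead avoid the dimension count by a direct argument: write $Y=zA+Y'$ with $Y'$ valued in $L^\perp$. The kernel condition then says $SA$ is skew, and one builds an explicit skew $X$ with $Xz=Y$. The dimension count is shorter, however, and I will use it. No step is a real obstacle; the only point needing care is verifying that the stabilizer of $z$ in $\mathfrak{so}(U)$ is $\mathfrak{so}(L^\perp)$, which uses the nondegeneracy of $L$, and this holds because $z^*z$ is invertible.
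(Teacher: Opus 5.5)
Your proof is correct and follows the paper's argument essentially step for step. Surjectivity comes from solving $z^*Y=\tfrac12T$: you write down the explicit solution $Y=\tfrac12 zS^{-1}T$, where the paper simply uses that $z^*$ is onto. The kernel comes from the inclusion $\mathfrak{so}(U)z\subset\ker(dq)_z$, upgraded to equality by the same dimension count using the stabilizer $\mathfrak{so}(L^\perp)$.
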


\begin{proof}
Since $z^*z$ is invertible, $z$ is injective and $z^*:U\to W$ is surjective.
Given $C=C^*\in\Sym(W)$, choose $Y$ with $z^*Y=C/2$.  Then
$Y^*z=(z^*Y)^*=C/2$, proving surjectivity.

The left $\Or(U)$-action preserves the Gram map, so
$\mathfrak{so}(U)z\subset\ker(dq)_z$.  The kernel dimension is
\[
  rs-\frac{s(s+1)}2.
\]
The stabilizer of $z$ for the left $\Or(U)$-action is $\Or(L^\perp)$, where
$L=z(W)$ and $\dim L^\perp=d$.  Therefore
\[
  \dim \mathfrak{so}(U)z
  =\frac{r(r-1)-d(d-1)}2
  =rs-\frac{s(s+1)}2.
\]
The inclusion is consequently an equality.
\end{proof}

\begin{proposition}[normal and conormal identification]\label{spin:prop:normal-identification}
Let $z=X_J$ and $S_J=z^*z$.  The Gram differential induces a
$\widehat K_z$-equivariant isomorphism of complex algebraic normal spaces
\begin{equation}\label{spin:eq:normal-iso}
  \fp/T_z(\widehat K\cdot z)
  \simeq
  \Sym(W)/T_{S_J}(\Or(W)\cdot S_J).
\end{equation}
Under the trace pairing on $\Sym(W)$, the algebraic conormal is therefore
\begin{equation}\label{spin:eq:alg-conormal}
  C_{\Sym(W)}(S_J)
  :=\{C\in\Sym(W):[C,S_J]=0\}.
\end{equation}
For the underlying real Nash manifold, the complexified conormal appearing in
Chen--Sun is
\begin{equation}\label{spin:eq:real-conormal}
  N_z^*
  \simeq
  C_{\Sym(W)}(S_J)
  \oplus
  \overline{C_{\Sym(W)}(S_J)}.
\end{equation}
\end{proposition}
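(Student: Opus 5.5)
The plan is to use the Gram map $q$ as a $\widehat K$-equivariant submersion near $z$ and to read off both tangent spaces through it. First, by Lemma~\ref{spin:lem:dq}, $(dq)_z$ is surjective with kernel $\mathfrak{so}(U)z=T_z(\Or(U)\cdot z)$. Next, differentiating the equivariance relation $q(aYb^{-1})=bq(Y)b^{-1}$ along $(0,\beta)\in\mathfrak{so}(U)\oplus\mathfrak{so}(W)$ gives
\[
 (dq)_z(-z\beta)=[\beta,S_J],
\]
so that
\[
 (dq)_z\bigl(T_z(\widehat K z)\bigr)=[\mathfrak{so}(W),S_J]=T_{S_J}(\Or(W)\cdot S_J).
\]
Since $\ker(dq)_z\subset T_z(\widehat K z)$, the surjective map $(dq)_z$ descends to an isomorphism of quotients, which is \eqref{spin:eq:normal-iso}. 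Equivariance under the stabilizer is immediate: for $(a,b)\in\widehat K_z$ one has $b\in C_{\Or(W)}(S_J)$, and $(dq)_z(aYb^{-1})=b\,(dq)_z(Y)\,b^{-1}$. All of these are linear-algebra identities; the only input needed is the dimension count already carried out in Lemma~\ref{spin:lem:dq}.

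For the conormal, use the nondegenerate $\Or(W)$-invariant trace pairing $(C,D)\mapsto\tr(CD)$ on $\Sym(W)$. The annihilator of $[\mathfrak{so}(W),S_J]$ consists of those $C$ with $\tr(C[\beta,S_J])=\tr(\beta[S_J,C])=0$ for every $\beta\in\mathfrak{so}(W)$. The key observation is that for $C,S_J$ self-adjoint the commutator $[S_J,C]$ is skew-adjoint, i.e.\ lies in $\mathfrak{so}(W)$. The trace form restricted to $\mathfrak{so}(W)$ is nondegenerate, so the vanishing condition forces $[S_J,C]=0$. This yields \eqref{spin:eq:alg-conormal}, and the identification is $C_{\Or(W)}(S_J)$-equivariant and hence $\widehat K_z$-equivariant.

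For the real statement, regard $\fp$ as a real vector space and the orbit as a real submanifold whose tangent space is the same complex subspace. The real normal space is then the underlying real space of the complex normal space $N^{\mathrm{hol}}$. Its complexification is $N^{\mathrm{hol}}\oplus\overline{N^{\mathrm{hol}}}$, and dualizing (using $\operatorname{Re}\tr$, or the complex pairing on each summand) gives \eqref{spin:eq:real-conormal}.

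The main point needing care is the equivariance bookkeeping. The paper uses conjugation conventions for the $\widehat K_z$-action on $N_z^*$ (contragredient, with $b$ versus $b^{-1}$), and these must match \eqref{spin:eq:CS-general}. Because the pairing is invariant, the dual of the normal space is isomorphic to the centralizer with the conjugation action, so no twist arises. Apart from that, the argument is routine.
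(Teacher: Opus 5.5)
Your proposal is correct and follows the paper's proof essentially step for step: the exact sequence from Lemma~\ref{spin:lem:dq}, the identification of the image of the right $\Or(W)$-tangent directions with $[\mathfrak{so}(W),S_J]$, the annihilator computation using skew-adjointness of $[S_J,C]$ and nondegeneracy of the trace form on $\mathfrak{so}(W)$, and the standard complexification of the underlying real normal space. The only difference is that you make explicit two checks the paper leaves implicit, namely $(dq)_z(-z\beta)=[\beta,S_J]$ together with $\ker(dq)_z\subset T_z(\widehat K z)$, and the $\widehat K_z$-equivariance through $b\in C_{\Or(W)}(S_J)$.
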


\begin{proof}
Lemma~\ref{spin:lem:dq} gives the exact sequence
\[
  0\longrightarrow T_z(\Or(U)\cdot z)
  \longrightarrow\fp
  \xrightarrow{(dq)_z}\Sym(W)
  \longrightarrow0.
\]
The image under $(dq)_z$ of the tangent contributed by the right $\Or(W)$
action is precisely the tangent space of the conjugacy orbit of $S_J$.
Quotienting the exact sequence therefore gives \eqref{spin:eq:normal-iso}.

The conjugacy tangent is
\[
  T_{S_J}(\Or(W)\cdot S_J)
  =[\mathfrak{so}(W),S_J].
\]
If $C=C^*$ and $A\in\mathfrak{so}(W)$, cyclicity of trace gives
\[
  \tr\bigl(C[A,S_J]\bigr)
  =\tr\bigl([S_J,C]A\bigr).
\]
The commutator $[S_J,C]$ is skew-adjoint.  Since the trace pairing is
nondegenerate on $\mathfrak{so}(W)$, the displayed expression vanishes for all
$A$ precisely when $[S_J,C]=0$.  This proves \eqref{spin:eq:alg-conormal}.

Finally, the orbit and the ambient vector space are complex submanifolds of
their underlying real Nash manifolds.  If $N^{\mathrm{alg}}$ denotes the
complex algebraic normal space, then the standard complexification identity
\[
  (N^{\mathrm{alg}})_{\R}\otimes_{\R}\C
  \simeq
  N^{\mathrm{alg}}\oplus\overline{N^{\mathrm{alg}}}
\]
gives \eqref{spin:eq:real-conormal} after dualizing.
\end{proof}

Because the $\lambda_j$ are pairwise distinct, the centralizer in
\eqref{spin:eq:alg-conormal} is block diagonal.  We obtain the explicit formula
\begin{equation}\label{spin:eq:conormal-blocks}
  C_{\Sym(W)}(S_J)
  =
  \bigoplus_{j\in J}\Sym(P_j)
  \oplus
  \bigoplus_{j\notin J}\C\{I_{P_j},N_j\}.
\end{equation}
Indeed, when $j\in J$ the block is the scalar $\lambda_jI_{P_j}$ and every
self-adjoint endomorphism centralizes it; when $j\notin J$, the self-adjoint
centralizer of $N_j$ is $\C\{I,N_j\}$.

As a check, if $k=|J|$, then
\[
  \dim_\C C_{\Sym(W)}(S_J)
  =3k+2(m-k)=s+k.
\]
On the other hand, using \eqref{spin:eq:boundary-stabilizer2}, a direct orbit
dimension calculation gives
\[
  \operatorname{codim}_\C(\widehat K\cdot z,\fp)=s+k.
\]
Thus the normal identification has the expected dimension; in particular,
there are no missing directions coming from $L^\perp$.

Fix $j_0\in J$.  Restricting \eqref{spin:eq:real-conormal} and
\eqref{spin:eq:conormal-blocks} to the factor $\Or(P_{j_0})$ gives
\begin{equation}\label{spin:eq:factor-conormal}
  N_z^*|_{\Or(P_{j_0})}
  \simeq
  \Sym(P_{j_0})
  \oplus\overline{\Sym(P_{j_0})}
  \oplus\mathbf1^{\oplus M_J}
\end{equation}
for some $M_J\ge0$.  Every other block and the factor $\Or_d(\C)$ are trivial
under this selected $\Or(P_{j_0})$.

\subsection{The rank-two determinant lemma}\label{spin:sec:detlemma}

We now prove the local character statement that eliminates every boundary
orbit simultaneously.

\begin{lemma}\label{spin:lem:det-ranktwo}
Let $P$ be a complex hyperbolic plane, and regard $\Or(P)=\Or_2(\C)$ as a real
Nash group.  For every $k\ge0$, the determinant character $\det_P$ does not
occur as a subquotient of
\begin{equation}\label{spin:eq:det-ranktwo-module}
  \Sym^k\bigl(\Sym(P)\oplus\overline{\Sym(P)}\bigr).
\end{equation}
The same conclusion holds after adding any number of trivial summands inside
the symmetric power.
\end{lemma}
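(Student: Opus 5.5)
The idea is to exploit the noncompact torus $\SO(P)\simeq\Gm$ together with a reflection. In the hyperbolic basis $e,f$ of \eqref{spin:eq:hyperbolic}, the proof of Lemma~\ref{spin:lem:ranktwo-orbit} writes a self-adjoint operator as $A(a,b,c)$. The element $t_z=\diag(z,z^{-1})$ acts on the coordinates $a,b,c$ by the weights $1,z^2,z^{-2}$. The reflection $\sigma$ exchanging $e$ and $f$ fixes $a$ and interchanges $b$ and $c$. On the conjugate copy $\overline{\Sym(P)}$ the same holds with $\bar z$ in place of $z$.

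Since $\Or(P)$ is reductive as a real algebraic group, every finite-dimensional algebraic representation of it is completely reducible. The same holds after adjoining any number of trivial summands. So it suffices to show that no nonzero vector $w$ of \eqref{spin:eq:det-ranktwo-module} transforms by $\det_P$.

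First, such a $w$ is $\SO(P)$-invariant, so it is fixed by every $t_z$. Expand $w$ in monomials $a^{\alpha}b^{\beta}c^{\gamma}\bar a^{\alpha'}\bar b^{\beta'}\bar c^{\gamma'}$, with the trivial summands as extra inert variables. Then $t_z$ acts on each monomial by $z^{2(\beta-\gamma)}\bar z^{2(\beta'-\gamma')}$. Varying the modulus and argument of $z$ independently, as in the proof of Proposition~\ref{diii:prop:normal-parity}, forces $\beta=\gamma$ and $\beta'=\gamma'$. Hence $w$ is a polynomial in $a$, $bc$, $\bar a$, $\bar b\bar c$ and the trivial variables.

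Second, $\sigma$ fixes each of these generators, because $a\mapsto a$ and $bc\mapsto cb$. Therefore $\sigma w=w$. But $\det_P(\sigma)=-1$, so the $\det_P$-condition requires $\sigma w=-w$. Hence $w=0$.

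The only point needing care is the sign convention for how $\sigma$ acts on the coordinate functions, as opposed to on $\Sym(P)$ itself. Since $\sigma$ has order two and fixes the generators under either convention, the argument is unaffected.
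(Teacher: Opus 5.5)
Your proposal is correct and follows essentially the same route as the paper. The $\SO(P)$-weight $z^{2u}\bar z^{2v}$ forces the $b$/$c$ exponents to balance in each copy separately, so the reflection exchanging $e$ and $f$ fixes every $\SO(P)$-invariant monomial while $\det_P$ takes the value $-1$ on it. Complete reducibility reduces subquotients to fixed vectors, and trivial summands enter only as inert variables.
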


\begin{proof}
Use the hyperbolic basis $e,f$.  A self-adjoint endomorphism is
\[
  A(a,b,c)=
  \begin{pmatrix}a&b\\c&a\end{pmatrix}.
\]
The identity component
\[
  \SO(P)\simeq\C^\times
\]
consists of $t_z=\diag(z,z^{-1})$.  Conjugation gives the weights
\begin{equation}\label{spin:eq:ranktwo-weights}
  a\mapsto a,
  \qquad
  b\mapsto z^2b,
  \qquad
  c\mapsto z^{-2}c.
\end{equation}
On the conjugate copy the corresponding weights are
$1,\bar z^2,\bar z^{-2}$.

Consider a monomial in the symmetric algebra.  Let $u$ be the exponent of the
holomorphic $+2$ variable minus the exponent of the holomorphic $-2$ variable,
and let $v$ be the analogous difference in the conjugate copy.  Its
$\SO(P)$-character is
\[
  z^{2u}\bar z^{2v}.
\]
If this character is trivial for every $z=re^{i\theta}\in\C^\times$, then the
radial and angular parts give
\[
  u+v=0,
  \qquad
  u-v=0.
\]
Hence $u=v=0$.  Thus every $\SO(P)$-invariant monomial contains equally many
$+2$ and $-2$ variables in each of the two copies separately.

Let $\tau\in\Or(P)$ be the reflection interchanging $e$ and $f$.  It exchanges
the $+2$ and $-2$ variables and fixes the weight-zero variable, in both the
holomorphic and the conjugate copies.  Every $\SO(P)$-invariant monomial is
therefore fixed by $\tau$.  Consequently $\tau$ acts trivially on the entire
$\SO(P)$-fixed subspace.

The determinant character is trivial on $\SO(P)$ and takes the value $-1$ on
$\tau$.  It therefore cannot occur.  The finite-dimensional modules here are
rational representations of the reductive real algebraic group obtained from
$\Or_2(\C)$ by restriction of scalars, hence are completely reducible; absence
as a subrepresentation is equivalent to absence as a subquotient.

Adding trivial summands merely tensors the weight calculation with a polynomial
algebra on which $\Or(P)$ acts trivially, and does not alter the conclusion.
\end{proof}

\subsection{Automatic extension in the even case}\label{spin:sec:automatic}

We now verify the hypotheses of Theorem~\ref{spin:thm:ChenSun} for
\begin{equation}\label{spin:eq:CS-data}
  G=\widehat K,
  \qquad
  M=R(\fp),
  \qquad
  U_0=V=R(\fp)\setminus B,
  \qquad
  \eta=\det_W.
\end{equation}
The group $\widehat K$ is an almost linear Nash group.  By
Proposition~\ref{spin:prop:p-closure}, the boundary $B=M\setminus V$ is the finite
union of the orbits $\widehat KX_J$ with $J\ne\varnothing$.

Fix such a boundary point $z=X_J$ and choose $j_0\in J$.  By
\eqref{spin:eq:modular-trivial},
\[
  \delta_{\widehat K/\widehat K_z}=1.
\]
By \eqref{spin:eq:detW-boundary}, the restriction of $\det_W$ to the selected
factor $\Or(P_{j_0})$ is $\det_{P_{j_0}}$.  Finally,
\eqref{spin:eq:factor-conormal} describes the restriction of the full complexified
real conormal.

\begin{proposition}\label{spin:prop:CS-vanishing}
For every boundary point $z=X_J$ and every $k\ge0$, the trivial
representation of $\widehat K_z$ does not occur as a subquotient of
\begin{equation}\label{spin:eq:CS-module-here}
  \Sym^k(N_z^*)\otimes\det_W|_{\widehat K_z}.
\end{equation}
\end{proposition}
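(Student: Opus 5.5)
The plan is to reduce the statement to Lemma~\ref{spin:lem:det-ranktwo} by restricting everything to the single factor $\Or(P_{j_0})\subset\widehat K_z$ attached to a chosen index $j_0\in J$; such an index exists because $J\ne\varnothing$. The reduction rests on one observation. If the trivial representation of $\widehat K_z$ occurred as a subquotient of \eqref{spin:eq:CS-module-here}, then the trivial representation of the subgroup $\Or(P_{j_0})$ would occur as a subquotient of the restriction, since restricting a composition series gives a filtration whose subquotients contain the original ones. So it suffices to show that the trivial representation of $\Or(P_{j_0})$ is not a subquotient of $\Sym^k(N_z^*)\otimes\det_W|_{\Or(P_{j_0})}$.

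The first step is to identify both tensor factors on $\Or(P_{j_0})$. By \eqref{spin:eq:detW-boundary}, the character $\det_W$ restricts on this factor to $\det_{P_{j_0}}$. By \eqref{spin:eq:factor-conormal}, the conormal restricts to
\[
N_z^*|_{\Or(P_{j_0})}\simeq\Sym(P_{j_0})\oplus\overline{\Sym(P_{j_0})}\oplus\mathbf 1^{\oplus M_J}.
\]
The modular factor has already been dropped, since it is trivial by \eqref{spin:eq:modular-trivial}. After tensoring with the self-inverse character $\det_{P_{j_0}}$, the trivial representation occurs in the product exactly when $\det_{P_{j_0}}$ occurs as a subquotient of $\Sym^k(N_z^*)|_{\Or(P_{j_0})}$.

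The second step is to apply Lemma~\ref{spin:lem:det-ranktwo}, including its clause allowing trivial summands inside the symmetric power. That lemma says $\det_{P_{j_0}}$ never occurs as a subquotient, which gives the contradiction.

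The only delicate point is that \eqref{spin:eq:factor-conormal} really is $\Or(P_{j_0})$-equivariant, with the complex conjugate copy carrying the conjugate action. The other blocks of \eqref{spin:eq:conormal-blocks} must also be fixed pointwise by $\Or(P_{j_0})$. This holds because $\Or(P_{j_0})$ acts by conjugation only on the $P_{j_0}$ block, while the $\Or_d(\C)$ factor does not enter the conormal at all. I would verify this from Proposition~\ref{spin:prop:normal-identification}, using that the Gram-differential isomorphism is $\widehat K_z$-equivariant with the right action $b\mapsto bCb^{-1}$. Complete reducibility, as noted in the proof of Lemma~\ref{spin:lem:det-ranktwo}, makes the subquotient and subrepresentation formulations agree.
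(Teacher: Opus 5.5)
Your proposal is correct and follows essentially the same route as the paper: pick $j_0\in J$, restrict to the factor $\Or(P_{j_0})$, identify the twist as $\det_{P_{j_0}}$ via \eqref{spin:eq:detW-boundary} and the conormal via \eqref{spin:eq:factor-conormal}, and conclude with Lemma~\ref{spin:lem:det-ranktwo} including its trivial-summand clause. The only cosmetic difference is the order of steps: the paper first uses complete reducibility to obtain a $\widehat K_z$-fixed vector and then restricts it, whereas you restrict the subquotient directly. Your order works equally well, since a $\widehat K_z$-subquotient is in particular an $\Or(P_{j_0})$-subquotient.
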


\begin{proof}
Suppose the trivial representation occurred.  The module in
\eqref{spin:eq:CS-module-here} is a finite-dimensional rational representation of a
reductive real algebraic group, hence is completely reducible.  It would
therefore contain a nonzero $\widehat K_z$-fixed vector.  Restrict this vector
to the factor $\Or(P_{j_0})$.

Under that factor, $N_z^*$ has the form
\[
  \Sym(P_{j_0})\oplus\overline{\Sym(P_{j_0})}
  \oplus\mathbf1^{\oplus M_J},
\]
while the character twist is $\det_{P_{j_0}}$.  Decomposing the symmetric
power of a direct sum according to degree in the trivial variables reduces the
existence of a fixed vector to the occurrence of $\det_{P_{j_0}}$ in some
symmetric power of
$\Sym(P_{j_0})\oplus\overline{\Sym(P_{j_0})}$.  This is excluded by
Lemma~\ref{spin:lem:det-ranktwo}.
\end{proof}

We can now cross the entire finite boundary.

\begin{proposition}\label{spin:prop:even-regular-distribution}
If $s$ is even, then
\begin{equation}\label{spin:eq:regular-sign-nonzero}
  \cS^*(R(\fp))^{\widehat K,\det_W}\ne0.
\end{equation}
Consequently
\[
  \cS^*(R(\fp))^{K,\chi}\ne0.
\]
\end{proposition}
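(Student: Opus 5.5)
The plan is to apply Theorem~\ref{spin:thm:ChenSun} with the data \eqref{spin:eq:CS-data}. This extends the orbital distribution $T_{\cO}$ of \eqref{spin:eq:TO-Schwartz} from $V$ to all of $R(\fp)$, and then passes to $(K,\chi)$ through Lemma~\ref{spin:lem:index-two}.

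First I check the hypotheses of the Chen--Sun theorem. The group $\widehat K=\Or(U)\times\Or(W)$, regarded as a real algebraic group, is almost linear Nash. The set $M=R(\fp)$ is an open $\widehat K$-stable Nash submanifold of $\fp$, because $\widehat K$ normalizes $H_0$. The set $V=M\setminus B$ is open and $\widehat K$-stable, and by Proposition~\ref{spin:prop:p-closure} its complement $B$ is the finite union of the orbits $\widehat K X_J$ with $J\neq\varnothing$. The character $\det_W$ takes values in $\{\pm1\}$, so it has moderate growth. At each boundary point $z=X_J$, the modular factor is trivial by \eqref{spin:eq:modular-trivial}. The twisted module \eqref{spin:eq:CS-general} is therefore exactly \eqref{spin:eq:CS-module-here}, and Proposition~\ref{spin:prop:CS-vanishing} excludes the trivial representation as a subquotient for every $k\ge0$.

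Next I apply the theorem. It gives that restriction is an isomorphism from the $\eta^{-1}$-equivariant Schwartz distributions on $R(\fp)$ onto those on $V$. Because $\det_W^{-1}=\det_W$, the conventions paragraph of Section~\ref{spin:sec:conventions} shows that this is the isomorphism
\[
\cS^*(R(\fp))^{\widehat K,\det_W}\xrightarrow{\ \sim\ }\cS^*(V)^{\widehat K,\det_W}.
\]
The nonzero element $T_{\cO}$ of \eqref{spin:eq:TO-Schwartz} therefore has a unique preimage. That preimage is nonzero, which proves \eqref{spin:eq:regular-sign-nonzero}.

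Finally, I apply Lemma~\ref{spin:lem:index-two} with $X=R(\fp)$, which is $\widehat K$-stable. It shows that $\cS^*(R(\fp))^{\widehat K,\det_W}$ is a direct summand of $\cS^*(R(\fp))^{K,\chi}$, so the latter space is nonzero as well. As a sanity check, $\det_W$ restricts to $\chi$ on $K$, because $\det a=\det b$ there.

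The substantive work is already done in Propositions~\ref{spin:prop:p-closure} and~\ref{spin:prop:CS-vanishing}. The only point that needs care is matching the inverse-character convention of Chen--Sun, and this is harmless because the character is self-inverse. One should also note that $B$ is closed in $M$, as established after \eqref{spin:eq:Bboundary}, so that $V$ is indeed open.
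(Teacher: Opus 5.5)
Your proposal is correct and follows the paper's proof: you check the Chen--Sun hypotheses for the data \eqref{spin:eq:CS-data} using \eqref{spin:eq:modular-trivial} and Proposition~\ref{spin:prop:CS-vanishing}, then extend $T_{\cO}$ through the resulting restriction isomorphism, using that $\det_W$ is self-inverse to match conventions. The only cosmetic difference is the last step: the paper simply notes that $\det_W|_K=\chi$, so every $(\widehat K,\det_W)$-equivariant distribution is $(K,\chi)$-equivariant, whereas you pass through the direct-sum decomposition of Lemma~\ref{spin:lem:index-two}; both are valid.
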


\begin{proof}
The distribution $T_{\cO}$ in \eqref{spin:eq:TO-Schwartz} is a nonzero
$\det_W$-equivariant Schwartz distribution on $V$.  Proposition
\ref{spin:prop:CS-vanishing}, together with
\eqref{spin:eq:modular-trivial}, verifies the hypothesis
\eqref{spin:eq:CS-general} of Theorem~\ref{spin:thm:ChenSun} at every boundary orbit and
for every conormal order.  The restriction map
\[
  \cS^*(R(\fp))^{\widehat K,\det_W}
  \longrightarrow
  \cS^*(V)^{\widehat K,\det_W}
\]
is therefore an isomorphism.  Hence $T_{\cO}$ has a
$\det_W$-equivariant Schwartz extension $\widetilde T$ to $R(\fp)$.  Since
its restriction is $T_{\cO}\ne0$, the extension is nonzero.  This proves
\eqref{spin:eq:regular-sign-nonzero}.

Finally $\det_W|_K=\chi$, so the same distribution belongs to
$\cS^*(R(\fp))^{K,\chi}$.
\end{proof}

\subsection{Completion of the Spin block family in ambient dimension at least five}\label{spin:sec:conclusion}

We now combine the two parities.

\begin{proof}[Proof of Theorem~\ref{thm:Spin-residual}]
Interchange the blocks if necessary so that $r>s$.  By
Proposition~\ref{spin:prop:sign-reduction}, it is enough to prove the implication
\eqref{spin:eq:SR}.

If $s$ is odd, Corollary~\ref{spin:cor:odd-sign-zero} gives
\[
  \cS^*(\fp)^{K,\chi}=0.
\]
Thus the conclusion of \eqref{spin:eq:SR} always holds.

If $s$ is even, Proposition~\ref{spin:prop:even-regular-distribution} gives
\[
  \cS^*(R(\fp))^{K,\chi}\ne0.
\]
Thus the premise of \eqref{spin:eq:SR} is false.

These two cases exhaust all possibilities for $s$, so \eqref{spin:eq:SR} always
holds.  The pair is regular.
\end{proof}

\begin{proof}[Proof of Corollary~\ref{cor:Spin-all}]
Assume $r+s\ge5$.  The Spin block cases with $|r-s|\le2$ are among the
nice symmetric Lie algebra pairs in Sekiguchi's classification and hence are
regular \cite[Lemmas~4.21--4.22 and Table~3]{Rubio}.  If $|r-s|>2$ and both
block dimensions are odd, Proposition~\ref{spin:prop:pleasant} makes the pair
pleasant, and pleasant pairs are regular.  In every remaining case $rs$ is
even, so Theorem~\ref{thm:Spin-residual} applies.  In particular, no
rank-one block is omitted: if, say, $s=1$ and $r+s\ge5$, then
$|r-s|>2$; odd $r$ is the unequal odd--odd pleasant case, whereas even $r$ is
in the residual $rs$-even case.
\end{proof}

\begin{corollary}[Spin component implication]\label{cor:Spin-component}
For every nonpleasant Spin block factor needed in Rubio's reduction, the
finite-component datum is componentwise regular.
\end{corollary}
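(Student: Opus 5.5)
The plan is to follow the pattern of Corollaries~\ref{cor:DIII-component} and~\ref{cor:CII-component}. First we identify the finite-component datum $(L,A,B_0,V)$ of a nonpleasant Spin block factor on the canonical cover $\Spin(E)$. By Lemma~\ref{spin:lem:central-multipliers}, every element of $\widetilde A_{\widetilde\theta}$ has multiplier $\pm1$ in the residual range \eqref{spin:eq:standing}. By the discussion after \eqref{spin:eq:g0-minus}, all multiplier $-1$ elements induce the same component action, namely that of $g_0$. Hence $J$ is the effective image $K=S(\Or(U)\times\Or(W))$ acting on $\fp$, $L=H_0$, and $A=J/L\simeq\Z/2$. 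Here $B_0=\{1\}$, since $\Spin(E)^{\theta}$ has effective image $H_0$. The only nontrivial character $\lambda$ of $A$ is therefore $\chi$ from \eqref{spin:eq:Kchi-intro}. By Lemma~\ref{lem:fixed-vectors} and $\fp^{H_0}=0$, the spaces $Q_L(V)$ and $R_L(V)$ are $\fp$ and $R(\fp)$.

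With this identification, implication \eqref{eq:component-implication} for $\lambda=\chi$ is literally \eqref{spin:eq:SR}, which we split by the parity of the smaller block $s$.
\begin{enumerate}[label=\textup{(\alph*)}]
\item If $s$ is odd, Corollary~\ref{spin:cor:odd-sign-zero} gives $\cS^*(\fp)^{K,\chi}=0$, so the conclusion holds unconditionally.
\item If $s$ is even, Proposition~\ref{spin:prop:even-regular-distribution} gives $\cS^*(R(\fp))^{K,\chi}\neq0$, so the premise fails.
\end{enumerate}
Either way the implication holds. The nice cases $|r-s|\le2$, if they are counted among the needed factors, are handled instead by Lemma~\ref{lem:special-component}. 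For this we take the witnessing form to be the restriction of the invariant form on $\mathfrak{so}(E)$, which every inner component, in particular $g_0$, preserves. The unequal odd--odd cases are pleasant by Proposition~\ref{spin:prop:pleasant}, so their condition is vacuous.

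The main point needing care is the identification of $A$ with exactly $K/H_0$, that is, that no extra component arises. For the canonical factor this follows from Lemma~\ref{spin:lem:central-multipliers}, which excludes the multipliers $\pm\omega$ when $r\neq s$. One also has to check that the two sectors in Definition~\ref{def:componentwise-regular}, which use pushforward equivariance under $J$, match the $(K,\chi)$ convention of Section~\ref{spin:sec:conventions}. They do, since $\chi$ is self-inverse.
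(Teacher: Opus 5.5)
Your proposal is correct and follows essentially the same route as the paper. The paper also identifies the obstruction quotient as order two with nontrivial character $\chi$ (citing Proposition~\ref{spin:prop:sign-reduction}, which rests on exactly the multiplier lemma and the $g_0$ discussion you invoke). It then applies Corollary~\ref{spin:cor:odd-sign-zero} for odd $s$ and Proposition~\ref{spin:prop:even-regular-distribution} for even $s$, and dismisses the remaining ranges as pleasant or nice.
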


\begin{proof}
In the residual range the obstruction quotient has order two and
Proposition~\ref{spin:prop:sign-reduction} identifies its nontrivial character
with $\chi$.  For odd smaller block,
Corollary~\ref{spin:cor:odd-sign-zero} makes the full $Q$-sector zero.  For even
smaller block, Proposition~\ref{spin:prop:even-regular-distribution} constructs
a nonzero regular-set distribution in that sector.  In the remaining Spin
ranges the pair is pleasant or nice, as proved in
Proposition~\ref{spin:prop:pleasant} and the proof of
Corollary~\ref{cor:Spin-all}.
\end{proof}

\section{The EVII pair}\label{sec:EVII-family}
Let $G$ be the simply connected complex group of type $E_7$ and let $\theta$
be the involution of type EVII.  Put
\[
 H=G^\theta,
 \qquad
 \fg=\fh\oplus\fp.
\]
The Lie algebra $\fh$ has type $E_6+\C$.  The isotropy representation has the
standard form
\begin{equation}\label{evii:eq:EVII-p}
 \fp\simeq J\oplus J^*,
\end{equation}
where $J$ is the $27$-dimensional minuscule $E_6$-module and the connected
central torus of $H$ acts by mutually inverse nontrivial characters on $J$ and
$J^*$.  Hence
\begin{equation}\label{evii:eq:EVII-dim}
 \dim_{\C}\fp=54,
 \qquad
 \fp^H=0.
\end{equation}
The invariant symmetric form is
\begin{equation}\label{evii:eq:EVII-B}
 B((x,\varphi),(y,\psi))=\varphi(y)+\psi(x).
\end{equation}
For $t\in\C^\times$, the linear map
$T_t(x,\varphi)=(tx,\varphi)$ satisfies
$B(T_tu,T_tv)=tB(u,v)$.  Hence $B$ and $tB$ are isometric for every $t$, so
$\delta_B=1$.

We prove
\begin{theorem}\label{evii:thm:EVII-special}
The complex symmetric pair $(E_7,E_6+\C)$ is special.  Hence it is weakly
linearly tame and regular.
\end{theorem}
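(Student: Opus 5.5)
By Lemma~\ref{core:lem:speciality-criterion}, applied to the one-summand decomposition $Q(\fp)=\fp$ with the form $B$ of \eqref{evii:eq:EVII-B}, it suffices to show that every $H$-invariant $B$-adapted distribution supported on the nilpotent cone $\cN(\fp)$ vanishes. Here $\delta_B=1$, and $\fp^H=0$ by \eqref{evii:eq:EVII-dim}. Suppose $\xi\neq0$ is such a distribution. The nilpotent cone contains finitely many $H$-orbits, so we may choose an orbit $H\cdot e$ that is maximal in $\supp\xi$. Lemma~\ref{core:lem:origin-adapted} excludes $e=0$. The argument of Lemma~\ref{cii:lem:maximal-symbol} then yields a nonzero Frobenius normal symbol on $H\cdot e$. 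By Lemma~\ref{core:lem:resonance-general}, its bidegree $(A,B')$ must satisfy the Type I or Type II resonance equation. It therefore remains to exclude, for every nonzero nilpotent orbit, every solution of those equations that carries an invariant vector.

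The first step is a table of graded-$\mathfrak{sl}_2$ data for the twenty-two nonzero nilpotent $H$-orbits in $\fp$. These orbits correspond, via Kostant--Sekiguchi and Djokovi\'c's tables, to real nilpotent orbits of $E_{7(-25)}$. For each orbit we take a normal triple $(e,d,f)$ and decompose $\fg$ under it into irreducible graded summands, recording whether each highest and lowest line lies in $\fh$ or in $\fp$. This gives $T(e)=\Tr(\ad d|_{\fh^e})$ together with the multiset of weighted degrees $\ell+2$ of the surviving lowest lines of $N_e$. When $T(e)<54$, Corollary~\ref{core:cor:strict-trace} eliminates the orbit at once. For the remaining orbits we list all ways to write $T(e)-54$ (respectively $T(e)-56$ for the holomorphic part in Type II) as a nonnegative combination of the available weighted degrees, separately in the holomorphic and antiholomorphic variables.

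The second step eliminates these resonances using additional symmetry of the stabilizer. The connected central torus $Z=Z(H)^\circ\simeq\Gm$ acts on $J$ and $J^*$ by mutually inverse characters, say $z^{\pm1}$ up to normalisation. For the $d$-grading choose a cocharacter of $H_e$ that combines $D_t$ with $Z$; more generally, use a maximal torus of the reductive part of $H_e$. Each surviving normal line then carries a character of this torus, namely the inverse of the character of its paired highest vector in $\fp^e$. The modular character $\Delta_{(H\times\C^\times)_e}$ is trivial on the compact-type part, and on the torus it is read off from the $\fh^e$ weights. Requiring that a normal tensor be invariant under the full torus imposes linear conditions on the exponents beyond the $D_t$-degree equation, in both the holomorphic and antiholomorphic copies, as in the proofs of Proposition~\ref{diii:prop:normal-parity} and Lemma~\ref{spin:lem:det-ranktwo}. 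For Type II we also try a finite-order element such as $c=zD_i$, with $z$ in the centre acting by $-1$ on $\fp$, as in Proposition~\ref{cii:prop:typeII}. For the surviving cases we expect these central-torus characters to have no solution compatible with the resonance degree.

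The main obstacle is the bookkeeping for the orbits whose reductive centralizer contains a three-dimensional torus, the two residual triple-centralizer cases. There the $D_t$-degree and the single central character are not enough, and one needs the characters of all three commuting tori on each normal line, together with the correct modular twist. For these orbits I would first compute $\fp^e$ explicitly as a module over the torus, by realising $e$ in the Freudenthal model $J\oplus J^*$ with $e=(x,\varphi)$ of low rank. I would then solve the resulting integer linear system and show that it has no nonnegative solution. If some solution survives, the fallback is to use the Weyl group of the reductive part of $H_e$, which permutes the torus weights, in order to show that no invariant vector exists. Once every orbit is eliminated, speciality follows, and weak linear tameness and regularity follow from Lemma~\ref{core:lem:speciality-criterion}.
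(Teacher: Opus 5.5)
Your framework is the paper's: the one-summand speciality criterion with $\delta_B=1$, a maximal orbit in the support, the Frobenius normal symbol, the resonance equations, Djokovi\'c's twenty-two orbits, and the strict trace inequality. Your Type II device is a valid alternative to the paper's. An $s\in Z(H)^\circ$ with $\nu(s)=-1$ acts by $-1$ on $\fp$, so $c=sD_i$ excludes every bidegree with $B'-A=2$ exactly as in Proposition~\ref{cii:prop:typeII}; the paper instead notes that one side of a Type II symbol still has degree $T(e)-54$.

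The gap is in the decisive step, the Type I resonances on the orbits with $T(e)\ge 54$. These have defect $6$ for orbits $15,17,18$ and defect $30$ for orbits $20,21,22$. You leave this step at ``we expect'', and the torus you actually name cannot settle it. If $Z(H)^\circ$ acts on $J$ and $J^*$ by $\nu^{\pm1}$, then $(D_tz,\lambda)$ with $z\in Z(H)^\circ$ fixes $e=(x,\varphi)$, $x\ne0\ne\varphi$, only when $\lambda=t^2\nu(z)=t^2\nu(z)^{-1}$, i.e.\ $\nu(z)^2=1$. Every residual orbit avoids $J\cup J^*$. Indeed $J$ and $J^*$ are abelian, so $e\in J\cup J^*$ would force $\dim\fp^e\ge27$, whereas these orbits have $\dim\fp^e\in\{3,4,11,12\}$. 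So beyond $(D_t,t^2)$, $Z(H)^\circ$ contributes only the finite-order element $c$, and $c$ acts on a Type I tensor by $i^{B'-A}=1$.

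What is missing is the right torus and the data on it. Orbits $17,18,21,22$ need no torus: their weighted degrees are $\{8,4\}$ and $\{12,8,4\}$, all divisible by four, against targets $6$ and $30$.

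For orbits $15$ and $20$ the paper uses the connected centre $Z\simeq\C^\times$ of the centralizer $\mathfrak c_{\fh}(e,d,f)$ of the $\mathfrak{sl}_2$-triple. That is what ``triple centralizer'' means, not three commuting tori. Two facts must then be proved.
\begin{enumerate}
\item $\Delta_{(H\times\C^\times)_e}|_Z=1$. Every highest weight in $\fh^e$ is even, so the invariant form makes each multiplicity space $Z$-self-dual. Its determinant is then an order-two character of a torus, hence trivial.
\item The explicit $Z$-characters on the highest spaces $M_\ell^{\fp}$, obtained by exact computation in Djokovi\'c's basis. For orbit $15$ they are reciprocal nontrivial on $M_4^{\fp}$ and $M_0^{\fp}$ and trivial on $M_2^{\fp}$, so none of the degree-six patterns $6$, $4+2$, $2+2+2$ is invariant. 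For orbit $20$ they are trivial on $M_6^{\fp}$ and $M_2^{\fp}$ and reciprocal on $M_4^{\fp}$, so invariant degrees have the form $8a+12b+4c\ne30$.
\end{enumerate}

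Your ``maximal torus of the reductive part of $H_e$'' contains $Z$, so that version would succeed, but only after exactly this computation. The single central character already suffices, so the extra torus characters and the Weyl-group fallback are unnecessary.
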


\subsection{Nilpotent orbits and the trace reduction}

Djokovi\'c gives explicit Cayley triples for all nilpotent adjoint orbits in
the real form $E_{7(-25)}$ \cite[Table~6]{Djokovic}.  By the
Kostant--Sekiguchi correspondence \cite[Chapter~9]{CM}, these are in
bijection with nilpotent $H$-orbits in the complexified EVII isotropy
representation.  There are $22$ nonzero orbits.

Djokovi\'c uses triples $(E,H_0,F)$ satisfying
\[
 [H_0,E]=2E,
 \qquad
 [H_0,F]=-2F,
 \qquad
 [F,E]=H_0.
\]
We pass to the normal triple
\begin{equation}\label{evii:eq:EVII-Cayley-transform}
 e=\frac{E-F+iH_0}{2},
 \qquad
 f=\frac{E-F-iH_0}{2},
 \qquad
 d=i(E+F).
\end{equation}
A direct bracket calculation gives \eqref{core:eq:normal-triple-general}; the EVII
involution satisfies $\theta(d)=d$, $\theta(e)=-e$, and $\theta(f)=-f$.

For $j\geq0$ set
\[
 \fp_j(e)=\{x\in\fp:[d,x]=jx\},\qquad
 \fh_j(e)=\{x\in\fh:[d,x]=jx\},
\]
and write $p_j(e)=\dim\fp_j(e)$ and $h_j(e)=\dim\fh_j(e)$.
The following elementary observation turns the exact $d$-weight dimensions
into the highest-weight decompositions used below.

\begin{lemma}[Highest multiplicities from graded dimensions]
\label{evii:lem:graded-multiplicities}
Let $m_{j,\fp}$ and $m_{j,\fh}$ denote the multiplicities of the irreducible
graded $\mathfrak{sl}_2$-modules of highest weight $j$ whose highest lines lie
in $\fp$ and $\fh$, respectively.  Then, for every $j\geq0$,
\begin{equation}\label{evii:eq:graded-difference}
 m_{j,\fp}=p_j(e)-h_{j+2}(e),\qquad
 m_{j,\fh}=h_j(e)-p_{j+2}(e),
\end{equation}
where a missing weight space has dimension zero.  In particular,
\begin{align}
 \fp^e&\simeq\bigoplus_{j\geq0}m_{j,\fp}V_j^{\fp},
 \label{evii:eq:p-highest-from-weights}\\
 T(e)&=\sum_{j\geq0}j\,m_{j,\fh}.
 \label{evii:eq:T-from-weights}
\end{align}
\end{lemma}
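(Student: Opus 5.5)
The plan is to decompose $\fg=\fh\oplus\fp$ under the $\mathfrak{sl}_2$-subalgebra spanned by the normal triple $(e,d,f)$. Because $d\in\fh$ and $e,f\in\fp$, every irreducible summand is graded. Its weight lines alternate between $\fh$ and $\fp$, since $\ad e$ and $\ad f$ reverse $\theta$-parity. Let $V_\ell$ be a summand of highest weight $\ell$. Its weights are $\ell,\ell-2,\ldots,-\ell$, and each weight line lies alternately in $\fh$ and $\fp$.

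First, count weight spaces. Fix $j\geq0$. The lines of $d$-weight $j$ lying in $\fp$ come from two kinds of summands. The first kind consists of summands whose highest line is in $\fp$ and has weight exactly $j$. The second kind consists of summands with highest weight $\ell\geq j+2$, $\ell\equiv j\pmod 2$, whose weight-$j$ line lies in $\fp$. In a summand of the second kind, the weight-$(j+2)$ line lies in $\fh$ and maps isomorphically onto the weight-$j$ line under $\ad f$.

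Next, use that for $j\geq0$ the map $\ad f:\fg_{j+2}\to\fg_j$ is injective, by $\mathfrak{sl}_2$ theory. Since $\ad f$ reverses parity, it restricts to an injection $\fh_{j+2}(e)\to\fp_j(e)$. Its image is exactly the span of the weight-$j$ lines of summands of the second kind. The cokernel of this injection is therefore the span of the highest lines of weight $j$ in $\fp$. Hence
\[
 m_{j,\fp}=p_j(e)-h_{j+2}(e).
\]
Exchanging the roles of $\fh$ and $\fp$ gives the formula for $m_{j,\fh}$. Missing weight spaces have dimension zero, so the formula also covers the top degrees.

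Finally, derive the two consequences. The centralizer $\fg^e$ is the span of the highest lines of all summands, and it decomposes by parity as $\fh^e\oplus\fp^e$. Thus $\fp^e$ is the span of the highest lines that lie in $\fp$, and this gives \eqref{evii:eq:p-highest-from-weights} with $d$ acting on each such line by $j$. Similarly, $\fh^e$ is the span of the highest lines lying in $\fh$, each an eigenline of $\ad d$ with eigenvalue $j$. Taking the trace gives $T(e)=\sum_j j\,m_{j,\fh}$, which is \eqref{evii:eq:T-from-weights}.

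I do not expect a genuine obstacle. The only point needing care is checking that $\ad f$ preserves the $\fh/\fp$ splitting up to parity reversal. This holds because $\theta(f)=-f$ for the normal triple \eqref{evii:eq:EVII-Cayley-transform}, so the injectivity of $\ad f$ on nonnegative weights holds for each parity component separately.
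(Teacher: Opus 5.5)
Your proposal is correct and is essentially the paper's argument in dual form: you use injectivity of $\ad f:\fh_{j+2}(e)\to\fp_j(e)$, whose cokernel is complemented by the weight-$j$ highest lines in $\fp$. The paper instead uses surjectivity of $\ad e:\fp_j(e)\to\fh_{j+2}(e)$, whose kernel is exactly those highest lines; either way the same dimension count gives \eqref{evii:eq:graded-difference}, and the consequences for $\fp^e$ and $T(e)$ follow as you describe.
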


\begin{proof}
On every finite-dimensional irreducible $\mathfrak{sl}_2$-module, the raising
operator $e$ maps the weight-$j$ line isomorphically onto the weight-$(j+2)$
line whenever $j\geq0$ and the latter line occurs.  Since $e\in\fp$, it
interchanges the two $\theta$-parities.  Hence
\[
 e:\fp_j(e)\longrightarrow\fh_{j+2}(e),\qquad
 e:\fh_j(e)\longrightarrow\fp_{j+2}(e)
\]
are surjective.  Their kernels are precisely the highest lines of weight $j$
in the indicated parity.  Taking dimensions proves
\eqref{evii:eq:graded-difference}; the remaining assertions follow from the
definitions of $\fp^e$ and $T(e)$.
\end{proof}

\subsubsection{The finite calculation}

We compute the rank and weight-space data for the twenty-two orbits in
Djokovi\'c's ordered
Chevalley basis
\[
 H_1,\ldots,H_7,\quad X_1,\ldots,X_{63},\quad
 X_{-1},\ldots,X_{-63}.
\]
We use the Bourbaki numbering of $E_7$, with edges
$(1,3),(3,4),(4,5),(5,6),(6,7),(2,4)$, and the corresponding Cartan matrix.
If the positive root numbered $r$ is
$\alpha_r=\sum_{k=1}^7a_{rk}\alpha_k$, our conventions are
\begin{align*}
 [H_k,X_{\pm r}]&=\pm\alpha_r(H_k)X_{\pm r},\\
 [X_r,X_{-r}]&=-\sum_{k=1}^7a_{rk}H_k,
 & [X_{-r},X_r]&=\sum_{k=1}^7a_{rk}H_k.
\end{align*}
For all other root brackets we use the signed constants $N(r,s)$ printed in
Djokovi\'c's Appendix.  That table lists the constants with positive first
index, including the mixed-sign cases.  We extend them by
\[
 N(-r,s)=-N(s,-r),\qquad N(-r,-s)=N(r,s),
\]
whenever the indicated root sum exists, and use skew-symmetry; all remaining
root-vector brackets are zero.  Thus the bracket table is determined without
any further choice.  An exact implementation of the finite row reductions in
this subsection is included as supplementary material.  Let $\sigma$ be the
conjugation in
\cite[Table~3]{Djokovic} and let $\sigma_u$ be the compact conjugation
$\sigma_u(H_k)=-H_k$, $\sigma_u(X_r)=X_{-r}$.  The EVII involution is
$\theta=\sigma_u\sigma$.  Direct substitution in the signed bracket table
gives $\theta^2=1$, $\theta([x,y])=[\theta x,\theta y]$, and
\[
 \dim\ker(\theta-1)=79,\qquad \dim\ker(\theta+1)=54.
\]

For each row below, if $E=\sum_rc_rX_r$, then
$F=\sum_rc_rX_{-r}$.  A sign $\pm$ gives the two numbered complex
$H$-orbits.  The entries are the Cayley triples in
\cite[Table~6]{Djokovic}; they also fix unambiguously the numbering used in
Proposition~\ref{evii:prop:EVII-orbit-invariants}.

\begingroup
\small
\renewcommand{\arraystretch}{1.08}
\begin{longtable}{@{}c >{\raggedright\arraybackslash}p{0.27\textwidth}
 >{\raggedright\arraybackslash}p{0.56\textwidth}@{}}
\caption{EVII Cayley data}\label{evii:tab:cayley-data}\\
\toprule
orbit(s)&$H_0=\sum_{k=1}^7b_kH_k$&$E$\\
\midrule
\endfirsthead
\toprule
orbit(s)&$H_0=\sum_{k=1}^7b_kH_k$&$E$\\
\midrule
\endhead
$1,2$ & $(2,2,3,4,3,2,1)$ & $\pm X_{63}$\\
$3,4$ & $(2,3,4,6,5,4,2)$ & $\pm(X_{49}-X_{63})$\\
$5$ & $(2,3,4,6,5,4,2)$ & $X_{49}+X_{63}$\\
$6,7$ & $(2,3,4,6,5,4,3)$ & $\pm(X_7-X_{49}+X_{63})$\\
$8,9$ & $(2,3,4,6,5,4,3)$ & $\pm(X_7+X_{49}+X_{63})$\\
$10$ & $(4,4,6,8,6,4,2)$ & $X_1+X_{37}+X_{55}+X_{61}$\\
$11,12$ & $(4,5,7,10,8,6,3)$ &
 $\pm(X_{27}+X_{39}+X_{49}+X_{53}-X_{54})$\\
$13,14$ & $(6,7,10,14,11,8,4)$ &
 $\pm(\sqrt3\,X_1+2X_{49}+\sqrt3\,X_{37})$\\
$15$ & $(4,6,8,12,10,8,4)$ &
 $\sqrt2\,(X_6+X_{34}+X_{40}+X_{56})$\\
$16,19$ & $(6,7,10,14,11,8,5)$ &
 $\pm(\sqrt3\,X_1-X_7+2X_{49}+\sqrt3\,X_{37})$\\
$17,18$ & $(6,7,10,14,11,8,5)$ &
 $\pm(\sqrt3\,X_1+X_7+2X_{49}+\sqrt3\,X_{37})$\\
$20$ & $(8,10,14,20,16,12,6)$ &
 $2X_1+\sqrt3\,X_6+\sqrt3\,X_{19}+2X_{37}+\sqrt3\,X_{40}+\sqrt3\,X_{41}$\\
$21,22$ & $(10,13,18,26,21,16,9)$ &
 $\pm(\sqrt5\,X_1+2\sqrt2\,X_6+3X_7+\sqrt5\,X_{37}+2\sqrt2\,X_{40})$\\
\bottomrule
\end{longtable}
\endgroup

\begin{remark}[The representative for orbits $6,7$]
\label{evii:rem:djokovic-sign}
The representative printed in \cite[Table~6]{Djokovic} is
\[
  \pm(X_7-X_{49}-X_{63}).
\]
With the structure constants and sign extension from Djokovi\'c's Appendix
used above, this printed vector satisfies the $\mathfrak{sl}_2$ relations
with $H_0=(2,3,4,6,5,4,3)$, but the exact weight calculation gives
\[
  (\operatorname{tr},\operatorname{inv})=(128,47),
\]
which is the invariant pair of the adjacent $8,9$ row.  Replacing only the
last sign gives
\[
  \pm(X_7-X_{49}+X_{63}),
\]
and again the $\mathfrak{sl}_2$ relations hold, while now
\[
  (\operatorname{tr},\operatorname{inv})=(0,79),
\]
which is the pair assigned to row $6,7$.  Both calculations use the same
structure constants and the same sign extension for $N(r,s)$, so the
discrepancy does not arise from that convention.
Accordingly, we use $\pm(X_7-X_{49}+X_{63})$ for orbits $6,7$ in
Table~\ref{evii:tab:cayley-data} and below.
\end{remark}

For a displayed triple define
\[
 \mathcal P_e(q)=\sum_{j\geq0}p_j(e)q^j,
 \qquad
 \mathcal H_e(q)=\sum_{j\geq0}h_j(e)q^j.
\]
The dimensions are obtained by the following explicitly displayed finite
systems on the $133$-dimensional Chevalley basis:
\begin{equation}\label{evii:eq:weight-kernels}
\begin{aligned}
 p_j(e)&=\dim\ker
 \begin{pmatrix}\theta+1\\ \ad d-j\end{pmatrix},\\
 h_j(e)&=\dim\ker
 \begin{pmatrix}\theta-1\\ \ad d-j\end{pmatrix}.
\end{aligned}
\end{equation}
All entries lie in $\Q(i,\sqrt2,\sqrt3,\sqrt5)$.  Exact row reduction of
\eqref{evii:eq:weight-kernels}, using the signed bracket conventions above,
gives the following complete list; omitted powers have coefficient zero.

\begingroup
\small
\renewcommand{\arraystretch}{1.08}
\begin{longtable}{@{}c >{\raggedright\arraybackslash}p{0.41\textwidth}
 >{\raggedright\arraybackslash}p{0.41\textwidth}@{}}
\caption{Nonnegative $d$-weight dimensions in $\fp$ and $\fh$}
\label{evii:tab:weight-dimensions}\\
\toprule
orbit(s)&$\mathcal P_e(q)$&$\mathcal H_e(q)$\\
\midrule
\endfirsthead
\toprule
orbit(s)&$\mathcal P_e(q)$&$\mathcal H_e(q)$\\
\midrule
\endhead
$1,2$   &$20+16q+q^2$&$47+16q$\\
$3,4$   &$2+16q+10q^2$&$47+16q$\\
$5$     &$18+16q+2q^2$&$31+16q+8q^2$\\
$6,7$   &$27q^2$&$79$\\
$8,9$   &$32+11q^2$&$47+16q^2$\\
$10$    &$30+12q^2$&$37+20q^2+q^4$\\
$11,12$ &$12+12q+5q^2+4q^3$&$21+12q+12q^2+4q^3+q^4$\\
$13,14$ &$2+8q+9q^2+8q^3+q^6$&$31+8q+8q^3+8q^4$\\
$15$    &$18+16q^2+2q^4$&$31+16q^2+8q^4$\\
$16,19$ &$16+10q^2+8q^4+q^6$&$31+16q^2+8q^4$\\
$17,18$ &$26q^2+q^6$&$47+16q^4$\\
$20$    &$14+10q^2+8q^4+2q^6$&$19+14q^2+9q^4+6q^6+q^8$\\
$21,22$ &$17q^2+9q^6+q^{10}$&$31+16q^4+8q^8$\\
\bottomrule
\end{longtable}
\endgroup
For every row the symmetry of the $d$-spectrum gives the checks
\[
 p_0(e)+2\sum_{j>0}p_j(e)=54,
 \qquad
 h_0(e)+2\sum_{j>0}h_j(e)=79.
\]
Thus the table accounts for all of $\fp$ and $\fh$, not merely their
highest spaces.

For an irreducible graded $\mathfrak{sl}_2$-module, write
$V_\ell^{\fh}$ or $V_\ell^{\fp}$ according as its highest line lies in
$\fh$ or $\fp$.  Lemma~\ref{evii:lem:graded-multiplicities} and
Table~\ref{evii:tab:weight-dimensions} give the following proposition.

\begin{proposition}[EVII orbit invariants]\label{evii:prop:EVII-orbit-invariants}
With Djokovi\'c's orbit numbering, the exact data are as follows.  In the
second column, $V_\ell$ denotes a $\fp$-parity highest module.
\begin{center}
\renewcommand{\arraystretch}{1.10}
\begin{tabular}{@{}c >{$}l<{$} r r@{}}
\toprule
orbit(s) & \multicolumn{1}{c}{$\fp$-highest modules} & $T(e)$ & $T(e)-54$\\
\midrule
$1,2$   & V_2+16V_1+20V_0                     & 16 & -38\\
$3,4$   & 10V_2+16V_1+2V_0                    & 16 & -38\\
$5$     & 2V_2+16V_1+10V_0                    & 32 & -22\\
$6,7$   & 27V_2                               & 0  & -54\\
$8,9$   & 11V_2+16V_0                         & 32 & -22\\
$10$    & 11V_2+10V_0                         & 44 & -10\\
$11,12$ & 4V_3+4V_2+8V_1                      & 48 & -6\\
$13,14$ & V_6+8V_3+V_2+2V_0                   & 52 & -2\\
$15$    & 2V_4+8V_2+2V_0                      & 60 & 6\\
$16,19$ & V_6+8V_4+2V_2                       & 44 & -10\\
$17,18$ & V_6+10V_2                           & 60 & 6\\
$20$    & V_6+2V_4+V_2                        & 84 & 30\\
$21,22$ & V_{10}+V_6+V_2                      & 84 & 30\\
\bottomrule
\end{tabular}
\end{center}
\end{proposition}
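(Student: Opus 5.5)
The table follows row by row from Lemma~\ref{evii:lem:graded-multiplicities} applied to Table~\ref{evii:tab:weight-dimensions}. I treat the two tables as given by the exact row reductions of \eqref{evii:eq:weight-kernels}, so the remaining work is bookkeeping. For each orbit I read off $p_j(e)$ and $h_j(e)$ from the polynomials $\mathcal P_e(q)$ and $\mathcal H_e(q)$. I then form the $\fp$-parity highest multiplicities $m_{j,\fp}=p_j(e)-h_{j+2}(e)$ and the $\fh$-parity multiplicities $m_{j,\fh}=h_j(e)-p_{j+2}(e)$ from \eqref{evii:eq:graded-difference}. The second column is the list of $m_{j,\fp}V_j$, by \eqref{evii:eq:p-highest-from-weights}. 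The third column is $T(e)=\sum_j j\,m_{j,\fh}$, by \eqref{evii:eq:T-from-weights}. The last column subtracts $n=\dim_\C\fp=54$ from \eqref{evii:eq:EVII-dim}.

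As a sample, take orbits $1,2$. Here $\mathcal P=20+16q+q^2$ and $\mathcal H=47+16q$. Then $m_{0,\fp}=20-0$, $m_{1,\fp}=16$ and $m_{2,\fp}=1$, giving $V_2+16V_1+20V_0$. On the $\fh$ side, $m_{1,\fh}=16-0$, so $T(e)=16$. Similarly, for orbits $21,22$ one has $\mathcal H=31+16q^4+8q^8$ and $\mathcal P=17q^2+9q^6+q^{10}$. This gives $m_{4,\fh}=16-9=7$ and $m_{8,\fh}=8-1=7$, hence $T(e)=28+56=84$. On the $\fp$ side, $m_{2,\fp}=17-16=1$, $m_{6,\fp}=9-8=1$ and $m_{10,\fp}=1$, giving $V_{10}+V_6+V_2$. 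The other eleven rows are the same arithmetic.

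I would also run a consistency check on each row, to guard against transcription errors in Table~\ref{evii:tab:weight-dimensions}. First, every multiplicity must be nonnegative. Second, the dimensions must add up correctly:
\[
\sum_j (j+1)\bigl(m_{j,\fp}+m_{j,\fh}\bigr)=133.
\]
Third, the $\fp$-part must be consistent with parity. A module $V_j$ with $\fp$-highest line has its $\fp$-lines in the weights $j,j-4,\dots$, so the modules must reproduce $54$. This third check is the same as the checks already recorded after the table. Finally, $\dim\fp^e=\sum_j m_{j,\fp}$ should be $54$ minus the orbit dimension. For orbits $6,7$ this means $\dim\fp^e=27$, which matches $T(e)=0$ and the orbit dimension $27$.

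The main obstacle is not this arithmetic but the reliability of the input data. That includes the corrected representative for orbits $6,7$ in Remark~\ref{evii:rem:djokovic-sign} and the identification of the orbit numbering with Djokovi\'c's list. I would settle the numbering by matching the pair $(\operatorname{tr},\operatorname{inv})$ of each row against the $H$-orbit invariants. I would also confirm that distinct rows give distinct weighted data, except where a sign pair is exchanged by the outer symmetry.
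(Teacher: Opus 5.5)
Your proposal is correct and is essentially the paper's proof: the paper also applies \eqref{evii:eq:graded-difference} row by row to Table~\ref{evii:tab:weight-dimensions} and then reads off $T(e)$ from \eqref{evii:eq:T-from-weights}; the only difference is that it works orbits $15$ and $20$ as samples instead of your orbits $1,2$ and $21,22$, and your arithmetic for those rows is right. The extra consistency checks you propose are valid sanity checks on the input data, but the statement itself does not need them.
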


\begin{proof}
Apply \eqref{evii:eq:graded-difference} row by row to
Table~\ref{evii:tab:weight-dimensions}, and then use
\eqref{evii:eq:T-from-weights}.  For example, in orbit $15$,
\[
 \mathcal P_e=18+16q^2+2q^4,\qquad
 \mathcal H_e=31+16q^2+8q^4,
\]
so
\[
 (m_{0,\fp},m_{2,\fp},m_{4,\fp})=(2,8,2),\qquad
 (m_{0,\fh},m_{2,\fh},m_{4,\fh})=(15,14,8),
\]
and $T(e)=2\cdot14+4\cdot8=60$.  In orbit $20$ the same subtraction gives
\[
 (m_{2,\fp},m_{4,\fp},m_{6,\fp})=(1,2,1)
\]
and
\[
 (m_{0,\fh},m_{2,\fh},m_{4,\fh},m_{6,\fh},m_{8,\fh})
 =(9,6,7,6,1),
\]
whence $T(e)=2\cdot6+4\cdot7+6\cdot6+8=84$.  The remaining rows are identical
one-line subtractions and give the displayed table.
\end{proof}

For the six orbits with nonnegative defect, the same calculation gives the
$\fh$-parity highest modules
\begin{align}
15:&\quad 15V_0^{\fh}\oplus14V_2^{\fh}\oplus8V_4^{\fh},
\label{evii:eq:EVII-h15}\\
17,18:&\quad 21V_0^{\fh}\oplus15V_4^{\fh},
\label{evii:eq:EVII-h17}\\
20:&\quad 9V_0^{\fh}\oplus6V_2^{\fh}\oplus7V_4^{\fh}
       \oplus6V_6^{\fh}\oplus V_8^{\fh},
\label{evii:eq:EVII-h20}\\
21,22:&\quad 14V_0^{\fh}\oplus7V_4^{\fh}\oplus7V_8^{\fh}.
\label{evii:eq:EVII-h21}
\end{align}
Every highest weight in \eqref{evii:eq:EVII-h15}--\eqref{evii:eq:EVII-h21} is even.
The $\fp$-highest weights in the same cases are also even.  Consequently the
normal quotient lines are exactly the lowest lines attached to the displayed
$\fp$-highest modules; no additional line arises from an odd
$\fh$-highest module.

The zero orbit is excluded by Lemma~\ref{core:lem:origin-adapted}.  By
Proposition~\ref{evii:prop:EVII-orbit-invariants} and
Corollary~\ref{core:cor:strict-trace}, the nonzero orbits
\begin{equation}\label{evii:eq:EVII-strict-orbits}
 1\text{--}14,\qquad16,\qquad19
\end{equation}
are excluded.  It remains to treat
\begin{equation}\label{evii:eq:EVII-remaining-orbits}
 15,\qquad17,18,\qquad20,\qquad21,22.
\end{equation}

\subsection{Congruence obstructions}

\begin{proposition}\label{evii:prop:EVII-congruence}
The orbits $17,18,21,22$ contribute no adapted distribution.
\end{proposition}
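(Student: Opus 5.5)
The plan is to apply the resonance equations of Lemma~\ref{core:lem:resonance-general} directly, using only the $\fp$-highest module data of Proposition~\ref{evii:prop:EVII-orbit-invariants}, and to show that no bidegree $(A,B')$ satisfying Type I or Type II can be realized by monomials in the normal variables. The obstruction should be a congruence modulo $4$.

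First I will identify the normal quotient lines. By the remark following \eqref{evii:eq:EVII-h21}, every $\fh$-highest and every $\fp$-highest weight for orbits $17,18$ and $21,22$ is even. Lemma~\ref{core:lem:resonance-general} then shows that the surviving lines of $N_e$ are exactly the lowest lines of the $\fp$-highest modules, as also recorded in that remark. These lines are:
\begin{itemize}
\item for orbits $17,18$, one line from $V_6$ and ten lines from $V_2$, with weighted degrees $8$ and $4$;
\item for orbits $21,22$, one line each from $V_{10}$, $V_6$ and $V_2$, with weighted degrees $12$, $8$ and $4$.
\end{itemize}
The Frobenius fiber is built from $\Sym(N_e\oplus\overline{N_e})$. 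The holomorphic degree $A$ is a sum of holomorphic line degrees, and the antiholomorphic degree $B'$ is a sum of conjugate line degrees. Hence both $A$ and $B'$ are nonnegative multiples of $4$ in both cases.

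Next I compare these degrees with the resonance targets. For orbits $17,18$ we have $T(e)-54=6$. Type I requires $A=B'=6$, and Type II requires $(A,B')=(4,6)$. In both cases $B'=6\not\equiv0\pmod 4$, so neither can occur. For orbits $21,22$ we have $T(e)-54=30$. Type I requires $A=B'=30$, and Type II requires $(A,B')=(28,30)$. Again $B'=30\not\equiv0\pmod4$, so both are impossible. Consequently every Frobenius point fiber \eqref{core:eq:resonance-I-general}--\eqref{core:eq:resonance-II-general} vanishes for every symmetric degree $k$.

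Finally I will convert this fiber vanishing into a statement about distributions. Suppose an adapted $H$-invariant distribution supported on the nilpotent cone had one of these orbits as a maximal stratum of its support. Lemma~\ref{cii:lem:maximal-symbol} would then produce a nonzero conormal Frobenius symbol, and by the computation above no such symbol exists. This argument is the same one used in Corollary~\ref{core:cor:strict-trace}, and it applies verbatim to finite extensions with $\chi$ trivial on the effective image of $H$. Therefore these four orbits never occur as maximal support orbits.

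The main point needing care is that no extra normal lines arise, either from odd $\fh$-highest modules or from mixed contributions. This is guaranteed by the evenness recorded after \eqref{evii:eq:EVII-h21}, so the remaining check is pure arithmetic.
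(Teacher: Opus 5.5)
Your proposal is correct and is essentially the paper's argument: the surviving normal lines have elementary degrees $8,4$ (orbits $17,18$) or $12,8,4$ (orbits $21,22$), so every holomorphic and antiholomorphic weighted degree is divisible by four, whereas in both resonance types one side must equal $6$ or $30$. The extra step you include, passing from vanishing of the Frobenius fibers to vanishing of distributions via the maximal-stratum symbol, is not in the paper's proof of this proposition; the paper carries it out afterwards, in the proof of Theorem~\ref{evii:thm:EVII-special}.
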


\begin{proof}
For orbits $17,18$, the defect is six and
\[
 \fp^e\simeq V_6^{\fp}\oplus10V_2^{\fp}.
\]
The elementary degrees from Lemma~\ref{core:lem:resonance-general} are $8$ and
$4$.  Every holomorphic weighted degree is divisible by four, whereas the
required type-I degree is six.  Type II still requires one side of degree six.
Thus both types are impossible.

For orbits $21,22$, the defect is thirty and
\[
 \fp^e\simeq V_{10}^{\fp}\oplus V_6^{\fp}\oplus V_2^{\fp}.
\]
The elementary degrees are $12,8,4$, so every holomorphic degree is divisible
by four, whereas thirty is not.  Again neither adapted type occurs.
\end{proof}

\subsection{A central-torus refinement}

For the remaining orbits $15$ and $20$, the normal one-parameter subgroup
admits the numerical degree.  Let
\begin{equation}\label{evii:eq:EVII-triple-centralizer}
 \mathfrak c_{\fh}(e,d,f)
 =\{z\in\fh:[z,e]=[z,d]=[z,f]=0\}.
\end{equation}
The centralizer of an $\mathfrak{sl}_2$-triple is reductive, so the connected
center of \eqref{evii:eq:EVII-triple-centralizer} exponentiates to an algebraic
torus in $H_e$.

\begin{lemma}[No hidden modular twist]\label{evii:lem:EVII-modular-torus}
For each of the orbits $15$ and $20$, let $Z\simeq\C^\times$ be the connected
central torus of \eqref{evii:eq:EVII-triple-centralizer}.  Then
\begin{equation}\label{evii:eq:EVII-modular-Z}
 \Delta_{(H\times\C^\times)_e}|_Z=1.
\end{equation}
Thus, on $Z$, the Frobenius fiber has only the character contributed by the
normal tensor.
\end{lemma}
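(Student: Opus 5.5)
Since $Z\simeq\C^\times$ is a connected complex torus sitting inside the complex algebraic group $(H\times\C^\times)_e$, the restriction $\Delta_{(H\times\C^\times)_e}|_Z$ is a continuous homomorphism $\C^\times\to\R_{>0}$. It is the absolute value of the complex determinant of $\Ad(z)$ on $\Lie\bigl((H\times\C^\times)_e\bigr)=\fh^e\oplus\C(d,2)$, by the same convention as in \eqref{core:eq:modular-general}. I would compute this determinant directly and show that it equals one.

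The first step is that $Z$ commutes with the external factor $\C(d,2)$, because $Z$ centralizes $d$; it therefore contributes trivially there. So the problem reduces to $\det\bigl(\Ad(z)|_{\fh^e}\bigr)$. The element $z$ lies in the centralizer of the whole triple $(e,d,f)$, so $\Ad(z)$ commutes with the $\mathfrak{sl}_2$-action on $\fh$. Consequently $\Ad(z)$ preserves each $\mathfrak{sl}_2$-isotypic component and each $d$-weight space. Moreover $\fh^e$ is the sum of the highest-weight lines, and it is identified $Z$-equivariantly with $\bigoplus_\ell M_\ell^{\fh}$, where $M_\ell^{\fh}$ is the multiplicity space of $V_\ell^{\fh}$ in $\fh$. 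The determinant of $z$ on $\fh^e$ is then $\prod_\ell\det(z|M_\ell^{\fh})$.

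The key step is to compare this with the determinant on all of $\fh$. Since $\fh=\bigoplus_\ell M_\ell^{\fh}\otimes V_\ell\,\oplus$ (the $\fh$-parts of the $\fp$-highest modules), a single determinant over $\fh$ does not isolate the product above, so I would argue weight by weight instead. The $d$-weight-$j$ space $\fh_j$ is $Z$-stable. Taking $Z$-determinants of the surjections $e:\fh_j\to\fp_{j+2}$ and $e:\fp_j\to\fh_{j+2}$ from Lemma~\ref{evii:lem:graded-multiplicities} expresses $\det(z|M_j^{\fh})$ as $\det(z|\fh_j)\det(z|\fp_{j+2})^{-1}$, since $e$ is $Z$-equivariant. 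This gives
\[
\det\bigl(z|\fh^e\bigr)=\prod_{j\ge0}\det(z|\fh_j)\prod_{j\ge2}\det(z|\fp_j)^{-1}.
\]
To evaluate the factors I would use the invariant form $B$ on $\fg$. It pairs $\fh_j$ with $\fh_{-j}$ and $\fp_j$ with $\fp_{-j}$, and it is $Z$-invariant, so $\det(z|\fh_j)\det(z|\fh_{-j})=1$ and similarly for $\fp$. Also $\fh_0$ and $\fp_0$ are self-dual, so their determinants are $\pm1$, hence of absolute value one, and $Z$ is connected, so they equal $1$. These facts alone do not cancel the positive-weight factors.

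The main obstacle is therefore to show $\prod_{j>0}\det(z|\fh_j)\,\det(z|\fp_{j+2})^{-1}=1$ for orbits $15$ and $20$. Here I would use the structure of the data: every highest weight occurring is even, so only even $j$ appear. I would apply $f$-lowering together with the $Z$-equivariant isomorphisms $\ad(e)^j:\fg_{-j}\to\fg_j$ inside each isotypic component. Combined with self-duality of each $M_\ell$ under $B$ (for $\ell$ even, $B$ restricts to a nondegenerate $Z$-invariant form on $M_\ell^{\fh}$), this gives $\det(z|M_\ell^{\fh})^2=1$, hence $\det(z|M_\ell^{\fh})=1$ by connectedness. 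If this parity and self-duality argument encountered a problem, the fallback would be to compute explicitly the $Z$-weights on $\fh^e$ from the Chevalley data of Table~\ref{evii:tab:cayley-data}, using the supplementary exact row reduction, and check that they sum to zero.
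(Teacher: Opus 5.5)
Your proposal is correct and, once you reach your final paragraph, it is the paper's argument: $Z$ fixes the line $(d,2)$, every $\fh$-highest weight for orbits $15$ and $20$ is even, so the Killing-form pairing $v,w\mapsto\kappa(v,\ad(f)^\ell w)$ gives a nondegenerate $Z$-invariant form on each multiplicity space $M_\ell^{\fh}$. The determinant character of $Z$ on $M_\ell^{\fh}$ therefore has order dividing two and is trivial on the connected torus; the weight-by-weight determinant bookkeeping in your third paragraph is correct but not needed.
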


\begin{proof}
The decompositions \eqref{evii:eq:EVII-h15} and \eqref{evii:eq:EVII-h20} show that
every $\mathfrak{sl}_2$ highest weight occurring in $\fh^e$ is even.  Let
$\kappa$ be the Killing form of $\fg$.  On each even isotypic component, the
highest and lowest lines have the same $\theta$-parity.  Since
$\fh\perp\fp$ for $\kappa$, the restriction of $\kappa$ induces a
nondegenerate $Z$-invariant bilinear form on the multiplicity space of every
$V_\ell^{\fh}$.  Hence each multiplicity space is self-dual as a $Z$-module.
Its determinant character is equal to its inverse.  A connected algebraic
torus has no nontrivial character of order two, so the determinant character
is trivial.  Multiplying over the isotypic components gives
\[
 \det(\Ad(z)|_{\fh^e})=1.
\]
With the action convention $\rho(\lambda)e=\lambda^{-1}e$, the stabilizer
Lie algebra is
\[
 \Lie((H\times\C^\times)_e)
 =\{(X,s)\in\fh\oplus\C:[X,e]-se=0\}.
\]
It is the direct sum of $\fh^e$ and the line represented by $(d,2)$, because
$[d,e]=2e$.  The torus $Z$ centralizes $d$ and acts trivially on the external
$\C^\times$ factor, so it fixes this complementary line.  Together with
$\det(\Ad(z)|_{\fh^e})=1$, this proves that the full stabilizer modular
character is trivial on $Z$.
\end{proof}

\begin{lemma}\label{evii:lem:EVII-holomorphic-test}
Assume \eqref{evii:eq:EVII-modular-Z}.  If the holomorphic symmetric algebra of
$N_e$ has no $Z$-invariant monomial of weighted degree $T(e)-54$, then the
orbit $H\cdot e$ contributes no $B$-adapted distribution.
\end{lemma}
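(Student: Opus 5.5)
We argue by contradiction, following the maximal-stratum mechanism used for the balanced CII family. Suppose $H\cdot e$ contributes a nonzero $B$-adapted distribution. By Lemma~\ref{cii:lem:maximal-symbol}, its germ along $H\cdot e$ then produces, for some $k$, a nonzero Frobenius point-fiber vector in
\[
 \bigl(\Sym^k(N_e)\otimes_{\R}\C\otimes\Delta_{(H\times\C^\times)_e}\otimes(1,\eta)^{-1}\bigr)^{(H\times\C^\times)_e},
\]
where $\eta$ is the Type~I or Type~II homogeneity character. The orbit is maximal in the support of a distribution on the nilpotent cone, so the relevant group is $L=H\times\C^\times$. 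We decompose $\Sym(N_e)\otimes_{\R}\C$ into monomials $z^\alpha\bar z^\beta$ in the surviving lowest-line coordinates, which are weight vectors for $Z$ and for $(D_t,t^2)$. A nonzero invariant vector then has a nonzero component on some monomial that is simultaneously invariant under both tori, since both act diagonally in this basis.

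\textbf{Step 1: the $Z$-condition.} Impose invariance under $Z\subset H_e$, viewed as $(z,1)\in L_e$. By \eqref{evii:eq:EVII-modular-Z}, $\Delta$ is trivial on $Z$, and $(1,\eta)$ is trivial on $Z$ as well. The monomial itself must therefore be $Z$-invariant as a real character $z\mapsto \zeta(z)^{a}\overline{\zeta(z)}^{b}$. Varying the modulus and the argument separately, exactly as in the proof of Proposition~\ref{diii:prop:normal-parity}, forces the holomorphic factor $z^\alpha$ and the antiholomorphic factor $\bar z^\beta$ to be $Z$-invariant separately. This works because the weights of $Z$ on $\overline{N_e}$ are the conjugates of those on $N_e$.

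\textbf{Step 2: the $(D_t,t^2)$-condition.} Lemma~\ref{core:lem:resonance-general} forces the holomorphic weighted degree $A$ of $z^\alpha$ to equal $T(e)-54$ in Type~I, or $T(e)-56$ in Type~II. The antiholomorphic degree equals $T(e)-54$ in both types. In either case, the antiholomorphic factor $\bar z^\beta$ is a $Z$-invariant monomial of weighted degree $T(e)-54$ in the conjugate algebra. Conjugating it gives a $Z$-invariant holomorphic monomial of weighted degree $T(e)-54$ in $\Sym(N_e)$. The hypothesis of the lemma excludes such a monomial, so no fiber vector exists, and the orbit contributes nothing.

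The main obstacle is Step~1: the reduction from invariance of a mixed real tensor to separate invariance of its holomorphic and antiholomorphic parts. This requires checking that the $Z$-action on the complexified real normal space is $N_e\oplus\overline{N_e}$ with conjugate weights, so that the characters $\zeta^a\bar\zeta^b$ are distinct for distinct $(a,b)$. We also need that $D_t$ commutes with $Z$, so the two torus conditions can be imposed together on monomials. Using the antiholomorphic side is what handles Type~II uniformly, since its degree remains $T(e)-54$.
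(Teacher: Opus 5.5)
Your proposal is correct and follows the paper's own argument. Invariance under $Z$ of a mixed monomial with character $z^a\bar z^b$ forces $a=b=0$, so the holomorphic and antiholomorphic factors are separately $Z$-invariant; the resonance equations then give the antiholomorphic factor weighted degree $T(e)-54$ in both types, and its conjugate is a $Z$-invariant holomorphic monomial of that degree, which the hypothesis excludes. Routing this through Lemma~\ref{cii:lem:maximal-symbol} is harmless but unnecessary, since Steps~1--2 already show that every Frobenius point fiber vanishes, and that vanishing is exactly what the paper uses.
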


\begin{proof}
After complexifying the underlying real normal space, a tensor splits into
holomorphic and antiholomorphic factors.  A torus character on the two factors
has the form $z^a\bar z^b$.  If it is identically one on the real Lie group
$\C^\times$, then $a=b=0$: first restrict to positive real $z$, then to the
unit circle.  Thus invariance forces the two characters to vanish separately.
In type I, both weighted degrees equal $T(e)-54$; in type II, one of them still
equals $T(e)-54$.  The antiholomorphic statement is the conjugate of the
holomorphic one.
\end{proof}

\subsection{Orbit 20}

All root vectors below use Djokovi\'c's numbering and normalization
\cite[Tables~1 and~3 and Appendix]{Djokovic}.  For $r>0$ write
\begin{equation}\label{evii:eq:SA-notation}
 S_r=X_r+X_{-r},\qquad A_r=X_r-X_{-r}.
\end{equation}

\begin{lemma}[The triple centralizer for orbit 20]\label{evii:lem:EVII-orbit20-centralizer}
For orbit $20$, the Lie algebra \eqref{evii:eq:EVII-triple-centralizer} has dimension
nine and one-dimensional center.  A generator of its center is
\begin{equation}\label{evii:eq:EVII-z20}
 z_{20}=3(X_7+X_{-7})-2(X_5+X_{-5})
 +(X_3+X_{-3})-(X_{28}+X_{-28}).
\end{equation}
If
\[
 M_\ell^{\fp}=\ker(\ad e)\cap\ker(\ad d-\ell)\cap\fp,
\]
then
\[
 \dim(M_6^{\fp},M_4^{\fp},M_2^{\fp})=(1,2,1)
\]
and
\begin{equation}\label{evii:eq:EVII-z20-char}
\begin{array}{c|ccc}
\ell&6&4&2\\ \hline
\det(s-\ad z_{20}|_{M_\ell^{\fp}})&s&s^2+36&s.
\end{array}
\end{equation}
Consequently, the connected central torus acts trivially on
$M_6^{\fp}$ and $M_2^{\fp}$ and by reciprocal nontrivial characters on
$M_4^{\fp}$.
\end{lemma}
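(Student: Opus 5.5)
The statement is a finite exact computation in the Chevalley basis, so the plan is to produce it by the same exact linear algebra as Table~\ref{evii:tab:weight-dimensions}. Then check the structural consequences by hand.

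\textbf{Step 1: the triple centralizer.} For orbit $20$ take the Cayley triple of Table~\ref{evii:tab:cayley-data} and pass to the normal triple $(e,d,f)$ by \eqref{evii:eq:EVII-Cayley-transform}. Compute $\mathfrak c_{\fh}(e,d,f)$ as the kernel of the stacked system
\[
 \begin{pmatrix}\theta-1\\ \ad e\\ \ad d\\ \ad f\end{pmatrix}
\]
on the $133$-dimensional basis. A consistency check on the dimension comes from \eqref{evii:eq:EVII-h20}: $\mathfrak c_{\fh}(e,d,f)$ is the $\fh$-parity weight-zero part of $\fg^e$, whose dimension is $m_{0,\fh}=9$. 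Next compute the center of this algebra as the common kernel of $\ad x$ for $x$ in the computed basis. Verify that the center is one-dimensional and spanned by the displayed $z_{20}$. Three membership checks are needed:
\begin{itemize}
\item $\theta z_{20}=z_{20}$ (each $S_r$ with $r\in\{3,5,7,28\}$ should be $\theta$-fixed under $\theta=\sigma_u\sigma$);
\item $[z_{20},e]=[z_{20},d]=0$, from which $[z_{20},f]=0$ follows since $f$ is determined by $(e,d)$;
\item $z_{20}$ commutes with a basis of the centralizer.
\end{itemize}
Reductivity of the triple centralizer makes its center a torus algebra, so the connected center $Z$ is the one-dimensional torus generated by $z_{20}$. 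Semisimplicity of $\ad z_{20}$ can be confirmed from the characteristic polynomials found in Step 2.

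\textbf{Step 2: the spaces $M_\ell^{\fp}$ and the characteristic polynomials.} Compute $M_\ell^{\fp}$ as the kernel of the system $(\theta+1,\ad e,\ad d-\ell)$ for $\ell=6,4,2$. Lemma~\ref{evii:lem:graded-multiplicities} with Table~\ref{evii:tab:weight-dimensions} already predicts the dimensions $(1,2,1)$ through $m_{\ell,\fp}=p_\ell-h_{\ell+2}$, namely $1-0$, $8-6$, and $10-9$. Since $z_{20}$ commutes with $e$ and $d$ and preserves $\fp$, $\ad z_{20}$ preserves each $M_\ell^{\fp}$.

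On the one-dimensional spaces $M_6^{\fp}$ and $M_2^{\fp}$ the eigenvalue is forced to be $0$ by a parity argument, which I would use as a cross-check on the computed value. The lines are $\theta$-anti-fixed highest lines of $\mathfrak{sl}_2$-type $\ell$ with $\ell$ even and multiplicity one. The Killing form pairs each multiplicity space with itself $Z$-invariantly, as in the proof of Lemma~\ref{evii:lem:EVII-modular-torus}: for even $\ell$ the highest and lowest lines lie in the same parity and are paired through $\fp$. A self-dual one-dimensional character of a connected torus is trivial, so the eigenvalue is $0$.

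For the same reason the $2\times2$ block on $M_4^{\fp}$ has eigenvalues $\pm\mu$, and its characteristic polynomial has the form $s^2-\mu^2$. The exact $2\times2$ reduction should give $\mu^2=-36$, i.e.\ eigenvalues $\pm6i$. With the real compact-type normalization of the $S_r$ this purely imaginary value is expected, so after choosing the integral generator of $Z$ one gets reciprocal characters $z^{\pm k}$ with $k\neq0$.

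\textbf{Main obstacle.} The hard part is the sign bookkeeping. One must apply the extended structure constants $N(-r,s)=-N(s,-r)$ correctly to compute $\theta$ and the brackets, so that the explicit $z_{20}$ really lands in $\fh$ and centralizes the triple. The paper already found a sign misprint for orbits $6,7$ (Remark~\ref{evii:rem:djokovic-sign}), so I would first re-verify the $\mathfrak{sl}_2$ relations and the row-$20$ entry of Table~\ref{evii:tab:weight-dimensions}. Only after that would I trust the centralizer and the value $36$, using the supplementary exact implementation.
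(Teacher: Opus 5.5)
Your proposal is correct and follows the paper's proof. The paper likewise row-reduces $x\mapsto([x,e],[x,d],[x,f])$ on $\fh$ (rank $70$, nine-dimensional kernel) and solves the center equations to get $z_{20}=c_9-3c_4$. It then computes the kernels of $x\mapsto([e,x],[d,x]-\ell x)$ on $\fp$ (ranks $53,52,53$) and reads off the matrices of $\ad z_{20}$ on these kernels. Your Killing-form self-duality argument, which forces eigenvalue $0$ on the lines $M_6^{\fp},M_2^{\fp}$ and a spectrum $\pm\mu$ on $M_4^{\fp}$, is a valid extra check that the paper does not need in this proof.

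One small slip: for $\ell=6$ the table gives $m_{6,\fp}=p_6-h_8=2-1$, not $1-0$, though the predicted dimension $1$ is still right.
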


\begin{proof}
For this row
\[
 E_{20}=2X_1+\sqrt3\,X_6+\sqrt3\,X_{19}+2X_{37}
          +\sqrt3\,X_{40}+\sqrt3\,X_{41},
 \qquad
 H_{0,20}=(8,10,14,20,16,12,6),
\]
and $F_{20}$ is obtained by replacing every $X_r$ by $X_{-r}$.
Write
\[
 S_r=X_r+X_{-r},\qquad A_r=X_r-X_{-r}.
\]
Consider the exact linear map
\[
 \Phi_{20}:\fh\longrightarrow\fg^{\oplus3},\qquad
 x\longmapsto([x,e],[x,d],[x,f]).
\]
Substitution of the Cayley transform into the signed Chevalley bracket table
and row reduction give $\rk\Phi_{20}=70$.  Its nine-dimensional kernel has the
following basis:
\begin{align*}
 c_1&=H_2,& c_2&=X_2,&c_3&=X_{-2},\\
 c_4&=S_5-S_7,
 &c_5&=X_9-X_{22}-X_{-10}-X_{-11},\\
 c_6&=X_4+X_{17}-X_{-15}-X_{-16},
 &c_7&=X_{15}+X_{16}-X_{-4}-X_{-17},\\
 c_8&=-X_{10}-X_{11}+X_{-9}-X_{-22},
 &c_9&=S_3+S_5-S_{28}.
\end{align*}
Each displayed vector is fixed by $\theta$ and has zero bracket with $e,d,f$;
the rank computation proves that they exhaust the triple centralizer.
If $x=\sum_{r=1}^9a_rc_r$, the equations $[x,c_r]=0$ for all $r$ reduce to
\[
 a_1=a_2=a_3=a_5=a_6=a_7=a_8=0,
 \qquad a_4+3a_9=0.
\]
Thus the center is generated by $c_9-3c_4$, which is precisely the element
$z_{20}$ in \eqref{evii:eq:EVII-z20}.

It remains to compute the action on the three highest spaces.  For
$\ell\in\{6,4,2\}$ set
\[
 \Psi_\ell:\fp\longrightarrow\fg\oplus\fg,
 \qquad x\longmapsto([e,x],[d,x]-\ell x).
\]
The respective ranks are $53,52,53$.  Hence the kernels have dimensions
$1,2,1$.  The following vectors form convenient ordered bases:
\begingroup
\small
\begin{align*}
 a_6={}&4\sqrt3iH_1+2\sqrt3iH_3-2\sqrt3iH_5
        -4\sqrt3iH_6-2\sqrt3iH_7\\
 &+3\sqrt3A_1-3A_6-3A_{19}+3\sqrt3A_{37}-3A_{40}-3A_{41}\\
 &+3iA_{42}-3iA_{43}-3iA_{44}-3iA_{51}
   +\sqrt3A_{55}+\sqrt3A_{61},\\[2mm]
 a_4={}&-6iA_7-2\sqrt3A_{12}+2\sqrt3A_{13}-2\sqrt3A_{35}
        -\sqrt3iA_{38}-2\sqrt3A_{45}\\
 &\hspace{2.4em}-\sqrt3iA_{46}+\sqrt3iA_{47}-\sqrt3iA_{48},\\[2mm]
 b_4={}&-6iH_7+2\sqrt3A_6-2\sqrt3A_{19}
        +2\sqrt3A_{40}-2\sqrt3A_{41}\\
 &\hspace{2.4em}-\sqrt3iA_{42}+\sqrt3iA_{43}
        -\sqrt3iA_{44}-\sqrt3iA_{51},\\[2mm]
 a_2={}&8H_1+10H_2+14H_3+20H_4+16H_5+12H_6+6H_7\\
 &-2iA_1-\sqrt3i(A_6+A_{19}+A_{40}+A_{41})-2iA_{37}.
\end{align*}
\endgroup
Direct bracket substitution gives
\[
 [z_{20},a_6]=[z_{20},a_2]=0,
 \qquad
 [z_{20},a_4]=6b_4,
 \qquad
 [z_{20},b_4]=-6a_4.
\]
Therefore the matrices of $\ad z_{20}$ on the ordered bases
$(a_6)$, $(a_4,b_4)$, and $(a_2)$ are
\[
 (0),\qquad
 \begin{pmatrix}0&-6\\6&0\end{pmatrix},\qquad
 (0),
\]
which proves \eqref{evii:eq:EVII-z20-char}.  Since the center of a reductive
triple centralizer is toral, the two nonzero eigenvalues are the differentials
of reciprocal nontrivial algebraic characters.  The exclusion argument below
uses only this qualitative dichotomy---trivial versus reciprocal nontrivial
characters---not the numerical values $\pm6i$ themselves.
\end{proof}

\begin{proposition}\label{evii:prop:EVII-orbit20}
Orbit $20$ contributes no adapted distribution.
\end{proposition}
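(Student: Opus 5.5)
The plan is to apply Lemma~\ref{evii:lem:EVII-holomorphic-test} with the central torus $Z$ of Lemma~\ref{evii:lem:EVII-orbit20-centralizer}. Lemma~\ref{evii:lem:EVII-modular-torus} already gives the hypothesis \eqref{evii:eq:EVII-modular-Z}. It then suffices to show that the holomorphic symmetric algebra of $N_e$ contains no $Z$-invariant monomial of weighted degree $T(e)-54=30$.

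First I describe the holomorphic normal variables. By Proposition~\ref{evii:prop:EVII-orbit-invariants}, the $\fp$-highest modules for orbit $20$ are $V_6+2V_4+V_2$. All highest weights, in both $\fh^e$ and $\fp^e$, are even. Hence, by the remark following \eqref{evii:eq:EVII-h21}, the surviving normal lines are exactly the lowest lines of these four modules. The pairing discussion before Lemma~\ref{core:lem:resonance-general} says that $N_e$ is dual to $\fp^e$ on highest lines, with commuting-torus characters inverted. The surviving lines therefore give:
\begin{itemize}
\item one variable $u_6$ of weighted degree $8$, with trivial $Z$-character;
\item two variables $u_4^\pm$ of weighted degree $6$, with $Z$-characters $\psi^{\mp1}$ for a nontrivial character $\psi$, by \eqref{evii:eq:EVII-z20-char};
\item one variable $u_2$ of weighted degree $4$, with trivial $Z$-character.
\end{itemize}
After inversion the two characters on $M_4^{\fp}$ are still a reciprocal nontrivial pair, so only this qualitative fact is used.

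Next comes the counting. A monomial $u_6^{a}(u_4^+)^{b}(u_4^-)^{c}u_2^{g}$ is $Z$-invariant exactly when $b=c$, since $\psi$ has infinite order. Its weighted degree is then
\[
8a+12b+4g,
\]
which is divisible by $4$. Because $30$ is not divisible by $4$, no such monomial exists. Lemma~\ref{evii:lem:EVII-holomorphic-test} then excludes both Type I and Type II, since in each case one side must have holomorphic degree exactly $30$.

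The main obstacle is the bookkeeping in the first step. One has to make sure that the $Z$-weights on the normal lines are really those of $M_\ell^{\fp}$, up to inversion, and that no extra normal line with a nontrivial or odd contribution has been missed. Both points are covered by the evenness of all highest weights and by the paired highest/lowest-line description, so the remaining argument is the congruence mod $4$.
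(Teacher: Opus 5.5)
Your proposal is correct and follows the paper's proof essentially step for step. Both arguments invoke Lemma~\ref{evii:lem:EVII-holomorphic-test}, with its hypothesis supplied by Lemma~\ref{evii:lem:EVII-modular-torus}, and identify the four normal variables of weighted degrees $8,6,6,4$ carrying trivial, reciprocal nontrivial, and trivial $Z$-characters; this pattern is unchanged by the inversion from \eqref{core:eq:normal-duality-general}, and both conclude that every invariant weighted degree $8a+12b+4g$ is divisible by $4$ while the defect is $30$.
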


\begin{proof}
The duality \eqref{core:eq:normal-duality-general} inverts the $Z$-characters when
passing from the highest spaces $M_\ell^{\fp}\subset\fp^e$ to the normal
quotient.  In \eqref{evii:eq:EVII-z20-char} the nontrivial characters already occur
as a reciprocal pair, while the other characters are trivial; hence the same
character multiset occurs on the normal variables.  The defect is thirty.  The elementary degrees attached to
$\ell=6,4,2$ are $8,6,4$.  By Lemma~\ref{evii:lem:EVII-orbit20-centralizer}, the
degree-eight and degree-four variables are torus-trivial, while the two
degree-six variables have reciprocal nontrivial characters.  In a torus-invariant
monomial, the latter variables occur with equal total exponent.  Therefore every
invariant holomorphic weighted degree is
\[
 8a+4c+12b,
 \qquad a,b,c\in\Z_{\ge0},
\]
which is divisible by four.  It cannot equal thirty.  Lemmas
\ref{evii:lem:EVII-modular-torus} and \ref{evii:lem:EVII-holomorphic-test} conclude the
proof.
\end{proof}

\subsection{Orbit 15}

\begin{lemma}[The triple centralizer for orbit 15]\label{evii:lem:EVII-orbit15-centralizer}
For orbit $15$, the Lie algebra \eqref{evii:eq:EVII-triple-centralizer} has dimension
fifteen and one-dimensional center.  A generator is
\begin{equation}\label{evii:eq:EVII-z15}
 z_{15}=(X_7+X_{-7})-(X_{20}+X_{-20})-(X_{21}+X_{-21}).
\end{equation}
The $\fp$-highest coefficient spaces satisfy
\[
 \dim(M_4^{\fp},M_2^{\fp},M_0^{\fp})=(2,8,2),
\]
and
\begin{equation}\label{evii:eq:EVII-z15-char}
\begin{array}{c|ccc}
\ell&4&2&0\\ \hline
\det(s-\ad z_{15}|_{M_\ell^{\fp}})&s^2+4&s^8&s^2+4.
\end{array}
\end{equation}
Thus the connected central torus acts by reciprocal nontrivial characters on
$M_4^{\fp}$ and $M_0^{\fp}$ and trivially on $M_2^{\fp}$.
\end{lemma}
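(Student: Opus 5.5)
The proof will mirror Lemma~\ref{evii:lem:EVII-orbit20-centralizer} step for step. All of it is exact linear algebra in Djokovi\'c's signed Chevalley basis.

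First, compute the normal triple attached to row $15$. Here
\[
E_{15}=\sqrt2\,(X_6+X_{34}+X_{40}+X_{56}),\qquad H_{0,15}=(4,6,8,12,10,8,4),
\]
and $F_{15}$ is obtained by replacing each $X_r$ by $X_{-r}$. Apply the Cayley transform \eqref{evii:eq:EVII-Cayley-transform} and form
\[
\Phi_{15}:\fh\to\fg^{\oplus3},\qquad x\mapsto([x,e],[x,d],[x,f]).
\]
Row reduction of $\Phi_{15}$ on the $79$-dimensional space $\ker(\theta-1)$ should give rank $64$, so the triple centralizer has dimension fifteen. As a consistency check, this is the multiplicity $m_{0,\fh}=15$ computed from Table~\ref{evii:tab:weight-dimensions} in the proof of Proposition~\ref{evii:prop:EVII-orbit-invariants}, and the triple centralizer equals $\fh_0(e)\cap\ker\ad e$.

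Next, write an explicit basis $c_1,\dots,c_{15}$ of this kernel in the $S_r,A_r,H_k,X_{\pm r}$ notation. For a general element $x=\sum a_rc_r$, solve the linear system $[x,c_r]=0$ for all $r$, using the structure constants of the bracket table. The expected result is a one-dimensional solution space spanned by the element $z_{15}$ of \eqref{evii:eq:EVII-z15}. I would also verify directly that $\theta(z_{15})=z_{15}$, since it is built from the $S_r$. Because the triple centralizer is reductive, its center is toral, so $z_{15}$ generates the Lie algebra of the torus $Z$.

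For the highest spaces, take
\[
\Psi_\ell:\fp\to\fg\oplus\fg,\qquad x\mapsto([e,x],[d,x]-\ell x),\qquad \ell=4,2,0.
\]
The ranks should be $52,46,52$, giving kernel dimensions $2,8,2$. This agrees with $(m_{4,\fp},m_{2,\fp},m_{0,\fp})=(2,8,2)$ from Proposition~\ref{evii:prop:EVII-orbit-invariants}. I would then give explicit bases and compute $\ad z_{15}$ on each kernel.
\begin{itemize}
\item For $\ell=4$ and $\ell=0$, expect pairs $(a,b)$ with $[z_{15},a]=2b$ and $[z_{15},b]=-2a$, giving characteristic polynomial $s^2+4$.
\item For $\ell=2$, expect $\ad z_{15}$ to vanish on all eight basis vectors, so the determinant is $s^8$. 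Since $z_{15}$ is semisimple, a nilpotent action on this space is zero.
\end{itemize}
The eigenvalues $\pm 2i$ are the differentials of reciprocal nontrivial algebraic characters of $Z\simeq\C^\times$, exactly as in the orbit-$20$ case.

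The main obstacle is bookkeeping rather than ideas. The rank-$64$ computation and the center computation must use the correct sign extension $N(-r,s)=-N(s,-r)$. Remark~\ref{evii:rem:djokovic-sign} shows that a single sign error silently changes the orbit, so I would first recheck the $\mathfrak{sl}_2$ relations for row $15$ and the dimension data of Table~\ref{evii:tab:weight-dimensions}. Finding a closed-form $z_{15}$ is also a concern, since a priori the center could involve Cartan elements. I would try first the ansatz of $\theta$-fixed combinations of $S_r$ for roots $r$ orthogonal to the support of $H_{0,15}$'s grading, and fall back on the supplementary exact implementation if that ansatz fails.
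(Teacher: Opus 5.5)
Your proposal matches the paper's proof essentially step for step. The paper row-reduces $\Phi_{15}$ on $\fh$ to rank $64$ and exhibits a fifteen-element kernel basis whose center equations kill every coefficient except that of $c_{13}=-z_{15}$. It then finds ranks $52,46,52$ for $\Psi_4,\Psi_2,\Psi_0$ and gives explicit bases on which $\ad z_{15}$ acts by $2\times2$ rotation blocks on $M_4^{\fp}$ and $M_0^{\fp}$ and by zero on $M_2^{\fp}$. Your extra checks are sound but redundant once the explicit brackets are computed: the kernel dimension agrees with $m_{0,\fh}=15$, and a nilpotent action of the semisimple $z_{15}$ must vanish.
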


\begin{proof}
For orbit $15$ the representative and neutral element are
\[
 E_{15}=\sqrt2\,(X_6+X_{34}+X_{40}+X_{56}),
 \qquad
 H_{0,15}=(4,6,8,12,10,8,4),
\]
with $F_{15}$ obtained by replacing $X_r$ by $X_{-r}$.  Continue to write
$S_r=X_r+X_{-r}$ and $A_r=X_r-X_{-r}$.  For
\[
 \Phi_{15}:\fh\longrightarrow\fg^{\oplus3},\qquad
 x\longmapsto([x,e],[x,d],[x,f]),
\]
exact row reduction gives $\rk\Phi_{15}=64$.  Its kernel has the following
basis:
\begingroup
\small
\begin{align*}
 c_1&=H_3,&c_2&=H_4,&c_3&=X_3,&c_4&=X_4,\\
 c_5&=X_{10},&c_6&=X_{-3},&c_7&=X_{-4},&c_9&=X_{-10},\\
 c_8&=-X_{28}-X_{-2}+X_{-5},
 &c_{10}&=X_{22}-X_{-9}+X_{-11},\\
 c_{11}&=X_{15}-X_{17}+X_{-16},
 &c_{12}&=-X_{16}-X_{-15}+X_{-17},\\
 c_{13}&=-S_7+S_{20}+S_{21},
 &c_{14}&=-X_9+X_{11}+X_{-22},\\
 c_{15}&=X_2-X_5+X_{-28}.
\end{align*}
\endgroup
Substitution verifies that these vectors are fixed by $\theta$ and centralize
the triple.  If $x=\sum_{r=1}^{15}a_rc_r$, the center equations
$[x,c_r]=0$ reduce to
\[
 a_r=0\quad(r\neq13).
\]
Thus the center is the line spanned by $c_{13}=-z_{15}$, proving the first
part of the lemma.

For $\ell=4,2,0$, let
\[
 \Psi_\ell:\fp\longrightarrow\fg\oplus\fg,
 \qquad x\longmapsto([e,x],[d,x]-\ell x).
\]
Its ranks are $52,46,52$, respectively.  Hence the corresponding highest
spaces have dimensions $2,8,2$.  Convenient ordered bases are as follows:
\begingroup
\small
\begin{align*}
 a_4={}&-\sqrt2i(2H_1+H_2+2H_3+2H_4+H_5+2H_7+A_{58}+A_{59})\\
 &\hspace{2.5em}+2(A_6-A_{34}+A_{40}-A_{56}),\\
 b_4={}&2(A_{13}-A_{27}-A_{45}-A_{53})
 +\sqrt2i(-2A_7+A_{20}+A_{21}-A_{49}-A_{63}),\\[1mm]
 w_1={}&2iX_{14}-\sqrt2X_{41}-\sqrt2X_{50}
       +\sqrt2X_{-19}+2iX_{-26}-\sqrt2X_{-33},\\
 w_2={}&-2iX_8+\sqrt2X_{36}+\sqrt2X_{46}
       +\sqrt2X_{-25}+2iX_{-32}-\sqrt2X_{-38},\\
 w_3={}&-2iX_1-\sqrt2X_{30}-\sqrt2X_{43}
       +\sqrt2X_{-31}+2iX_{-37}-\sqrt2X_{-42},\\
 w_4={}&\sqrt2X_{31}+2iX_{37}-\sqrt2X_{42}
       -2iX_{-1}-\sqrt2X_{-30}-\sqrt2X_{-43},\\
 w_5={}&-\sqrt2X_{25}-2iX_{32}+\sqrt2X_{38}
       +2iX_{-8}-\sqrt2X_{-36}-\sqrt2X_{-46},\\
 w_6={}&\sqrt2X_{19}+2iX_{26}-\sqrt2X_{33}
       +2iX_{-14}-\sqrt2X_{-41}-\sqrt2X_{-50},\\
 w_7={}&\sqrt2X_{13}+2iX_{20}-2iX_{21}+\sqrt2X_{27}
       +\sqrt2X_{45}-\sqrt2X_{53}\\
 &+\sqrt2X_{-13}+2iX_{-20}-2iX_{-21}+\sqrt2X_{-27}
       +\sqrt2X_{-45}-\sqrt2X_{-53},\\
 w_8={}&4iH_1+6iH_2+8iH_3+12iH_4+10iH_5+8iH_6+4iH_7\\
 &+\sqrt2\,(X_6+X_{34}+X_{40}+X_{56}
          -X_{-6}-X_{-34}-X_{-40}-X_{-56}),\\[1mm]
 a_0={}&-2H_1-H_2-2H_3-2H_4-H_5+H_7,\\
 b_0={}&-(A_7+A_{20}+A_{21}).
\end{align*}
\endgroup
Thus $(a_4,b_4)$ is a basis of $M_4^{\fp}$,
$(w_1,\ldots,w_8)$ is a basis of $M_2^{\fp}$, and $(a_0,b_0)$ is a basis
of $M_0^{\fp}$.  The remaining bracket calculation is
\begin{align*}
 [z_{15},a_4]&=-2b_4,& [z_{15},b_4]&=2a_4,\\
 [z_{15},w_r]&=0\quad(1\leq r\leq8),\\
 [z_{15},a_0]&=2b_0,& [z_{15},b_0]&=-2a_0.
\end{align*}
Accordingly the three matrices are
\[
 \begin{pmatrix}0&2\\-2&0\end{pmatrix},
 \qquad 0_8,
 \qquad
 \begin{pmatrix}0&-2\\2&0\end{pmatrix},
\]
which proves \eqref{evii:eq:EVII-z15-char} and the asserted character
statement.  As for orbit $20$, only the resulting trivial/nontrivial reciprocal
character pattern is used below; the numerical eigenvalues $\pm2i$ themselves
are not needed.
\end{proof}

\begin{proposition}\label{evii:prop:EVII-orbit15}
Orbit $15$ contributes no adapted distribution.
\end{proposition}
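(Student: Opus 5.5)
The plan is to argue as for orbit~$20$ (Proposition~\ref{evii:prop:EVII-orbit20}), combining the central-torus data of Lemma~\ref{evii:lem:EVII-orbit15-centralizer} with the holomorphic test of Lemma~\ref{evii:lem:EVII-holomorphic-test}. By Proposition~\ref{evii:prop:EVII-orbit-invariants}, orbit~$15$ has $T(e)-54=6$. It also has
\[
 \fp^e\simeq 2V_4^{\fp}\oplus 8V_2^{\fp}\oplus 2V_0^{\fp}.
\]
By \eqref{evii:eq:EVII-h15} all highest weights are even. Hence the normal quotient lines are exactly the lowest lines of these $\fp$-highest modules. By Lemma~\ref{core:lem:resonance-general} they carry the elementary weighted degrees $6$ (two variables), $4$ (eight variables) and $2$ (two variables).

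Next I track the torus characters on these variables. Let $Z\simeq\C^\times$ be the connected central torus of the triple centralizer, generated by $z_{15}$. Passing from $M_\ell^{\fp}\subset\fp^e$ to $N_e$ via \eqref{core:eq:normal-duality-general} inverts characters. The nontrivial characters in \eqref{evii:eq:EVII-z15-char} come in reciprocal pairs, so the character multiset on the normal variables is unchanged. Write $\chi$ for the nontrivial character of $Z$ whose differential has eigenvalue $2i$. Then:
\begin{itemize}
\item the two degree-$6$ variables have characters $\chi$ and $\chi^{-1}$;
\item the eight degree-$4$ variables are $Z$-trivial;
\item the two degree-$2$ variables have characters $\chi$ and $\chi^{-1}$.
\end{itemize}
For the degree-$2$ pair this uses that the eigenvalues $\pm2i$ on $M_0^{\fp}$ and on $M_4^{\fp}$ have the same modulus. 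That same-modulus identification can be avoided, as explained below.

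The key step is to list the holomorphic monomials of weighted degree exactly $6$ and check that none is $Z$-invariant. There are three possible shapes:
\begin{itemize}
\item a single degree-$6$ variable, with nontrivial character $\chi^{\pm1}$;
\item a degree-$4$ variable times a degree-$2$ variable, with character $\chi^{\pm1}$;
\item a product of three degree-$2$ variables, with character $\chi^{\pm1}$ or $\chi^{\pm3}$.
\end{itemize}
For the cubic shape I only need that an odd number of factors, each with character $\psi$ or $\psi^{-1}$ for a fixed nontrivial $\psi$, cannot multiply to the trivial character. This holds because the character group of $Z$ is $\Z$ and so torsion-free; this parity argument needs no identification of $\psi$ with $\chi$. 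Hence no $Z$-invariant holomorphic monomial of weighted degree $6$ exists.

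Finally, Lemma~\ref{evii:lem:EVII-modular-torus} shows that the modular character is trivial on $Z$. Lemma~\ref{evii:lem:EVII-holomorphic-test} then excludes orbit~$15$ for both Type~I and Type~II.

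The main obstacle is the bookkeeping in the cubic case and the correct transfer of characters to $N_e$. Everything else is immediate from the computed data. To keep the argument robust, I will state the cubic case purely through the parity of the exponent of the nontrivial character, rather than through the numerical eigenvalues.
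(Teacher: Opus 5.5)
Your proposal is correct and matches the paper's proof step for step. It uses the same elementary degrees $6,4,2$ coming from $\ell=4,2,0$, the same three monomial patterns $6$, $4+2$, $2+2+2$ with the odd-exponent argument in the cubic case, and the same final appeal to Lemmas~\ref{evii:lem:EVII-modular-torus} and~\ref{evii:lem:EVII-holomorphic-test}. Your observation that the $M_0^{\fp}$ and $M_4^{\fp}$ characters need not be identified also agrees with the paper, which works with only a fixed nontrivial character $\nu$ and its inverse in the third pattern.
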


\begin{proof}
As above, \eqref{core:eq:normal-duality-general} inverts the central-torus
characters on passage to the normal quotient.  The two nontrivial characters
in each of $M_4^{\fp}$ and $M_0^{\fp}$ are reciprocal and the $M_2^{\fp}$
characters are trivial, so the multiset in \eqref{evii:eq:EVII-z15-char} is
unchanged.  The defect is six.  The elementary degrees for $\ell=4,2,0$ are
$6,4,2$.  A holomorphic monomial of total weighted degree six has one of the
three patterns
\[
 6,
 \qquad
 4+2,
 \qquad
 2+2+2.
\]
In the first pattern the degree-six factor has a nontrivial torus character.
In the second, the degree-four factor is trivial but the degree-two factor is
nontrivial.  In the third, each degree-two factor has character $\nu$ or
$\nu^{-1}$ for a fixed nontrivial character $\nu$; the sum of three exponents
$\pm1$ is $\pm1$ or $\pm3$, never zero.  Thus there is no invariant
holomorphic monomial of weighted degree six.  Lemmas
\ref{evii:lem:EVII-modular-torus} and \ref{evii:lem:EVII-holomorphic-test} apply.
\end{proof}

\subsection{Completion of speciality}

\begin{proof}[Proof of Theorem~\ref{evii:thm:EVII-special}]
The zero orbit is excluded by Lemma~\ref{core:lem:origin-adapted}, and the
nonzero orbits in \eqref{evii:eq:EVII-strict-orbits} are excluded by the
strict trace inequality.  Proposition~\ref{evii:prop:EVII-congruence} excludes
orbits $17,18,21,22$, Proposition~\ref{evii:prop:EVII-orbit20} excludes orbit
$20$, and Proposition~\ref{evii:prop:EVII-orbit15} excludes orbit $15$.  Therefore
every conormal Frobenius space that could support a $B$-adapted invariant
distribution on the nilpotent cone is zero.  Apply
\cite[Theorem~2.5.6]{AG09} to the action of $H\times\C^\times$ separately
for each of the two adapted homogeneity characters.  The
Kostant--Sekiguchi list above is finite and exhaustive, so the theorem gives
vanishing of every adapted invariant distribution supported on the nilpotent
cone.

Apply Lemma~\ref{core:lem:speciality-criterion} to the one-summand decomposition
$Q(\fp)=\fp$ and the form \eqref{evii:eq:EVII-B}.  This proves speciality; weak
linear tameness and regularity follow.
\end{proof}

\begin{corollary}[EVII component implication]\label{cor:EVII-component}
The finite-component datum for EVII is componentwise regular, and every finite
effective enlargement arising from a central quotient is regular.
\end{corollary}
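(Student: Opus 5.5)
The plan is to derive the corollary from the speciality of Theorem~\ref{evii:thm:EVII-special} through Lemma~\ref{lem:special-component}, and then to apply Proposition~\ref{prop:finite-assembly} for the enlargements.

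\emph{Step 1: the witnessing decomposition.} Lemma~\ref{lem:special-component} needs a witnessing decomposition for speciality that every element of $A$ preserves, together with its form. Theorem~\ref{evii:thm:EVII-special} uses the single summand $Q(\fp)=\fp$ and the form $B$ of \eqref{evii:eq:EVII-B}. By Lemma~\ref{cii:lem:B-centralizer}, applied verbatim with the Killing form of $\fe_7$, this form is a nonzero multiple of the restriction of the invariant form $\kappa$ of $\fg$. The effective group $J$ is the image of $\widetilde A_{\widetilde\theta}$, and this image consists of adjoint automorphisms commuting with $\theta$. Every inner automorphism preserves $\kappa$, and $\fp$ is preserved because the automorphism commutes with $\theta$. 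So $J$ preserves $\kappa|_{\fp}$, and hence preserves $B$ up to the fixed scalar, which means it preserves $B$ itself. The whole finite group $J/L=A$ therefore preserves the summand and the form. Lemma~\ref{lem:special-component} then gives componentwise regularity of the datum $(L,A,B_0,\fp)$.

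\emph{Step 2: identifying $A$.} I would also check that $A$ is small, for instance that $Z(E_7^{\mathrm{sc}})=\mu_2$ and the effective component group has order at most two. This is not needed for the argument, which is uniform in $A$.

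\emph{Step 3: finite enlargements.} A finite central quotient of the canonical cover replaces $B_0$ by the intermediate subgroup $B_F$ of Lemma~\ref{lem:exact-component-identification}. Admissible elements give classes $a\in A$ with $a^2\in B_F$. Proposition~\ref{prop:finite-assembly}, applied with $B=B_F$ together with the componentwise regularity from Step 1, then yields regularity of every such enlargement. Lemma~\ref{lem:finite-components-nullcone} guarantees that the regular set and the null cone do not change under this finite enlargement.

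The main obstacle is a subtle point in Step 1. The vanishing statement of speciality must be the one used in Lemma~\ref{lem:special-component}: every $L$-invariant distribution on the null cone that is adapted to $B$ is zero. That is exactly what the proof of Theorem~\ref{evii:thm:EVII-special} established, through \cite[Theorem~2.5.6]{AG09} applied to $H\times\C^\times$. Since $L$ is the effective image of $H$, no further input is needed.
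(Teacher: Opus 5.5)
Your proposal is correct and follows the paper's proof: the hyperbolic form is (a nonzero multiple of) the restriction of the invariant form of $\mathfrak e_7$, so all of $J$ preserves it and Lemma~\ref{lem:special-component} applies, after which Proposition~\ref{prop:finite-assembly} with $B=B_F$ handles every central quotient. One small correction: Lemma~\ref{cii:lem:B-centralizer} cannot be applied verbatim, because its proof uses $H$-irreducibility of $\fp$, whereas here $\fp=J\oplus J^*$ is reducible. Your conclusion still stands: the central torus makes $J$ and $J^*$ isotropic, and Schur's lemma for $E_6$ then leaves only a one-dimensional space of $H$-invariant symmetric forms.
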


\begin{proof}
Theorem~\ref{evii:thm:EVII-special} proves vanishing of every connected-group
invariant distribution adapted to the hyperbolic form
\eqref{evii:eq:EVII-B}.  That form is the restriction of the invariant form on
the $\mathfrak e_7$ ideal and is preserved by every inner component.  Apply
Lemma~\ref{lem:special-component} and then
Proposition~\ref{prop:finite-assembly}.
\end{proof}

\section{Completion of the conjectures}\label{sec:completion}

\subsection{A compatible Chevalley anti-automorphism}

We first supply the GP1--GP2 equivalence needed for symmetric subgroups.  It is
enough to prove it on the canonical central cover, where the central torus and
the simply connected semisimple factors are separated.

\begin{proposition}[Compatible Chevalley involution]
\label{prop:compatible-chevalley}
Let
\[
 \widetilde G=Z\times G_{\mathrm{ss}}
\]
with $Z$ a complex torus and $G_{\mathrm{ss}}$ simply connected semisimple, and
let $\theta$ be an involution of $\widetilde G$.  There exists an involutive
algebraic automorphism $C$ such that
\begin{enumerate}[label=\textup{(\roman*)}]
\item $C$ commutes with $\theta$;
\item $C$ restricts to inversion on a maximal torus of $\widetilde G$;
\item the anti-automorphism $\sigma=C\circ\mathrm{inv}$ is
      $\Ad(\widetilde G)$-admissible and satisfies
      $\sigma(\widetilde G^\theta)=\widetilde G^\theta$.
\end{enumerate}
Consequently, for every irreducible Casselman--Wallach representation $E$,
\[
 \dim\Hom_{\widetilde G^\theta}(E,\C)
 =\dim\Hom_{\widetilde G^\theta}(E^\vee,\C).
\]
Thus GP1 and GP2 are equivalent for $(\widetilde G,\widetilde G^\theta)$.
\end{proposition}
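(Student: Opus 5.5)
The construction splits along the product $\widetilde G=Z\times G_{\mathrm{ss}}$, since $\theta$ preserves both characteristic factors. On $Z$ I take $C_Z=\mathrm{inv}$. Inversion is central in $\operatorname{Aut}(Z)$ because $Z$ is abelian, so it commutes with $\theta|_Z$. On $G_{\mathrm{ss}}$ I work at the Lie algebra level and then integrate, which is possible because $G_{\mathrm{ss}}$ is simply connected.

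\textbf{Step 1: a $\theta$-stable Chevalley involution on $\fg_{\mathrm{ss}}$.} Choose a compact real form $\fg_u$ that is $\theta$-stable; this is the standard Cartan argument. Let $\sigma_u$ be the corresponding conjugation, so that $\sigma_u$ commutes with $\theta$. Next choose a maximally split $\theta$-stable Cartan subalgebra $\mathfrak t$ that is also $\sigma_u$-stable. Concretely, take $\mathfrak t=\mathfrak a\oplus\mathfrak t_{\fh}$, where $\mathfrak a\subset\fp$ is a maximal abelian subspace of semisimple elements compatible with $\sigma_u$. Then pick a Chevalley involution $C$ with $C|_{\mathfrak t}=-1$.

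To make $C$ commute with $\theta$, I will use the classical fact that the normalized Chevalley involution can be chosen of the form $C=\theta\circ\sigma_u\circ\sigma_0$, adapted to real forms. An easier route is the following. Let $\mathcal C$ be the set of Chevalley involutions acting by $-1$ on $\mathfrak t$. It is a torsor under $\Ad(T)$ (modulo elements of order two), and conjugation by $\theta$ preserves $\mathcal C$, because $\theta$ preserves $\mathfrak t$ and sends root vectors to root vectors. A Lang/averaging argument on the connected torus $T$ then produces a fixed point. Write $\theta C\theta=\Ad(t)C$ with $t\in T$. Since $C$ acts on $T$ by inversion, we have $\Ad(s)C\Ad(s)^{-1}=\Ad(s^2)C$. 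It therefore suffices to solve $s^2\theta(s)^{-1}\ldots=t$ inside $T$. The map $s\mapsto s^{2}$-type is surjective on a complex torus, so this equation is solvable. The exact form of the cocycle equation has to be checked, using that $\theta$ acts on $T$ commuting with inversion.

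Once $C$ commutes with $\theta$, I integrate it to $G_{\mathrm{ss}}$. It then preserves $G^\theta_{\mathrm{ss}}$, and it restricts to inversion on the maximal torus $T$.

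\textbf{Step 2: the anti-automorphism.} Set $\sigma=C\circ\mathrm{inv}$ on $\widetilde G$. Because $C$ commutes with $\theta$ and with inversion, we get $\sigma(\widetilde G^\theta)=\widetilde G^\theta$. Admissibility means that $\sigma$ preserves every semisimple conjugacy class. This holds because $\sigma$ fixes $T$ pointwise, and every semisimple element is conjugate into $T$. For the Casselman--Wallach consequence, the Gelfand--Kazhdan style lemma applies: $E\circ\sigma$-twisting gives $E^\vee\simeq E^{C}$. This follows from the characterization of $E^\vee$ via the character or infinitesimal data together with the Adams--Vogan/Sun theorem that the contragredient is the Chevalley twist. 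Twisting by $C$ preserves $\widetilde G^\theta$, so it identifies $\Hom_{\widetilde G^\theta}(E^\vee,\C)$ with $\Hom_{\widetilde G^\theta}(E,\C)$. GP2 combined with this equality of dimensions then yields GP1.

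\textbf{Main obstacle.} I expect the main obstacle to be Step 1: producing $C$ that is simultaneously a Chevalley involution and commutes with $\theta$, for arbitrary $\theta$, including outer ones. I would first try the torus-cocycle argument above, and fall back on reducing to simple factors, where $\theta$ is either a swap of two factors (handled diagonally) or an involution of a simple algebra (handled through Kac diagrams).
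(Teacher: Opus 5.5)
Your Step 2 is the paper's argument: admissibility because $\sigma$ fixes $T$ pointwise, then $E^\vee\simeq E^{C}$, then $C(\widetilde G^\theta)=\widetilde G^\theta$. Step 1, which you rightly call the main obstacle, has a genuine gap.

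Every automorphism inverting $T$ has the form $\operatorname{Int}(u)C$ with $u\in T$. If $\theta C\theta^{-1}=\operatorname{Int}(t)C$, then $\theta\,\operatorname{Int}(u)C\,\theta^{-1}=\operatorname{Int}(\theta(u)t)C$. So $\operatorname{Int}(u)C$ commutes with $\theta$ if and only if $t\equiv u\,\theta(u)^{-1}$ modulo $Z(G_{\mathrm{ss}})$. Your conjugates $\operatorname{Int}(s^2)C$ give the same equation with $u=s^2$.

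The only information you have on $t$ is $t\,\theta(t)\in Z(G_{\mathrm{ss}})$, which is forced by $\theta^2=1$. The image of $u\mapsto u\,\theta(u)^{-1}$ is only the identity component of $\{x\in T_{\mathrm{ad}}:x\,\theta(x)=1\}$. The obstruction is therefore a class in $H^1(\langle\theta\rangle,T_{\mathrm{ad}})$. This is an elementary abelian $2$-group, and it is nonzero whenever the cocharacter lattice has a $\theta$-stable rank-one direct summand with trivial $\theta$-action. Already $\C^\times$ with trivial action gives $\mu_2$.

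The $\mathfrak t_{\fh}$-part of a maximally split torus produces such summands. For $(\mathfrak{sl}_4,\mathfrak s(\mathfrak{gl}_3\oplus\mathfrak{gl}_1))$ one gets $H^1\simeq\Z/2$. So surjectivity of squaring on $T$ decides nothing. Lang's theorem is a finite-field statement, and averaging fails because $T$ has $2$-torsion. The formula $C=\theta\sigma_u\sigma_0$ does not help either: with $\sigma_0=\theta\sigma_u$ it is the identity. You yourself left the cocycle equation unchecked, and that is exactly where the proof is missing.

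The missing idea is to choose the torus so that the cocycle is visibly a coboundary. Instead of a maximally split torus, the paper takes a $\theta$-stable Borel pair $(B,T)$ with a pinning. Then $\theta=\operatorname{Int}(t)\delta$ with $t\in T$ and $\delta$ the pinned diagram automorphism, and $\theta^2=1$ gives $t\,\delta(t)\in Z(G_{\mathrm{ss}})$. The pinned Chevalley involution $C_0$ inverts $T$ and commutes with $\delta$.

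In your notation this gives $\theta C_0\theta^{-1}=\operatorname{Int}(t^2)C_0$. Since $\theta$ acts on $T$ through $\delta$, we have $t^2=t\,\theta(t)^{-1}\cdot t\,\delta(t)\equiv t\,\theta(t)^{-1}$ modulo the centre. So $u=t$ solves your equation, and $C=\operatorname{Int}(t)C_0$ is the required involution. Indeed $C^2=1$ because $C_0(t)=t^{-1}$, and $C\theta C^{-1}=\operatorname{Int}(\delta(t)^{-1})\delta=\theta$.

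The paper uses inversion on $Z$. It handles exchanged pairs of simple factors by transporting one pinning through the exchange. With this replacement for Step 1, the rest of your proposal goes through as written.
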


\begin{proof}
On $Z$, take $C_Z(z)=z^{-1}$; inversion commutes with every algebraic
automorphism of a torus.  It remains to treat a $\theta$-orbit of simple factors.
For an orbit of size two, write it as $S\times S'$ and choose an isomorphism
$\alpha:S\to S'$ such that
\[
 \theta(x,y)=(\alpha^{-1}(y),\alpha(x)).
\]
Choose a pinning on $S$, transport it by $\alpha$ to $S'$, let $C_S$ be the
corresponding Chevalley involution, and put
$C_{S'}=\alpha C_S\alpha^{-1}$.  Then $C_S\times C_{S'}$ is involutive,
inverts the product torus, and
\[
 (C_S\times C_{S'})\theta(x,y)
 =\bigl(C_S\alpha^{-1}y,\alpha C_Sx\bigr)
 =\theta(C_Sx,C_{S'}y).
\]
Thus it commutes with $\theta$ on the exchanged factor orbit.

For a $\theta$-stable semisimple factor, choose a $\theta$-stable pair
$(B,T)$ and a pinning.  Let $\delta$ be the pinned diagram automorphism with
the same action as $\theta$ on the based root datum.  Then
$\delta^{-1}\theta$ preserves $(B,T)$ and acts trivially on the based root
datum, so the standard structure theorem for pinned automorphisms
\cite{Steinberg} makes it $\operatorname{Int}(t)$ for some $t\in T$.  Thus
\[
 \theta=\operatorname{Int}(t)\delta,
 \qquad t\delta(t)\in Z(G_{\mathrm{ss}}).
\]
Let $C_0$ be the Chevalley involution attached to the pinning.  It inverts $T$
and commutes with $\delta$.  Set
\[
 C=\operatorname{Int}(t)C_0.
\]
Since $C_0(t)=t^{-1}$, one has $C^2=1$.  Moreover
\[
 C\theta C^{-1}
 =\delta\operatorname{Int}(t^{-1})
 =\operatorname{Int}(\delta(t)^{-1})\delta
 =\operatorname{Int}(t)\delta=\theta,
\]
the last equality following from centrality of $t\delta(t)$.  Taking the product
over the factor orbits and with $C_Z$ gives (i) and (ii).

Put $\sigma=C\circ\mathrm{inv}$.  It is involutive.  It normalizes the
conjugation action because
\[
 \sigma\circ\Ad(g)\circ\sigma^{-1}=\Ad(\sigma(g^{-1})).
\]
Every closed conjugacy class is semisimple and meets $T$, while $\sigma$ fixes
$T$ pointwise; hence $\sigma$ preserves every closed conjugacy class.  Thus it
is $\Ad(\widetilde G)$-admissible.  Since $C$ commutes with $\theta$, both $C$
and $\sigma$ preserve the fixed subgroup.

The admissible-anti-automorphism theorem
\cite[Theorem~8.2.1 and Corollary~8.2.3]{AG09} identifies $E^\vee$ with the
$C$-twist of $E$.  Because $C$ preserves $\widetilde G^\theta$, twisting does
not change the dimension of fixed linear forms.  The displayed equality and
the equivalence GP1$\Leftrightarrow$GP2 follow.
\end{proof}

\subsection{Proof of regularity for all complex symmetric pairs}

The following table summarizes the irreducible reduction from Rubio's
Tables~2--4 and Proposition~6.3, together with the odd--odd Spin pleasantness
result of Proposition~\ref{spin:prop:pleasant}.  ``Pleasant'' and ``nice''
imply regularity by the results recalled in Section~\ref{sec:framework}; the
three classical residual families and EVII are proved in this paper.

\begingroup
\small
\renewcommand{\arraystretch}{1.08}
\begin{longtable}{@{}p{0.46\textwidth}p{0.46\textwidth}@{}}
\caption{Regularity of irreducible complex symmetric pairs}
\label{tab:irreducible-reduction}\\
\toprule
irreducible symmetric Lie algebra pair & reason for regularity \\
\midrule
\endfirsthead
\toprule
irreducible symmetric Lie algebra pair & reason for regularity \\
\midrule
\endhead
$(A_{2n},B_n)$ & nice (also pleasant) \\
$(A_{2n-1},D_n)$ & nice \\
$(A_{2n-1},C_n)$ & pleasant \\
$(A_{r+s+1},A_r+A_s+\C)$, $r\ne s$ & pleasant \\
$(A_{2r+1},A_r+A_r+\C)$ & nice \\
$(C_n,A_{n-1}+\C)$ & nice \\
$(C_{r+s},C_r+C_s)$, $r\ne s$ & pleasant \\
$(C_{2r},C_r+C_r)$ & balanced CII, Section~\ref{sec:CII-family} \\
$(D_r,A_{r-1}+\C)$ & DIII, Section~\ref{sec:DIII-family} \\
$(\mathfrak{so}_{r+s},\mathfrak{so}_r+\mathfrak{so}_s)$,
$|r-s|\le2$ & nice \\
$(\mathfrak{so}_{r+s},\mathfrak{so}_r+\mathfrak{so}_s)$,
$|r-s|>2$, $r,s$ odd & pleasant by Proposition~\ref{spin:prop:pleasant} \\
$(\mathfrak{so}_{r+s},\mathfrak{so}_r+\mathfrak{so}_s)$,
$|r-s|>2$, $rs$ even & residual Spin block, Section~\ref{sec:Spin-family} \\
$(G_2,A_1+A_1)$ & nice \\
$(F_4,B_4)$ & pleasant \\
$(F_4,C_3+A_1)$ & nice \\
$(E_6,C_4)$ & nice \\
$(E_6,A_5+A_1)$ & nice \\
$(E_6,F_4)$ & pleasant \\
$(E_6,D_5+\C)$ & pleasant \\
$(E_7,A_7)$ & nice \\
$(E_7,D_6+A_1)$ & pleasant \\
$(E_7,E_6+\C)$ & EVII, Section~\ref{sec:EVII-family} \\
$(E_8,D_8)$ & nice \\
$(E_8,E_7+A_1)$ & pleasant \\
\bottomrule
\end{longtable}
\endgroup

The low-rank accidental identifications
$A_1=B_1=C_1$, $B_2=C_2$, $D_2=A_1+A_1$, and $D_3=A_3$, as well as the
triality automorphisms of $D_4$, only identify entries already present in the
table and create no additional irreducible case.  In particular
$(D_4,A_3+\C)$ is the $r=4$ DIII entry.  At the group level, different
isogeny forms and the possible diagonal coupling of component groups are
handled by Section~\ref{sec:component-assembly}, rather than by adding new
Lie-algebra rows.

\begin{proof}[Proof of Theorem~\ref{thm:regularity-all}]
Pass to the canonical central cover of Lemma~\ref{lem:canonical-cover} and group
the simple factors by the orbits of the lifted involution.  A two-factor orbit
is a diagonal symmetric pair and is pleasant.  For a simple factor fixed by
the permutation induced by $\theta$, Table~\ref{tab:irreducible-reduction}
shows explicitly that it is pleasant or nice except for the four families of
Theorem~\ref{thm:four-inputs}; this is the reduction in
\cite[Tables~2--4 and Proposition~6.3]{Rubio}.  The DIII component implication
is Corollary~\ref{cor:DIII-component}, the balanced CII implication is
Corollary~\ref{cor:CII-component}, the Spin implication is
Corollary~\ref{cor:Spin-component}, and the EVII implication is
Corollary~\ref{cor:EVII-component}.  Pleasant factors have no obstruction
character, and nice factors are covered by Lemma~\ref{lem:special-component}.

Proposition~\ref{prop:component-product} makes the canonical-cover product
componentwise regular.  The central anti-invariant space is removed from
$Q(\fp)$ by Lemma~\ref{lem:fixed-vectors}.  Finally, the original group is a
finite $\theta$-stable central quotient of the cover; its full fixed subgroup is
represented, by Lemma~\ref{lem:exact-component-identification}, by the exact
intermediate component group $B_F$.  Proposition
\ref{prop:finite-assembly} proves regularity for that full fixed subgroup and
for every admissible element.  This proves the theorem.
\end{proof}

\subsection{Generalized descent and finite central descent}

\begin{proof}[Proof of Theorem~\ref{thm:gelfand-all}]
Let
\[
 m:\widetilde G=Z(G)^\circ\times G_{\mathrm{der}}^{\mathrm{sc}}\longrightarrow G
\]
be the canonical central cover and let $\widetilde\theta$ be the lifted
involution.  The centralizer of a semisimple element in a simply connected
semisimple complex group is connected by Steinberg's connected-centralizer
theorem \cite{Steinberg}; hence every
semisimple centralizer in $\widetilde G$ is connected.  Every descendant of
$(\widetilde G,\widetilde G^{\widetilde\theta})$ is therefore a connected
complex reductive symmetric pair and is regular by
Theorem~\ref{thm:regularity-all}.

Every connected complex symmetric pair is good
\cite[Corollary~7.1.7]{AG09}.  The generalized Harish--Chandra descent
criterion \cite[Theorem~7.4.5]{AG09} now shows that the cover pair is a
Gelfand--Kazhdan pair, hence has GP2.  Proposition
\ref{prop:compatible-chevalley} converts GP2 into GP1 on the cover.

Let $E$ be an irreducible Casselman--Wallach representation of $G$ and inflate
it along $m$.  Pullback through a finite covering preserves smoothness,
Fr\'echet moderate growth, admissibility, and finite length.  It also preserves
irreducibility: a closed $\widetilde G$-invariant subspace is $G$-invariant
because $m$ is surjective.  If
$\widetilde H=\widetilde G^{\widetilde\theta}$, then
$m(\widetilde H)\subset H$, and therefore
\[
 \Hom_H(E,\C)\subset
 \Hom_{m(\widetilde H)}(E,\C)
 \simeq\Hom_{\widetilde H}(E\circ m,\C).
\]
The right-hand space has dimension at most one.  Thus $(G,H)$ has GP1.  If
$\pi$ is an irreducible admissible unitary representation, its $K$-finite
vectors form an irreducible Harish--Chandra module; the Casselman--Wallach
globalization theorem identifies its smooth vectors with the irreducible
smooth Fr\'echet globalization of that module
\cite[Theorem~10.6]{BernsteinKrotz}.  Therefore GP1 implies the unitary
multiplicity-one property and proves van Dijk's conjecture.
\end{proof}

\begin{remark}
The central-cover argument is used only for the multiplicity-one conclusion.
Theorem~\ref{thm:regularity-all} itself is proved for the full fixed subgroup of
every connected central quotient by the finite-component assembly of
Section~\ref{sec:component-assembly}.
\end{remark}

\end{document}